\def\@settitle{\begin{center}%
    \bfseries
 \normalfont\LARGE\@title
  \end{center}%
}
\def\@setauthors{\begin{center}%
 \normalsize\@author
  \end{center}%
}
\numberwithin{equation}{section}
\renewcommand{\cal}{\mathcal}
\newcommand{\cC}{{\cal C}}
\newcommand{\cD}{{\cal D}}
\newcommand{\cE}{{\cal E}}
\newcommand{\cL}{{\cal L}}
\newcommand{\cM}{{\cal M}}
\newcommand{\cR}{{\mathcal R}}
\newcommand{\cU}{{\mathcal U}}
\newcommand\cW{{\mathcal W}}
\newcommand{\sfa}{{\sf a}}
\newcommand{\sfb}{{\sf b}}
\newcommand{\sfs}{{\sf s}}
\newcommand{\sfS}{{\sf S}}
\newcommand{\sfA}{{\sf A}}
\newcommand{\sfB}{{\sf B}}
\newcommand{\sfC}{{\sf C}}
\newcommand{\fa}{{\mathfrak a}}
\newcommand{\fb}{{\mathfrak b}}
\newcommand{\fc}{{\mathfrak c}}
\newcommand{\fr}{{\mathfrak r}}
\newcommand{\fC}{{\frak C}}
\newcommand{\fM}{{\frak M}}
\newcommand{\bma}{{\bm{a}}}
\newcommand{\bmb}{{\bm{b}}}
\newcommand{\bme}{{\bm{e}}}
\newcommand{\bmv}{{\bm{v}}}
\newcommand{\bmx}{{\bm{x}}}
\newcommand{\bmy}{{\bm{y}}}
\newcommand{\rd}{{\rm d}}
\newcommand{\ri}{\mathrm{i}}
\newcommand{\bC}{{\mathbb C}}
\newcommand{\bE}{\mathbb{E}}
\newcommand{\bP}{\mathbb{P}}
\newcommand{\bR}{{\mathbb R}}
\newcommand{\bZ}{\mathbb{Z}}
\DeclareMathOperator{\Tr}{Tr}
\DeclareMathOperator{\supp}{supp}
\DeclareMathOperator{\dist}{dist}
\DeclareMathOperator{\dom}{\mathcal{D}}
\DeclareMathOperator{\OO}{O}
\DeclareMathOperator{\oo}{o}
\renewcommand{\Re}{\mathop{\mathrm{Re}}}
\renewcommand{\Im}{\mathop{\mathrm{Im}}}
\newcommand{\deq}{\mathrel{\mathop:}=} 
\renewcommand{\leq}{\leqslant}
\renewcommand{\geq}{\geqslant}
\newcommand{\td}{\tilde}
\newcommand{\del}{\partial}
\newcommand{\beq}{\begin{equation}}
\newcommand{\eeq}{\end{equation}}
\theoremstyle{plain} 
\newtheorem{theorem}{Theorem}[section]
\newtheorem*{theorem*}{Theorem}
\newtheorem*{lemma*}{Lemma}
\newtheorem*{corollary*}{Corollary}
\newtheorem{proposition}[theorem]{Proposition}
\newtheorem*{proposition*}{Proposition}
\newtheorem{assumption}[theorem]{Assumption}
\newtheorem*{assumption*}{Assumption}
\newtheorem{claim}[theorem]{Claim}
\newtheorem{ansatz}[theorem]{Ansatz}
\newtheorem*{definition*}{Definition}
\newtheorem*{example*}{Example}
\newtheorem{remark}[theorem]{Remark}
\newtheorem*{remark*}{Remark}
\newtheorem*{remarks*}{Remarks}
\def\author#1{\par
    {\centering{\authorfont#1}\par\vspace*{0.05in}}
}
\def\titlefont{\fontsize{13}{15}\bfseries\boldmath\selectfont\centering{}}
\def\authorfont{\fontsize{13}{15}}
\let\affiliationfont\rhfont
\def\address#1{\par
    {\centering{\affiliationfont#1\par}}\par\vspace*{11pt}
}
\def\body{
\setcounter{footnote}{0}
\def\thefootnote{\alph{footnote}}
\def\@makefnmark{{$^{\rm \@thefnmark}$}}
}
\def\title#1{
    \thispagestyle{plain}
    \vspace*{-14pt}
    \vskip 79pt
    {\centering{\titlefont #1\par}}%
    \vskip 1em
}
\newcommand{\ft}{{\frak t}}
\begin{document}

\title{Edge Universality for Nonintersecting Brownian Bridges}

\vspace{1.2cm}

 \author{Jiaoyang Huang}
\address{New York University\\
   E-mail: jh4427@nyu.edu}

~\vspace{0.3cm}

\begin{abstract}
In this paper we study fluctuations of extreme particles of nonintersecting Brownian bridges
starting from $a_1\leq a_2\leq \cdots \leq a_n$ at time $t=0$ and ending at $b_1\leq b_2\leq \cdots\leq b_n$ at time $t=1$, where $\mu_{A_n}=(1/n)\sum_{i}\delta_{a_i}, \mu_{B_n}=(1/n)\sum_i \delta_{b_i}$ are discretization of probability measures $\mu_A, \mu_B$.
Under regularity assumptions of $\mu_A, \mu_B$, we show as the number of particles $n$ goes to infinity, fluctuations of extreme particles at any time $0<t<1$, after proper rescaling, are asymptotically universal, converging to the Airy point process.
\end{abstract}

\section{Introduction}

One-dimensional Markov processes conditioned
not to intersect form an important class of models which arise in the study of random matrix
theory, growth processes, directed polymers and random tiling (dimer) models \cite{ferrari2010random, ferrari2010interacting, spohn2005kardar,johansson2005random,weiss2017reflected}. 
Among them, nonintersecting Brownian bridges, from conditioning $n$ standard Brownian bridges not to intersect, have been most studied.
Their scaling limits give rise to determinantal point processes, which are believed to be universal objects for large families of interacting particle systems.

In the particular case, when nonintersecting Brownian bridges start at general position and end at the same position, say the origin, after a space-time transformation \eqref{e:dual}, it is the distribution of  Dyson's Brownian motion \cite{MR0148397} on the real line. The positions of the particles at any time slice have the same distribution as the eigenvalues of the Gaussian unitary ensemble with external source \cite{johansson2001universality}.
In this case, as the number of particles  goes to infinity, under
proper scaling,  the local statistics of nonintersecting Brownian bridges are universal, i.e. governed by the sine kernel inside the limit shape (in the bulk) \cite{johansson2001universality, MR3914908,MR3687212,aptekarev2005large,bleher2004large},  by the Airy kernel at the edge of the limit shape \cite{tracy1994level, landon2017edge,aptekarev2005large,bleher2004large},
and by the Pearcey kernel at the cusp \cite{adler2007pdes, bleher2007large, brezin1998universal, brezin1998level,tracy2006pearcey}. 
These kernels are universal since they appear in many other problems.
In particular, the Airy process appears ubiquitously in
the Kardar--Parisi--Zhang (KPZ) universality class \cite{corwin2012kardar}, an important class of interacting particle systems and random growth models.
The analysis of nonintersecting Brownian bridges greatly improves the understanding of the Airy process and the KPZ universality class, see \cite{corwin2014brownian,dauvergne2018directed}.

In this paper, we study the edge scaling limit of nonintersecting Brownian bridges with general boundary condition.
They consist of $n$ standard Brownian bridges $\{(x_1(t), x_2(t),\cdots, x_n(t))\}_{0\leq t\leq 1}$ starting from $a_1\leq a_2\leq \cdots \leq a_n$ at time $t=0$ and ending at $b_1\leq b_2\leq \cdots\leq b_n$ at time $t=1$, 
conditioned not to intersect during the time interval $0<t<1$, i.e. $x_i(0)=a_i, x_i(1)=b_i$ for $1\leq i\leq n$ and $x_1(t)<x_2(t)<\cdots<x_n(t)$ for $0<t<1$. We parametrize the starting and ending configurations as measures,
\begin{align}\label{e:dicret}
\mu_{A_n}=\frac{1}{n}\sum^n_{i=1}\delta_{a_i}, \quad
\mu_{B_n}=\frac{1}{n}\sum^n_{i=1}\delta_{b_i}.
\end{align}


If the measures $\mu_{A_n}$ and $\mu_{B_n}$ converge weakly to $\mu_A$ and $\mu_B$ respectively, as the number of particles $n$ goes to infinity, 
under mild assumptions of $\mu_A, \mu_B$, it follows from \cite{MR1883414, MR2091363} that the empirical particle density of nonintersecting Brownian bridges with boundary data $\mu_{A_n},\mu_{B_n}$, converges
\begin{align*}
\frac{1}{n}\sum_{i=1}^n \delta_{x_i(t)}\rightarrow \rho_t^*(x)\rd x,\quad 0\leq t\leq1,
\end{align*}
in the weak sense. The measure valued process $\{\rho_t^*(x)\}_{0\leq t\leq1}$ is given explicitly by a variational problem:
\begin{align}\label{e:funcS0}
\{(\rho_t^*, u_t^*)\}_{0\leq t\leq 1}=\arg\inf\frac{1}{2}\left(\int_0^1 \int_\bR \left(u_t^2 \rho_t +\frac{\pi^2}{3}\rho_t^3 \right)\rd x \rd t +\Sigma(\mu_A)+\Sigma(\mu_B)\right),
\end{align}
the $\inf$ is taken over all the pairs $\{(\rho_t,u_t)\}_{0\leq t\leq 1}$ such that $\del_t \rho_t+\del_x(\rho_tu_t)=0$ in the sense of distributions, $\{\rho_t(\cdot)\}_{0\leq t\leq 1}\in{\mathcal{C}}([0,1],\cM(\bR))$  and its initial and terminal data are  given by 
\begin{align*}
\lim_{t\rightarrow 0+}\rho_t(x)\rd x=\rd\mu_A,\quad \lim_{t\rightarrow 1-}\rho_t(x)\rd x=\rd\mu_B,
\end{align*}
where convergence holds in the weak sense.
 We will discuss more on the variational problem in Section \ref{s:VP}. Let $\Omega=\{(x,t)\in \bR\times (0,1), \rho_t^*(x)>0\}$ be the region where $\rho_t^*(x)$ is positive. As in Figure \ref{f:DBM}, it turns out that the nonintersecting Brownian bridge occupies $\Omega$. 
The boundary of the region $\Omega$ is piecewise analytic, with possibly some cusp points. 
Let $\{(\sfa(t),t)\}_{0\leq t\leq 1}$ be the left boundary of $\Omega$. For any $0<t<1$, in the neighborhood of $x=\sfa(t)$,  $\rho_t^*(x)$ has square root behavior 
\begin{align}\label{e:square}
\rho_t^*(x)= \frac{\sfs(t)\sqrt{[x-\sfa(t)]_+}}{\pi}+\OO(|x-\sfa(t)|^{3/2}),
\end{align}
and similar statement holds for the right boundary of $\Omega$.

\begin{figure}
\begin{center}
 \includegraphics[scale=0.22,trim={0cm 5cm 0 7cm},clip]{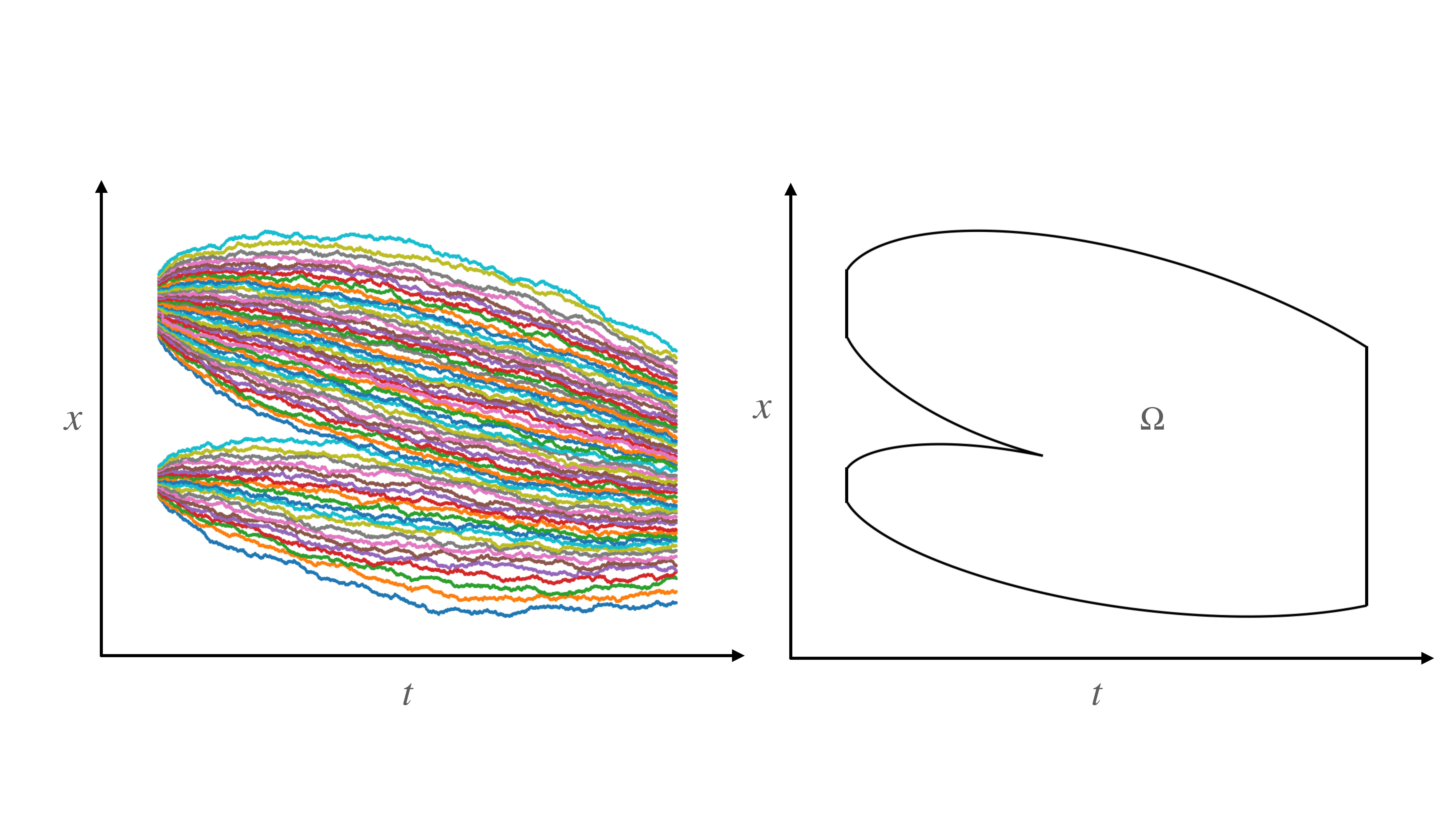}
 \caption{An example of nonintersecting Brownian bridges, and its support $\Omega$.}
 \label{f:DBM}
 \end{center}
 \end{figure}

In this work, we study  fluctuations of extreme particles of  nonintersecting Brownian bridges.  Under regularity assumptions of $\mu_A, \mu_B$, we show as the number of particles $n$ goes to infinity, fluctuations of extreme particles at any time $0<t<1$, after proper rescaling, converge
to the Airy point process.

\begin{theorem}\label{t:universality}
Given probability measures $\mu_A,\mu_B$ satisfying regularity Assumptions \ref{a:reg} and \ref{a:ncritic}, and $\mu_{A_n}, \mu_{B_n}$ the discretization of $\mu_A,\mu_B$ satisfying Assumptions \ref{a:A_n} and \ref{a:B_n}. Fix small $\ft>0$. 
For nonintersecting Brownian bridges with boundary data given by $\mu_{A_n}, \mu_{B_n}$, as $n$ goes to infinity, fluctuations of extreme particles at time $\ft\leq t\leq1-\ft$, after proper rescaling, converge to the Airy point process,
\begin{align*}
(\sfs(t)n)^{2/3}(x_1(t)-\sfa(t), x_2(t)-\sfa(t),x_3(t)-\sfa(t),\cdots)\rightarrow \text{Airy Point Process}
\end{align*}
The same statement holds for particles close to the right edge.
\end{theorem}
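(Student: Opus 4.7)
The plan is to adapt the three-step dynamical approach developed for Wigner-type ensembles to the nonintersecting-bridge setting: (i) prove a rigidity estimate for the particle positions at every $t \in [\ft, 1-\ft]$; (ii) use a short-time Dyson Brownian motion (DBM) coupling to regularize the local edge statistics; (iii) compare with a reference ensemble whose Airy edge universality is already known.

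\textbf{Step 1: Determinantal structure and rigidity.} By the Karlin-McGregor / Lindstr\"om-Gessel-Viennot formula, the configuration $(x_1(t),\ldots,x_n(t))$ is a determinantal point process whose correlation kernel admits a double contour integral representation involving the heat kernel and the empirical measures $\mu_{A_n}, \mu_{B_n}$. The saddle points of this integral are determined by the characteristics of the Burgers-type equation that governs $(\rho_t^*, u_t^*)$ from the variational problem \eqref{e:funcS0}. A saddle-point analysis yields a local density law matching $\rho_t^*$ on scales $\gg n^{-1}$, and in particular optimal rigidity near the edge: with overwhelming probability,
\begin{equation*}
|x_k(t) - \gamma_k(t)| \leq n^{-2/3+\eps}\min(k, n-k+1)^{-1/3},
\end{equation*}
where $\gamma_k(t)$ are the classical locations defined from $\rho_t^*$. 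The noncriticality Assumption~\ref{a:ncritic} ensures that the square-root behavior \eqref{e:square} holds uniformly in $t \in [\ft, 1-\ft]$ with $\sfs(t)$ bounded away from $0$ and $\infty$.

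\textbf{Step 2: Short-time DBM coupling.} Using the Brownian Gibbs / Markov property of nonintersecting bridges, the trajectories on a short window $[t-\tau, t]$ with $\tau = n^{-1+\delta}$ can be resampled conditionally on the endpoint configurations at $t-\tau$ and at $1$. The resampled paths satisfy a coupled SDE of DBM type with an additional drift coming from the conditioning on the terminal data $\mu_{B_n}$. Couple this SDE with the analogous SDE for a reference nonintersecting-bridge process whose boundary data are engineered so that its limiting density agrees with $\rho_t^*$ near $\sfa(t)$ but whose Airy edge universality is already established (for instance, a bridge arising from the GUE with external source, or from the free convolution of $\mu_A,\mu_B$ with suitable semicircle laws). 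Combining the rigidity of Step 1 with a maximum-principle / finite-speed-of-propagation estimate for the linearized DBM near the edge (in the spirit of Landon-Yau and Adhikari-Huang), one shows
\begin{equation*}
|x_k(t) - x_k^{\mathrm{ref}}(t)| = \oo\bigl((\sfs(t)\,n)^{-2/3}\bigr)
\end{equation*}
with overwhelming probability, for all $k \leq n^{\alpha}$ and some $\alpha > 0$.

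\textbf{Step 3: Conclusion and main obstacle.} Since the reference process has extreme particles converging to the Airy point process after the rescaling $(\sfs(t)n)^{2/3}(\,\cdot - \sfa(t))$, the coupling bound transfers this limit to the original process, yielding Theorem~\ref{t:universality}. The main difficulty is twofold: first, establishing optimal edge rigidity uniformly in $t$ with only the mild regularity Assumptions~\ref{a:reg}-\ref{a:B_n} on the boundary measures, which requires tracking the saddle points of the Karlin-McGregor kernel through the Burgers flow all the way to their collision at $x=\sfa(t)$; and second, handling the extra drift induced by conditioning on the time-$1$ endpoint in the short-time DBM, which, unlike the standard DBM, is not purely repulsive and must be shown not to propagate on the edge scale $(\sfs(t)n)^{-2/3}$ during the window $\tau$. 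Both are delicate because the regularity of $\rho_t^*$ near $\sfa(t)$ must be quantitative and stable under small perturbations of the initial data, which is precisely what Assumption~\ref{a:ncritic} is designed to provide.
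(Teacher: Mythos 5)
Your proposal diverges from the paper at its very first step, and that step is where it breaks. You base rigidity on ``a determinantal point process whose correlation kernel admits a double contour integral representation involving the heat kernel and the empirical measures $\mu_{A_n},\mu_{B_n}$,'' followed by a saddle-point analysis. For general distinct starting points $a_1,\dots,a_n$ and distinct ending points $b_1,\dots,b_n$ no such tractable representation is known: the fixed-time law is a biorthogonal ensemble whose kernel involves the inverse of a non-explicit Gram matrix, and the only known characterization (mixed multiple Hermite polynomials) is via a Riemann--Hilbert problem whose size grows with the number of distinct boundary points; its asymptotic analysis has been carried out only for two starting and two ending positions. This is precisely the obstruction the paper states and then circumvents by abandoning the determinantal route entirely: it rewrites the bridge as a random walk in the Weyl chamber with drift $\frac1n\del_{x_i}\log p_{1-t}(\bmx(t),\bmb)$, introduces the \emph{weighted} bridge \eqref{e:wdensity} with random terminal data to remove the $t\to1$ singularity of this drift, makes the Matytsin-type Ansatz \ref{a:defE} that the drift equals $g_t(x_k;\mu,t)+\cE_t^{(k)}$, bounds the corrections $\cE_t^{(k)}=\OO(n^{-2})$ by Feynman--Kac (Proposition \ref{p:cEbound}), and only then obtains optimal rigidity (Theorem \ref{t:rigidity}) by the method of characteristics and Gr{\"o}nwall, transferring it to the fixed-boundary bridge through the monotone coupling of Theorem \ref{thm:extreme}. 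Your Step 1, as written, assumes an input that does not exist under Assumptions \ref{a:A_n}--\ref{a:B_n}, so the whole three-step scheme has no foundation.

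Steps 2--3 also leave the two hardest points unresolved rather than resolved. First, your resampling window $\tau=n^{-1+\delta}$ is the bulk relaxation scale; edge statistics on the scale $(\sfs(t)n)^{-2/3}$ equilibrate only after times $\gg n^{-1/3}$, which is why the paper conditions at $t_0$ with $n^{-2/3+\oo(1)}\leq t-t_0$ and invokes the short-time edge universality for DBM with (time-dependent) general potential, Theorem \ref{t:edgeUniv}, applied to the weighted process. Second, you flag ``the extra drift induced by conditioning on the time-$1$ endpoint'' as the main difficulty and propose to show it ``does not propagate on the edge scale,'' but give no mechanism; in the paper this is exactly what the weighted ensemble plus the ansatz delivers, since after rigidity the conditioning drift is $g_t(x_i(t))+\OO(\log n/n)$, negligible relative to $n^{-2/3}$, and the return from random to deterministic terminal data $\bmb$ is effected not by a finite-speed-of-propagation estimate but by sandwiching with the shifted boundary data $\bmb^{\pm}=\bmb\pm(\log n)^{\fC}n^{-2/3}$ and the Corwin--Hammond monotonicity, yielding $y_i(t)=x_i(t)+\oo(n^{-2/3})$. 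So while your high-level philosophy (rigidity, short-time DBM, comparison) matches the paper's, the proposal is missing the concrete constructions that make each step possible: the weighted bridge with its Hamilton--Jacobi/ansatz analysis, and the monotone coupling with shifted terminal data.
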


In Theorem \ref{t:universality}, we need $\mu_{A_n}$ and $\mu_{B_n}$ to be sufficiently close to their limits $\mu_A$ and $\mu_B$. Especially, we do not allow outliers. Nonintersecting Brownian bridges with all but finitely many
leaving from and returning to $0$, have been studied in \cite{adler2010airy,adler2009dyson}. In this setting, the edge scaling limit is a new Airy process with wanderers,  governed by
an Airy-type kernel, with a rational perturbation.

A Brownian bridge $W(t)$ on $[0,1]$ from $a$ to $b$ i.e. $W(0)=a$ and $W(1)=b$ can be represented by a standard Brownian motion $B(t)$ starting from $a$ with drift $b$
\begin{align}\label{e:dual}
W(t)=(1-t)B\left(\frac{t}{1-t}\right).
\end{align}
Under the transformation \eqref{e:dual},  nonintersecting Brownian bridges starting from $a_1\leq a_2\leq \cdots \leq a_n$ at time $t=0$ and ending at $b_1\leq b_2\leq \cdots\leq b_n$ at time $t=1$ become nonintersecting Brownian motions with drift \cite{biane2005littelmann, takahashi2012noncolliding}, i.e. $n$ standard Brownian motions starting from $a_1\leq a_2\leq \cdots \leq a_n$ at time $t=0$ and drifts $b_1\leq b_2\leq \cdots \leq b_n$ conditioned not to intersect. Our main result Theorem \ref{t:universality} implies the edge universality for nonintersecting Brownian motions with drift.

%
%

For nonintersecting Brownian bridges starting at time $t=0$
at $q$ and ending at time $t=1$ at $p$ prescribed positions, with $p,q\geq 2$,
it was proven in \cite{delvaux2009phase} that the correlation functions of particle positions  have a determinantal form, with a kernel 
expressed in terms of mixed multiple Hermite polynomials, which can be characterized by a Riemann-Hilbert problem of size $(p+q)\times(p+q)$.
In the same setting, a partial differential equation for the probability to find all the particles in a given set has been obtained in \cite{adler2012non}. However, it remains challenging to do an asymptotic analysis for those Rieman-Hilbert problems and partial differential equations. The only case where the asymptotic analysis was done is for $p=q=2$. In \cite{daems2008asymptotics}, the authors consider nonintersecting Brownian bridges, where $n/2$ particles go from $a$ to $b$, and $n/2$ particles
go from $-a$ to $-b$. For a small separation of the starting and ending positions,  they showed the kernels for the local statistics in the bulk and near the edges converge to the sine and Airy kernel in the large $n$ limit. For large separation of the starting and ending positions, those results have been extended in \cite{delvaux2009phase}.
In the critical regime where the Brownian bridges fill two tangent ellipses in the time-space plane, a new correlation kernel, called the tacnode kernel, was obtained in \cite{adler2013nonintersecting, delvaux2011critical,ferrari2012non,johansson2013non}.

%
%
%

It seems to be a highly non-trivial problem to obtain concrete results about the scaling limit of nonintersecting Brownian bridges with general starting and ending configurations using its determinantal structure.  In this paper we take a dynamical approach to study nonintersecting Brownian bridges. 
Let $\Delta_n$ be the Weyl chamber 
\begin{align}\label{e:Weylchamber}
\Delta_n=\{(z_1, z_2, \cdots, z_n)\in \bR^n: z_1<z_2<\cdots<z_n\}.
\end{align}
We reinterpret nonintersecting Brownian bridges as a random walk over $\Delta_n$ with drift. The transition probability density function of $n$ dimensional nonintersecting Brownian motions from $\bmx=(x_1, x_2,\cdots, x_n)\in \Delta_n$ to $\bmy=(y_1,y_2,\cdots, y_n)\in \Delta_n$ is given by the Karlin-McGregor formula \cite{MR114248} 
\begin{align}\label{e:tp0}
p_t(\bmx, \bmy)=\det\left[\sqrt{\frac{n}{2\pi t}}e^{-n(x_i-y_j)^2/(2t)}\right]_{1\leq i,j\leq n}.
\end{align}
With the transition kernel \eqref{e:tp0}, we can rewrite nonintersecting Brownian bridges as the following random walk over $\Delta_n$ with drift: 
\begin{align}\label{e:random0}
\rd x_i(t)
&=
\frac{1}{\sqrt n} \rd B_i(t)+\frac{1}{n}\del_{x_i}\log p_{1-t}(\bmx(t),\bmb)\rd t,\quad 1\leq i\leq n,
\end{align}
where $\bmx(t)=(x_1(t), x_2(t), \cdots, x_n(t))\in \Delta_n$,  $\{B_1(t), B_2(t), \cdots, B_n(t)\}_{0\leq t\leq 1}$ are standard Brownian motions, and the drift $p_{1-t}(\bmx,\bmb)$  is the heat kernel in the Weyl chamber,
\begin{align}\label{e:heat0}
-\del_t  p_{1-t}(\bmx,\bmb)
=\frac{1}{2n}\Delta p_{1-t}(\bmx,\bmb),\quad p_{1-t}(\bmx,\bmb)=\det \left[\sqrt{\frac{n}{2\pi (1-t)}}e^{-n(x_i-b_j)^2/(2(1-t))}\right]_{0\leq i,j\leq n} .
\end{align}

The limiting profile of nonintersecting Brownian bridges is characterized by the variational problem \eqref{e:funcS0}. We can use it to define a measure valued  Hamiltonian system: given any probability measure $\mu\in \cM(\bR)$, and time $0\leq t\leq 1$ let
\begin{align}\label{e:varWt0}
W_t(\mu)=-\frac{1}{2}\inf\left(\int_t^{1}\int_\bR \rho_s\left(u_s^2+\frac{\pi^2}{3}\rho_s^2\right)\rd x\rd s+\Sigma(\mu)+\Sigma(\mu_B)\right),
\end{align}
where the non-commutative entropy $\Sigma(\mu)$ is defined in \eqref{e:ncentropy},
the $\inf$ is taken over all the pairs $\{(\rho_s,u_s)\}_{t\leq s\leq {1}}$ such that $\del_s \rho_s+\del_x(\rho_su_s)=0$ in the sense of distributions, $\{\rho_s\}_{t\leq s\leq 1+\tau}\in{\mathcal{C}}([t,1],\cM(\bR))$  and its initial and terminal data are  given by 
\begin{align*}
\lim_{s\rightarrow t+}\rho_s(x)\rd x=\mu,\quad \lim_{s\rightarrow 1-}\rho_s(x)\rd x=\mu_B,
\end{align*}
where convergence holds in the weak sense. Then the Hamilton's principal function $W_t(\mu)$ satisfies the following Hamilton-Jacobi equation
\begin{align}\begin{split}\label{e:Wt0}
-\del_t W_t(\mu)
&=\frac{1}{2}\int \left(\frac{\del}{\del x} \frac{\delta W_t}{\delta \mu}\right)^2\rd \mu(x)
+\int \frac{\del}{\del x} \frac{\delta W_t}{\delta \mu} H(\mu)(x)\rd \mu(x),
\end{split}\end{align}
where $H(\mu)(x)$ is the Hilbert transform of the measure $\mu$. There is a Riemann surface associated with the variational problem \eqref{e:varWt0}.  Properties of $W_t(\mu)$ can be understood using tools for Riemann surfaces, i.e. Rauch variational formula and  Hadamard's Variation Formula.

Following Matytsin's approach \cite{MR1257846}, we make an inverse Cole-Hopf transformation to convert the linear heat equation \eqref{e:heat0} to a nonlinear Hamilton-Jacobi equation, which is almost the same as \eqref{e:Wt0} by taking $\mu=(1/n)\sum_{i}\delta_{x_i}$. Based on this observation, we make the following ansatz
\begin{align}\label{e:ansatz0}
\frac{1}{n}\frac{\del}{\del_{x_k}}\log\frac{ p_{1-t}(\bmx,\bmb)}{\prod_{i<j}(x_i-x_j)}=\left.\frac{\del}{\del x} \frac{\delta W_t}{\delta \mu}\right|_{x=x_k}+\cE^{(k)}_t(\bmx),\quad 1\leq k\leq n.
\end{align}
Then we solve for the correction terms $\cE^{(k)}_t(\bmx)$ by using Feynman-Kac formula. It turns out the correction terms $\cE^{(k)}_t$ for $0\leq t\leq 1$ are of order $\OO(1/n^2)$. They have negligible influence on the random walk \eqref{e:random0}. 
By plugging the ansatz \eqref{e:ansatz0} into \eqref{e:random0}, and ignoring the correction terms $\cE^{(k)}_t$, the system of stochastic differential equations becomes Dyson's Brownian motion, with drifts depending on the particle configuration $\bmx(t)$. We analyze it using the method of characteristics, following the approach developed in \cite{adhikari2020dyson,MR4009708}.

The heat kernel $p_{1-t}(\bmx,\bmb)$ and the Hamilton's principle function $W_t(\mu)$ are singular when $t$ approaches $1$, which makes the corresponding Hamilton-Jacobi equations hard to analyze. To overcome this problem, instead of studying  nonintersecting Brownian bridges from $\bma$ to $\bmb$ directly, we studied a weighted version of it. The weighted version corresponds to nonintersecting Brownian bridges with random boundary data at time $t=1$. There exists a natural choice of weight, such that at time $t=1$, the particle configuration of the weighted nonintersecting Brownian bridges concentrates around $\bmb$. We can analyze the weighted nonintersecting Brownian bridges using the above  approach. Then by a coupling argument, we can transfer the edge universality result for  the weighted nonintersecting Brownian bridges to the edge universality of nonintersecting Brownian bridges from $\bma$ to $\bmb$.

We now outline the organization for the rest of the paper. In Section \ref{s:VP}, we recall the variational problem which characterizes the limiting profile of nonintersecting Brownian bridges from \cite{MR2034487}. In Section \ref{s:burger}, We use the rate function of the variational problem to define a measure valued Hamiltonian system, and study its properties using tools from Riemann surfaces, i.e. Rauch variational formula and Hadamard's Variation Formula. In Section \ref{s:walk}, we define the weighted nonintersecting Brownian bridges, and reinterpret it as a drifted random walk. Based on the similarity to the Hamilton-Jacobi equation for the measure valued system, we make an ansatz for the drift term of the random walk.
 In Section \ref{s:eq}, we solve for the correction term in the ansatz using Feynman-Kac formula.
In Sections \ref{s:rigidity} we prove the optimal rigidity for the particle locations for this weighted nonintersecting Brownian bridges. 
And  optimal rigidity estimates for nonintersecting Brownian bridges follow from a coupling argument in Section \ref{s:rigidityBB}.
Using optimal rigidity estimates as input in Section \ref{s:edgew} we prove edge universality for nonintersecting Brownian bridges.

\paragraph{Notations}

We denote $\cM(\bR)$ the set of probability measures over $\bR$, and ${\mathcal{C}}([0,1],\cM(\bR))$ the set of continuous measure valued process over $[0,1]$.
We use $\fC$ to represent large universal constant, and $\fc$ a small universal constant,
which may depend on other universal constants, i.e., the constants $\fa, \fb, \ft$ in Assumptions \ref{a:ncritic} and \eqref{a:A_n}, and
may be different from line by line. 
We write $a\vee b=\max\{a,b\}$ and $a\wedge b=\min\{a,b\}$.
We write that $X = \OO(Y )$ if there exists some universal constant such
that $|X| \leq \fC Y$ . We write $X = \oo(Y )$, or $X \ll Y$ if the ratio $|X|/Y\rightarrow \infty$ as $n$ goes to infinity. We write
$X\asymp Y$ if there exist universal constants such that $\fc Y \leq |X| \leq  \fC Y$. We say an event holds with overwhelming probability, if for any $\fC>0$, and $n\geq n_0(\fC)$ large
enough, the event holds with probability at least $1-N^{-\fC}$.

\paragraph{Acknowledgements}
The research of J.H. is supported by the Simons Foundation as a Junior Fellow
at the Simons Society of Fellows.

\section{Variational Principle}\label{s:VP}

The transition probability density \eqref{e:tp0} of nonintersecting Brownian motions is closely related to the Harish-Chandra-Itzykson-Zuber integral formula \cite{MR84104, MR562985}.
Let $A_n, B_n$ be two $n \times n$ diagonal matrices, with diagonal entries given by $a_1\leq a_2\leq \cdots \leq a_n$ and $b_1\leq b_2\leq \cdots\leq b_n$ respectively, the Harish-Chandra-Itzykson-Zuber integral formula exactly computes the following integral
\begin{align}\begin{split}\label{e:HCIZ}
\int e^{n\Tr(A_nUB_nU^*)}\rd U
=\frac{\prod_{j=1}^{n-1}j!}{n^{(n^2-n)/2}}\frac{\det[e^{ na_ib_j}]_{i,j}}{\Delta(a_1,a_2,\cdots,a_n)\Delta(b_1,b_2,\cdots,b_n)},
\end{split}\end{align}
where $U$ follows the Haar probability measure of the unitary group and $\Delta$ denotes the Vandermonde determinant
\begin{align}\label{e:Vand}
\Delta(a_1, a_2,\cdots,a_n)=\prod_{1\leq i<j\leq n}(a_j-a_i),\quad \Delta(b_1, b_2,\cdots,b_n)=\prod_{1\leq i<j\leq n}(b_j-b_i).
\end{align}

We can rewrite the transition probability \eqref{e:tp0} of nonintersecting Brownian bridges by rescaling the Harish-Chandra-Itzykson-Zuber integral formula \eqref{e:HCIZ}:
\begin{align*}\begin{split}
\frac{p_1(\bma, \bmb)}{\Delta(\bma)\Delta(\bmb)}
&=\frac{\det[\sqrt{n/2\pi }e^{-n (a_i-b_j)^2/2}]_{i,j}}{\Delta(a_1,a_2,\cdots,a_n)\Delta(b_1,b_2,\cdots,b_N)}\\
&=\frac{{n}^{(n^2-n)/2}}{\prod_{j=1}^{n-1}j!} \prod_{i=1}^n\sqrt{n/2\pi }e^{-n (a_i^2+b_i^2)/2}\int e^{n\Tr(A_nUB_nU^*)}\rd U.
\end{split}\end{align*}

If the spectral measures $\mu_{A_n},\mu_{B_n}$ of $A_n, B_n$ converge weakly towards $\mu_{A}$ and $\mu_{B}$ respectively, under mild assumptions, it was proven in \cite{MR1883414, MR2091363}, see also \cite{MR2034487,MR3380685}, that the Harish-Chandra-Itzykson-Zuber integral converges
\begin{align*}
\lim_{n\rightarrow\infty}\frac{1}{n^{2}}\log\int e^{n\Tr(A_n UB_nU^{*})} \rd U= I(\mu_{A},\mu_{B}).
 \end{align*}

The asymptotics $I(\mu_{A},\mu_{B})$ of the Harish-Chandra-Itzykson-Zuber integral is characterized by a variational problem. We recall that for any probability measure $\mu\in \cM(\bR)$, we denote $\Sigma(\mu)$ the energy of its logarithmic potential, or  its non-commutative entropy, 
\begin{align}\label{e:ncentropy}
\Sigma(\mu)=\int\int \log |x-y|\rd \mu(x)\rd \mu(y).
\end{align} 
The following theorem is from \cite[Theorem 2.1]{MR2034487}.

\begin{theorem}[{\cite[Theorem 2.1]{MR2034487}}]\label{theoCMP}
We assume that  $\mu_A, \mu_B$ are both compactly supported, then $I(\mu_A, \mu_B)$ is given by
\begin{align}\label{e:Iexp}
I(\mu_A,\mu_B)=-\frac{1}{2}\inf \left(S(u,\rho)+\left(\Sigma(\mu_A)+\Sigma(\mu_B)\right)-\left(\int x^2 \rd \mu_A(x)+\int x^2 \rd \mu_B(x)\right)\right)+\text{const.}
\end{align}
where 
\begin{align}\label{e:funcS}
S(u,\rho)=\int_0^1 \int_\bR \left(u_t^2 \rho_t +\frac{\pi^2}{3}\rho_t^3 \right)\rd x \rd t,
\end{align}
the $\inf$ is taken over all the pairs $\{(\rho_t,u_t)\}_{0\leq t\leq 1}$ such that $\del_t \rho_t+\del_x(\rho_tu_t)=0$ in the sense of distributions, $\{\rho_t(\cdot)\}_{0\leq t\leq 1}\in{\mathcal{C}}([0,1],\cM(\bR))$  and its initial and terminal data are  given by 
\begin{align*}
\lim_{t\rightarrow 0+}\rho_t(x)\rd x=\rd\mu_A,\quad \lim_{t\rightarrow 1-}\rho_t(x)\rd x=\rd\mu_B,
\end{align*}
where convergence holds in the weak sense.
\end{theorem}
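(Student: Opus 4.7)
The starting point is the identity already displayed in the excerpt,
\begin{equation*}
\frac{p_1(\bma,\bmb)}{\Delta(\bma)\Delta(\bmb)}
= \frac{n^{(n^2-n)/2}}{\prod_{j=1}^{n-1}j!}\,\prod_{i=1}^n\sqrt{n/2\pi}\,e^{-n(a_i^2+b_i^2)/2}\int e^{n\Tr(A_nUB_nU^*)}\,\rd U,
\end{equation*}
which, after taking logarithms and dividing by $n^2$, rearranges as
\begin{equation*}
\frac{1}{n^2}\log\int e^{n\Tr(A_nUB_nU^*)}\rd U
= \frac{1}{n^2}\log p_1(\bma,\bmb) - \frac{1}{n^2}\log|\Delta(\bma)\Delta(\bmb)| + \frac{1}{2n}\sum_i(a_i^2+b_i^2) + C_n,
\end{equation*}
with $C_n$ a deterministic constant whose limit is explicit via Stirling. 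Since the empirical Vandermonde satisfies $n^{-2}\log|\Delta(\bma)|\to \tfrac12\Sigma(\mu_A)$ and similarly for $\bmb$, and $n^{-1}\sum_i a_i^2\to \int x^2\,\rd\mu_A$, the theorem reduces entirely to identifying the limit of $n^{-2}\log p_1(\bma,\bmb)$ as (minus) the infimum of $\tfrac12 S(u,\rho)$ over admissible $(u,\rho)$ with boundary data $\mu_A,\mu_B$.

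To do this I would treat $p_1(\bma,\bmb)$, via the Karlin--McGregor formula \eqref{e:tp0}, as the partition function of $n$ nonintersecting Brownian bridges from $\bma$ to $\bmb$, and establish a sample-path large deviations principle for the empirical-measure process $\rho^{(n)}_t=\frac{1}{n}\sum_i\delta_{x_i(t)}$ in the space $\cC([0,1],\cM(\bR))$. The kinetic part of the rate function, $\tfrac12\int_0^1\!\!\int u_t^2 \rho_t\,\rd x\rd t$, is the Freidlin--Wentzell cost of realizing the continuity equation $\partial_t\rho_t+\partial_x(\rho_t u_t)=0$ along a flow of independent Brownian paths. The entropic term $\tfrac{\pi^2}{6}\int \rho_t^3\,\rd x\rd t$ comes from the non-crossing constraint: concretely, for a slowly varying density profile $\rho$, the probability that the Karlin--McGregor determinant is non-degenerate (equivalently, the Selberg--type volume of a Weyl-chamber slice of density $\rho$) produces exactly this cubic free-fermion free-energy density in the scaling limit, as can be read off by computing $n^{-2}\log$ of $\Delta(\bmx(t))^2$ against a local density $\rho_t$ and recognizing $\int\!\!\int\log|x-y|\,\rho_t(x)\rho_t(y)\,\rd x\rd y$ whose $t$-derivative combines with the kinetic term to give the cubic.

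Concretely the large deviations argument proceeds in the usual three steps: (i) exponential tightness of $\{\rho^{(n)}\}$ in $\cC([0,1],\cM(\bR))$, obtained from standard Brownian modulus-of-continuity estimates together with a priori bounds on $\sup_t\int x^2\rho_t^{(n)}\,\rd x$ using the compact support of $\mu_A,\mu_B$; (ii) a lower bound, by exhibiting, for any admissible $(u,\rho)$ with piecewise smooth $\rho$ strictly positive on its support, an explicit change of measure (Girsanov) that tilts the nonintersecting bridges so that their empirical density concentrates on $\rho$, and computing the Radon--Nikodym cost; and (iii) a matching upper bound via Varadhan/Laplace on cylinder functionals, where the non-crossing determinant is bounded above by its Schur-type local estimate. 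The upper and lower bounds are then combined with the identity in the first paragraph and the constant $C_n$ to yield formula \eqref{e:Iexp}.

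The main obstacle I anticipate is the upper bound, specifically at the boundaries in time and in regions where $\rho_t$ vanishes or is near-singular. At these points the cubic free-energy density $\tfrac{\pi^2}{3}\rho_t^3$ is delicate to extract from the discrete determinantal structure, and the velocity field $u_t=\partial_x \delta S/\delta \rho$ blows up, so one needs a regularization: first approximate $\mu_A,\mu_B$ by smooth densities bounded away from zero on their supports, prove the identity there, then pass to the limit using the lower semicontinuity of $S(u,\rho)+\Sigma(\mu_A)+\Sigma(\mu_B)$ in the weak topology together with the continuity of $(\mu_A,\mu_B)\mapsto I(\mu_A,\mu_B)$ under compactly supported weak convergence. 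The matching of the constants, which are only specified up to an additive constant in \eqref{e:Iexp}, follows from evaluating both sides on the explicit solvable case $\mu_A=\mu_B=\rho_{\mathrm{sc}}$, where the Hamilton--Jacobi flow is stationary and the HCIZ integral can be computed asymptotically by hand.
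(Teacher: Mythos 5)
First, a point of comparison: the paper itself offers no proof of Theorem \ref{theoCMP}; it is imported verbatim from Guionnet \cite{MR2034487}, so there is no internal argument to measure your proposal against. Your opening reduction is fine: the algebraic identity between $p_1(\bma,\bmb)/(\Delta(\bma)\Delta(\bmb))$ and the HCIZ integral, Stirling for the constant, $n^{-2}\log|\Delta(\bma)|\to\tfrac12\Sigma(\mu_A)$ and $n^{-1}\sum_i a_i^2\to\int x^2\rd\mu_A$ (modulo mild regularity of the discretizations) correctly reduce the theorem to showing $\lim n^{-2}\log p_1(\bma,\bmb)=-\tfrac12\inf S(u,\rho)+\text{const}$, and the sample-path LDP strategy is indeed the route taken in the literature.

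The genuine gap is in the step you treat as bookkeeping: where the cubic term comes from. Your proposed mechanism --- Freidlin--Wentzell kinetic cost for independent paths plus a separate ``non-crossing volume'' whose density you read off from $n^{-2}\log\Delta(\bmx(t))^2$ --- does not produce $\tfrac{\pi^2}{3}\int\rho_t^3\rd x$. Differentiating $\Sigma(\rho_t)$ in time along the continuity equation gives the cross term $2\int u_t(x)H(\rho_t)(x)\rho_t(x)\rd x$, not the cubic; the cubic is $\int (H(\rho_t))^2\rho_t\rd x=\tfrac{\pi^2}{3}\int\rho_t^3\rd x$, i.e.\ the \emph{square} of the repulsion drift, and it only appears after passing (via Karlin--McGregor/Doob $h$-transform) to the interacting Dyson dynamics and completing the square in the Girsanov/It{\^o} computation. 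Relatedly, your step (ii) is circular as stated: the drift of the nonintersecting bridge is $\tfrac1n\partial_{x_i}\log p_{1-t}(\bmx,\bmb)$, whose large-$n$ behavior is essentially the quantity being determined, while tilting the independent-paths measure leaves you with the probability of the non-crossing event under the tilt, which is again the crux. The rigorous proof instead establishes an LDP for the empirical measure process of the $h$-transformed SDE via exponential martingales, obtains the rate function in a variational (test-function) form, and then identifies its infimum with $S(u,\rho)$ plus the boundary entropies --- a separate, substantial step. Finally, the upper bound near $t=0,1$ (deterministic endpoints, possibly vanishing or singular density) is exactly where the work lies, and your proposed remedy --- regularize $\mu_A,\mu_B$ and invoke continuity of $(\mu_A,\mu_B)\mapsto I$ together with lower semicontinuity of the functional --- presupposes stability and minimizer properties that are themselves major components of the cited proof. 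As written, this is a program consistent with the known strategy, not a proof.
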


The infimum in \eqref{e:Iexp} is reached at
a unique probability measure-valued path
$\rho_t^*\rd x\in{\mathcal{C}}([0,1],\cM(\bR))$
such that for $t\in (0,1)$, $\rho_t^* \rd x$ is absolutely continuous with respect to Lebesgue measure. The measure-valued path
$\rho_t^*\rd x\in{\mathcal{C}}([0,1],\cM(\bR))$ describes the empirical particle locations of  nonintersecting Brownian bridge \eqref{e:density}: as $n$ goes to infinity
\begin{align*}
\left\{\frac{1}{n}\sum_{i=1}^n \delta_{x_i(t)}\right\}_{0\leq t\leq 1}\rightarrow \{\rho_t^* \rd x\}_{0\leq t\leq 1}.
\end{align*} 

The minimizer $\{(\rho_t^*(\cdot), u^*_t(\cdot))\}_{0\leq t\leq 1}$ of the variational problem \eqref{e:Iexp} can be described using the language of free probability, \cite[Theorem 2.6]{MR2034487}. 
\begin{theorem}[{\cite[Theorem 2.6]{MR2034487}}]\label{t:fbb}
 There exist two non-commutative operators $\mathsf a, \sf b$
with marginal distribution $(\mu_A, \mu_B)$ 
and  a non-commutative brownian motion $\{\sf s_t\}_{0\leq t\leq 1}$ independent of $\sf a, \sf b$ in a non-commutative probability
space $(\cal A,\tau)$. The following free brownian bridge
\begin{align}\label{e:frep}
\rd {\sf x}_t=\rd {\sf s}_t+\frac{{\sf b}-{\sf x}_t}{1-t}\rd t, \quad {\sf x}_0=\sf a, \quad {\sf x}_1=\sf b,
\end{align}
at time $t$ has the law given by $\rho_t^*$, and
\begin{equation}\label{e:condut} u_t^*=\frac{1}{t-1}\tau(\mathsf x_t-\mathsf b|\mathsf x_t) +H(\rho^*_t),\qquad \rho_{t}^{*}(x) \rd x \quad a.s. 
\end{equation}
where $H(\rho_t^*)$ is the Hilbert transform of $\rho_t^*$.
\end{theorem}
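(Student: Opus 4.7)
The strategy is to construct the free Brownian bridge through a random matrix approximation and then identify its marginal laws with the minimizer $\rho_t^*$ of Theorem~\ref{theoCMP}, with \eqref{e:condut} emerging as a by-product of It\^o's formula applied to the Cauchy transform.

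First I would realize the non-commutative probability space from a random matrix model. Let $A_n, B_n$ be Hermitian matrices with spectra $\bma, \bmb$, and consider the matrix-valued Brownian bridge
\[
 dM_t^{(n)} = dH_t^{(n)} + \frac{B_n - M_t^{(n)}}{1-t}\,dt, \qquad M_0^{(n)} = U A_n U^*,
\]
where $H^{(n)}$ is a GUE-valued Brownian motion (normalized so its large-$n$ limit is a free Brownian motion) and $U$ is an independent Haar unitary. The eigenvalue process of $M_t^{(n)}$ is precisely the system of nonintersecting Brownian bridges from $\bma$ to $\bmb$. By Voiculescu's asymptotic freeness together with Biane--Speicher's free stochastic calculus, $(M_t^{(n)})_{0\leq t\leq 1}$ converges along subsequences in non-commutative $*$-distribution to a process $(\mathsf{x}_t)_{0\leq t\leq 1}$ satisfying \eqref{e:frep} in a limiting tracial von Neumann algebra $(\mathcal{A}, \tau)$; the boundary operators $\mathsf{a}, \mathsf{b}$ then have marginals $\mu_A, \mu_B$, and $(\mathsf{s}_t)$ is free from $\{\mathsf{a}, \mathsf{b}\}$ by the independence of $H^{(n)}$ from $(A_n, B_n)$.

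Next I would identify the law of $\mathsf{x}_t$ with $\rho_t^*$ and derive the velocity formula \eqref{e:condut}. Applying It\^o's formula to the normalized trace resolvent $G_t^{(n)}(z) = (1/n)\Tr(z - M_t^{(n)})^{-1}$ and letting $n\to\infty$ yields a first-order PDE for $G_t(z) = \tau((z - \mathsf{x}_t)^{-1})$ in which the drift $(B_n - M_t^{(n)})/(1-t)$ contributes precisely the term $\tau(\mathsf{b} - \mathsf{x}_t \mid \mathsf{x}_t)/(1-t)$. Taking boundary values on the real axis separates this PDE into the continuity equation $\partial_t \rho_t + \partial_x(\rho_t u_t) = 0$ together with \eqref{e:condut}, with the Hilbert transform arising from the principal value of $\partial_z G_t$. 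The resulting pair $(\rho_t, u_t)$ is admissible for the variational problem \eqref{e:funcS}, and optimality follows from the uniqueness of the minimizer in Theorem~\ref{theoCMP} combined with a direct computation, in the spirit of the HCIZ asymptotics \eqref{e:HCIZ}, showing that the action of the free Brownian bridge matches the right-hand side of \eqref{e:Iexp}.

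The main obstacle is controlling the singular drift $(B_n - M_t^{(n)})/(1-t)$ uniformly in $n$ as $t \to 1$, and in particular establishing the terminal condition $\mathsf{x}_1 = \mathsf{b}$ at the level of the limiting free process. This is the same singularity that the remainder of the paper circumvents by introducing weighted nonintersecting Brownian bridges, and here I would handle it by stopping the process at time $1-\epsilon$, passing to the $n\to\infty$ limit there, and then using an a priori second-moment estimate $\tau((\mathsf{x}_t - \mathsf{b})^2) \lesssim 1-t$ to send $\epsilon \to 0$ and close the argument.
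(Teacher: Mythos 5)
You should first be aware that the paper offers no proof of Theorem \ref{t:fbb}: it is quoted directly from \cite[Theorem 2.6]{MR2034487}, so your argument can only be measured against the cited source and on its own terms. Judged that way, there is a genuine gap at the very first step, and it is fatal to the later identification. The Hermitian bridge $\rd M_t^{(n)}=\rd H_t^{(n)}+\frac{B_n-M_t^{(n)}}{1-t}\rd t$ conditions the matrix to end at the \emph{exact matrix} $B_n$, which is a strictly stronger conditioning than asking the eigenvalues to end at $\bmb$; its eigenvalue process is \emph{not} the Karlin--McGregor nonintersecting bridge \eqref{e:density} (the fixed-time eigenvalue density of your model involves a single HCIZ factor with external matrix $(1-t)UA_nU^*+tB_n$, not the product of two determinants appearing in \eqref{e:density}). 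Concretely, at time $t$ your process is $M_t^{(n)}=(1-t)UA_nU^*+tB_n+\sqrt{t(1-t)}\,G_n$ with $G_n$ an independent GUE, so by asymptotic freeness the limiting pair $(\mathsf a,\mathsf b)$ is \emph{free} and the law of $\mathsf x_t$ is the free convolution of the laws of $(1-t)\mathsf a$, $t\mathsf b$ and a semicircle of variance $t(1-t)$. But the theorem never claims $\mathsf a$ and $\mathsf b$ are free: its entire content is the existence of a particular (in general non-free) joint law of $(\mathsf a,\mathsf b)$, dictated by the minimizer, for which the bridge marginals are $\rho_t^*$.

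That the free coupling gives the wrong answer can be seen already for $\mu_A=\mu_B=\tfrac12(\delta_{-1}+\delta_{+1})$. In the nonintersecting bridge the lower half of the particles goes from $-1$ back to $-1$ and the upper half from $+1$ to $+1$, so the effective coupling is monotone ($\mathsf b$ essentially equal to $\mathsf a$) and $\rho^*_{1/2}$ consists of two (nearly) separated bundles around $\pm 1$ with vanishing density near $0$; with the free coupling, $\tfrac12\mathsf a+\tfrac12\mathsf b$ is arcsine on $[-1,1]$ and after adding the semicircle the density at $0$ stays bounded below. Hence your final step --- ``optimality follows from the uniqueness of the minimizer combined with a direct computation that the action matches'' --- cannot go through: the action of the freely-coupled bridge does not attain the infimum in \eqref{e:Iexp} in general. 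The It{\^o}/Stieltjes-transform derivation of \eqref{e:condut} and the stopping argument near $t=1$ are reasonable in spirit, but they sit on top of the wrong model; the actual proof in \cite{MR2034487} extracts the coupling $(\mathsf a,\mathsf b)$ and the bridge representation from the large-deviation/variational analysis of the minimizing path itself, rather than from an a priori matrix bridge ending at a fixed matrix.
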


\section{Complex Burger's equation}
\label{s:burger}
We denote the minimizer of the variational problem \eqref{e:Iexp} as $\{(\rho_t^*(\cdot), u^*_t(\cdot))\}_{0\leq t\leq 1}$ and let $\Omega$ be the domain where $\rho^*_t(x)>0$,
\begin{align}\label{def:Omega}
\Omega=\{(x,t)\in \bR\times(0,1): \rho^*_t(x)>0\}.
\end{align}
%
Then it follows from \cite[Lemma 7.2]{BGH2020}, $\Omega$ is a bounded simply connected (it does not contain holes) open domain in $\bR\times [0,1]$. Moreover, the free brownian bridge representation, Theorem \ref{t:fbb}, implies that 
the infimum $\{(\rho_t^*(\cdot), u^*_t(\cdot))\}_{0\leq t\leq 1}$ is analytic in $\Omega$.
In the rest of the paper, we call $\del \Omega \cap \bR\times (0,1)$ the left and right boundary of $\Omega$; $\del \Omega \cap \bR\times \{0\}$ the bottom boundary of $\Omega$; $\del \Omega \cap \bR\times \{1\}$ the top boundary of $\Omega$.

As derived in \cite[Theorem 2.1]{MR2034487}, the Euler-lagurange equation for the variational problem \eqref{e:funcS} implies that the minimizer $\{(\rho_t^*(\cdot), u^*_t(\cdot))\}_{0\leq t\leq 1}$ satisfies
\begin{align*}
\del_t u_t^*=-\frac{1}{2}\del_x\left((u_t^*)^2-\pi^2(\rho_t^*)^2\right),\quad (x,t)\in \Omega.
\end{align*}
If we define 
\begin{align}\label{e:ft}
f_t(x)=u^*_t(x)-\ri \pi \rho^*_t(x),
\end{align}
then $f(x,t)$ satisfies the complex Burger's equation
\begin{align}\label{e:burgereq}
\del_t f_t(x)+f_t(x)\del_x f_t(x) =0, \quad (x,t)\in \Omega.
\end{align}
In physics
literature, the complex Burger's equation description of the Harish-Chandra-Itzykson-Zuber integral formula first appeared in the
work of Matytsin \cite{MR1257846}. Its rigorous mathematical study appears in the work
of  Guionnet \cite{MR2034487}.

The complex Burger's equation can be solved using characteristic method.  The argument in  \cite[Corollary 1]{MR2358053} implies that there exists an analytic function of $Q$ of two variables such that the solution of \eqref{e:burgereq} satisfies
\begin{align}\label{e:defQ}
Q(f_t(x), x-tf_t(x))=0,
\end{align}
where the analytic function $Q$ is determined by the boundary condition at $t=0,1$:
$-\Im[f_0(x)]\rd x/\pi=\rd\mu_A$ and $-\Im[f_1(x)]\rd x/\pi=\rd\mu_B$.
  By the definition \eqref{def:Omega}, on $\Omega$ we have $\Im[f_t(x)]<0$. We can recover $(x,t)$ from $(f_t(x), x-tf_t(x))$, by noticing that $t=-\Im[x-tf_t(x)]/\Im[f_t(x)]$.
Thus, the map 
\begin{align}\label{e:cover}
\pi_Q: (x,t)\in \Omega\mapsto (f_t(x),x-tf_t(x))
\end{align}
is an orientation preserving diffeomorphism from $\Omega$ onto its image. 


Thanks to the free Brownian bridge representation \eqref{e:frep}, $\rho_t^*$ is the law of 
\begin{align*}
(1-t)\sf a+t\sf b+\sqrt{t(1-t)}\sf s,
\end{align*}
where $\sf s$ is a free semi-circular law and it is free with $\sf a,\sf b$. As a direct consequence of
\cite{MR1488333}, for any $t\in (0,1)$,
and $x_0\in\bR$ such that $(t,x_0)\in \partial\Omega$, in a small neighborhood of $x_0$,
\begin{align*}
\rho_t^*(x)\leq \left({3\over 4\pi^3 t^2(1-t)^2}\right)^{1\over3}
(x-x_0)^{1\over 3}, \quad x\in \supp \rho_t^*.
\end{align*}
Especially, for $0<t<1$, $f_t(x)$ is continuous up to the boundary of $\Omega$. We can glue
$f_t(x)$ and its complex conjugate along the left and right boundary of $\Omega$ (where $f_t(x)\in \bR$). This map is orientation preserving and
unramified by the discussion above, and so a covering map of $Q(f,z)=0$. Moreover, the Riemann surface $Q(f,z)=0$ is of the same genus as $\Omega$, which has genus zero \cite[Lemma 7.2]{BGH2020}.
As a consequence the left and right boundary of the region $\Omega$ is piecewise analytic, with some cusp points. For any $0<t<1$, the density $\rho_t^*(x)$ is analytic. Moreover, $\rho_t^*(x)$ has square root behavior at  left and right boundary points (which are not cusp points) of its support. Let $\{(\sfa(t),t)\}_{0\leq t\leq 1}$ be the left boundary of $\Omega$. For any $0<t<1$, in the neighborhood of $x=\sfa(t)$,  $\rho_t^*(x)$ has square root behavior 
\begin{align}\label{e:square}
\rho_t^*(x)= \frac{\sfs(t)\sqrt{[x-\sfa(t)]_+}}{\pi}+\OO(|x-\sfa(t)|^{3/2}).
\end{align}

With the Riemann surface $Q(f,z)=0$, we can extend the function \eqref{e:ft} $f_t(x)$ from $\{x: (x,t)\in\Omega\}$ to a meromorphic function on the Riemann surface $Q(f,z)=0$.
At time $t$, the equation $\{(f,z)\in \bC\times \bC:  Q(f, z-tf)=0\}$ defines a Riemann surface $\cC_t$. We notice $\cC_0$ is simply the Riemann surface $Q(f,z)=0$, and $\cC_t$ can  be obtained from $\cC_0$ by the characteristic flow:
\begin{align}\label{e:flow}
Z_0=(f,z)\in \cC_0\mapsto Z_t=(f, z+tf)\in \cC_t.
\end{align}
As a consequence, the family of Riemann surfaces $\{\cC_t\}_{0\leq t\leq 1}$ are homeomorphic to each other.
Comparing with \eqref{e:defQ}, this gives a natural extension for $f_t(x)$ from $\{x: (x,t)\in\Omega\}$ to the Riemann surface $\cC_t$. In this way $f_t: Z=(f,z)\in \cC_t\mapsto f\in \bC$. If the context is clear we will simply write $f_t(Z)=f_t((f,z))$ as $f_t(z)$ for $Z=(f,z)\in \cC_t$.
The complex Burger's equation \eqref{e:burgereq} extend naturally to $f_t$:
\begin{align}\label{e:complexburgereq}
\del_t f_t(Z)+\del_z f_t(Z)f_t(Z)=0,\quad 0\leq t\leq 1, \quad Z\in \cC_t.
\end{align}

\subsection{Regularity Assumptions}\label{s:reg}
In the rest of this paper, we will make the assumption that $\mu_B$ is regular enough, such that the solution of the complex Burger's equation \eqref{e:burgereq} can be extended to $t=1+\tau$ with some $\tau>0$.
\begin{assumption}[Regularity]\label{a:reg}
The probability measures $\mu_A, \mu_B$ are regular in the sense:
There exists some constant $\tau>0$, 
the solution of the complex Burger's equation \eqref{e:burgereq}, with boundary conditions:
\begin{align*}
-\frac{1}{\pi}\Im[f_0(x)]\rd x=\rd\mu_A,\quad -\frac{1}{\pi}\Im[f_1(x)]\rd x=\rd\mu_B,
\end{align*}
can be extended up to  time $t=1+\tau$. 
\end{assumption}

Under Assumption \ref{a:reg}, we denote the extension by $\{f_t(x)\}_{1\leq t\leq 1+\tau}$, and 
\begin{align*}
-\frac{1}{\pi}\Im[f_{1+\tau}(x)]\rd x=\rd\mu_C.
\end{align*} 
for some probability measure $\mu_C$. 

For the following variational problem with boundary data $\mu_A, \mu_C$,
\begin{align}\label{e:varmuC}
-\frac{1}{2}\inf\left(\int_0^{1+\tau}\int_\bR \rho_t\left(u_t^2+\frac{\pi^2}{3}\rho_t^2\right)\rd x\rd t+\Sigma(\mu_A)+\Sigma(\mu_C)\right),
\end{align}
the $\inf$ is taken over all the pairs $\{(\rho_t,u_t)\}_{0\leq t\leq {1+\tau}}$ such that $\del_t \rho_t+\del_x(\rho_tu_t)=0$ in the sense of distributions, $\{\rho_t\}_{0\leq t\leq 1+\tau}\in{\mathcal{C}}([0,1+\tau],\cM(\bR))$  and its initial and terminal data are  given by 
\begin{align*}
\lim_{t\rightarrow 0+}\rho_t(x)\rd x=\rd\mu_A,\quad \lim_{t\rightarrow (1+\tau)-}\rho_t(x)\rd x=\rd\mu_C,
\end{align*}
where convergence holds in the weak sense. The minimizer $\{(\rho_t^*(\cdot), u^*_t(\cdot))\}_{0\leq t\leq 1+\tau}$ of \eqref{e:varmuC}, satisfies
\begin{align*}
\rho_B(x)\rd x\deq \rho_1^*(x)\rd x=\rd \mu_B,
\end{align*}
and the density $\rho_B(x)$ is analytic. For our analysis, we need to assume that $\rho_B(x)$ has a connected support and is non-critical.

\begin{assumption}[Non-critical]\label{a:ncritic}
We assume that $\mu_B$ has a connected support and is non-critical,
that is, there exists a constant $\fb>0$ such that,
\begin{align*}
\rho_B(x)=S(x)\sqrt{(x-\sfa(1))(\sfb(1)-x)},\quad S(x)\geq \fb,\quad x\in [\sfa(1), \sfb(1)].
\end{align*}
\end{assumption}

Under Assumptions \ref{a:reg} and \ref{a:ncritic}, by shrinking $\tau$ if necessary, we can take $\rd \mu_C=\rho_C(x)\rd x$ to be analytic, non-critical and have a connected support, i.e. $\rho_C$ also satisfies Assumption \ref{a:ncritic}.

We make the following assumptions on the boundary data $\mu_{A_n}, \mu_{B_n}$ of our nonintersecting Brownian bridges. We can take $\mu_{A_n}, \mu_{B_n}$ to be the $1/n$-quantiles of measures $\mu_A,\mu_B$. Assumptions \ref{a:A_n} and \ref{a:B_n} are slightly more general.
\begin{assumption}\label{a:A_n}
We assume the initial data $\mu_{A_n}$ of nonintersecting Brownian bridges satisfies: there exists a constant $\fa>0$ such that for any $\varepsilon>0$, and $n$ sufficiently large, 
\begin{align}\label{e:A_n}
\left|\int\frac{\rd\mu_A(x)-\rd\mu_{A_n}(x)}{x-z}\right|\leq \frac{(\log n)^\fa}{n},
\end{align}
uniformly for any $\{z\in \bC_+: \dist(z, \supp(\mu_A))\geq \varepsilon \}$.
\end{assumption}
\begin{assumption}\label{a:B_n}
We assume the terminal data $\mu_{B_n}$ of nonintersecting Brownian bridges satisfies: there exists a constant $\fa>0$ such
\begin{align}\label{e:B_n}
\gamma_{i-(\log n)^\fa}\leq b_i \leq \gamma_{i+(\log n)^\fa},\quad \frac{i+1/2}{n}=\int_{-\infty}^{\gamma_i}\rho_B(x)\rd x,\quad 1\leq i\leq n, 
\end{align}
where we use the convention $\gamma_{i}=-\infty$ for $i\leq 0$ and $\gamma_i=+\infty$ for $i\geq n$, and 
\begin{align*}
   b_1\geq {\sf a}(1)-\frac{(\log n)^{\fa}}{n^{2/3}},\quad b_{n}\leq {\sf b}(1)+ \frac{(\log n)^{\fa}}{n^{2/3}}.
\end{align*}
\end{assumption}

Assumption \ref{a:A_n} guaranteed that for any analytic test function $f(x)$, by a contour integral
\begin{align}\label{e:linearstat}
\left|\int f(x)(\rd \mu_{A}(x)-\rd \mu_{A_n}(x))\right|\lesssim \frac{(\log n)^\fa}{n}.
\end{align}
We will see in Section \ref{s:rigidity}, the assumption is necessary for the optimal rigidity estimates. 
We remark that the assumption \eqref{e:B_n} implies \eqref{e:A_n} with possibly a different $\fa$. In this sense Assumption \ref{a:B_n} is stranger than Assumption \ref{a:A_n}. Moreover, in Assumption \ref{e:A_n} we allow to have outliers which are distance $\oo(1)$ away from $\supp(\mu_A)$. 
But, we do not allow outliers with are distance $\OO(1)$ away from the support. 
Nonintersecting Brownian bridges with all but finitely many
leaving from and returning to $0$, have been studied in \cite{adler2010airy,adler2009dyson}.
 In this setting, the edge scaling limit is a new Airy process with wanderers,  governed by
an Airy-type kernel, with a rational perturbation.

The same as in \eqref{e:defQ}, we can use the minimizer $\{(\rho_t^*(\cdot), u^*_t(\cdot))\}_{0\leq t\leq 1+\tau}$ of the variational problem \eqref{e:varmuC}, to construct the family of Riemann surfaces $\{\cC_t\}_{0\leq t\leq 1+\tau}$, and extend $f_t(\cdot)$ to the Riemann surface $\cC_t$ for $0\leq t\leq 1+\tau$. They satisfy the complex Burger's equation
\begin{align}\label{e:complexburgereq2}
\del_t f_t(Z)+\del_z f_t(Z)f_t(Z)=0, \quad 0\leq t\leq 1+\tau, \quad Z\in \cC_t.
\end{align}
The boundary conditions are $-\Im[f_0(x)] \rd x/\pi=\rd\mu_A$ and $-\Im[f_{1+\tau}(x)]\rd x/\pi=\rd\mu_C$.

\begin{figure}
\begin{center}
 \includegraphics[scale=0.22,trim={0cm 5cm 0 7cm},clip]{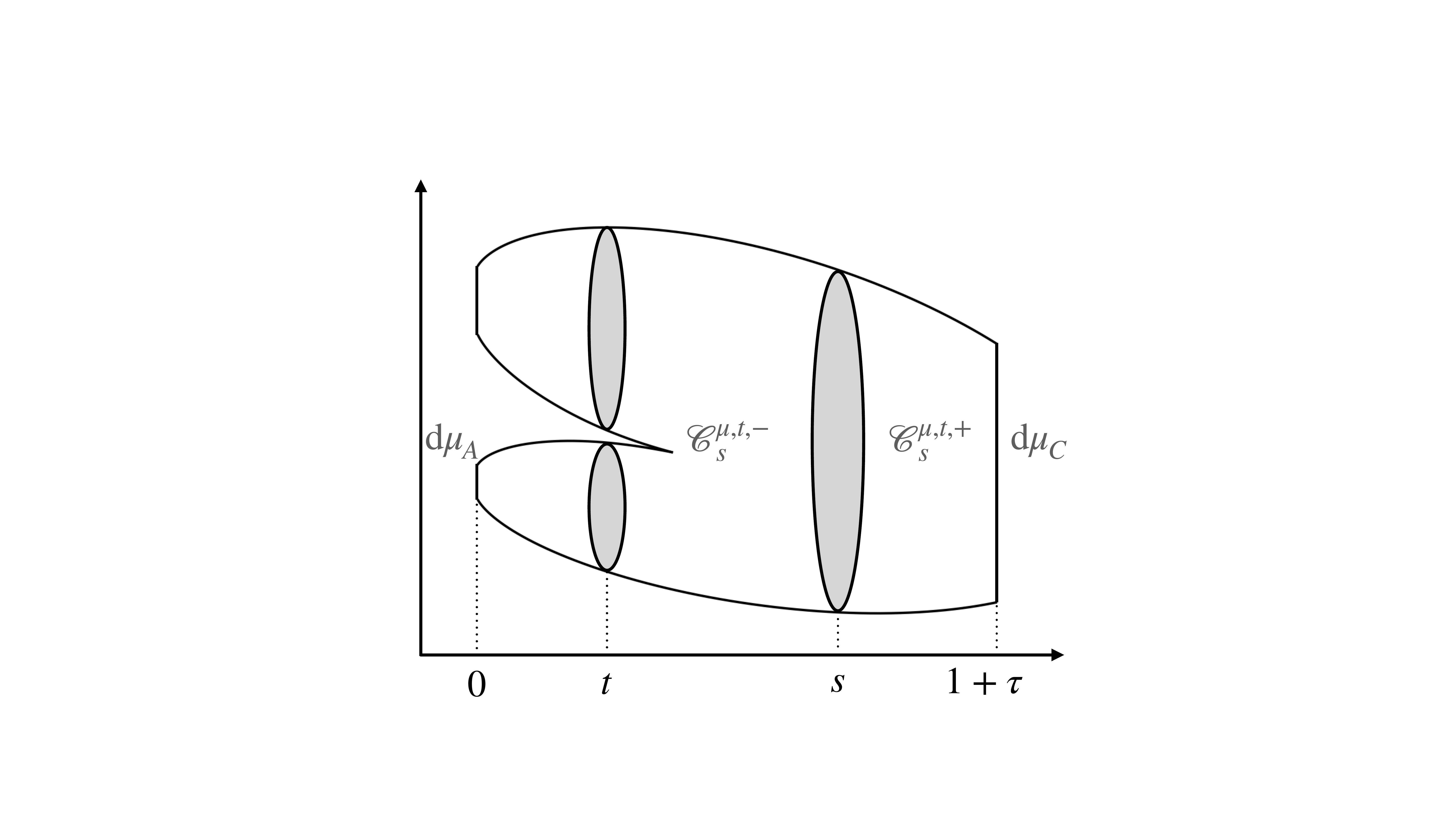}
 \caption{We can identify the Riemann surface $\cC_0$ with two copies of $\Omega$ gluing together.
These curves $\{( f_s(x;\mu,t),x): (x,s)\in\Omega^{\mu,t}\}$ and $ \{(\overline{ f_s(x;\mu,t)},x): (x,s)\in\Omega^{\mu,t}\}$
 cut the Riemann surface $\cC_s^{\mu,t}$ into two parts:  $\cC_s^{\mu,t,-}, \cC_s^{\mu,t,+}$. $\cC_s^{\mu,t,-}$ corresponds to $\{(x,r): (x,r)\in\Omega^{\mu,t}, t\leq r\leq s\}$, and $\cC_s^{\mu,t,+}$ corresponds to $\{(x,r): (x,r)\in\Omega, s\leq r\leq 1+\tau\}$.  }
 \label{f:cut}
 \end{center}
 \end{figure}

We can reformulate our boundary conditions $-\Im[f_0(x)] \rd x/\pi=\rd\mu_A$, $-\Im[f_{1+\tau}(x)]\rd x/\pi=\rd\mu_C$ in terms of the residuals of the $1$-form $f_0(Z)\rd z$ over $\cC_0$. Let $\Omega=\{(x,t)\in \bR\times(0,1+\tau): \rho^*_t(x)>0\}$. We recall that using the map \eqref{e:cover}, 
\begin{align}\label{e:covercopy}
\pi_Q: (x,t)\in \Omega\mapsto (f_t(x),x-tf_t(x))\in \cC_0,
\end{align}
the Riemann surface $\cC_0$ can be identified as gluing two copies of $\Omega$ along their left and right boundaries, and has cuts along the top and bottom boundaries of $\Omega$. The $1$-form $f_0(Z)\rd z$ has residuals along the bottom boundary of $\Omega$ corresponding to time $t=0$, and along the top boundary of $\Omega$ corresponding to time $t=1+\tau$, as in Figure \ref{f:cut}.
The boundary condition at time $t=0$, $-\Im[f_0(x)] \rd x/\pi=\rd\mu_A$ gives that the residual of $f_0(Z)\rd z$ along the bottom boundary of $\Omega$ is given by \begin{align}\label{e:topres}
\frac{1}{\pi}\Im[f_0(x)\rd x]=-\rd \mu_A(x).
\end{align} 
For the boundary condition at time $t={1+\tau}$, $-\Im[f_{1+\tau}(x)] \rd x/\pi=\rd\mu_C$, we use the characteristic flow \eqref{e:flow}, which send $Z_0=(f,z)\in \cC_0$ to $Z_{1+\tau}=(f, z+(1+\tau)f)\in \cC_{1+\tau}$
\begin{align*}
f_{1+\tau}(Z_{1+\tau})=f_{1+\tau}((f, z+(1+\tau)f))=f=f_0((f,z))=f_0(Z_0).
\end{align*}
Let 
\begin{align*}
f_{1+\tau}(x)=\gamma(x)-\ri \pi \rho_C(x),\quad x\in \supp(\mu_C),
\end{align*}
where $\gamma: \supp(\rho_C)\mapsto \bR$ is analytic.
Then the preimage of $\{(f_{1+\tau}(x),x): x\in \supp(\rho_C)\}$ under the characteristic flow consists of two curves in $\cC_0$, which can be parametrized as
\begin{align}\label{e:botres}
\left\{(f,z)=\left(\gamma(x)-\ri\pi\rho_C(x), x+(1+\tau)(-\gamma(x)+\ri\pi\rho_C(x))\right):x\in \supp(\rho_C)  \right\},
\end{align}
and its complex conjugate.
Thus the residual of $f_0(Z)\rd z$ along the top boundary of $\pi_Q(\Omega)$ is given by 
\begin{align*}
\frac{1}{\pi}\Im\left[(\gamma(x)-\ri\pi\rho_C(x))\rd(x+(1+\tau)(-\gamma(x)+\ri\pi\rho_C(x)))\right]
=-\rho_C(x)\rd x+(1+\tau)\rd \gamma(x)\rho_C(x).
\end{align*}

\begin{remark}
We remark that the residuals of $f_0(Z)\rd z$ along the bottom and top boundaries of $\Omega$, \eqref{e:topres} and \eqref{e:botres}, uniquely determines the Riemann surface $\cC_0:Q(f,z)=0$. 
However, it is not easy to directly write down the expression of $f$.
\end{remark}

\subsection{More General Boundary Condition}\label{s:bc}

Slightly more general than \eqref{e:varmuC}, we can consider 
 the following functional, for any probability measure $\mu$, and time $0\leq t\leq 1$,
\begin{align}\label{e:varWt}
W_t(\mu)=-\frac{1}{2}\inf\left(\int_t^{1+\tau}\int_\bR \rho_s\left(u_s^2+\frac{\pi^2}{3}\rho_s^2\right)\rd x\rd s+\Sigma(\mu)+\Sigma(\mu_C)\right),
\end{align}
the $\inf$ is taken over all the pairs $\{(\rho_s,u_s)\}_{t\leq s\leq {1+\tau}}$ such that $\del_s \rho_s+\del_x(\rho_su_s)=0$ in the sense of distributions, $\{\rho_s\}_{t\leq s\leq 1+\tau}\in{\mathcal{C}}([t,1+\tau],\cM(\bR))$  and its initial and terminal data are  given by 
\begin{align*}
\lim_{s\rightarrow t+}\rho_s(x)\rd x=\mu,\quad \lim_{s\rightarrow (1+\tau)-}\rho_s(x)\rd x=\mu_C,
\end{align*}
where convergence holds in the weak sense. Similarly to Theorem \ref{t:fbb}, \eqref{e:varWt} corresponds to certain free Brownian bridge $\{\sf x_s\}_{t\leq s\leq 1+\tau}$, with the law of $\sf x_t$ and $\sf x_{1+\tau}$ given by $\mu$ and $\mu_C$ respectively.
The same as \eqref{e:burgereq},
if we denote the minimizer of \eqref{e:varWt} as $\{(\rho_s^*(\cdot;\mu,t), u_s^*(\cdot;\mu,t))\}_{t\leq s\leq 1}$ and 
\begin{align}\label{e:ft2}
f_s(x;\mu,t)=u^*_s(x;\mu,t)-\ri \pi \rho^*_s(x;\mu,t),\quad t\leq s\leq 1+\tau,
\end{align}
then $f_s(x;\mu,t)$ satisfies the complex Burger's equation
\begin{align}\label{e:burgereq2}
\del_s f_s(x;\mu,t)+f_s(x;\mu,t)\del_x f_s(x;\mu,t) =0, \quad (x,s)\in \Omega^{\mu,t}\deq \{(x,s)\in \bR\times (t,1+\tau): \rho_s^*(x;\mu,t)>0\}.
\end{align}
There exists an analytic function $Q^{\mu,t}$ of two variables such that the solution of \eqref{e:burgereq2} satisfies
\begin{align}\label{e:defQatt}
Q^{\mu,t}(f_s(x;\mu,t), x-sf_s(x;\mu,t))=0,\quad t\leq s\leq 1+\tau.
\end{align}
 With the analytic function $Q^{\mu,t}$, we can define the Riemann surface:
$\cC^{\mu,t}_s=\{(f,z)\in \bC\times \bC: Q^{\mu,t}(f,z-sf)=0\}$. Comparing with \eqref{e:defQatt}, this gives a natural extension for $f_s(x;\mu,t)$ from $\{x: (x,s)\in\Omega^{\mu,t}\}$ to the Riemann surface $\cC_s^{\mu,t}$. In this way $f_s: Z=(f,z)\in \cC_s^{\mu,t}\mapsto f\in \bC$. The complex Burger's equation extend naturally to $f_s$:
\begin{align}\label{e:complexburgereq}
\del_s f_s(Z;\mu,t)+\del_z f_s(Z;\mu,t)f_s(Z;\mu,t)=0,\quad t\leq s\leq 1+\tau, \quad Z\in \cC_s^{\mu,t}.
\end{align}
Similarly to \eqref{e:flow}, 
 for $t\leq s\leq 1+\tau$, $\cC^{\mu,t}_s$ can  be obtained from $\cC^{\mu,t}_t$ by the characteristic flow:
\begin{align}\label{e:flowatt}
Z_t=(f,z)\in \cC^{\mu,t}_t\mapsto Z_s=(f, z+(s-t)f)\in \cC^{\mu,t}_s.
\end{align}
Using the map 
\begin{align}\label{e:cover2}
\pi_{Q^{\mu,t}}: (x,r)\in\Omega^{\mu,t} \mapsto (f_r(x;\mu,t),x-(r-t)f_r(x;\mu, t))\in \cC_t^{\mu,t},
\end{align}
we can identify $\cC_t^{\mu,t}$ with two copies of $\Omega^{\mu,t}$ gluing along the left and right boundaries of $\Omega^{\mu,t}$.
The bottom boundary $\{(x,t): x\in \supp(\mu)\}$  and top boundary $\{(x,1+\tau): x\in \supp(\mu_C)\}$ of $\Omega^{\mu,t}$ are mapped to $\{X(x)=(f_t(x;\mu,t), x)\}_{x\in \supp(\mu)}$ and $\{C(x)=(f_{1+\tau}(x;\mu,t), x-(1+\tau-t)f_{1+\tau}(x;\mu,t))\}_{x\in\supp(\mu_C)}$ of $\cC^{\mu,t}_t$. 
The same as \eqref{e:topres}, the residual of $f_t(Z;\mu,t)\rd z$ along 
the bottom boundary of $\Omega^{\mu,t}$
is given by $-\rd\mu(x)$, and 
\begin{align}\label{e:residual0}
f_t(Z;\mu,t)=\int \frac{\rd \mu(x)}{z-x}+\OO(1), \quad Z\rightarrow X(x).
\end{align}

The same as  \eqref{e:botres}, along the top boundary of $\Omega^{\mu,t}$, we have 
\begin{align}\label{e:f1tau}
f_{1+\tau}(x;\mu,t)=\gamma^{\mu,t}(x)-\ri \pi \rho_C(x),\quad x\in \supp(\mu_C),
\end{align}
where $\gamma^{\mu,t}: \supp(\rho_C)\mapsto \bR$ is analytic. The residual of $f_t(Z;\mu,t)\rd z$ along the top boundary of $\Omega^{\mu,t}$ is given by 
\begin{align}\label{residual1}
\left(-\rho_C(x)\rd x+(1+\tau-t)\rd \gamma^{\mu,t}(x)\rho_C(x)\right).
\end{align}

Using the characteristic flow \eqref{e:flowatt}, we can also map $\Omega^{\mu,t}$ to $\cC_s^{\mu,t}$, for any $t\leq s\leq 1$:
\begin{align}\label{e:coveratr}
\pi^s_{Q^{\mu,t}}: (x,r)\in \Omega^{\mu,t}\mapsto (f_s(x; \mu,t),x-(r-s)f_s(x;\mu,t))\in \cC_s^{\mu,t},
\end{align}
and identify $\cC_s^{\mu,t}$ as the gluing of two copies of $\Omega^{\mu,t}$ along the left and right boundaries. 

Using the above identification, the points $(f,z)\in \cC_s^{\mu,t}$ with $z$ real, i.e. $z\in \bR$, correspond to the left and right boundary of $\Omega^{\mu,t}$ where $f_s(x;\mu,t)\in \bR$, and the horizontal segments $\{(x,s):(x,s)\in \Omega^{\mu,t} \}$. The image of $\{(x,s):(x,s)\in \Omega^{\mu,t} \}$ under the map \eqref{e:coveratr} $\{( f_s(x;\mu,t),x): (x,s)\in\Omega^{\mu,t}\}$ and its 
complex conjugate $ \{(\overline{ f_s(x;\mu,t)},x): (x,s)\in\Omega^{\mu,t}\}$ glue together to cycles on the Riemann surface $\cC_s^{\mu,t}$. They
 cut the Riemann surface $\cC_s^{\mu,t}$ into two parts:  $\cC_s^{\mu,t,-}, \cC_s^{\mu,t,+}$. $\cC_s^{\mu,t,-}$ corresponds to $\{(x,r): (x,r)\in\Omega^{\mu,t}, t\leq r\leq s\}$, and $\cC_s^{\mu,t,+}$ corresponds to $\{(x,r): (x,r)\in\Omega^{\mu,t}, s\leq r\leq 1+\tau\}$, as shown in Figure \ref{f:cut}.
%
If we consider the map, 
\begin{align}\label{e:zmatr}
(f,z)\in \cC^{\mu,t,+}_s\mapsto z\in \bC,
\end{align}
it maps the cut $\{( f_s(x;\mu,t),x): (x,s)\in\Omega^{\mu,t}\}\subset \cC_s^{\mu,t}\}\cup \{(\overline{ f_s(x;\mu,t)},x): (x,s)\in\Omega^{\mu,t}\}\subset \cC_s^{\mu,t}\}$ to the segment $\{x: (x,s)\in \Omega^{\mu,t}\}$ twice. We remark that in this notation, $\cC_t^{\mu,t}=\cC_t^{\mu,t,+}$.

Let $m_{s}(z;\mu,t)$ be the Stieltjes transform of the measure $\rho_s^*(x;\mu,t)$,
\begin{align*}
m_{s}(z;\mu,t)=\int \frac{\rho_s^*(x;\mu,t)}{z-x}\rd x.
\end{align*}
As a meromorphic function over $\bC$, we can lift $m_{s}(z;\mu,t)$ to a meromorphic function over $\cC_s^{\mu,t,+}$ using the map \eqref{e:zmatr}. 
We notice that on $\{x: (x,s)\in \Omega^{\mu,t}\}$
\begin{align*}\begin{split}
&\lim_{\eta\rightarrow 0+}m_{s}(x+\ri\eta;\mu,t)=-\ri\pi \rho_s^*(x;\mu,t)=\Im[f_s(x;\mu,t)],\\ 
&\lim_{\eta\rightarrow 0-}m_{s}(x+\ri\eta;\mu,t)=\ri\pi \rho_s^*(x;\mu,t)=\Im[\overline{f_t(x;\mu,t)}].
\end{split}\end{align*}
Therefore, if we subtract $m_{s}(z;\mu,t)$ from $f_s(z;\mu,t)$, what remaining can be glued along the cut $\{( f_s(x;\mu,t),x): (x,s)\in\Omega^{\mu,t}\}\cup \{(\overline{ f_s(x;\mu,t)},x): (x,s)\in\Omega^{\mu,t}\}$, i.e. identifying $( f_s(x;\mu,t),x)$ with $(\overline{ f_s(x;\mu,t)},x)$, to an analytic function in a neighborhood of the cut. More precisely,
let
\begin{align}\label{e:gszmut}
f_s(Z;\mu,t)=m_{s}(z;\mu,t)+g_s(Z; \mu, t),\quad z=z(Z)
\end{align}
then $g_s(Z;\mu,t)$, as a meromorphic function over $\cC^{\mu,t,+}_s$, can be glued along the cut $\{( f_s(x;\mu,t),x): (x,s)\in\Omega^{\mu,t}\}\cup \{(\overline{ f_s(x;\mu,t)},x): (x,s)\in\Omega^{\mu,t}\}$, to an analytic function in a neighborhood of the cut.

Comparing \eqref{e:gszmut} with \eqref{e:ft2}, on the cut $\{( f_s(x;\mu,t),x): (x,s)\in\Omega^{\mu,t}\}\cup \{(\overline{ f_s(x;\mu,t)},x): (x,s)\in\Omega^{\mu,t}\}$, we have
\begin{align*}\begin{split}
f_s(x;\mu,t)&=u^*_s(x;\mu,t)-\ri \pi \rho^*_s(x;\mu,t)=g_s(x;\mu,t)+m_{s}(x;\mu,t)\\
&=g_s(x;\mu,t)+\int\frac{\rho_s^*(y;\mu,t)}{x-y}\rd y-\ri \pi \rho^*_s(x;\mu,t).
\end{split}\end{align*}
It follows that
\begin{align}\label{e:gtut15}
g_s(x;\mu,t)=u^*_s(x;\mu,t)-\int\frac{ \rho_s^*(y;\mu,t)}{x-y}\rd y,\quad t\leq s\leq 1+\tau,
\end{align}
and especially by taking $s=t$, we have
\begin{align}\label{e:gtut2}
g_t(x;\mu,t)=u^*_t(x;\mu,t)-\int\frac{1}{x-y}\rd \mu(y).
\end{align}

We can use \eqref{e:gtut15} to derive the differential equations of $m_{s}(z;\mu,t)$ the Stieltjest transform of $\rho_s^*(x;\mu,t)$. We recall from \eqref{e:varWt}, $\del_s \rho^*_s(x;\mu,t)=-\del_x(\rho^*_s(x;\mu,t)u_s(x;\mu,t))$
\begin{align}\begin{split}\label{e:dermt}
&\phantom{{}={}}\del_s m_{s}(z;\mu,t)=\int\frac{\del_s \rho_s^*(x;\mu,t)}{z-x}\rd x
=\int\frac{-\del_x(\rho^*_s(x;\mu,t)u_s(x;\mu,t))}{z-x}\rd x\\
&=\int\rho_s^*(x;\mu,t)\left(g_s(x;\mu,t)+\int\frac{\rho_s^*(y;\mu,t)}{x-y}\rd y\right)\del_x\left(\frac{1}{z-x}\right)\rd x\\
&=-\del_z m_{s}(z;\mu,t)m_{s}(z;\mu,t)+\int\frac{g_s(x;\mu,t)}{(z-x)^2}\rho^*_s(x;\mu,t)\rd x.
\end{split}\end{align} 
Moreover, $f_s(Z;\mu,t)$ with $Z=(f,z)\in \cC_s^{\mu,t}$ satisfies the complex Burger's equation
\begin{align}\begin{split}\label{e:burger}
0&=\del_s f_s(z;\mu,t)+\del_z f_s(Z;\mu,t) f_s(Z;\mu,t)\\
&=
\del_s (m_{s}(z;\mu,t)+ g_s(Z;\mu,t))
+\del_z(m_{s}(z;\mu,t)+g_s(Z;\mu,t))(m_{s}(z;\mu,t)+ g_s(Z;\mu,t)).
\end{split}\end{align}
We obtain the differential equation of $g_s(Z;\mu,t)$ by taking difference of \eqref{e:dermt} and \eqref{e:burger},
\begin{align}\begin{split}\label{e:gteq}
\del_s g_s(Z;\mu,t)&+g_s(Z;\mu,t) \del_z g_s(Z;\mu,t)\\
&+\int \frac{g_s(x;\mu,t)-g_s(Z;\mu,t)-(x-z)\del_z g_s(Z;\mu,t)}{(x-z)^2} \rho^*_s(x;\mu,t)\rd x=0.
\end{split}\end{align}

\subsection{Variational Formula}
In Sections \ref{s:eq}, we need to estimate the difference of two solutions of the complex Burger's equation \eqref{e:complexburgereq} with different initial data: $f_t(Z^1;\mu^1,t)-f_t(Z^0;\mu^0,t)$, where $z(Z^1)=z(z^0)=z$. To estimate it we interpolate the initial data 
\begin{align*}
\mu^\theta=(1-\theta)\mu^0+\theta \mu^1, \quad 0\leq \theta\leq 1,
\end{align*}
and write 
\begin{align*}
f_t(Z^1;\mu^1,t)-f_t(Z^0;\mu^0,t)=\int_0^1 \del_\theta f_t(Z^\theta;\mu^\theta,t)\rd \theta,\quad z(Z^\theta)=z.
\end{align*}
For any $0\leq \theta\leq 1$,  we have that
\begin{align}\label{e:zmattcopy}
(f_t(Z;\mu^\theta, t)),z)\in \cC_t^{\mu^\theta,t}\mapsto z\in\bC,
\end{align}
is holomorphic except for some branch points. Since $Q^{\mu^\theta,t}$ is real, the branch points of \eqref{e:zmattcopy} come in pairs. We denote the branch points of \eqref{e:zmattcopy} as $Z_1^\theta=(f_1^\theta, z^\theta_1), \bar Z_1^\theta=(\bar{f}^\theta_1, \bar{z}^\theta_1), Z_2^\theta=(f^\theta_2, z^\theta_2), \bar Z_2^\theta=(\bar{f}^\theta_2, \bar{z}^\theta_2), \cdots, Z_w^\theta=(f^\theta_w, z^\theta_w), \bar Z_w^\theta=(\bar{f}^\theta_w, \bar{z}^\theta_w)$ with ramification indices $d_1, d_1, d_2,d_2,\cdots, d_w, d_w$ respectively.


Given the Riemann surface, the Schiffer kernel \cite{MR39812,BE2018} $B(Z,Z';\mu^\theta,t)$ on $(Z,Z')=((f,z), (f',z'))\in\cC_t^{\mu^\theta,t}\times \cC_t^{\mu^\theta,t}$ is a symmetric meromorphic bilinear differential, i.e. meromorphic $1$-form of $Z$ tensored by
a meromorphic $1$-form of $Z'$ and $B(Z,Z';\mu^\theta,t)=B(Z',Z;\mu^\theta,t)$. And it has a double pole on the diagonal, such that in a small neighborhood $V$ of $(f,z)\in \cC^{\mu^\theta,t}_t$, for $(Z,Z')\in V\times V$,
\begin{align}\label{e:Schiffer}
B(Z,Z';\mu^\theta,t)=\frac{1}{(z-z')^2}\rd z\rd z'
+ \OO(1). 
\end{align}

We recall the map $\pi_{Q^{\mu^\theta,t}}$ from \eqref{e:cover2}, such that using  $\pi_{Q^{\mu^\theta,t}}$, $\cC_t^{\mu^\theta,t}$ can be identified as two copies of $\Omega^{\mu^\theta,t}$ gluing along the boundary of $\Omega^{\mu^\theta,t}$. Since 
$\Omega^{\mu^\theta,t}$ is simply connected, we denote $\phi$ the Riemann map from $\pi_{Q^{\mu^\theta,t}}(\Omega^{\mu^\theta,t})$ to the upper half plane. We can extend $\phi$ to
$\overline{\pi_{Q^{\mu^\theta,t}}(\Omega^{\mu^\theta,t})}$ by setting $\phi(\overline Z)=\overline{\phi( Z)}$, for any $Z\in \pi_{Q^{\mu^\theta,t}}(\Omega^{\mu^\theta,t})$. In this way, $\phi$ is defined over $\cC_t^{\mu^\theta, t}$. The Green's function of the domain $\pi_{Q^{\mu^\theta,t}}(\Omega^{\mu^\theta,t})$ can be expressed in terms of $\phi$:
\begin{align*}
G(Z,Z';\mu^\theta,t)=\log\left|\frac{\phi(Z)-\phi(Z')}{\phi(Z)-\overline{\phi(Z')}}\right|,
\end{align*}
and the Schiffer kernel on $\cC_t^{\mu^\theta,t}$ is
\begin{align}\label{e:Schiffer}
B(Z,Z';\mu^\theta,t)=\del_z \del_{z'}G(Z,Z';\mu^\theta,t)\rd z\rd z'=\frac{\phi'(Z)\phi'(Z')}{(\phi(Z)-\phi(Z'))^2}\rd z\rd z'. 
\end{align}
The integral of the Schiffer kernel
\begin{align}\label{e:third}
Q_{Z_1, Z_0}(Z; \mu^\theta,t)\deq \int_{Z'=Z_0}^{Z'=Z_1}B(Z,Z';\mu^\theta,t)
=\frac{\phi'(Z)\rd z}{\phi(Z)-\phi(Z_1)}
-\frac{\phi'(Z)\rd z}{\phi(Z)-\phi(Z_0)}. 
\end{align}
is a meromorphic $1$-form (called third kind form) over  $\cC_t^{\mu^\theta,t}$, having only poles at $Z_0, Z_1$ with residual $-1,1$.

We recall from \eqref{e:residual0} and \eqref{residual1}, as a meromorphic $1$-form over 
the Riemann surface $\cC_t^{\mu^\theta,t}$, $f(Z; \mu^\theta, t)\rd z$ has residual along the top and bottom boundaries of $\Omega^{\mu^\theta,t}$.
Therefore we can write down $f_t(Z;\mu^\theta,t)\rd z$ explicitly as a contour integral of the third kind form \eqref{e:third}: for $Z=(f,z)\in \cC_t^{\mu^\theta,t}$
\begin{align}\begin{split}\label{e:ftexp}
f_t(Z;\mu^\theta,t)\rd z
&=\frac{1}{2\pi\ri}\oint_{\sfC_0} Q_{Z', Z_0}(Z; \mu^\theta, t)f(Z';\mu^\theta,t)\rd z(Z')\\
&+\frac{1}{2\pi\ri}\oint_{\sfC_1} Q_{Z', Z_0}(Z; \mu^\theta, t)f(Z';\mu^\theta,t)\rd z(Z'),
\end{split}\end{align}
where the contour $\sfC_0$ encloses the bottom boundary of $\Omega^{\mu^\theta,t}$, and 
the contour $\sfC_1$ encloses the top boundary of $\Omega^{\mu^\theta,t}$.
We remark that the total residual of $f(Z;\mu^\theta,t)$ sum up to zero, the righthand side of the above expression does not have pole at $Z_0$, and is in fact independent of $Z_0$.
Using \eqref{e:residual0} and \eqref{residual1}, we can write \eqref{e:ftexp} in terms of the residuals of $f_t(Z;\mu^\theta,t)\rd z$,
\begin{align}\begin{split}\label{e:fexpf}
f_t(Z;\mu^\theta,t)\rd z
&=\int Q_{X_\theta(x), Z_0}(Z; \mu^\theta,t)\rd \mu^\theta(x)\\
&+\int Q_{C_\theta(x), Z_0}(Z; \mu^\theta,t)\left(-\rho_C(x)\rd x+(1+\tau-t)\rd \gamma^{\mu^\theta,t}(x)\rho_C(x)\right),
\end{split}\end{align}
where $\{X_\theta(x)=(f_t(x;\mu^\theta,t), x)\}_{x\in \supp(\mu^\theta)}$ corresponds to the bottom boundary of $\Omega^{\mu^\theta,t}$, and $\{C_\theta(x)=(f_{1+\tau}(x;\mu^\theta,t), x-(1+\tau-t)f_{1+\tau}(x;\mu^\theta,t))\}_{x\in\supp(\mu_C)}$ corresponds to the top boundary of $\Omega^{\mu^\theta,t}$.
%

We have the following simple formula for the derivative of $f_t(Z;\mu^\theta,t)\rd z$ with respect to $\theta$ in terms of the third kind form \eqref{e:third}.  
\begin{proposition}\label{p:derft}
The derivative of $f_t(Z;\mu^\theta,t)\rd z$ with respect to $\theta$ is given by
\begin{align*}
\del_\theta f_t(Z;\mu^\theta,t)\rd z = \int Q_{X_\theta(x), Z_0}(Z; \mu^\theta,t)\rd(\mu^1(x)-\mu^0(x)),
\end{align*}
where $\{X_\theta(x)=(f_t(x;\mu^\theta,t), x)\}_{x\in \supp(\mu^\theta)}$ 
\end{proposition}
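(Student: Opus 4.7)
The plan is to differentiate the explicit third-kind-differential representation \eqref{e:fexpf} for $f_t(Z;\mu^\theta,t)\rd z$ in $\theta$ and show that every contribution except the direct one $\int Q_{X_\theta(x), Z_0}(Z;\mu^\theta,t)\rd(\mu^1-\mu^0)(x)$ cancels. The other contributions come from three sources: the motion of the bottom parametrization $X_\theta(x)=(f_t(x;\mu^\theta,t),x)$; the motion of the top parametrization $C_\theta(x)$ combined with the $\theta$-dependence of $\gamma^{\mu^\theta,t}$; and the deformation of the Schiffer kernel $B(\cdot,\cdot;\mu^\theta,t)$ as the underlying Riemann surface $\cC_t^{\mu^\theta,t}$ itself varies, controlled by the Hadamard/Rauch variational formula.

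The cleanest way to package the cancellation uses that $\Omega^{\mu^\theta,t}$ is simply connected, so $\cC_t^{\mu^\theta,t}$ has genus zero, and hence any meromorphic $1$-form on it is determined uniquely by its residues. Set
\begin{equation*}
\Phi(Z) \;=\; \del_\theta f_t(Z;\mu^\theta,t)\rd z \;-\; \int Q_{X_\theta(x), Z_0}(Z;\mu^\theta,t)\rd(\mu^1-\mu^0)(x).
\end{equation*}
I would first verify that $\Phi$ is a meromorphic $1$-form on $\cC_t^{\mu^\theta,t}$, then check that its residues along both the bottom and the top boundary of $\Omega^{\mu^\theta,t}$ vanish, and conclude $\Phi\equiv 0$.

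For the bottom residues, \eqref{e:residual0} gives residue density $-\rd\mu^\theta(x)$ along the image $X_\theta(x)$ of the bottom boundary; differentiating at a fixed bottom point yields the residue variation $-\rd(\mu^1-\mu^0)(x)$. This is exactly what the subtracted integral absorbs, since each $Q_{X_\theta(x),Z_0}$ is a third-kind differential with matching residue at $X_\theta(x)$. For the top residues, \eqref{residual1} gives density $-\rho_C(x)\rd x + (1+\tau-t)\rd\gamma^{\mu^\theta,t}(x)\,\rho_C(x)$: the first piece is $\theta$-invariant because $\mu_C$ is fixed, and the characteristic flow \eqref{e:flowatt} together with the $\theta$-independent constraint $\Im f_{1+\tau}(\cdot;\mu^\theta,t)=-\pi\rho_C$ forces the $\theta$-derivative of the second piece to be a total derivative in $x$ along $\supp(\rho_C)$, which contributes no residue to $\Phi$.

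The main technical obstacle is the top analysis: the curve $C_\theta(x)$ genuinely moves with $\theta$, and this motion couples with the Rauch-type deformation of $B(\cdot,\cdot;\mu^\theta,t)$; reconciling the two requires combining the Hadamard/Rauch formula for the variation of the Schiffer kernel with an integration by parts on $\supp(\rho_C)$ that exploits the analyticity of $\gamma^{\mu^\theta,t}$. Once both sets of residues are shown to vanish, genus-zero uniqueness forces $\Phi\equiv 0$, which is the stated formula.
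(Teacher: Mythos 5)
Your strategy---encode $\del_\theta f_t(Z;\mu^\theta,t)\rd z$ by its singularity structure on the genus-zero surface and invoke uniqueness---is a legitimate repackaging of the problem, and your bottom-boundary bookkeeping does reproduce the paper's direct term \eqref{e:remain}. But there is a genuine gap at the top boundary, which is exactly where the paper does its real work. First, the derivative is taken at fixed $z$, while the top cut $\{z(C_\theta(x))\}_{x\in\supp(\rho_C)}$ moves with $\theta$; differentiating through a moving jump discontinuity produces an extra singular contribution supported on the cut, proportional to $\del_\theta z(C_\theta(x))$ times the jump $\Im[f_t(C_\theta(x);\mu^\theta,t)]$. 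This term is invisible if you only differentiate the residue density \eqref{residual1} at fixed $x$, and it is precisely the Hadamard-variation term \eqref{e:boundarycc} in the paper. Second, the $\theta$-derivative of the density $(1+\tau-t)\rd\gamma^{\mu^\theta,t}(x)\rho_C(x)$ is \emph{not} a total derivative in $x$ (the factor $\rho_C(x)$ stands outside the differential), and even if it were, your uniqueness argument needs the \emph{pointwise} jump of $\Phi$ across the cut to vanish so that $\Phi$ continues analytically; a jump equal to an exact differential would still obstruct continuation. In the paper neither top contribution vanishes on its own: \eqref{e:dgamma} and \eqref{e:boundarycc} cancel against each other only after an integration by parts along $\supp(\rho_C)$, with the deformation of the Schiffer kernel controlled by Theorem \ref{thm:Rauch}. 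So the point you defer as ``the main technical obstacle'' is not a technicality; filling it essentially amounts to redoing the paper's Rauch/Hadamard computation, and the heuristic you offer for why the top drops out is incorrect as stated.

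Two smaller repairs would also be needed. The uniqueness principle should be phrased as: a $1$-form with \emph{no} singularities on a compact genus-zero surface vanishes; residues alone do not determine a meromorphic form, so you must also rule out higher-order poles, check that the simple poles at $Z_0$ cancel (they do, because $\mu^1-\mu^0$ has total mass zero), check regularity at the point at infinity (the $1/z$ coefficient of $f_t$ is the total mass $1$, which is $\theta$-independent), and check regularity of $\del_\theta f_t\,\rd z$ at the moving branch points $Z_i^\theta$, where the apparent $(z-z_i^\theta)^{1/d-1}$ blow-up of $\del_\theta f_t$ at fixed $z$ is compensated by the zero of $\rd z$---this is the same input as the vanishing of the Rauch term \eqref{e:branchcc} in the paper's proof.
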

\begin{proof}
The derivative of $f_t(Z;\mu^\theta,t)\rd z$ with respect to $\theta$ consists of three parts, either the derivative hits $\rd \mu^\theta(x)$ in the first term of \eqref{e:fexpf}, or $\rd \gamma^{\mu^\theta,t}(x)\rho_C(x)$ in the last term of \eqref{e:fexpf}, or the derivative hits one of those third kind form. In the first case, $\del_\theta \rd \mu^\theta(x)=\rd(\mu^1(x)-\mu^0(x))$, and we get the term
\begin{align}\label{e:remain}
 \int Q_{X_\theta(x), Z_0}(Z; \mu^\theta,t)\rd(\mu^1(x)-\mu^0(x)).
\end{align} 
In the second case, we get the term 
\begin{align}\begin{split}\label{e:dgamma}
&\phantom{[{}={}}\int Q_{C_\theta(x), Z_0}(Z; \mu^\theta,t)\left((1+\tau-t)\rd \del_\theta(\gamma^{\mu^\theta,t}(x))\rho_C(x)\right)\\
&=\frac{1}{\pi}\int Q_{C_\theta(x), Z_0}(Z; \mu^\theta,t)\Im\rd\left[\del_\theta z(C_\theta(x)) f_t(C_\theta(x);\mu^\theta,t)\right]\\
&=-\frac{1}{\pi}\int B(Z, C_\theta(x); \mu^\theta,t) \del_\theta z(C_\theta(x))\Im\left[ f_t(C_\theta(x);\mu^\theta,t)\right],
\end{split}\end{align}
where $\{C_\theta(x)=(f_{1+\tau}(x;\mu^\theta,t), x-(1+\tau-t)f_{1+\tau}(x;\mu^\theta,t))\}_{x\in\supp(\mu_C)}$ and we did an integration by part in the last line.

Finally, we study the case that the derivative  hits one of those third kind forms in \eqref{e:fexpf}.
We recall from \eqref{e:third} that the third kind forms are obtained from integrating the Schiffer kernel. The derivative of the Schiffer kernel $B(Z,Z';\mu^\theta,t)\rd z\rd z'$ with respect to the branch points $Z_1^\theta, \bar Z_1^\theta, \cdots, Z_w^\theta, \bar Z_w^\theta$ is described by the Rauch Variational Formula, the derivative of the Schiffer kernel $B(Z,Z';\mu^\theta,t)\rd z\rd z'$ with respect to the change of the upper boundary curve $C_\theta(x)$ is described by Hadamard's Variation Formula. We collect them in Theorem \ref{thm:Rauch}. For the derivative of the Schiffer kernel $B(Z,Z';\mu^\theta,t)\rd z\rd z'$ with respect to the branch points $Z_1^\theta, \bar Z_1^\theta, \cdots, Z_w^\theta, \bar Z_w^\theta$, using \eqref{e:Rauch}, the derivative corresponding to the branch point $Z_i^\theta$ is
\begin{align}\begin{split}\label{e:branchcc}
&\phantom{{}={}}\frac{\del_\theta Z_i^\theta}{2\pi\ri}
\left(\oint_{\sfS_i} \frac{B(Z,Z'';\mu^\theta,t)}{\rd z(Z'')} \int Q_{X_\theta(x), Z_0}(Z''; \mu^\theta,t)\rd \mu^\theta(x)\right.\\
&\left.+\oint_{\sfS_i}\frac{B(Z,Z'';\mu^\theta,t)}{\rd z(Z'')} \int Q_{X_\theta(x), Z_0}(Z''; \mu^\theta,t) \left(-\rho_C(x)\rd x+(1+\tau)\rd \gamma^{\mu^\theta,t}(x)\rho_C(x)\right)\right),
  \\
&=\frac{\del_\theta Z_i^\theta}{2\pi\ri}
\oint_{\sfS_i} \frac{B(Z,Z'';\mu^\theta,t)}{\rd z(Z'')}f(Z'';\mu^\theta,t)\rd z(Z'')\\
&=\frac{\del_\theta Z_i^\theta}{2\pi\ri}
\oint_{\sfS_i} B(Z,Z'';\mu^\theta,t)f(Z'';\mu^\theta,t)=0,
\end{split}\end{align}
where in the last line the integral vanishes, because the integrand is holomorphic at the 
branch point $Z_i^\theta$. Similarly the contribution from other branch points also vanishes. 

For the derivative of the Schiffer kernel $B(Z,Z';\mu^\theta,t)\rd z\rd z'$ with respect to the change of the upper boundary curve $C_\theta(x)$, using \eqref{e:bbcurve}, it gives
\begin{align}\begin{split}\label{e:boundarycc}
&\phantom{{}={}}\frac{1}{\pi}
\left(\int B(Z,C_\theta(x);\mu^\theta,t)\del_\theta z(C_\theta(x)) \int \Im\left[\frac{Q_{X_\theta(y),Z_0}(C_\theta(x);\mu^\theta,t)}{\rd z(C_\theta(x))}\right] \rd \mu^\theta(y)\right.\\
&\left.+\int B(Z,C_\theta(x);\mu^\theta,t)\del_\theta z(C_\theta(x)) \int \Im\left[\frac{Q_{X_\theta(y),Z_0}(C_\theta(x);\mu^\theta,t)}{\rd z(C_\theta(x))}\right]\left(-\rho_C(y)\rd y+(1+\tau)\rd \gamma^{\mu^\theta,t}(y)\rho_C(y)\right)\right),
  \\
&=\frac{1}{\pi}
\int B(Z,C_\theta(x);\mu^\theta,t)\del_\theta z(C_\theta(x))\Im[f_t(C_\theta(x);\mu^\theta,t)].
\end{split}\end{align}
We notice that \eqref{e:boundarycc} cancels with \eqref{e:dgamma}. Proposition \ref{p:derft} follows from combining \eqref{e:remain}, \eqref{e:dgamma}, \eqref{e:branchcc} and \eqref{e:boundarycc}.

\end{proof}


\begin{theorem}\label{thm:Rauch}
The derivative of the Schiffer kernel $B(Z,Z';\mu^\theta,t)$ with respect to any branch point $Z_i^\theta$ is given by
\begin{align}\begin{split}\label{e:Rauch}
\frac{\del_\theta Z_i^\theta}{2\pi\ri} \oint_{\sfS_i} B(Z,Z'';\mu^\theta,t)B(Z'',Z';\mu^\theta,t)/\rd z(Z''),
\end{split}\end{align}
where the contour $\sfS_i$ encloses $Z_i^\theta$. The derivative of the Schiffer kernel $B(Z,Z';\mu^\theta,t)$ with respect to the boundary curve $\{z(C_\theta(x)): x\in \supp(\rho_C)\}$ is given by
\begin{align}\begin{split}\label{e:bbcurve}
\frac{1}{\pi}\int B(Z,C_\theta(x);\mu^\theta,t)\del_\theta z(C_\theta(x)) \Im\left[\frac{B(C_\theta(x),Z';\mu^\theta,t)}{\rd z(C_\theta(x))}\right].
\end{split}\end{align}
\end{theorem}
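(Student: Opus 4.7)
The plan is to derive both variational formulas from the explicit representation \eqref{e:Schiffer} of the Schiffer kernel in terms of the Riemann map $\phi_\theta$ from $\pi_{Q^{\mu^\theta,t}}(\Omega^{\mu^\theta,t})$ to the upper half plane, extended to $\cC_t^{\mu^\theta,t}$ by Schwarz reflection. Since the $\theta$-dependence of $B$ enters through $\cC_t^{\mu^\theta,t}$ in two geometrically distinct ways---through the positions of the branch points $Z_i^\theta$ of the projection $(f,z)\mapsto z$, and through the analytic top-boundary curve $\{C_\theta(x)\}_{x\in\supp(\mu_C)}$---I would treat these two contributions separately. Crucially, the $\theta$-dependence of $B$ does \emph{not} include a Hadamard-type contribution from the bottom boundary: while $\mu^\theta$ varies, this enters $f_t$ only through the residue data, not through a genuine geometric deformation of $\cC_t^{\mu^\theta,t}$ at that boundary, which is why only the term \eqref{e:remain} in Proposition \ref{p:derft} is needed on that side.

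For the Rauch formula \eqref{e:Rauch}, first I would fix a local uniformizer $w$ near a branch point $Z_i^\theta$ of ramification index $d_i$, with $z=z_i^\theta+w^{d_i+1}$. Viewed as a meromorphic bidifferential on the reference surface (after transporting via a quasiconformal deformation supported near the moving branch points), $\partial_\theta B$ has no pole on the diagonal---because the principal part $\rd z\,\rd z'/(z-z')^2$ is intrinsic and $\theta$-independent---is holomorphic away from the branch points, and has a completely explicit singular behavior at each $Z_i^\theta$ determined by how the local uniformizer shifts. The claim then reduces to identifying the contour integral $\oint_{\sfS_i} B(Z,Z'')B(Z'',Z')/\rd z(Z'')$ as the unique meromorphic bidifferential on the genus-zero surface with precisely this singular behavior and no other poles; this is a residue computation, obtained by expanding $B$ in the local coordinate $w$ and reading off the $w^{-1}\rd w$ coefficient.

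For Hadamard's formula \eqref{e:bbcurve}, I would use the potential-theoretic identity $B(Z,Z';\mu^\theta,t)=\del_z\del_{z'}G_\theta(Z,Z')\,\rd z\,\rd z'$ and the classical Hadamard variation formula: under an infinitesimal deformation of $\partial\Omega^{\mu^\theta,t}$ with normal displacement $\delta n$,
\begin{equation*}
\delta G_\theta(Z,Z')=-\frac{1}{2\pi}\int_{\partial}\partial_n G_\theta(Z,\zeta)\,\partial_n G_\theta(\zeta,Z')\,\delta n(\zeta)\,\rd\sigma(\zeta).
\end{equation*}
Taking $\del_z\del_{z'}$ of both sides, identifying $\partial_n G_\theta$ at a point $C_\theta(x)$ of the analytic top-boundary curve with $\Im[B(C_\theta(x),\cdot)/\rd z(C_\theta(x))]$, and identifying the normal-displacement one-form with $\partial_\theta z(C_\theta(x))$, would produce exactly \eqref{e:bbcurve}. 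The hard part will be choosing the Riemann maps $\phi_\theta$ coherently in $\theta$ so that the bottom boundary $\{X_\theta(x)\}$ is held fixed in a reference parametrization, thereby localizing all geometric motion of $\cC_t^{\mu^\theta,t}$ to the branch points and the top boundary; once this is done, the Rauch and Hadamard contributions together account for the full $\theta$-derivative of $B$ and the residue variation of $\mu^\theta$ is handled, disjointly, by the first term of Proposition \ref{p:derft}.
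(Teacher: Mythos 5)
Your proposal is correct in outline, and for the boundary-variation half it is essentially the paper's own route: the paper obtains \eqref{e:bbcurve} by taking $\del_z\del_{z'}$ of Hadamard's variation formula for the Dirichlet Green's function of $\pi_{Q^{\mu^\theta,t}}(\Omega^{\mu^\theta,t})$ (citing \cite{schiffer1946hadamard}) and plugging in the relation $B=\del_z\del_{z'}G\,\rd z\,\rd z'$, which is exactly your plan, including identifying the normal displacement with $\del_\theta z(C_\theta(x))$ and $\del_n G$ along the analytic top cut with $\Im[B/\rd z]$ (only sign/normalization bookkeeping remains). Where you genuinely diverge is on \eqref{e:Rauch}: the paper does not prove it but cites the classical Rauch variational formula \cite{MR110798,MR110799} and \cite[(2.4)]{MR2131384}, whereas you sketch the standard proof from scratch --- local uniformizer at $Z_i^\theta$, the observation that $\del_\theta B$ has no diagonal pole since the principal part $\rd z\,\rd z'/(z-z')^2$ is $\theta$-independent, explicit singular behavior at the moving branch points, and uniqueness of a bidifferential with prescribed poles in genus zero, matched against the residue of $\oint_{\sfS_i}B(Z,Z'')B(Z'',Z')/\rd z(Z'')$. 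That buys self-containedness at the cost of the details the citation hides: the local expansion must actually be carried out (and with the paper's convention the exponent should be the ramification index $d_i$ rather than $d_i+1$, depending on how one reads ``index''), and the uniqueness step should be run on the compact genus-zero double, using the anti-holomorphic symmetry built into $B$ through $\phi(\bar Z)=\overline{\phi(Z)}$, since on the bordered surface alone ``holomorphic everywhere else'' does not yet force the difference of two candidates to vanish. Your observation that the bottom boundary contributes no Hadamard term is consistent with the paper's use of the theorem: in the $z$-coordinate that cut is $\supp(\mu^\theta)\subset\bR$, which is stationary under the interpolation, so all geometric motion is carried by the branch points and the top curve --- precisely the decomposition exploited in the proof of Proposition \ref{p:derft}.
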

\begin{proof}
Formula \eqref{e:Rauch} is the Rauch variational formula \cite{MR110798,MR110799} and \cite[(2.4)]{MR2131384}. Formula \eqref{e:bbcurve} follows from  Hadamard's Variation Formula for Green's Function \cite[(1)]{schiffer1946hadamard}. We recall the relation between Green's function and the Schiffer kernel \eqref{e:Schiffer}, i.e. the Schiffer kernel is obtained from the Green's function by taking derivatives
\begin{align}\label{e:GBcopy}
G(Z,Z';\mu^\theta,t)=\del_z \del_{z'}B(Z,Z';\mu^\theta,t)\rd z\rd z'.
\end{align}
Formula  \eqref{e:bbcurve} follows from taking derivatives on both sides of \cite[(1)]{schiffer1946hadamard} and plugging in \eqref{e:GBcopy}.
\end{proof}

We recall $g_t(Z;\mu,t)$ from \eqref{e:gszmut}, its derivative with respect to the measure $\mu$ can be expressed explicitly using Proposition \ref{p:derft}. Proposition \ref{p:derft} gives that 
\begin{align}\label{e:deltagt}
\frac{\delta f_t(Z;\mu,t)}{\delta \mu}= \frac{Q_{Z', Z_0}(Z;\mu,t)}{\rd z(Z)}.
\end{align}
We can take derivative with respect to $z'$ on both sides of \eqref{e:deltagt}
\begin{align}\label{e:deltagt}
\del_{z'}\frac{\delta f_t(Z;\mu,t)}{\delta \mu}= \frac{B(Z,Z';\mu,t)}{\rd z(Z)\rd z(Z')},
\end{align}
which is the Schiffer kernel \eqref{e:Schiffer}. We recall $g_t(Z;\mu,t)$ from \eqref{e:gszmut}, by taking $s=t$. Then  \eqref{e:deltagt} gives
\begin{align}\label{e:dmugt}
\del_{z'}\frac{\delta g_t(Z;\mu,t)}{\delta \mu}= \frac{B(Z,Z';\mu,t)}{\rd z(Z)\rd z(Z')}-\frac{1}{(z(Z)-z(Z'))^2}.
\end{align}
The double pole of the Schiffer kernel along the diagonal cancels with the double pole of $1/(z(Z)-z(Z'))^2$. We conclude that $\del_{z'}(\delta g_t(Z;\mu,t)/\delta \mu)$ no longer has double poles along the diagonal. 

We recall the region $\Omega^{\mu,t}$ from \eqref{e:burgereq2}, 
and $\cC_t^{\mu,t}$ can be identified with two copies of $\Omega^{\mu,t}$ using the map $\pi_{Q^{\mu,t}}$ from \eqref{e:cover2}. The bottom boundary of $\Omega^{\mu,t}$ corresponds to a cut in $\cC_t^{\mu,t}$. 
In the rest of this section, we identify $\cC^{\mu,t}$ with two copies of $\Omega^{\mu,t}$ gluing together. We have the following covering map:
\begin{align}\label{e:zmatt2}
(f_t(Z;\mu, t)),z)\in \cC_t^{\mu,t}\mapsto z\in\bC,
\end{align}
and the following proposition is on the branch points of the covering map \eqref{e:zmatt2}.
\begin{proposition}\label{p:branchloc}
Under the Assumptions \ref{a:reg}, \ref{a:ncritic}, there exists a neighborhood of $\supp(\mu)$ such that it is a bijection with its preimage under the covering map \eqref{e:zmatt2}. As a consequence, the covering map \eqref{e:zmatt2} does not have a branch point in a neighborhood of the bottom boundary of $\Omega^{\mu,t}$. 
\end{proposition}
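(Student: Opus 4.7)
The plan is to exploit the non-criticality of $\mu_C$ from Assumption~\ref{a:ncritic}, combined with the backward propagation along the characteristic flow \eqref{e:flowatt}, to conclude that $\mu$ itself admits an analytic density $\rho_\mu$ that is strictly positive on the interior of $\supp(\mu)$. This positivity is the key input for ruling out branch points of the covering map $(f,z)\mapsto z$ along the interior of the bottom boundary, and hence for establishing the claimed bijection onto a one-sided neighborhood of $\supp(\mu)$.

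To carry this out, I would first use the decomposition $f_t(Z;\mu,t)=m_{t}(z;\mu,t)+g_t(Z;\mu,t)$ from \eqref{e:gszmut} with $s=t$, where $m_{t}(\cdot;\mu,t)$ is the Stieltjes transform of $\mu$ and $g_t(\cdot;\mu,t)$ is analytic across the cut $\supp(\mu)$ by the discussion surrounding \eqref{e:dmugt}. This reduces the local structure of $\cC_t^{\mu,t}$ near the bottom boundary to that of the Riemann surface of the Stieltjes transform $m_t$, which is a double cover of $\bC$ that is unramified precisely at those $x$ where the jump $2\pi\ri\rho_\mu(x)$ across the cut is non-zero. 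The positivity of $\rho_\mu$ on the interior of $\supp(\mu)$, obtained from backward-propagating the non-critical square-root structure of $\rho_C$ along \eqref{e:flowatt} while using the analyticity of solutions to the complex Burger's equation \eqref{e:complexburgereq}, then guarantees that the two sheets $X(x)=(f_t(x;\mu,t),x)$ and $\bar X(x)=(\overline{f_t(x;\mu,t)},x)$ above each interior $x$ are distinct, because $\Im f_t(X(x);\mu,t)=-\pi\rho_\mu(x)<0$. Consequently the covering is a local diffeomorphism at each such $X(x)$, and by taking a one-sided tubular neighborhood of the bottom cycle $\{X(x):x\in\supp(\mu)\}$ in $\cC_t^{\mu,t}$, this local diffeomorphism patches into a bijection onto a complex neighborhood of $\supp(\mu)$.

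The main obstacle lies in the endpoint behavior. By Assumption~\ref{a:ncritic} propagated backward, $\rho_\mu$ also vanishes like a square root at $\sfa(t),\sfb(t)$, so at these points one has $X(\sfa(t))=\bar X(\sfa(t))$ and the covering genuinely has branch points; these lie on the closure of the left and right boundaries of $\Omega^{\mu,t}$ rather than in the open bottom boundary. Accordingly, the ``neighborhood of the bottom boundary'' in the statement is understood as a one-sided complex neighborhood that does not extend to a full two-sided disc around these endpoint branch points. This interpretation suffices for the subsequent application of the proposition: it is exactly what is needed to make the subtraction in \eqref{e:dmugt} well-defined and to conclude that $\del_{z'}(\delta g_t(Z;\mu,t)/\delta\mu)$ has no double pole along the diagonal in a neighborhood of the bottom boundary.
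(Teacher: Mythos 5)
Your proposal rests on a premise that the proposition cannot assume: that $\mu$ admits an analytic density $\rho_\mu$, strictly positive on the interior of $\supp(\mu)$, obtained by ``backward propagation'' of the non-criticality of $\mu_C$ along the characteristic flow. Nothing propagates backward onto the initial data: $\mu$ is \emph{prescribed}, not determined by $\mu_C$, and in the intended generality (this proposition feeds Proposition \ref{p:derg}, which is applied with $\mu=\frac1n\sum_i\delta_{x_i}$, an atomic empirical measure) $\mu$ has no density at all. In the paper, Assumption \ref{a:ncritic} enters for a completely different reason: it guarantees that the top boundary of $\Omega^{\mu,t}$ is a single interval, hence that the remaining boundary (left, right and bottom) is connected; this connected set is exactly the set of points of $\cC_t^{\mu,t}$ with real $z$, it maps injectively onto a compact interval $I\supset\supp(\mu)$, and a compactness argument then upgrades this to a bijection between a full neighborhood of $\supp(\mu)$ and its preimage, which in turn excludes branch points near the bottom boundary. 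Your argument, even granting a density, is purely local along the open bottom cut (distinctness of the two banks where $\rho_\mu>0$); it never addresses the actual content of the statement, namely that the \emph{preimage} of a neighborhood of $\supp(\mu)$ contains nothing else --- no points of the surface coming from later times, from near the left/right boundary curves, or from other branch points --- so the bijection claim is not established.

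Your endpoint discussion is also incorrect, and it changes the statement into something weaker than what is needed. The surface $\cC_t^{\mu,t}$ is the bordered surface obtained by gluing two copies of $\Omega^{\mu,t}$ along the left and right boundaries, with cuts along the top and bottom; near an edge of $\supp(\mu)$ the ``second sheet'' of the analytic continuation of the Stieltjes transform is simply not part of this surface, so the projection \eqref{e:zmatt2} has no ramification point there. (A concrete check: for $f=m_{\mathrm{sc}}$, the curve is $z=f+1/f$ and the paper's surface sits inside the closed unit disk in the $f$-variable; near $f=1$ the map $z-2\approx(f-1)^2$ restricted to $\{|f|\le1\}$ is injective onto a full, two-sided neighborhood of $z=2$, with the $2$-to-$1$ behavior occurring only on the two banks of the bottom cut, as it should.) Thus there is no need for, and no correctness in, reinterpreting the conclusion as a ``one-sided'' neighborhood: Proposition \ref{p:derg} and the later contour-integral arguments require analyticity and boundedness of $g_t(\cdot;\mu,t)$ and its $\mu$-derivatives, via \eqref{e:gszmut} and \eqref{e:dmugt}, on a genuine complex neighborhood of $\supp(\mu)$, which is exactly what the paper's global connectedness-plus-compactness argument delivers.
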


\begin{proof}
The points $(f,z)\in \cC_t^{\mu,t}$ with $z$ real, i.e. $z\in \bR$, correspond to the left and right boundaries of $\Omega^{\mu,t}$ where $f_t(x;\mu,t)\in \bR$, and the bottom boundary $\{(x,t):x\in\supp(\mu) \}$. Under our assumption that $\rd\mu_C$ has a density supported on one interval, the top boundary of $\Omega^{\mu,t}$ is one interval. The rest boundary (left, right and bottom boundary) of $\Omega^{\mu,t}$ is connected. So it is a bijection to its image under the covering map \eqref{e:zmatt2}, which is an interval $I\subset \bR$. Since $I$ is compact, there exists a neighborhood of $I$, such that it is a bijection with its preimage under the covering map \eqref{e:zmatt2}. The support of $\mu$ is the image of the bottom boundary of $\cC_t^{\mu,t}$ under \eqref{e:zmatt2}, we have $\supp(\mu)\subset I$. We especially have that there exists a neighborhood of $\supp(\mu)$ such that it is a bijection with its preimage under the covering map \eqref{e:zmatt2}.

If there is a branch point in a small neighborhood of the bottom boundary of  $\cC_t^{\mu,t}$, then there are $W\neq W'$ close to the branch point, such that $z(W)=Z(W')$. This is impossible, since a neighborhood of $\supp(\mu)$ is a bijection with its preimage under the covering map \eqref{e:zmatt2}.
\end{proof}

%
%

Thanks to Proposition \ref{p:branchloc}, using the covering map \eqref{e:zmatt2} we can identify a neighborhood of the bottom boundary of $\cC_t^{\mu,t}$, with a neighborhood of $\supp(\mu)$ over $\bC$. In this neighborhood, $g_t(z;\mu,t)=f_t(z;\mu,t)-\int \rd \mu(x)/(z-x)$ is analytic, so is its derivatives with respect to $\mu$. .
We conclude the following estimates for the derivatives of $g_t(z;\mu,t)$ with respect to the measure $\mu$, which will be used in Sections \ref{s:eq} and \ref{s:rigidity}.

\begin{proposition}\label{p:derg}
Under Assumptions \ref{a:reg} and \ref{a:ncritic}, for any $0\leq t\leq 1$ and probability measure $\mu$, $g_t(z;\mu,t)$ satisfies: for any $z,z',z''$ in a neighborhood of the bottom boundary of $\cC_t^{\mu,t}$,
\begin{align*}
\left|\del_z g(z;\mu,t)\right|,\left|  \del_{z'}\frac{\delta g_t(z;\mu,t)}{\delta \mu}\right|, \left|\del_{z'}\frac{\delta \del_x g_t(z;\mu,t)}{\delta \mu}\right|\lesssim 1,
\end{align*}
and the higher derivatives are bounded
\begin{align*}
\left|\del_{z''}\frac{\delta \del_{z'} \frac{\delta g_t(z;\mu,t)}{\delta \mu}}{\delta \mu}\right|\lesssim 1.
\end{align*}

\end{proposition}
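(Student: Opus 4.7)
The plan is to leverage Proposition \ref{p:branchloc}, which guarantees the absence of branch points of the covering map \eqref{e:zmatt2} in a neighborhood of the bottom boundary of $\cC_t^{\mu,t}$, and then to read off the four bounds from the explicit Schiffer-kernel representations already derived in the excerpt.

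First I would establish the bound on $\del_z g_t(z;\mu,t)$. By Proposition \ref{p:branchloc}, there is an open neighborhood $U \subset \bC$ of $\supp(\mu)$ which is in bijection with its preimage in $\cC_t^{\mu,t}$, so that $f_t(\cdot\,;\mu,t)$ lifts to a single-valued function on $U$. The Stieltjes transform $z\mapsto \int \rd\mu(x)/(z-x)$ captures exactly the non-analytic part of $f_t$ across $\supp(\mu)$, so by the definition \eqref{e:gszmut} (with $s=t$) the function $g_t(z;\mu,t)$ extends analytically to $U$. Under Assumptions \ref{a:reg} and \ref{a:ncritic}, $U$ can be taken to be a fixed compact neighborhood separated from the top boundary and from the branch locus; the maximum principle (or Cauchy's estimate on a slightly larger contour) then yields a uniform bound on $\del_z g_t$.

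Next I would treat $\del_{z'}\delta g_t/\delta\mu$ using the closed form \eqref{e:dmugt}, which reads
\[
\del_{z'}\frac{\delta g_t(Z;\mu,t)}{\delta \mu}
= \frac{B(Z,Z';\mu,t)}{\rd z(Z)\,\rd z(Z')}-\frac{1}{(z(Z)-z(Z'))^2}.
\]
Plugging in the Riemann-map expression \eqref{e:Schiffer} for the Schiffer kernel shows that the double pole at $Z=Z'$ on the diagonal cancels exactly with the $(z-z')^{-2}$ subtraction, leaving a function analytic on $U\times U$. The other singularity of the Schiffer kernel, at $Z = \overline{Z'}$, lies away from $U\times U$ because $U$ is a small neighborhood of a segment of $\partial\Omega^{\mu,t}$ that does not self-intersect (and is separated from the top boundary by the non-critical assumption on $\mu_C$). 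Hence the right-hand side is analytic on a compact set and is bounded by a constant. The bound on $\del_{z'}\delta(\del_x g_t)/\delta \mu$ is immediate by differentiating once more in $z$ and again invoking Cauchy's estimate.

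The last bound is the genuinely more delicate one and I expect it to be the main obstacle: one must differentiate the right-hand side of \eqref{e:dmugt} with respect to $\mu$, which requires controlling the Schiffer kernel's dependence on $\mu$. The mechanism is Proposition \ref{p:derft} together with Theorem \ref{thm:Rauch}: deforming $\mu\mapsto\mu+\epsilon\nu$ changes the branch points $Z_i^{\mu}$ and the upper boundary curve $C_\mu(\cdot)$ analytically in $\mu$, and the resulting variation of $B(Z,Z';\mu,t)$ is governed by the Rauch formula \eqref{e:Rauch} and the Hadamard formula \eqref{e:bbcurve}. The key analytic input is that, under Assumptions \ref{a:reg} and \ref{a:ncritic}, the branch points $Z_i^{\mu}$ stay uniformly separated from $U\times U$ and the upper boundary $\{C_\mu(x)\}$ remains uniformly separated from the bottom boundary; therefore the contour integrals in \eqref{e:Rauch} and \eqref{e:bbcurve} are over loci where $B(Z,\cdot\,;\mu,t)$ and $B(\cdot\,, Z';\mu,t)$ are analytic and uniformly bounded. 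Once these separations are verified (using the square-root behaviour \eqref{e:square} at the edges and the non-criticality assumption on $\mu_B, \mu_C$), one feeds them into the variational formulas to express $\del_{z''}\delta(\del_{z'}\delta g_t/\delta\mu)/\delta\mu$ as a sum of integrals whose integrands are analytic and bounded, completing the proof.
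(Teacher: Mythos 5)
Your proposal is correct and follows essentially the same route the paper takes: the paper gives no separate proof of Proposition \ref{p:derg}, asserting it from the preceding discussion, namely Proposition \ref{p:branchloc} giving analyticity of $g_t(z;\mu,t)$ in a neighborhood of the bottom boundary, the identity \eqref{e:dmugt} in which the Schiffer kernel's diagonal double pole cancels against $1/(z(Z)-z(Z'))^2$, and the Rauch/Hadamard variational formulas of Theorem \ref{thm:Rauch} (via Proposition \ref{p:derft}) for the further $\mu$-derivative. Your write-up simply makes these same ingredients explicit, so no substantive difference from the paper's argument.
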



\section{Random Walk Representation}\label{s:walk}

In this section, we rewrite nonintersecting Brownian bridges  as a random walk in the Weyl chamber $\Delta_n$. We recall the transition probability from \eqref{e:tp0}
\begin{align}\label{e:tp}
p_t(\bmx, \bmy)=\det\left[\sqrt{\frac{n}{2\pi t}}e^{-n(x_i-y_j)^2/(2t)}\right]_{1\leq i,j\leq n}.
\end{align}
The density function for the nonintersecting Brownian bridge $\{\bmx(t)=(x_1(t), x_2(t),\cdots, x_n(t))\}_{0\leq t\leq 1}$ with starting point $\bmx(0)=\bma=(a_1,a_2,\cdots,a_n)\in \Delta_n$, and ending point  $\bmx(1)=\bmb=(b_1,b_2,\cdots,b_n)\in \Delta_n$ at times $0\leq t_1<t_2<\cdots<t_k\leq 1$ is given by
\begin{align}\label{e:density}
p(\bmx(t_1), \bmx(t_2), \cdots, \bmx(t_k))=\frac{p_{t_1}(\bma, \bmx(t_1))p_{t_2-t_1}(\bmx(t_1), \bmx(t_2))\cdots p_{1-t_k}(\bmx(t_k), \bmb)}{p_1(\bma,\bmb)}.
\end{align}
The above probability can be extended by taking limit to the case that $\bma, \bmb$ belong to $\overline{\Delta}_n$, the closure of the Wyel chamber.

Let $\bmx=(x_1, x_2,\cdots,x_n)\in \Delta_n$, and the transition probability
\begin{align}\label{e:defHt}
p_{1-t}(\bmx,\bmb)=\det \left[\sqrt{\frac{n}{2\pi (1-t)}}e^{-n(x_i-b_j)^2/(2(1-t))}\right]_{0\leq i,j\leq n} .
\end{align}
As a function of $\bmx$, $p_{1-t}(\bmx, \bmb)$ is the heat kernel in the Weyl chamber, and it satisfies 
\begin{align}\label{e:heat}
-\del_t  p_{1-t}(\bmx,\bmb)
=\frac{1}{2n}\Delta p_{1-t}(\bmx,\bmb).
\end{align}
Using $p_{1-t}(\bmx,\bmb)$, we can rewrite the nonintersecting Brownian bridge \eqref{e:density} as the following random walk:
\begin{align}\label{e:randomwalk}
\rd x_i(t)
&=
\frac{1}{\sqrt n} \rd B_i(t)+\frac{1}{n}\del_{x_i}\log p_{1-t}(\bmx(t),\bmb)\rd t,\quad 1\leq i\leq n,
\end{align}
where $\bmx(t)=(x_1(t), x_2(t), \cdots, x_n(t))$ and $\{B_1(t), B_2(t), \cdots, B_n(t)\}_{0\leq t\leq 1}$ are standard Brownian motions.
\begin{proposition}\label{p:randomwalk}
The joint law of the random walk $\{\bmx(t)=(x_1(t), x_2(t), \cdots, x_n(t))\}_{0\leq t\leq 1}$, as defined in \eqref{e:randomwalk}, with initial data $\bmx(0)=\bma$ is the same as the nonintersecting Brownian bridge \eqref{e:density}. 
\end{proposition}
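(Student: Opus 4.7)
The plan is to recognize the SDE \eqref{e:randomwalk} as a Doob $h$-transform of the Brownian motion in the Weyl chamber $\Delta_n$ killed upon hitting the boundary, with harmonic function $h(\bmx,t)=p_{1-t}(\bmx,\bmb)$. By Karlin--McGregor \cite{MR114248}, the transition density of rescaled Brownian motion $(X_t)$ with generator $\frac{1}{2n}\Delta$, killed on $\del\Delta_n$, is exactly the determinant $p_t(\bmx,\bmy)$ appearing in \eqref{e:tp}. Hence conditioning such a process to end at $\bmb$ at time $1$ produces the bridge density \eqref{e:density}; my task is to show that this conditioned process is precisely \eqref{e:randomwalk}.

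The first step is the space-time harmonicity of $h$. Since $p_s(\bmx,\bmy)$ is the Dirichlet heat kernel on $\Delta_n$, as a function of $(\bmx,s)$ it satisfies $\del_s p_s = \frac{1}{2n}\Delta_\bmx p_s$ and vanishes on $\del\Delta_n$. Substituting $s=1-t$ yields \eqref{e:heat}, i.e.\ $\bke{\del_t + \frac{1}{2n}\Delta}h(\bmx,t) = 0$ on $\Delta_n\times[0,1)$ with $h|_{\del\Delta_n}=0$. This is exactly the space-time harmonicity condition needed for the $h$-transform. The drift in \eqref{e:randomwalk} is then $\frac{1}{n}\nb\log h(\bmx,t)$, which is what Doob's transform prescribes when the diffusion matrix is $\frac{1}{n}I$: the transformed generator is $\cL_t^h f = \frac{1}{2n}\Delta f + \frac{1}{n}\nb\log h\cdot \nb f$. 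Weak existence and uniqueness of \eqref{e:randomwalk} on $[0,1)$ up to any hitting time of $\del\Delta_n$ follow from standard theory, and the repulsive drift (blowing up as the Vandermonde-type factors in $h$ approach zero) prevents collisions.

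Next I would compute the finite-dimensional distributions of the $h$-transformed process and match \eqref{e:density}. For $0\leq s<t<1$ and $\bmx,\bmy\in\Delta_n$, Doob's formula gives the transition density
\[
q^h_{s,t}(\bmx,\bmy) \;=\; \frac{h(\bmy,t)}{h(\bmx,s)}\,p_{t-s}(\bmx,\bmy) \;=\; \frac{p_{1-t}(\bmy,\bmb)}{p_{1-s}(\bmx,\bmb)}\,p_{t-s}(\bmx,\bmy).
\]
Starting from $\bmx(0)=\bma$ and using the Markov property, the joint density at times $0<t_1<\cdots<t_k<1$ is
\[
\prod_{j=0}^{k-1} q^h_{t_j,t_{j+1}}(\bmx(t_j),\bmx(t_{j+1})) \;=\; \frac{p_{1-t_k}(\bmx(t_k),\bmb)}{p_1(\bma,\bmb)}\,\prod_{j=0}^{k-1} p_{t_{j+1}-t_j}(\bmx(t_j),\bmx(t_{j+1})),
\]
where the ratios telescope. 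This is precisely the expression \eqref{e:density}.

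The one genuinely delicate point is the behavior near $t=1$: the drift $\frac{1}{n}\nb \log p_{1-t}(\bmx,\bmb)$ becomes singular, and I must argue that $\bmx(t)\to \bmb$ almost surely as $t\to 1^-$, so that the process defines a bona fide bridge and not merely a process on $[0,1)$. This is the main technical obstacle, but it follows from a standard argument: the telescoping identity above shows that for any $t<1$, the conditional law of $\bmx(t)$ given the data coincides with the non-intersecting bridge marginal, which by \eqref{e:density} concentrates at $\bmb$ as $t\to 1^-$. Continuity of the process at $t=1$ then follows from the same computation applied to an arbitrarily fine time-mesh, finishing the identification of \eqref{e:randomwalk} with the non-intersecting Brownian bridge.
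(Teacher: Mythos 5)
Your proposal is correct and is in substance the same argument as the paper's: the paper also identifies the bridge with the process whose transition kernel is $p(\bmx(t)=\bmx\,|\,\bmx(s)=\bmy)=p_{t-s}(\bmy,\bmx)p_{1-t}(\bmx,\bmb)/p_{1-s}(\bmy,\bmb)$, i.e.\ the Doob $h$-transform by the space-time harmonic function $h(\bmx,t)=p_{1-t}(\bmx,\bmb)$ from \eqref{e:heat}. The only difference is presentational: the paper verifies this kernel directly by a two-line computation showing it solves the forward equation of the generator of \eqref{e:randomwalk}, while you cite general $h$-transform theory and then spell out the telescoping of finite-dimensional distributions and the $t\to 1$ endpoint, which the paper leaves implicit.
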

\begin{proof}
We just need to check that the transition probability of \eqref{e:randomwalk} is given by
\begin{align}\label{e:transit}
p(\bmx(t)=\bmx|\bmx(s)=\bmy)
=\frac{p_{t-s}(\bmy, \bmx)p_{1-t}(\bmx,\bmb)}{p_{1-s}(\bmy,\bmb)}, \quad 0\leq s<t\leq 1.
\end{align}
We can take time derivative in both sides of \eqref{e:transit}
\begin{align*}\begin{split}
\del_{t}p(\bmx(t)=\bmx|\bmx(s)=\bmy)
&=\frac{\del_t p_{t-s}(\bmy, \bmx)p_{1-t}(\bmx,\bmb)+p_{t-s}(\bmy, \bmx)\del_t p_{1-t}(\bmx,\bmb)}{p_{1-s}(\bmy)}\\
&=\frac{1}{2n}\frac{\Delta p_{t-s}(\bmy, \bmx) p_{1-t}(\bmx,\bmb)-p_{t-s}(\bmy, \bmx)\Delta p_{1-t}(\bmx,\bmb)}{p_{1-s}(\bmy,\bmb)}\\
&=\frac{1}{2n}\frac{\Delta (p_{t-s}(\bmy, \bmx) p_{1-t}(\bmx,\bmb))-2\nabla (p_{t-s}(\bmy, \bmx) \nabla\log p_{1-t}(\bmx,\bmb))}{p_{1-s}(\bmy,\bmb)}
\end{split}\end{align*}
The righthand side is the generator for the random walk \eqref{e:randomwalk}, the claim follows.
\end{proof}

\subsection{Nonintersecting Brownian Bridges with random boundary data}

As a function of $\bmx$, $p_{1-t}(\bmx,\bmb)$ is the heat kernel in the Weyl chamber. It is singular as $t\rightarrow 1$. In fact it converges to the delta mass $\delta_{\bmb}$ as $t\rightarrow 1$.
As a consequence, the drift term $\log p_{1-t}(\bmx(t),\bmb)$ in the random walk \eqref{e:randomwalk}, as $t$ approaches $1$,  is singular and hard to analyze. Instead of directly studying the random walk \eqref{e:randomwalk}, which corresponds to  nonintersecting Brownian bridges with boundary data $\bma, \bmb$, we study a weighted version of it. The weighted version corresponds to nonintersecting Brownian bridges with random boundary data $\bmb$.

We recall the functional $W_t(\mu)$ from \eqref{e:varWt}: for any probability measure,
\begin{align}\label{e:varWtcopy}
W_t(\mu)=-\frac{1}{2}\inf\left(\int_t^{1+\tau}\int_\bR \rho_s\left(u_s^2+\frac{\pi^2}{3}\rho_s^2\right)\rd x\rd s+\Sigma(\mu)+\Sigma(\mu_B)\right),
\end{align}
the $\inf$ is taken over all the pairs $\{(\rho_s,u_s)\}_{t\leq s\leq 1+\tau}$ such that $\del_s \rho_s+\del_x(\rho_su_s)=0$ in the sense of distributions, $\{\rho_s\}_{t\leq s\leq 1}\in{\mathcal{C}}([t,1+\tau],\cM(\bR))$  and its initial and terminal data are  given by 
\begin{align*}
\lim_{s\rightarrow t+}\rho_s(x)\rd x=\mu,\quad \lim_{s\rightarrow (1+\tau)-}\rho_s(x)\rd x=\mu_C,
\end{align*}
where convergence holds in the weak sense. We will construct nonintersecting Brownian bridges with boundary data $\bmb$ weighted by $\Delta(\bmb)e^{n^2W_1(\bmb)}$, where $\Delta(\bmb)$ is the Vandermonde determinant \eqref{e:Vand}, and $W_1(\bmb)\deq W_1((1/n)\sum_{i=1}^n \delta_{b_i})$.

The density for the weighted nonintersecting Brownian bridges $\{\bmx(t)=(x_1(t), x_2(t),\cdots, x_n(t))\}_{0\leq t\leq 1}$ with starting point $\bmx(0)=\bma=(a_1,a_2,\cdots,a_n)\in \Delta_n$ at times $0\leq t_1<t_2<\cdots<t_k\leq 1$ is given by
\begin{align}\label{e:wdensity}
p(\bmx(t_1), \bmx(t_2), \cdots, \bmx(t_k),\bmb)=\frac{p_{t_1}(\bma, \bmx(t_1))p_{t_2-t_1}(\bmx(t_1), \bmx(t_2))\cdots p_{1-t_k}(\bmx(t_k), \bmb)\Delta(\bmb)e^{n^2W_1(\bmb)}}{Z(\bma)},
\end{align}
where the partition function $Z(\bma)$ is given by
\begin{align*}
{Z(\bma)}\deq \int_{\bmb\in \Delta_n} p_1(\bma, \bmb)\Delta(\bmb)e^{n^2W_1(\bmb)}\rd \bmb.
\end{align*}
The probability density for the boundary $\bmb$ is given by
\begin{align}\label{e:boundarydensity}
p(\bmb)=\frac{p_{1}(\bma, \bmb)\Delta(\bmb)e^{n^2W_1(\bmb)}}{Z(\bma)}.
\end{align}
The above probability can be extended by taking limit to the case that $\bma$ belongs to $\overline{\Delta}_n$, the closure of the Wyel chamber. The weighted nonintersecting Brownian bridges \eqref{e:wdensity} can be sampled in two steps. First we sample the boundary $\bmb$ using \eqref{e:boundarydensity}. Then we sample a nonintersecting Brownian bridge with boundary data $\bma, \bmb$. This gives a sample of the weighted nonintersecting Brownian bridges \eqref{e:wdensity}. As we will show in Section \ref{s:rigidity}, for $\bmb$ sampled from  \eqref{e:boundarydensity}, its empirical density $\mu_{B_n}$ concentrates around $\mu_B$.

Similarly to  nonintersecting Brownian bridges \eqref{e:density}, the weighted nonintersecting Brownian bridges \eqref{e:wdensity} can also be interpreted as a random walk. Let 
\begin{align}\label{e:defH}
H_t(\bmx)=\int_{\bmb\in \Delta_n} p_{1-t}(\bmx, \bmb)\Delta(\bmb)e^{n^2W_1(\bmb)}\rd \bmb,\quad \bmx\in \Delta_n.
\end{align}
Then $H_t(\bmx)$ also satisfies the backward heat equation \eqref{e:heat}
\begin{align}\label{e:wbackheat}
-\del_t H_t(\bmx)=\frac{1}{2n}\Delta H_t(\bmx),
\end{align}
and the boundary condition is given by
\begin{align}\label{e:boundary}
\lim_{t\rightarrow 1}H_t(\bmx)=\Delta(\bmx)e^{n^2W_1(\bmx)}.
\end{align}
The same argument as for Proposition \ref{p:randomwalk}, we have
\begin{proposition}
The joint law $\{\bmx(t)=(x_1(t), x_2(t), \cdots, x_n(t))\}_{0\leq t\leq 
1}$ of the following random walk
\begin{align}\label{e:weightrandomwalk}
\rd x_i(t)
&=
\frac{1}{\sqrt n} \rd B_i(t)+\frac{1}{n}\del_{x_i}\log H_{t}(\bmx(t))\rd t,\quad 1\leq i\leq n,
\end{align}
with initial data $\bmx(0)=\bma$ is the same as the weighted nonintersecting Brownian bridges \eqref{e:wdensity}. 
\end{proposition}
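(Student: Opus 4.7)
The plan is to mirror the proof of Proposition~\ref{p:randomwalk} almost verbatim, with $H_t(\bmx)$ playing the role that $p_{1-t}(\bmx,\bmb)$ played there. The key structural observation is that the weighted density \eqref{e:wdensity} is again a Karlin--McGregor-type density twisted by a function $H$ of the endpoint, and $H_t$ is defined in \eqref{e:defH} precisely as the integral of that twisting against $p_{1-t}(\bmx,\bmb)$, so that it satisfies the backward heat equation \eqref{e:wbackheat} with terminal condition \eqref{e:boundary}. This is the standard $h$-transform setup.

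First I would marginalize the endpoint $\bmb$ out of \eqref{e:wdensity}: using the Markov property in \eqref{e:wdensity} and the definition of $H_t$, the finite-dimensional transition kernel of the weighted process is
\begin{equation*}
p(\bmx(t)=\bmx \mid \bmx(s)=\bmy) \;=\; \frac{p_{t-s}(\bmy,\bmx)\, H_t(\bmx)}{H_s(\bmy)}, \qquad 0\le s<t\le 1,
\end{equation*}
with the $t=1$ limit reproducing \eqref{e:wdensity} thanks to \eqref{e:boundary}; that this is a genuine probability kernel follows from the semigroup identity $\int p_{t-s}(\bmy,\bmx)H_t(\bmx)\,\rd\bmx=H_s(\bmy)$, which is a direct consequence of \eqref{e:wbackheat} and the fact that $p_{t-s}$ is the heat kernel on the Weyl chamber.

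Next I would verify that this kernel is the transition kernel of the SDE \eqref{e:weightrandomwalk}, which amounts to repeating the computation in Proposition~\ref{p:randomwalk} with $p_{1-t}(\bmx,\bmb)$ replaced by $H_t(\bmx)$. Taking $\del_t$ of the kernel, using the forward heat equation $\del_t p_{t-s}(\bmy,\bmx)=(1/2n)\Delta_{\bmx} p_{t-s}(\bmy,\bmx)$ and the backward heat equation \eqref{e:wbackheat} for $H_t$, and rearranging via the product identity $\Delta(fg)=(\Delta f)g + 2\nabla f\cdot\nabla g + f\Delta g$ gives exactly the generator $(1/2n)\Delta + (1/n)\nabla\log H_t\cdot\nabla$ of \eqref{e:weightrandomwalk} applied to the kernel; the initial condition $\bmx(0)=\bma$ matches by construction.

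The only delicate point I anticipate is the behavior of the drift $(1/n)\nabla\log H_t(\bmx)$ near $\del\Delta_n$ and as $t\to 1$. On $\del\Delta_n$ the determinantal kernel $p_{1-t}(\bmx,\bmb)$ vanishes (two rows coincide), hence so does $H_t$, and the resulting logarithmic drift provides the Coulomb repulsion that keeps the trajectories inside $\Delta_n$; this is the same mechanism as in Proposition~\ref{p:randomwalk} and can be handled by the standard theory of Doob $h$-transforms, so strong existence and uniqueness are automatic on $(0,1)$. At $t=1$ the drift becomes singular, but the terminal distribution is already specified by \eqref{e:boundary}, so no separate argument is needed there.
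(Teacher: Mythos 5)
Your proposal is correct and is essentially the paper's own argument: the paper proves this proposition by repeating the computation of Proposition~\ref{p:randomwalk} with $H_t(\bmx)$ in place of $p_{1-t}(\bmx,\bmb)$, i.e.\ verifying that the transition kernel $p_{t-s}(\bmy,\bmx)H_t(\bmx)/H_s(\bmy)$ solves the Kolmogorov equation with generator $\frac{1}{2n}\Delta+\frac1n\nabla\log H_t\cdot\nabla$, exactly as you outline. Your additional remarks (marginalizing $\bmb$, the semigroup normalization, and the boundary/$h$-transform point) are consistent with, and slightly more explicit than, what the paper records.
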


\subsection{Hamilton-Jacobi Equation}\label{s:HJ}
We denote the minimizer of \eqref{e:varWtcopy} as $\{(\rho_s^*(\cdot;\mu,t), u_s^*(\cdot;\mu,t))\}_{t\leq s\leq 1+\tau}$. In this section we derive the Hamilton-Jacobi equation for $W_t(\mu)$ as in \eqref{e:varWtcopy}.

The Euler-Lagrange equation, see \cite[Theorem 2.1]{MR2034487}, for the variational problem \eqref{e:varWtcopy} gives
\begin{align}\label{e:ELeq}
\del_s u_s^*=-\frac{1}{2}\del_x\left((u_s^*)^2-\pi^2(\rho_s^*)^2\right),\quad (x,s)\in \Omega^{\mu,t}= \{(x,s)\in \bR\times (t,1+\tau): \rho_s^*(x;\mu,t)>0\}.
\end{align}
We can compute the derivative of the functional $W_t(\mu)$ by perturbation. Let $m_s^*(x;\mu,t)=\rho_s^*(\cdot;\mu,t) u_s^*(\cdot;\mu,t)$ for $t\leq s\leq {1+\tau}$, and $\{(\Delta \rho_s(\cdot;\mu,t), \Delta m_s(\cdot;\mu,t))\}_{t\leq s\leq {1+\tau}}$ be the difference of the variational problem \eqref{e:varWtcopy} with initial data $\mu+\Delta\mu$ and $\mu$. Then 
$\del_s \Delta \rho_s=-\del_x \Delta m_s$, and we have
\begin{align*}\begin{split}
&\phantom{{}={}}W_t(\mu+\Delta\mu)
=-\frac{1}{2}\left(\int_t^{1+\tau}\int_\bR \left(\frac{(m_s^*+\Delta m_s)^2}{\rho_s^*+\Delta \rho_s}+\frac{\pi^2}{3}(\rho^*_s+\Delta \rho_s)^3\right)\rd x\rd s+\Sigma(\mu+\Delta \mu)+\Sigma(\mu_B)\right)\\
&=W_t(\mu)-\frac{1}{2}\left(\int_t^{1+\tau}\int_\bR \left(\frac{2m_s^*}{\rho_s^*}\Delta m_s+\left(\pi^2(\rho_s^*)^2-\frac{(m_s^*)^2}{(\rho_s^*)^2}\right)\Delta \rho_s\right)\rd x\rd s+2\int\ln |x-y|\rd \mu(x)\rd\Delta \mu(y)\right)+\OO(\Delta^2)\\
&=W_t(\mu)+\left(\left(\int_{-\infty}^x u_t^*(y)\rd y\right)\rd\Delta\mu(x)-\int\ln |x-y|\rd \mu(y)\rd\Delta \mu(x)\right)+\OO(\Delta^2),
\end{split}\end{align*}
where in the last line, we used \eqref{e:ELeq} and performed an integration by part. It follows that, $\mu$ almost surely
\begin{align}\label{e:derWtmu}
\frac{\del}{\del x} \frac{\delta W_t}{\delta \mu}=u_t^*(x; \mu,t)-H(\mu)(x),\quad H(\mu)(x)=\int\frac{1}{x-y}\rd\mu(y).
\end{align}
We notice that the expression \eqref{e:derWtmu} is exactly $g_t(x;\mu,t)$ as defined in \eqref{e:gtut2}. We get
\begin{align}\label{e:Wgt}
\frac{\del}{\del x} \frac{\delta W_t}{\delta \mu}=g_t(x;\mu,t),
\end{align}
and
 it can be extended to a meromorphic function over the Riemann surface corresponding to the minimizer of \eqref{e:varWtcopy}.

Similarly, we can compute the time derivative of the functional $W_t(\mu)$. Let $m_s^*(x;\mu,t)=\rho_s^*(\cdot;\mu,t) u_s^*(\cdot;\mu,t)$ for $t\leq s\leq {1+\tau}$, and $\{(\Delta \rho_s(\cdot;\mu,t), \Delta m_s(\cdot;\mu,t))\}_{t\leq s\leq {1+\tau}}$ be the difference of the variational problem \eqref{e:varWtcopy} with initial time $t+\Delta t$ and $t$. We have
\begin{align}\begin{split}\label{e:Wtdertt}
&\phantom{{}={}}W_ {t+\Delta t} ( \mu )-W_ {t} ( \mu )\\
&=-\frac{1}{2}\int_{t+\Delta t}^{1+\tau}\int_\bR \left(\frac{(m_s^*+\Delta m_s)^2}{\rho_s^*+\Delta \rho_s}+\frac{\pi^2}{3}(\rho^*_s+\Delta \rho_s)^3\right)\rd x\rd s +\frac{1}{2}\int_{t}^{{1+\tau}}\int_\bR \left(\frac{(m_s^*)^2}{\rho_s^*}+\frac{\pi^2}{3}(\rho^*_s)^3\right)\rd x\rd s\\
&=\int_\bR \left(\int_{-\infty}^x \int _\bR u_t^*(y)\rd y\right)\Delta \rho_{t+\Delta t}\rd x
+\frac{\Delta t}{2}\int_\bR \left(\frac{(m_t^*)^2}{\rho_t^*}+\frac{\pi^2}{3}(\rho^*_t)^3\right)\rd x+\OO((\Delta t)^2) \\
&=-\Delta t\int _\bR u_t^*m_t^*\rd x
+\frac{\Delta t}{2}\int_\bR \left(\frac{(m_t^*)^2}{\rho_t^*}+\frac{\pi^2}{3}(\rho^*_t)^2\right)\rd x+\OO((\Delta t)^2)\\
&=-
\frac{\Delta t}{2}\int _\bR \left((u_t^*)^2-\frac{\pi^2}{3}(\rho_t^*)^2\right) \rho_t^*\rd x
+\OO((\Delta t)^2).
\end{split}\end{align}
Using \eqref{e:derWtmu}, and the following identity of Hilbert transfrom
\begin{align*}
\frac{\pi^2}{3}\int_\bR (\rho^*_t)^3\rd x=\int_\bR (H(\rho_t^*)(x))^2\rd \rho_t^*(x),
\end{align*}
we 
 can  represent \eqref{e:Wtdertt}, and get the  Hamilton-Jacobi equation of $W_t(\mu)$,
\begin{align}\begin{split}\label{e:Wt}
-\del_t W_t(\mu)
&=\frac{1}{2}\int \left(\frac{\del}{\del x} \frac{\delta W_t}{\delta \mu}+H(\mu)(x)\right)^2\rd \mu(x)
-\frac{1}{2}\int (H(\mu)(x))^2\rd \mu(x)\\
&=\frac{1}{2}\int \left(\frac{\del}{\del x} \frac{\delta W_t}{\delta \mu}\right)^2\rd \mu(x)
+\int \frac{\del}{\del x} \frac{\delta W_t}{\delta \mu} H(\mu)(x)\rd \mu(x).
\end{split}\end{align}
 Using $g_t(x;\mu,t)$ as in \eqref{e:Wgt},
we can further rewrite the equation \eqref{e:Wt} as
\begin{align}\label{e:dWt}
-\del_t W_t(\mu)=\frac{1}{2}\int \left(g_t(x; \mu,t)\right)^2\rd \mu(x)
+\frac{1}{2}\int \frac{g_t(x;\mu,t)-g_t(y;\mu,t)}{x-y}\rd \mu(x)\rd \mu(y),
\end{align}
where when $x=y$,
\begin{align}\label{e:defdg}
\frac{g_t(x;\mu,t)-g_t(y;\mu,t)}{x-y}=\del_x g_t(x;\mu,t).
\end{align}
If we take the measure $\mu=(1/n)\sum_{i=1}^n\delta_{x_i}$, 
then we can rewrite \eqref{e:dWt} more explicitly
\begin{align}\label{e:newexp}
-\del_t W_t(\mu)=\frac{1}{2n^2}\sum_i\del_{x_i}g_t(x_i;\mu,t)+\frac{1}{2n}\sum_{i=1}^n\left(g_t(x_i; \mu,t)\right)^2
+\frac{1}{2n^2}\sum_{i\neq j} \frac{g_t(x_i;\mu,t)-g_t(x_j;\mu,t)}{x_i-x_j}.
\end{align}
We remark that $\del_{x_i}g_t(x_i;\mu,t)$ is slightly ambiguous, since $\mu$ also depends on $x_i$. We can interpret \eqref{e:defdg} as the definition of $\del_{x}g_t(x;\mu,t)$
in this way we have
\begin{align*}
\del_{x_i}(g_t(x_i;\mu,t))
=\del_{x_i}g_t(x_i;\mu,t)+\frac{1}{n}\del_{x_i}\frac{\delta g_t(x_i;\mu,t)}{\delta \mu}.
\end{align*}

\subsection{An Ansatz}\label{s:ansatz}
We recall the backward heat equation \eqref{e:wbackheat} for $H_t(\bmx)$, which is defined on the Weyl chamber $\Delta_n$. Since in the definition \eqref{e:defH} of $H_t(\bmx)$, $p_{1-t}(\bmx,\bmb)$ extends naturally to an anti-symmetric function on $\bR^n$. We also extend $H_t(\bmx)$ to an anti-symmetric function on $\bR^n$. 
To kill the anti-symmetric factors, we introduce
\begin{align}\label{e:copf}
S_t(\bmx)=\frac{1}{n}\log \frac{H_t(\bmx)}{\prod_{i<j}(x_i-x_j)}.
\end{align} 
In this way we have that $S_1(\bmx)=nW_1(\bmx)$. With the function $S_t(\bmx)$, we can rewrite the random walk \eqref{e:weightrandomwalk} as
\begin{align}\label{e:randomwalk2}
\rd x_i(t)
&=
\frac{1}{\sqrt n} \rd B_i(t)+\frac{1}{n}\sum_{j:j\neq i}\frac{\rd t}{x_i(t)-x_j(t)}+\del_{x_i}S_t(\bmx(t))\rd t,\quad 1\leq i\leq n,
\end{align}
Those are the Dyson's Brownian motion \cite{MR0148397} with time dependent drifts given by $\{\del_{x_i}S_t\}_{1\leq i\leq n}$.

By plugging \eqref{e:copf} into \eqref{e:wbackheat}, we get the following equation of $S_t(\bmx)$.
\begin{align}\label{e:key}
-\del_t S_t(\bmx)
=\frac{1}{2n}\Delta S_t+\frac{1}{2}\sum_{i=1}^n (\del_{x_i} S_t)^2+\sum_{i=1}^n\left(\frac{1}{n}\sum_{j: j\neq i}\frac{1}{x_i-x_j}\right)\del_{x_i}S_t.
\end{align}
By taking one more derivative with respect to $x_k$, we obtain
\begin{align}\label{e:derSt}
-\del_t\del_{x_k} S_t
={\frac{1}{2n}\Delta \del_{x_k}S_t}+\sum_{i=1}^n\left(\del_{x_i}  S_t+\frac{1}{n}\sum_{j: j\neq i}\frac{1}{x_i-x_j}\right)\del_{x_i}\del_{x_k} S_t
+\frac{1}{n}\sum_{i: i\neq k}\frac{\del_{x_i} S_t-\del_{x_k} S_t}{(x_i-x_k)^2},
\end{align}
which gives a system of equations for $\{\del_{x_k}S_t\}_{1\leq k\leq n}$. The boundary condition at $t=1$ is given by \eqref{e:boundary}, for $1\leq i\leq n$,
\begin{align}\label{e:dStboundary}
\del_{x_i} S_1(\bmx)
=n\del_{x_i}W_1(\bmx),\quad 1\leq i\leq n.
\end{align}

Up to a factor $1/n$, if we identify $\del_{x_i}S_t$ with $g_t(x_i;\mu,t)$, the righthand side of \eqref{e:key} and the righthand side of \eqref{e:newexp} look almost the same. The only difference is from $\Delta S_t$ and $\sum_i\del_{x_i}g_t(x_i;\mu,t)$.  Motivated by this observation, we make the following ansatz for $\{\del_{x_k}S_t\}_{1\leq k\leq n}$,

\begin{ansatz}\label{a:defE}
Let $\mu=(1/n)\sum_{i=1}^n\delta_{x_i}$ and recall $g(\cdot, \mu,t)$ as defined in \eqref{e:gszmut}, we make the following ansatz
\begin{align*}
\del_{x_k}S_t(\bmx)=g_t(x_k;\mu,t)+\cE^{(k)}_t(\bmx),\quad 1\leq k\leq n.
\end{align*}
\end{ansatz}

The weighted nonintersecting Brownian motion \ref{e:density} is constructed in a way such that 
\begin{align*}
\del_{x_k}S_1(\bmx)=n\del_{x_k}W_1(\bmx)=\left.\frac{\del}{\del x}\frac{\delta W_1(\mu)}{\delta \mu}\right|_{x=x_k}=g_1(x_k;\mu,1),\quad 1\leq i\leq n,
\end{align*}
where $\mu=(1/n)\sum_{i=1}^n \delta_{x_i}$. Thus in the Ansatz \ref{a:defE}, $\cE_1^{(k)}(\bmx)=0$ for $1\leq k\leq n$.

In Section \ref{s:eq1} we derive the equations for $\cE^{(k)}_t$ using the Hamilton-Jacobi type equation \eqref{e:derSt}, and solve for the leading order term of $\cE^{(k)}_t$ in Section \ref{s:solve}. It turns out the correction terms $\cE^{(k)}_t$ for $0\leq t\leq 1$ are of order $\OO(1/n^2)$. They have negligible influence on the random walk \eqref{e:weightrandomwalk}. Then we use the random walk \eqref{e:randomwalk2} to understand the weighted nonintersecting Brownian bridges \eqref{e:wdensity} in Section \ref{s:rigidity}.

\subsection{Calogero-Moser system}

The  Hamilton-Jacobi equation \eqref{e:key} was first derived by A. Matytsin \cite{MR1257846} to compute the asymptotics of the  the Harish-Chandra-Itzykson-Zuber integral formula \eqref{e:HCIZ}.  In \cite{MR1257846},  Matytsin argues that the first term $\Delta S_t/2n$ in \eqref{e:key} is negligible comparing with the remaining terms. Then he ignored the Laplacian term $\Delta S_t/2n$,  
\begin{align}\label{e:key2}
-\del_t S_t(\bmx)
=\frac{1}{2}\sum_{i=1}^n (\del_{x_i} S_t)^2+\sum_{i=1}^n\left(\frac{1}{n}\sum_{j: j\neq i}\frac{1}{x_i-x_j}\right)\del_{x_i}S_t.
\end{align}
and describes the limit of $S_t$ in terms of the complex Burger's equation.

Later it is noticed by G. Menon \cite{GM2017} that \eqref{e:key2}  is exact solvable. It describes the Calogero-Moser system:
\begin{align}\begin{split}\label{e:CM}
&\del_t x_i(t)=v_i(t),\\
&\del_t v_i(t)=-\frac{1}{n^2}\sum_{j: j\neq i}\frac{1}{(x_i-x_j)^3},
\end{split}\end{align}
where $x_1<x_2<\cdots<x_n$ is in the Weyl chamber $\Delta_n$.
We refer to \cite{olshanetsky1994integrable} for a systematic study of systems of Calogero type.
The Hamiltonian of the system \eqref{e:CM} is given by
\begin{align*}
H(\bmx,\bmv)=\frac{1}{2}\sum_i v_i^2-\frac{1}{2n^2}\sum_{i\neq j}\frac{1}{(x_i-x_j)^2}.
\end{align*}
The Lagrangian for the Calogero-Moser system \eqref{e:CM} is the function
\begin{align*}
L(\bmx,\bmv)=\frac{1}{2}\sum_i v_i^2+\frac{1}{2n^2}\sum_{i\neq j}\frac{1}{(x_i-x_j)^2},
\end{align*}
and we associate to any path $\gamma:[t,1]\mapsto \Delta_n$ the action
\begin{align*}
\int_t^1 L(\gamma(s), \dot{\gamma}(s))\rd s.
\end{align*}
The principle of least action asserts that the true path is a minimizer for the above variational principle. In this case, the Euler-Lagrange equations are the equations of motion for the  Calogero-Moser system \eqref{e:CM}:
\begin{align*}
\del_t v_i(t)=-\del_t \del_{v_i}L(\gamma(t), \dot{\gamma}(t))=\del_{x_i}L(\gamma(t), \dot{\gamma}(t))=-\frac{1}{n^2}\sum_{j: j\neq i}\frac{1}{(x_i-x_j)^3}
\end{align*}
If we define 
\begin{align*}
W_t(\bmx)\deq\min_{\gamma(1)=\bmb, \gamma(t)=\bmx}\int_t^1 L(\gamma(s), \dot{\gamma}(s))\rd s.
\end{align*}
Then by a deformation argument, we obtain the Hamilton-Jacobian equation
\begin{align*}
-\del_t W_t(\bmx)=H(\bmx, \nabla W_t(\bmx))
=\frac{1}{2}\sum_{i=1}^n \left(\del_{x_i} W_t\right)^2-\frac{1}{2n^2}\sum_{i\neq j}\frac{1}{(x_i-x_j)^2}
\end{align*}
which is the equation \eqref{e:key2} by a change of variable $S_t(\bmx)=W_t(\bmx)-\sum_{i< j}\log(x_i-x_j)$.

\section{Solve  the Correction Term}\label{s:eq}
We derive the partial differential equations of the correction terms $\cE_t^{(k)}(\bmx)$ in Section \ref{s:eq1}, and solve it using Feynman-Kac
formula in Section \ref{s:solve}.

\subsection{Equation of the Correction Term}\label{s:eq1}

In this Section, we derive the differential equation for the correction term $\cE^{(k)}_t(\bmx)$ defined in Ansatz \eqref{a:defE}. 
\begin{proposition}\label{p:eqEt}
The correction terms $\cE^{(k)}_t$ as in the ansatz \ref{a:defE} satisfy the following partial differential equations: for $1\leq k\leq n$,
\begin{align}\begin{split}\label{e:derE1}
-\del_t\cE^{(k)}_t
&=\frac{1}{2n}\Delta \cE^{(k)}_t+\sum_{i=1}^n\left(\del_{x_i}  S_t+\frac{1}{n}\sum_{j: j\neq i}\frac{1}{x_i-x_j}\right)\del_{x_i}\cE^{(k)}_t+\frac{1}{n}\sum_{i: i\neq k}\frac{ \cE^{(i)}_t-\cE^{(k)}_t}{(x_i-x_k)^2}+\cL^{(k)}_t,
\end{split}\end{align}
where $\mu=(1/n)\sum_{i=1}^n \delta_{x_i}$ and
\begin{align}\begin{split}\label{e:defLt}
\cL^{(k)}_t&=\del_{x_k}g_t(x_k;\mu,t)\cE^{(k)}_t+\frac{1}{n}\sum_{i=1}^n\del_{x_i}\frac{\delta g_t(x_k;\mu,t)}{\delta \mu}\cE^{(i)}_t\\
&+\frac{1}{2n^2}\del_{x_k}\frac{\delta \del_{x_k}g_t(x_k;\mu,t)}{\delta\mu}
+\sum_{i=1}^n\frac{1}{2n^3}\del_{x_i}\frac{\delta \del_{x_i}\frac{\delta g_t(x_k;\mu,t)}{\delta \mu}}{\delta \mu}.
\end{split}\end{align}
\end{proposition}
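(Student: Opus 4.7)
The strategy is to substitute the Ansatz $\del_{x_k} S_t(\bmx) = g_t(x_k;\mu,t) + \cE^{(k)}_t(\bmx)$, with $\mu = (1/n)\sum_{i} \delta_{x_i}$, directly into the PDE \eqref{e:derSt} satisfied by $\del_{x_k} S_t$, and then cancel off the ``mean-field'' PDE that $g_t(x_k;\mu,t)$ alone must satisfy. What is left is exactly \eqref{e:derE1}, and the source $\cL^{(k)}_t$ collects the $1/n$-scale discrepancies between the quantum PDE \eqref{e:key} for $S_t$ and its semiclassical counterpart \eqref{e:Wt} for $W_t$.

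First I would derive a PDE for $g_t(x_k;\mu,t)$ on the empirical measure. By \eqref{e:Wgt}, $g_t(x_k;\mu,t) = n\,\del_{x_k} W_t(\mu)$ when $\mu = (1/n)\sum_i \delta_{x_i}$, so applying $n\,\del_{x_k}$ to \eqref{e:newexp} gives such a PDE, provided one uses the functional chain rule
$$\del_{x_i} F(x_k;\mu,t) \;=\; \delta_{ik}\,\del_x F(x_k;\mu,t) \;+\; \frac{1}{n}\,\del_{x_i}\frac{\delta F(x_k;\mu,t)}{\delta\mu}$$
and tracks every $1/n$ factor it produces. Next I substitute the Ansatz into \eqref{e:derSt}. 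The Laplacian splits as $\frac{1}{2n}\Delta \cE^{(k)}_t + \frac{1}{2n}\Delta g_t(x_k;\mu,t)$; the transport term, using the Ansatz to keep the coefficient $\del_{x_i} S_t + \frac{1}{n}\sum_{j\neq i}(x_i-x_j)^{-1}$ intact, splits into the desired transport of $\cE^{(k)}_t$ plus $\sum_i(\del_{x_i} S_t + \frac{1}{n}\sum_{j\neq i}(x_i-x_j)^{-1})\del_{x_i} g_t(x_k;\mu,t)$; and the exchange sum splits into the $\cE^{(i)}_t - \cE^{(k)}_t$ part (kept in \eqref{e:derE1}) and the $g_t$-difference part (absorbed in the mean-field PDE). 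The chain rule converts the $\cE$-linear pieces $\sum_i \cE^{(i)}_t \del_{x_i} g_t(x_k;\mu,t)$ into $\cE^{(k)}_t\,\del_{x_k} g_t(x_k;\mu,t) + \frac{1}{n}\sum_i \cE^{(i)}_t\,\del_{x_i}[\delta g_t(x_k;\mu,t)/\delta\mu]$, which are exactly the first two summands of $\cL^{(k)}_t$. Finally, $\frac{1}{2n}\Delta g_t(x_k;\mu,t)$ expanded via the iterated chain rule produces a diagonal $1/n^2$ piece and a $(\delta/\delta\mu)^2$ piece of size $1/n^3$, which, after matching with the $\frac{1}{2n^2}\sum_i \del_{x_i} g_t(x_i;\mu,t)$ remainder coming from \eqref{e:newexp}, yield precisely the last two summands of $\cL^{(k)}_t$.

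The main obstacle is not conceptual but purely combinatorial: for each of the three terms on the right of \eqref{e:derSt} one must simultaneously separate contributions into $\cE$-independent, $\cE$-linear, and $\cE$-bilinear pieces \emph{and} keep track of every $1/n$ prefactor produced by the functional chain rule; the Ansatz-in-the-coefficient step for the transport term must leave the coefficient as $\del_{x_i} S_t$ rather than $g_t(x_i;\mu,t) + \cE^{(i)}_t$, so that \eqref{e:derE1} has the form stated. The delicate part is that the diagonal second-derivative contribution to the Laplacian of $g_t(x_k;\mu,t)$ and the residual $\frac{1}{2n^2}\sum_i \del_{x_i} g_t(x_i;\mu,t)$ in the semiclassical HJ equation must cancel up to exactly the two $1/n^2$ and $1/n^3$ terms of $\cL^{(k)}_t$, with no spurious leftover; the analytic bounds in Proposition \ref{p:derg} then confirm that these residuals are indeed small, which is the quantitative input that will make the Feynman–Kac analysis of Section \ref{s:solve} work.
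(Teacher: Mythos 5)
Your plan is correct in outline and shares the paper's skeleton (substitute Ansatz \ref{a:defE} into \eqref{e:derSt}, keep the coefficient as $\del_{x_i}S_t$ so that \eqref{e:derE1} comes out verbatim, expand $\frac{1}{2n}\Delta g_t(x_k;\mu,t)$ by the functional chain rule as in \eqref{e:Deltag}, and cancel all $g$-only terms against an exact equation satisfied by $g_t$ at the empirical measure), but you obtain that cancellation identity by a different route. You propose to get the equation for $g_t(x_k;\mu,t)$ by applying $n\del_{x_k}$ to the finite-$n$ Hamilton--Jacobi identity \eqref{e:newexp}, using $g_t(x_k;\mu,t)=n\del_{x_k}W_t(\mu)$ from \eqref{e:Wgt}. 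The paper instead never differentiates $W_t$ at the atomic measure: it derives the needed identity \eqref{e:limiteq} from the equation \eqref{e:gteq} for $g_s(Z;\mu,t)$ on the Riemann surface (complex Burger's equation minus the Stieltjes-transform evolution), combined with the self-consistency $g_s(x;\mu,t)=g_s(x;\rho_s^*,s)$ in \eqref{e:gs}--\eqref{e:dergs}, yielding \eqref{e:ddWt} for general measures and only then specializing $\mu$ to the atomic measure; subtracting this from \eqref{e:defL} gives \eqref{e:defLt}. Your route is formally equivalent --- differentiating \eqref{e:dWt} in $\delta/\delta\mu$ and $\del_z$ reproduces \eqref{e:ddWt} --- but two points deserve emphasis. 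First, to make your bookkeeping close you must invoke the symmetry of the second functional derivative, \eqref{e:symmetric} (the Schiffer-kernel symmetry): it is what converts terms of the form $\del_{x_k}\frac{\delta g_t(x_i;\mu,t)}{\delta\mu}$ produced by differentiating \eqref{e:newexp} into the $\del_{x_i}\frac{\delta g_t(x_k;\mu,t)}{\delta\mu}$ and $\del_{x_i}^2\frac{\delta g_t(x_k;\mu,t)}{\delta\mu}$ terms that cancel the third piece of \eqref{e:Deltag} and leave exactly the two $1/n^2$ and $1/n^3$ summands of \eqref{e:defLt}; your sketch never mentions this symmetry, and without it the diagonal terms do not match. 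Second, your starting point \eqref{e:newexp} is itself only a formal specialization of \eqref{e:dWt} to atomic $\mu$ (where $W_t$ involves the divergent entropy $\Sigma(\mu)$), so you are differentiating a formally specialized identity, whereas the paper's derivation uses only objects ($g_t$, its functional derivatives via Proposition \ref{p:derft} and Proposition \ref{p:derg}) that are genuinely defined at atomic measures; this is what the extra structure in Sections \ref{s:bc} buys. With the symmetry \eqref{e:symmetric} made explicit and \eqref{e:newexp} accepted as an identity, your computation produces the same $\cL^{(k)}_t$ as \eqref{e:defLt}.
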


We recall from \eqref{e:gteq},
as a function over $\cC_s^{\mu,t,+}$, $g_s(Z;\mu,t)$  satisfies the following equation
\begin{align}\begin{split}\label{e:gteqcopy}
\del_s g_s(Z; \mu,t)
&+g_s(Z;\mu,t) \del_z g_s(Z;\mu, t)\\
&+\int \frac{g_s(x;\mu, t)-g_s(Z;\mu, t)-(x-z)\del_z g_s(Z;\mu, t)}{(x-z)^2} \rho^*_s(x;\mu,t)\rd x=0.
\end{split}\end{align}
The derivatives of $g_t(Z;\mu,t)$ with respect to the measure $\mu$ can be expressed explicitly using Proposition \ref{p:derft}. We recall the Schiffer kernel \eqref{e:Schiffer} $B(Z,Z';\mu,t)$ on $(Z,Z')=((f,z), (f',z'))\in\cC_t^{\mu,t}\times \cC_t^{\mu,t}$ of the Riemann surface $\cC_t^{\mu,t}$.  Since $g_t(Z;\mu,t)=f_t(Z;\mu,t)-\int \rd \mu(x)/(z(Z)-x)$, \eqref{e:deltagt} gives
\begin{align}\label{e:dmugt}
\del_{z'}\frac{\delta g_t(Z;\mu,t)}{\delta \mu}= \frac{B(Z,Z';\mu,t)}{\rd z(Z)\rd z(Z')}-\frac{1}{(z(Z)-z(Z'))^2},
\end{align}
which no longer have double poles along the diagonal.
We remark that the Schiffer kernel is symmetric, it gives that
\begin{align}\label{e:symmetric}
\del_{z'}\frac{\delta g_t(Z;\mu,t)}{\delta \mu}=\del_{z}\frac{\delta g_t(Z';\mu,t)}{\delta \mu}.
\end{align} 
Using Theorem \ref{thm:Rauch}, one can further take derivative with respect to $\mu$ on both sides of \eqref{e:symmetric}. Those higher order derivatives are also bounded as in Proposition \ref{p:derg}.

%
%

In the rest of this section we prove Proposition \ref{p:eqEt}. Let 
$\{(\rho^*_s(\cdot;\mu,t), u^*_s(\cdot;\mu,t))\}_{t\leq s\leq {1+\tau}}$ be the minimizer of \eqref{e:varWtcopy}.
For any $t\leq s\leq {1+\tau}$, $\{(\rho^*_r(\cdot;\mu,t), u^*_r(\cdot;\mu,t))\}_{t\leq r\leq {1+\tau}}$ restricted on time interval $[s,{1+\tau}]$ is the minimizer of 
\begin{align}\label{e:varWs}
W_s(\rho_s^*(\cdot;\mu,t))=-\frac{1}{2}\inf\left(\int_s^{1+\tau}\int_\bR \rho_r\left(u_r^2+\frac{\pi^2}{3}\rho_r^2\right)\rd x\rd r+\Sigma(\rho_s^*(\cdot;\mu,t))+\Sigma(\mu_C)\right),
\end{align}
the $\inf$ is taken over all the pairs $\{(\rho_r,u_r)\}_{s\leq r\leq {1+\tau}}$ such that $\del_r \rho_r+\del_x(\rho_ru_r)=0$ in the sense of distributions, $\{\rho_r\}_{s\leq r\leq {1+\tau}}\in{\mathcal{C}}([t,{1+\tau}],\cM(\bR))$  and its initial and terminal data are  given by 
\begin{align*}
\lim_{r\rightarrow s+}\rho_r(x)\rd x=\rho_s^*(x;\mu,t)\rd x,\quad \lim_{r\rightarrow (1+\tau)-}\rho_r(x)\rd x=\mu_C,
\end{align*}
where convergence holds in the weak sense.
Therefore, \eqref{e:gtut15} implies that
\begin{align}\label{e:gs}
g_s(x;\mu,t)=u^*_s(x;\mu,t)-\int\frac{1}{x-y}\rd \rho_s^*(y)=g_s(x; \rho_s^*, s)
\end{align}

For the time derivative in \eqref{e:gteqcopy}, using \eqref{e:gs}, we can rewrite it as
\begin{align}\begin{split}\label{e:dergs}
\left.\del_s g_s(x; \mu,t)\right|_{s=t}
&=\left.\del_s g_s(x; \rho_s^*, s)\right|_{s=t}
=\del_t g_t(x;\mu,t)+\int \frac{\delta g_t(x; \mu,t)}{\delta \mu}\left.\del_s \rho_s^*(y;\mu,t)\right|_{s=t}\rd y\\
&=\del_t g_t(x;\mu,t)-\int \frac{\delta g_t(z; \mu, t)}{\delta \mu}\del_y \left.(\rho_s^*(y;\mu,t)u_s^*(y;\mu,t))\right|_{s=t}\rd y\\
&=\del_t g_t(x;\mu,t)+\int \del_y\frac{\delta g_t(x;\mu,t)}{\delta \mu} \left(g_t(y;\mu,t)+\int\frac{\rd\mu(w)}{y-w}\right)\rd \mu(y),
\end{split}\end{align}
where in the last line we used \eqref{e:gtut15}.

Using \eqref{e:dergs}, we can take $s=t$ in \eqref{e:gteqcopy}, and $z=z(Z)$ with $Z\in \cC^{\mu,t}_t$
\begin{align}\begin{split}\label{e:ddWt}
\del_t g_t(Z;\mu,t)&+g_t(Z;\mu,t) \del_z g_t(Z;\mu, t)+\int \del_y\frac{\delta g_t(Z;\mu,t)}{\delta \mu} \left(g_t(y;\mu,t)+\int\frac{\rd\mu(w)}{y-w}\right)\rd \mu(y)\\
&+\int \frac{g_t(x;\mu, t)-g_t(Z;\mu, t)-(x-z)\del_z g_t(Z;\mu, t)}{(x-z)^2} \rd\mu(x)=0,
\end{split}\end{align}
%
After taking the measure $\mu=(1/n)\sum_{i=1}^n \delta_{x_i}$,  \eqref{e:ddWt} becomes
\begin{align}\begin{split}\label{e:limiteq}
\del_t g_t(Z;\mu,t)&+g_t(Z; \mu,t)\del_z g_t(Z; \mu,t)+\frac{1}{n}\sum_{i} \del_{z} \frac{\delta g_t(x_i;\mu,t)}{\delta \mu} g_t(x_i;\mu,t)\\
&+\frac{1}{2n^2}\sum_{i,j} \frac{\del_z\frac{\delta g_t(x_i;\mu,t)}{\delta \mu}-\del_z \frac{\delta g_t(x_j;\mu,t)}{\delta \mu}}{x_i-x_j}
+\frac{1}{n}\sum_{i}\del_z\left(\frac{g_t(Z;\mu,t)-g_t(x_i;\mu,t)}{z-x_i}\right)=0,
\end{split}\end{align}
where we used \eqref{e:symmetric}, and formally when $x=x'$, we take
\begin{align*}
\frac{\del_z \frac{\delta g_t(x;\mu,t)}{\delta \mu}-\del_z\frac{\delta g_t(x';\mu,t)}{\delta \mu}}{x-x'}
=\del_x \del_z\frac{\delta g_t(x;\mu,t)}{\delta \mu},
\end{align*}
and we $x=x'$, we take 
\begin{align*}
\frac{g_t(x;\mu,t)-g_t(x';\mu,t)}{x-x'}=\del_x g_t(x;\mu,t).
\end{align*}

We recall the partial differential equation \eqref{e:derSt} for $\del_{x_k} S_t$,
\begin{align}\label{e:derSt1}
-\del_t\del_{x_k} S_t
={\frac{1}{2n}\Delta \del_{x_k}S_t}+\sum_{i=1}^n\left(\del_{x_i}  S_t+\frac{1}{n}\sum_{j: j\neq i}\frac{1}{x_i-x_j}\right)\del_{x_i}\del_{x_k} S_t
+\frac{1}{n}\sum_{i: i\neq k}\frac{\del_{x_i} S_t-\del_{x_k} S_t}{(x_i-x_k)^2},
\end{align}
We can plug in \eqref{e:derSt1} the Ansatz \ref{a:defE}, $\del_{x_k}S_t=g_t(x_k;\mu,t)+\cE^{(k)}_t$, 
\begin{align*}\begin{split}
&-\del_t (g_t(x_k;\mu,t)+\cE^{(k)}_t)\\
&={\frac{1}{2n}\Delta (g_t(x_k;\mu,t)+\cE^{(k)}_t)}+\sum_{i=1}^n\left(\del_{x_i}  S_t+\frac{1}{n}\sum_{j: j\neq i}\frac{1}{x_i-x_j}\right)\del_{x_i}(g_t(x_k;\mu,t)+\cE^{(k)}_t)\\
&+\frac{1}{n}\sum_{i: i\neq k}\frac{ (g_t(x_i;\mu,t)+\cE^{(i)}_t)-(g_t(x_k;\mu,t)+\cE^{(k)}_t)}{(x_i-x_k)^2},
\end{split}\end{align*}
which can be written as
\begin{align}\begin{split}\label{e:derE2}
-\del_t\cE^{(k)}_t
&=\frac{1}{2n}\Delta \cE^{(k)}_t+\sum_{i=1}^n\left(\del_{x_i}  S_t+\frac{1}{n}\sum_{j: j\neq i}\frac{1}{x_i-x_j}\right)\del_{x_i}\cE^{(k)}_t+\frac{1}{n}\sum_{i: i\neq j}\frac{ \cE^{(i)}_t-\cE^{(k)}_t}{(x_i-x_k)^2}+\cL^{(k)}_t,
\end{split}\end{align}
where 
\begin{align}\begin{split}\label{e:defL}
\cL^{(k)}_t=\del_t g_t(x_k;\mu,t)
&+\frac{1}{2n}\Delta ( g_t(x_k;\mu,t))+\left(g_t(x_k;\mu,t)+\cE^{(k)}_t+\frac{1}{n}\sum_{j: j\neq k}\frac{1}{x_k-x_j}\right)\del_{x_k}g_t(x_k;\mu,t)\\
&+\sum_{i=1}^n\left(g_t(x_i;\mu,t)+\cE^{(i)}_t+\frac{1}{n}\sum_{j: j\neq i}\frac{1}{x_i-x_j}\right)\frac{1}{n}\del_{x_i}\frac{\delta g_t(x_k;\mu,t)}{\delta \mu}\\
&+\frac{1}{n}\sum_{i: i\neq k}\frac{ g_t(x_i;\mu,t)-g_t(x_k;\mu,t)}{(x_i-x_k)^2}.
\end{split}\end{align}
The equation \eqref{e:derE2} gives \eqref{e:derE1}. In the following we will use \eqref{e:Deltag} to simplify the expression \eqref{e:defL} of $\cL_t^{(k)}$.
We recall that $\mu=(1/n)\sum_{i=1}^n \delta_{x_i}$, the Laplacian term in \eqref{e:defL} can be written as
\begin{align}\begin{split}\label{e:Deltag}
&\phantom{{}={}}\frac{1}{2n}\Delta ( g_t(x_k;\mu,t))
=\frac{1}{2n}\del_{x_k}(\del_{x_k}g_t(x_k;\mu,t))
+\sum_{i=1}^n\frac{1}{2n^2}\del_{x_i}\left(\del_{x_i}\frac{\delta g_t(x_k;\mu,t)}{\delta \mu}\right)\\
&=\frac{1}{2n}\del_{x_k}^2g_t(x_k;\mu,t)+\frac{1}{2n^2}\del_{x_k}\frac{\delta \del_{x_k}g_t(x_k;\mu,t)}{\delta\mu}
+\sum_{i=1}^n\frac{1}{2n^2}\del_{x_i}^2\frac{\delta g_t(x_k;\mu,t)}{\delta \mu}
+\sum_{i=1}^n\frac{1}{2n^3}\del_{x_i}\frac{\delta \del_{x_i}\frac{\delta g_t(x_k;\mu,t)}{\delta \mu}}{\delta \mu}
\end{split}\end{align}
It follows from plugging \eqref{e:Deltag} into \eqref{e:defL}, and taking difference with \eqref{e:limiteq} for $z(Z)=x_k$, we obtain  
\begin{align*}\begin{split}
\cL^{(k)}_t&=\del_{x_k}g_t(x_k;\mu,t)\cE^{(k)}_t+\frac{1}{n}\sum_{i=1}^n\del_{x_i}\frac{\delta g_t(x_k;\mu,t)}{\delta \mu}\cE^{(i)}_t\\
&+\frac{1}{2n^2}\del_{x_k}\frac{\delta \del_{x_k}g_t(x_k;\mu,t)}{\delta\mu}
+\sum_{i=1}^n\frac{1}{2n^3}\del_{x_i}\frac{\delta \del_{x_i}\frac{\delta g_t(x_k;\mu,t)}{\delta \mu}}{\delta \mu}.
\end{split}\end{align*}
This finishes Proposition \ref{p:eqEt}.

\subsection{Solve the Correction Term}\label{s:solve}
The partial differential equations \eqref{e:derE1} of the correction terms $\cE_t^{(k)}$ can be solved by using Feynman-Kac formula. We recall the random walk corresponding to the weighted nonintersecting Brownian bridges from \eqref{e:randomwalk2}
\begin{align}\label{e:randomwalk3}
\rd x_i(t)=\frac{1}{\sqrt n} \rd B_i(t)+\frac{1}{n}\sum_{j: j\neq i}\frac{\rd t}{x_i(t)-x_j(t)}+(g_t(x_i(t);\mu,t)+\cE_t^{(i)}(x_1(t),x_2(t), \cdots, x_n(t)))\rd t,
\end{align}
for $1\leq i\leq n$. 
We can use \eqref{e:randomwalk3} to write down the stochastic differential equation of 
$\cE_t^{(k)}(\bmx(t))=\cE_t^{(k)}(x_1(t), x_2(t),\cdots, x_n(t))$,
\begin{align}\begin{split}\label{e:keyeq}
&\rd \cE_t^{(k)}(\bmx(t))=\rd \cM_t^{(k)}
-\frac{1}{n}\sum_{i:i\neq k}\frac{\cE_t^{(i)}(\bmx(t))-\cE_t^{(k)}(\bmx(t))}{(x_i(t)-x_k(t))^2}-\del_{x_k}g_t(x_k(t);\mu_t,t)\cE^{(k)}_t(\bmx(t))\\
&-\frac{1}{n}\sum_{i=1}^n\del_{x_i}\frac{\delta g_t(x_k(t);\mu_t,t)}{\delta \mu}\cE^{(i)}_t
-\frac{1}{2n^2}\del_{x_k}\frac{\delta \del_{x_k}g_t(x_k(t);\mu_t,t)}{\delta\mu}
-\sum_{i=1}^n\frac{1}{2n^3}\del_{x_i}\frac{\delta \del_{x_i}\frac{\delta g_t(x_k(t);\mu_t,t)}{\delta \mu}}{\delta \mu}.
\end{split}\end{align}
where $\mu_t=(1/n)\sum_{i=1}^n \delta_{x_i(t)}$, and $\rd \cM_t^{(k)}$ is the martingale
\begin{align*}
\rd \cM_t^{(k)}=\frac{1}{\sqrt n}\sum_{i=1}^n \del_{x_i}\cE_t^{(k)}(\bmx(t))\rd B_i(t).
\end{align*}

In this section we solve for the correction terms $\cE_t^{(k)}$, and show they are of order $\OO(1/n^2)$.
\begin{proposition}\label{p:cEbound}
Under Assumptions \ref{a:reg} and \ref{a:ncritic}, the correction terms $\cE_t^{(k)}(\bmx)$ as defined in Ansatz \ref{a:defE} satisfies
\begin{align*}
\left|\cE_t^{(k)}(\bmx)\right|\lesssim \frac{1}{n^2}.
\end{align*}
\end{proposition}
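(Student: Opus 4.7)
The boundary condition $\cE_1^{(k)}(\bmx)=0$ for all $k$ and $\bmx$ is built into the construction: the weight $\Delta(\bmb)e^{n^2W_1(\bmb)}$ in \eqref{e:wdensity} was chosen precisely so that $\del_{x_k}S_1=n\del_{x_k}W_1=g_1(x_k;\mu,1)$, leaving no error term on the terminal slice. The plan is therefore to propagate this vanishing terminal data backward in time by reading the stochastic differential equation \eqref{e:keyeq} for $\cE_t^{(k)}(\bmx(t))$ as a backward (Kolmogorov-type) equation and applying a Feynman--Kac representation, closing the resulting integral inequality by Gronwall.

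The singular linear contribution $-\frac{1}{n}\sum_{i\neq k}\frac{\cE_t^{(i)}-\cE_t^{(k)}}{(x_i-x_k)^2}\,\rd t$ in \eqref{e:keyeq} is too large to handle by a termwise bound, but it has the crucial structure of a Markov generator on the label set $\{1,\dots,n\}$. To exploit this I would couple the particle process $\bmx(t)$ solving \eqref{e:randomwalk2} with an auxiliary continuous-time label process $k(t)$ which jumps from $k$ to $i$ at rate $\frac{1}{n(x_i(t)-x_k(t))^2}$. Applying It\^o to $\cE_s^{(k(s))}(\bmx(s))$ between $s=t$ and $s=1$, the compensator of the label jumps exactly cancels the singular drift in \eqref{e:keyeq}, and only a martingale together with $-\cL_s^{(k(s))}\,\rd s$ remain. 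Taking expectations and using $\cE_1^{(k)}\equiv 0$ yields
\begin{align*}
\cE_t^{(k)}(\bmx)=\bE\!\left[\int_t^1 \cL_s^{(k(s))}(\bmx(s))\,\rd s\,\Big|\,\bmx(t)=\bmx,\;k(t)=k\right].
\end{align*}

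To close the argument I would estimate $\cL_s^{(k)}$ from \eqref{e:defLt}: by Proposition \ref{p:derg} the two purely deterministic contributions $\frac{1}{2n^2}\del_{x_k}\frac{\delta\del_{x_k}g_s}{\delta\mu}$ and $\sum_i\frac{1}{2n^3}\del_{x_i}\frac{\delta\del_{x_i}\frac{\delta g_s}{\delta\mu}}{\delta\mu}$ are both $\OO(1/n^2)$ (the second is a sum of $n$ terms of order $1/n^3$), while the two terms linear in $\cE$ are bounded by $\fC\max_i|\cE_s^{(i)}|$. Setting $E_t=\sup_{\bmx,k}|\cE_t^{(k)}(\bmx)|$, the Feynman--Kac identity above gives $E_t\leq \int_t^1(\fC E_s+\fC/n^2)\,\rd s$, and Gronwall's inequality yields $E_t\lesssim 1/n^2$. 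The main obstacle is that Proposition \ref{p:derg} requires the empirical measure $\mu_s=(1/n)\sum_i\delta_{x_i(s)}$ to stay in a neighborhood of the equilibrium profile so that the Riemann surface $\cC_s^{\mu_s,s}$ and all the relevant derivatives remain non-degenerate uniformly along trajectories; resolving this will typically require a weak a priori control on the particle configurations (a bootstrap with the rigidity developed in Section \ref{s:rigidity}) in order to make the above Gronwall step self-consistent.
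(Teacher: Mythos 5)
Your proposal is correct and is essentially the paper's own argument: the paper likewise solves \eqref{e:keyeq} backward from the terminal data $\cE_1^{(k)}\equiv 0$ by a Feynman--Kac representation and bounds the resulting source term by $\OO(1/n^2)$ using Proposition \ref{p:derg}. The only real difference is bookkeeping. The paper absorbs both the singular coupling term and the bounded linear-in-$\cE$ terms into the operator $\mathscr B(t)$ of \eqref{e:evolve}, so the source $\cR_t$ contains only the two deterministic $\OO(1/n^2)$ terms, and instead of your auxiliary label process it bounds the dual flow $\bmv(t)=\cU(s,t)^\top\bme_i$ in $L^1$ by $e^{\fC(t-s)}$ (Proposition \ref{c:L1bound}), exploiting exactly the sign/Markov structure of the jump part that you use probabilistically; your variant, which keeps the linear terms as part of the source and closes with Gr\"onwall on $E_t=\sup_{\bmx,k}|\cE_t^{(k)}(\bmx)|$, is equivalent in substance but additionally needs a priori finiteness of that supremum, which the propagator bound sidesteps. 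One caution: your closing remark about bootstrapping with the rigidity of Section \ref{s:rigidity} should be dropped --- Proposition \ref{p:derg} is stated uniformly in the measure $\mu$ (only Assumptions \ref{a:reg} and \ref{a:ncritic} on $\mu_B,\mu_C$ enter, so no closeness of $\mu_t$ to the equilibrium profile is required), and invoking Section \ref{s:rigidity} here would be circular, since the rigidity argument (e.g.\ Claim \ref{prop:gtchange} via \eqref{e:getchange2}) already uses Proposition \ref{p:cEbound} as input.
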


To solve \eqref{e:keyeq}, we define the following operator, for $\bmv=(v_1,v_2,\cdots,v_n)^\top$
\begin{align}\label{e:evolve}
(\mathscr B(t)\bmv)_k=-\sum_{i:i\neq k}\frac{1}{n}\frac{v_i-v_k}{(x_i(t)-x_k(t))^2}
-\del_{x_k}g_t(x_k(t);\mu_t,t)v_k-\frac{1}{n}\sum_{i=1}^n\del_{x_i}\frac{\delta g_t(x_k(t);\mu_t,t)}{\delta \mu}v_i,
\end{align}
and the generator $\cU(s,t)$ for $s\leq t$, with $\cU(s,s)=I$ and 
\begin{align*}
\del_t \cU(s,t)=-\cU(s,t)\mathscr B(t).
\end{align*}
Then we can rewrite \eqref{e:keyeq} as
\begin{align}\label{e:FK}
\rd (\cU(s,t)(\cE_t(\bmx(t)))=\cU(s,t)\rd \cM_t-\cU(s,t)\cR_t(\bmx(t))\rd t,
\end{align}
where $\cE_t= (\cE_t^{(1)},\cE^{(2)}_t,\cdots, \cE^{(n)}_t)^\top$, $\rd \cM_t=(\rd \cM_t^{(1)}, \rd \cM_t^{(2)},\cdots, \rd \cM_t^{(n)})^\top$, and $\cR_t=(\cR_t^{(1)}, \cR_t^{(2)},\cdots, \cR_t^{n})^\top$ is the vector
\begin{align*}
\cR_t^{(k)}(\bmx(t))
=-\frac{1}{2n^2}\del_{x_k}\frac{\delta \del_{x_k}g_t(x_k(t);\mu_t,t)}{\delta\mu}
-\sum_{i=1}^n\frac{1}{2n^3}\del_{x_i}\frac{\delta \del_{x_i}\frac{\delta g_t(x_k(t);\mu_t,t)}{\delta \mu}}{\delta \mu}.
\end{align*}


By  taking expectation on both sides of \eqref{e:FK} and integrate it from time $t=s$ to $t=1$, and condition on that $\bmx(s)=\bmx$ we obtain
\begin{align}\label{e:solveE_s}
\cE_s(\bmx)
=\bE\left[\left.\int_{s}^1 \cU(s,t)\cR_t(\bmx(t))\rd t\right| \bmx(s)=\bmx\right].
\end{align}
Since $(\cU(s,t)\cR_t)_i=\langle \cU(s,t)^\top \bme_i, \cR_t\rangle$. We can use duality to understand the operator $\cU(s,t)$, and compute $\cU(s,t)^\top \bme_i$.
In other words, we need to analyze the evolution:
\begin{align}\label{e:evolve2}
\bmv(t)=\cU(s,t)^\top \bme_i, \quad \del_t \bmv(t)=-\mathscr B(t) \bmv(t),
\end{align}
where $ \bmv(t)=(v_1(t), v_2(t),\cdots, v_n(t))$ and $\bmv(s)=\bme_i$. Then we can write the integrand in \eqref{e:solveE_s} as
\begin{align}\label{e:rewriteUR}
(\cU(s,t)\cR_t(\bmx(t)))_i=
\langle \cU(s,t)^\top \bme_i, \cR_t(\bmx(t))\rangle
=\langle \bmv(t), \cR_t(\bmx(t))\rangle
=\sum_{i=1}^n  v_i(t)\cR^{(i)}_t(\bmx(t)).
\end{align}
In the following we study the evolution \eqref{e:evolve2} and obtain an upper bound for the $L_1$ norm of $\bmv(t)$.
\begin{proposition}\label{c:L1bound}
Fix time $0\leq s\leq 1$ and assume the assumptions in Proposition \ref{p:cEbound}. $\bmv(t)$ as defined in \eqref{e:evolve2} satisfies the following $L_1$ norm bound: for $s\leq t\leq 1$,
\begin{align*}
\|\bmv(t)\|_{1}=\sum_{i=1}^n |v_i(t)|\leq  e^{\fC(t-s)},
\end{align*}
for some universal constant $\fC>0$.
\end{proposition}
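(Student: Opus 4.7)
My plan is to estimate $\frac{d}{dt}\|\bmv(t)\|_1$ directly and apply Gronwall. The evolution equation $\partial_t \bmv(t) = -\mathscr B(t)\bmv(t)$ expands, using the definition \eqref{e:evolve}, to
\begin{align*}
\partial_t v_k(t) = \sum_{i:i\neq k} c_{ki}(t)(v_i(t)-v_k(t)) + \alpha_k(t) v_k(t) + \frac{1}{n}\sum_{i=1}^n \beta_{ki}(t) v_i(t),
\end{align*}
where $c_{ki}(t) = 1/(n(x_i(t)-x_k(t))^2)$, $\alpha_k(t) = \partial_{x_k} g_t(x_k(t);\mu_t,t)$, and $\beta_{ki}(t) = \partial_{x_i}\frac{\delta g_t(x_k(t);\mu_t,t)}{\delta \mu}$. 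The crucial structural fact is that $c_{ki} = c_{ik}$, so the first term is a symmetric jump generator, which should be dissipative in $\ell^1$; the remaining two terms are bounded perturbations.

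Next, I would approximate $|\cdot|$ by a smooth convex function $\phi_\varepsilon$ with $\phi_\varepsilon' \to \sgn$ and $|\phi_\varepsilon'|\le 1$, then differentiate $\sum_k \phi_\varepsilon(v_k(t))$. For the jump contribution, symmetrizing in $i\leftrightarrow k$ gives
\begin{align*}
\sum_k \phi_\varepsilon'(v_k)\sum_{i\neq k} c_{ki}(v_i-v_k) = \frac{1}{2}\sum_{i\neq k} c_{ki}(v_i-v_k)\bigl(\phi_\varepsilon'(v_k)-\phi_\varepsilon'(v_i)\bigr)\leq 0,
\end{align*}
since $c_{ki}\geq 0$ and $\phi_\varepsilon$ is convex (i.e.\ $\phi_\varepsilon'$ is nondecreasing). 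For the drift terms, Proposition \ref{p:derg} supplies uniform bounds $|\alpha_k(t)|,|\beta_{ki}(t)|\leq \fC$, and so their combined contribution is at most
\begin{align*}
\sum_k |\alpha_k||v_k| + \frac{1}{n}\sum_{k,i}|\beta_{ki}||v_i| \leq \fC\|\bmv(t)\|_1 + \fC\|\bmv(t)\|_1 = 2\fC\|\bmv(t)\|_1.
\end{align*}
Passing $\varepsilon\to 0$ yields the differential inequality $\frac{d}{dt}\|\bmv(t)\|_1 \leq \fC \|\bmv(t)\|_1$ (in the a.e.\ or distributional sense), and Gronwall together with $\|\bmv(s)\|_1 = \|\bme_i\|_1=1$ delivers the claimed bound.

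The only delicate point is justifying the uniform-in-$t$ boundedness of $\alpha_k(t)$ and $\beta_{ki}(t)$ along the random trajectory: one needs the particles $x_i(t)$ to lie in the neighborhood of the bottom boundary of $\cC_t^{\mu_t,t}$ on which Proposition \ref{p:derg} operates. Since $\mu_t=\frac{1}{n}\sum_i \delta_{x_i(t)}$, each $x_i(t)$ is tautologically in $\supp(\mu_t)$, hence in that neighborhood, so Proposition \ref{p:derg} applies pointwise in $t$ with a constant depending only on the regularity parameters in Assumptions \ref{a:reg} and \ref{a:ncritic}. The smooth approximation step for $|\cdot|$ is standard and presents no real difficulty; the heart of the argument is recognizing the dissipative symmetric-jump structure of the first term of $\mathscr B(t)$, which is what makes the $\ell^1$ bound work despite the singular $1/(x_i-x_k)^2$ weights.
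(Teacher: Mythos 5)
Your proof is correct and follows essentially the same route as the paper: the paper also isolates the symmetric coupling term as non-expansive in $\ell^1$ (by splitting indices into $I_t^\pm=\{v_i\gtrless 0\}$ and noting the cross terms are non-positive, rather than via your smooth convex approximation of $|\cdot|$), bounds the drift terms by $\fC\|\bmv(t)\|_1$ using Proposition \ref{p:derg}, and concludes with Gr{\"o}nwall from $\|\bmv(s)\|_1=1$. Your regularization of $|\cdot|$ is just a technically cleaner implementation of the same dissipativity observation.
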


The estimate Proposition \ref{p:cEbound} of the correction terms $\cE_t^{(k)}$ follow easily from Claim \ref{c:L1bound}.
\begin{proof}[Proof of Proposition \ref{p:cEbound}]
We recall from \eqref{e:solveE_s}
\begin{align}\label{e:cEsbb}
\cE^{(i)}_s(\bmx)
=\bE\left[\left.\int_{s}^1 (\cU(s,t)\cR_t(\bmx(t)))_{i}\rd t\right| \bmx(s)=\bmx\right].
\end{align}
As in \eqref{e:rewriteUR}, $(\cU(s,t)\cR_t(\bmx(t)))_{i}$ can be rewritten in terms of $\bmv(t)$ as defined in \eqref{e:evolve2}. Proposition \ref{p:derg} gives that  $|\cR_t^{(i)}|\lesssim 1/n^2$ for any $1\leq i\leq n$.
It follows from combining with Claim \ref{c:L1bound}
\begin{align}\label{e:URbb}
\left|(\cU(s,t)\cR_t(\bmx(t)))_{i}\right|
=\left|\sum_{i=1}^n  v_i(t)\cR^{(i)}_t(\bmx(t))\right|
\leq \|\bmv(t)\|_1\max_{1\leq i\leq n}\left|\cR^{(i)}_t(\bmx(t))\right|
\lesssim \frac{e^{\fC(t-s)}}{n^2}.
\end{align}
By plugging \eqref{e:URbb} into \eqref{e:cEsbb}, 
\begin{align*}
\left|\cE^{(i)}_s(\bmx)\right|
\lesssim \frac{1}{n^2}.
\end{align*}
This finishes the proof of Proposition \ref{p:cEbound}.
\end{proof}

\begin{proof}[Proof of Proposition \ref{c:L1bound}]
For any time $s\leq t\leq 1$, we denote the index sets:
\begin{align*}
I_t^+=\{1\leq i\leq n: v_i(t)\geq 0\},\quad I_t^-=\{1\leq i\leq n: v_i(t)< 0\}.
\end{align*}
Then the $L_1$ norm of $\bmv(t)$ is simply $\|\bmv(t)\|_{1}=\sum_{k\in I_t^+}v_k(t)-\sum_{k\in I_t^-}v_k(t)$. We can write down the evolution of $\sum_{k\in I_t^+}v_k(t)$ and $\sum_{k\in I_t^-}v_k(t)$ separately. Using the definition \eqref{e:evolve} of the operator $\mathscr B(t)$
\begin{align*}
\del_t\sum_{k\in I_t^+}v_i(t)
=\sum_{k\in I_t^+, i\in I_t^-}\frac{1}{n}\frac{v_i-v_k}{(x_i(t)-x_k(t))^2}
+\sum_{k\in I_t^+}\left(\del_{x_k}g_t(x_k(t);\mu_t,t)v_k+\frac{1}{n}\sum_{i=1}^n\del_{x_i}\frac{\delta g_t(x_k(t);\mu_t,t)}{\delta \mu}v_i\right).
\end{align*}
We notice that the first term is non-positive, and from Proposition \ref{p:derg}, the derivatives of $g_t$ are bounded by $\OO(1)$,
\begin{align}\label{e:upbb}
\del_t\sum_{k\in I_t^+}v_i(t)\leq \fC\|\bmv(t)\|_1.
\end{align}
We have a similar upper bound for the sum $-\sum_{k\in I_t^-}v_k(t)$,
\begin{align}\label{e:lobb}
\del_t\left(-\sum_{k\in I_t^-}v_i(t)\right)\leq \fC\|\bmv(t)\|_1.
\end{align}
The two estimates \eqref{e:upbb} and \eqref{e:lobb} together imply
\begin{align}\label{e:L1bound}
\del_t \|\bmv(t)\|_1
=\del_t\left(\sum_{k\in I_t^+}v_i(t)-\sum_{k\in I_t^-}v_i(t)\right)
\leq \fC\|\bmv(t)\|_1.
\end{align}
At time $t=s$ we have $\|\bmv(s)\|_1= \|\bme_i\|_1=1$. Therefore, \eqref{e:L1bound} implies an upper bound for $\|\bmv(t)\|_1$
\begin{align*}
\|\bmv(t)\|_1\leq e^{\fC(t-s)}.
\end{align*}
\end{proof}

\section{Optimal Particle Rigidity}\label{s:rigidity}
In this section, we show that the particle locations of the weighted nonintersecting Brownian bridge \eqref{e:wdensity} are close to their classical locations.

We denote the minimizer of the variational problem \eqref{e:varmuC} 
as $\{(\rho_t^*(\cdot), u^*_t(\cdot))\}_{0\leq t\leq 1+\tau}$. From the discussion after Assumption \ref{a:reg}, we know that after restricting on the time interval $[0,1]$, $\{(\rho_t^*(\cdot), u^*_t(\cdot))\}_{0\leq t\leq 1}$ is the minimizer of \eqref{e:Iexp}. We recall $f_t$ from \eqref{e:ft}. In the notation of Section \ref{s:bc}, we have $f_t(x)=f_t(x; \mu_A, 0)$.
There is a Riemann surface $\cC_t$ associated with the variational problem, which can be identified with two copies of the region $\Omega$ gluing along its left and right boundaries
\begin{align}\label{e:Qt}
\pi_{Q^t}: (x,s)\in\Omega=\{(x,t)\in \bR\times (0,1+\tau):\rho_t^*(x)>0\}\mapsto (f_t(x),x-(s-t)f_t(x))\in \cC_t.
\end{align}
$f_t$ can be extended to a meromorphic function over $\cC_t$ and it satisfies the complex Burger's equation from \eqref{e:complexburgereq2}
\begin{align}\label{e:dft}
\del_tf_t(Z)+\del_z f_t(Z) f_t(Z)=0,\quad Z=(f,z)\in \cC_t.
\end{align}
The complex Burger's equation can be solved 
using the characteristic flow \eqref{e:flow}: for any $Z=(f,z)\in \cC_0$, let
\begin{align}\label{e:transport2}
Z_t(Z)=(f, z+t f)\in \cC_t, \quad z_t(Z)=z+tf.
\end{align}
Then $f_t(Z_t(Z))=f$. Similar to \eqref{e:zmatr}, the cycles $\{(f_t(x),x): (x,t)\in\Omega\}\cup \{\overline{(f_t(x),x)}: (x,t)\in\Omega\}$ cut $\cC_t$ into two parts $\cC_t^{+}=\cC^{\mu_A,0,+}_{t}$ and $\cC_t^{-}=\cC^{\mu_A,0,-}_{t}$
If we denote 
\begin{align}\label{e:gtZdef}
g_t(Z)=g_t(Z;\mu_A,0)=f_t(Z)-m_t(z),\quad z=z(Z),\quad Z\in \cC_t^+,
\end{align}
then it can be glued to an analytic function in a neighborhood the cut.
Then \eqref{e:dermt} gives
\begin{align}\begin{split}\label{e:dermtse}
\del_t m_{t}(z)=-\del_z m_{t}(z)m_{t}(z)+\int\frac{g_t(x)}{(z-x)^2}\rho^*_t(x)\rd x,
\end{split}\end{align} 
and \eqref{e:gteq} gives
\begin{align}\label{e:gtsec8}
\del_t g_t(Z)+g_t(Z) \del_z g_t(Z)+\int \frac{g_t(x)-g_t(Z)-(x-z)\del_z g_t(Z)}{(z-x)^2} \rho_t^*(x)\rd x=0.
\end{align}

We denote the Stieltjes transform of the empirical particle density of the weighted nonintersecting Brownian bridge starting from $\mu_{A_n}$ and the minimizer of the variational problem \eqref{e:varmuC}  as
\begin{align*}
\tilde m_t(z)=\frac{1}{n}\sum_{i=1}^n\frac{1}{z-x_i(t)},\quad m_t(z)=\int \frac{\rho^*_t(x)}{z-x}\rd x,\quad 0\leq t\leq 1.
\end{align*}
With these notations, we can restate Assumption \ref{a:A_n} as $|\tilde m_0(z)-m_0(z)|\lesssim (\log n)^\fa/n$.

\begin{remark}
Assumption \ref{a:A_n} is necessary for the optimal rigidity estimates. We will see from our proof, the error for the Stieltjes transform on the region away from the support of $\mu_A$ propagates. In other words, if $|\tilde m_0(z)-m_0(z)|$ is large, we do not expect that $|\tilde m_t(z)-m_t(z)|$ will be small for $0\leq t\leq 1$.
\end{remark}

In this section we prove the following optimal particle rigidity estimates.

\begin{theorem}\label{t:rigidity}
Under Assumptions \ref{a:reg}, \ref{a:ncritic} and \ref{a:A_n}, for any small time $\ft>0$, the following holds for the particle locations of the weighted nonintersecting Brownian bridges \eqref{e:wdensity} starting from $\mu_{A_n}$.
With high probability, for any time $\ft\leq t\leq 1$ we have
\begin{enumerate}
\item We denote $\gamma_i(t)$ the $1/n$-quantiles of the density $\rho^*_t(\cdot)$, i.e. 
\begin{align}\label{e:gamma0}
\frac{i+1/2}{n}=\int_{-\infty}^{\gamma_i(t)}\rho_t^*(x)\rd x,\quad 1\leq i\leq n, 
\end{align}
then uniformly for $1\leq i\leq n$, the locations of $x_i(t)$ are close to their corresponding quantiles
\begin{align}\label{e:bulkrigid}
\gamma_{i-(\log n)^{\OO(1)}}(t)\leq x_i(t)\leq \gamma_{i+(\log n)^{\OO(1)}}(t).
\end{align}
\item If $\rho_t^*$ is supported on $[\sfa(t), \sfb(t)]$, then the particles close to the left and right boundary points of $\supp(\rho_t^*)$ satisfies
\begin{align}\label{e:edgerigid2}
   x_1(t)\geq {\sf a}(t)-\frac{(\log n)^{\OO(1)}}{n^{2/3}},\quad x_{n}(t)\leq {\sf b}(t)+ \frac{(\log n)^{\OO(1)}}{n^{2/3}}.
\end{align}
\end{enumerate}
\end{theorem}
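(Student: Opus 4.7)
The plan is to control the Stieltjes transform $\tilde m_t(z) \deq (1/n)\sum_i 1/(z - x_i(t))$ of the weighted nonintersecting Brownian bridge and compare it to the deterministic $m_t(z)$ via the method of characteristics for the complex Burger equation \eqref{e:dft}, following the approach of \cite{adhikari2020dyson,MR4009708}. Applying It\^o's formula to $\tilde m_t(z)$ along the random walk \eqref{e:randomwalk3} and using the standard symmetrization identity, the repulsion term produces the Burger nonlinearity $-\tilde m_t(z)\,\partial_z\tilde m_t(z)$, the drift contributes $(1/n)\sum_i g_t(x_i(t);\mu_t,t)/(z-x_i(t))^2$, the Brownian increments give a martingale whose bracket is of order $\OO(1/(n^2 (\Im z)^3))$, and the ansatz correction contributes at order $1/n^2$ by Proposition \ref{p:cEbound}. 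Subtracting the equation \eqref{e:dermtse} satisfied by $m_t$, and using Proposition \ref{p:derg} to linearize $g_t(\cdot;\mu_t,t) - g_t(\cdot;\rho^*_t\rd x,t)$ in the measure difference, one obtains that $\tilde f_t - f_t$, viewed as meromorphic $1$-forms on the Riemann surface $\cC_t$ with $\tilde f_t = \tilde m_t + g_t(\cdot;\mu_t,t)$, satisfies an approximate transport equation along the characteristic flow $Z \mapsto Z_t(Z)$ from \eqref{e:transport2}.

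Propagating this relation along the characteristics, I would transport the initial bound $|\tilde m_0(z) - m_0(z)| \leq (\log n)^{\fa}/n$ from Assumption \ref{a:A_n} forward in time, accumulating along the flow (i) the $\OO(1/n^2)$ ansatz error from Proposition \ref{p:cEbound}, (ii) the martingale noise, which a standard Burkholder--Davis--Gundy inequality combined with a grid plus continuity argument converts into a uniform bound of the form $\OO((\log n)^{\OO(1)}/(n\sqrt{\Im z}))$ on spectral scales $\Im z \gtrsim 1/n$, and (iii) the quadratic nonlinearity in $\tilde m_t - m_t$, absorbed by a continuous bootstrap on the stopping time at which the bound is first violated. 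The target output is an isotropic-type estimate
\[
|\tilde m_t(z) - m_t(z)| \leq \frac{(\log n)^{\OO(1)}}{n\,\Im z},
\]
valid uniformly for $\Im z \geq (\log n)^{\OO(1)}/n$ in the bulk and for $\Im z \geq (\log n)^{\OO(1)}/n^{2/3}$ on scale $|\Re z - \sfa(t)|, |\Re z - \sfb(t)| \lesssim (\log n)^{\OO(1)}/n^{2/3}$ near a regular edge with the square-root profile \eqref{e:square}.

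The main obstacle is that the characteristic flow \eqref{e:transport2} degenerates at the branch points of $\cC_t$, which project onto the edges and possible cusps of $\supp \rho_t^*$, so following the flow up to the spectral edge naively produces divergent factors. To handle this I would work with $g_t$ rather than $f_t$ wherever possible, exploiting that $g_t$ and its functional derivative in $\mu$ glue to analytic objects across the spectral cut by \eqref{e:dmugt} and Proposition \ref{p:branchloc}, and choose characteristic curves starting from a neighborhood of $\supp(\mu_A)$ that avoids the branch locus, then transport them by \eqref{e:transport2} to the target $(z,t)$ with $\ft \le t \le 1$. Once the Stieltjes estimate above is established, the bulk rigidity \eqref{e:bulkrigid} follows from the standard Helffer--Sj\"ostrand and quantile argument applied to intervals of length $\sim (\log n)^{\OO(1)}/n$, and the edge rigidity \eqref{e:edgerigid2} is read off by evaluating the estimate at $z = \sfa(t) - \kappa + \i\eta$ with $\kappa,\eta \sim (\log n)^{\OO(1)}/n^{2/3}$ and comparing $\Im \tilde m_t(z)$ with the square-root profile \eqref{e:square} to rule out particles outside the stated window.
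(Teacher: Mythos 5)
Your overall strategy is the same as the paper's: write the weighted bridge as the drifted random walk, follow the difference $\Delta_t=\tilde m_t-m_t$ (through $\tilde f_t=\tilde m_t+g_t$) along the characteristics of the complex Burger's equation, control the martingale term by Burkholder--Davis--Gundy on a lattice plus continuity, absorb the quadratic term by a stopping-time bootstrap and Gr{\"o}nwall, and then read off rigidity from the Stieltjes-transform bounds. However, there is a genuine gap at the edge. Your target estimate near $\sfa(t)$ is $|\tilde m_t(z)-m_t(z)|\leq (\log n)^{\OO(1)}/(n\Im z)$, and you propose to deduce \eqref{e:edgerigid2} by evaluating this at $\kappa,\eta\sim(\log n)^{\OO(1)}n^{-2/3}$. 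This cannot work: a single particle within distance $\eta$ of $\sfa(t)-\kappa$ contributes only about $1/(2n\eta)$ to $\Im\tilde m_t$ (as in \eqref{e:sumerror}), which is strictly smaller than the allowed error $(\log n)^{\OO(1)}/(n\eta)$, so a bound of size $1/(n\Im z)$ can never detect the absence of outliers at the $n^{-2/3}$ scale. The paper's proof hinges on a strictly stronger estimate \emph{outside the support}, namely $|\Delta_t(z)|\lesssim (\log n)^{\fa+1}/(n\sqrt{\Im z\,\dist(z,[\sfa(t),\sfb(t)])})$, built into the stopping time \eqref{stoptime2} and propagated using the square-root behavior of Proposition \ref{p:squareroot} together with the deterministic gap estimate \eqref{e:gap} for characteristics ending near the edge (Claim \ref{p:gap}); only with the extra factor $\sqrt{\Im z/\dist}$ does the bound at $\eta=n^{-2/3}$, $\kappa\geq(\log n)^{2\fa+4}n^{-2/3}$ fall below $1/(n\eta)$ and yield \eqref{e:extremepp}, hence \eqref{e:edgerigid2}. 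Your proposal contains no mechanism producing this improvement; indeed your intermediate claims are internally inconsistent on this point (a bracket of order $1/(n^2(\Im z)^3)$ does not give a noise bound $1/(n\sqrt{\Im z})$, and in the bulk the noise is genuinely of order $\log n/(n\Im z)$, obtained only after integrating along the flow as in Proposition \ref{p:propzt}).

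Two smaller omissions: the paper must split the argument into a short-time regime $0\leq t\leq\ft_0$ (where $\mu_A$ need not have square-root edges, so only estimates at distance $\OO(1)$ from the spectrum are propagated, Proposition \ref{p:bulkrigid3}) and a long-time regime $\ft_0\leq t\leq 1$ where Proposition \ref{p:squareroot} applies; your plan of ``transporting from a neighborhood of $\supp(\mu_A)$ avoiding the branch locus'' does not by itself explain how the estimate is upgraded from the far-field bound to the optimal local bound once square-root edges form. Also, the nonlocal term \eqref{defcE2} and the replacement $g_t(\cdot;\mu_t,t)\to g_t(\cdot)$ are controlled in the paper by contour integrals on the Riemann surface against the stopping-time bound (Claims \ref{prop:gtsumbound} and \ref{prop:gtchange}); your appeal to Proposition \ref{p:derg} is in the right spirit but you should make explicit that the linearization kernel glues analytically across the cut so the contour can be taken at distance $\OO(1)$, which is what makes these terms $\OO(\fM(t)/n)$ rather than losing factors of $1/\Im z$. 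The bulk statement \eqref{e:bulkrigid} from the Stieltjes bound via a Helffer--Sj{\"o}strand/quantile argument is fine and matches the paper's citation of \cite[Corollary 3.2]{MR4009708}.
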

\begin{remark}
The support of $\rho_t^*$ may consist of several intervals $[{\sf a}_1(t), {\sf b}_1(t)]\cup [{\sf a}_2(t), {\sf b}_2(t)]\cup \cdots \cup [{\sf a}_r(t), {\sf b}_r(t)]$. The argument of \eqref{e:edgerigid2} can be extended to give optimal rigidity estimates for particles  close to those inner endpoints, i.e. when $\sfa_{k+1}(t)- \sfb_k(t)\gtrsim 1$, we expect that 
\begin{align*}
   x_i(t)\leq {\sf b_{k}}(t)+\frac{(\log n)^{\OO(1)}}{n^{2/3}},\quad x_{i+1}(t)\geq {\sf a_{k+1}}(t)- \frac{(\log n)^{\OO(1)}}{n^{2/3}}, \quad i=n\int_{-\infty}^{\sfb_k(t)}\rho_t^*(x)\rd x.
\end{align*}
\end{remark}

Theorem \ref{t:rigidity} follows from estimates of the Stieltjes transform of the empirical particle density. It follows from \cite[Corollary 3.2]{MR4009708} that \eqref{e:bulkrigid} is equivalent to the following estimates of the Stieltjes transform of the empirical particle density.
\begin{proposition} \label{p:bulkrigid2}
Under the assumptions of Theorem \ref{t:rigidity}, with high probability, the Stieltjes transform of the weighted nonintersecting Brownian bridges \eqref{e:wdensity}
 satisfies the following estimates: for any $\ft\leq t\leq 1$, with high probability we have
\begin{align}\label{e:bulkrigid2}
    \left|\tilde m_t(z)-m_t(z)\right|\lesssim \frac{(\log n)^{ \fa+1}}{n\Im[z]}.
\end{align}
uniformly on   $\{z\in\bC: \dist(z, [\sfa(t), \sfb(t)])|\Im[m_t(z)]|\wedge|\Im[z]|\gtrsim (\log n)^{ \fa+2}/n)\}$.
\end{proposition}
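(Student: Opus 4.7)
The strategy is to derive a closed stochastic evolution equation for the difference $D_t(z) := \tilde m_t(z) - m_t(z)$ and then analyze it by the method of characteristics along the flow of the limiting Stieltjes transform, following the framework of \cite{adhikari2020dyson,MR4009708}. First I would apply It\^o's formula to $\tilde m_t(z) = (1/n)\sum_i (z-x_i(t))^{-1}$ using the random walk representation \eqref{e:randomwalk2} together with the Ansatz \ref{a:defE}, $\del_{x_k}S_t = g_t(x_k;\mu_t,t) + \cE_t^{(k)}$. The repulsion term combined with the standard Dyson identity $(1/n^2)\sum_{i\ne j}(z-x_i)^{-2}(x_i-x_j)^{-1} = -\tilde m_t(z)\,\del_z\tilde m_t(z) + O(1/n^2(\Im z)^3)$ and the It\^o correction $\frac{1}{2n^2}\sum_i (z-x_i)^{-3}$ produce an equation of the form
\begin{equation*}
d\tilde m_t(z) = -\tilde m_t(z)\,\del_z \tilde m_t(z)\,dt - \frac{1}{n}\sum_i \frac{g_t(x_i;\mu_t,t)}{(z-x_i(t))^2}\,dt + dM_t(z) + O\!\left(\tfrac{1}{n(\Im z)^3}\right) dt,
\end{equation*}
where the $O(\cdot)$ absorbs the contribution of $\cE_t^{(k)}$ via Proposition \ref{p:cEbound}, and $M_t(z)$ is a martingale whose quadratic variation is $O(dt/(n^2(\Im z)^3))$.

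Next I would compare with the deterministic equation \eqref{e:dermtse} satisfied by $m_t(z)$. Subtracting yields
\begin{equation*}
\del_t D_t(z) = -\tilde m_t(z)\,\del_z D_t(z) - \del_z m_t(z)\,D_t(z) + R_t(z) + (\text{martingale}) + (\text{small}),
\end{equation*}
where $R_t(z)$ is the difference between the empirical and deterministic integrals of $g_t$ against $(z-x)^{-2}$. I would split $R_t(z)$ into two pieces: a linear-statistic error $\int g_t(x;\rho_t^*,t)(z-x)^{-2}\rho_t^*(x)dx - (1/n)\sum_i g_t(x_i;\rho_t^*,t)(z-x_i)^{-2}$, which can be expressed by contour integration as an integral of $D_t(w)$ on a contour enclosing $\supp(\rho_t^*)$ and away from $z$ (using the analyticity of $g_t(\,\cdot\,;\rho_t^*,t)$ in a neighborhood of the support, per Proposition \ref{p:derg}); and a measure-perturbation piece $(1/n)\sum_i [g_t(x_i;\rho_t^*,t) - g_t(x_i;\mu_t,t)](z-x_i)^{-2}$, handled by the variational formula of Section \ref{s:bc} together with Proposition \ref{p:derg} to write $g_t(x;\mu_t,t) - g_t(x;\rho_t^*,t)$ as an integral of the Schiffer-kernel-type object against $(\mu_t - \rho_t^*)$, which is again expressible through $D_t$ on an appropriate contour.

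With this formulation, the method of characteristics applies. I would introduce characteristics $z_t$ solving $\dot z_t = \tilde m_t(z_t)$, so that along $z_t$ the PDE becomes an ODE $\tfrac{d}{dt}D_t(z_t) = -\del_z m_t(z_t) D_t(z_t) + R_t(z_t) + dM_t(z_t)/dt + \ldots$. The characteristic flow in the complex Burger picture of Section \ref{s:burger} preserves the distance to the support up to short-time corrections, as in \cite{MR4009708}, so the scale $\Im z$ is preserved for $t \leq O(1)$. A Gr\"onwall argument, applied after taking $\sup_{z\in\Lambda_n}|D_t(z)|$ over a discretized lattice $\Lambda_n$ at scale $n^{-10}$ in the admissible region, closes the bound at the target order $(\log n)^{\fa+1}/(n\Im z)$; the martingale $M_t$ is controlled by BDG together with its quadratic variation estimate, contributing $(\log n)^{\OO(1)}/(n\Im z)$ with overwhelming probability. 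The initial condition at $t=0$ is Assumption \ref{a:A_n}, providing the input of the Gr\"onwall iteration.

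The main obstacle is the last step of the comparison argument: handling the measure-dependence of $g_t(\,\cdot\,;\mu_t,t)$ in a way that both linearizes correctly and preserves the analyticity required to represent the error as a contour integral of $D_t$. This is exactly where Propositions \ref{p:derg} and \ref{p:branchloc} — which guarantee that $g_t(z;\mu,t)$ and its $\mu$-derivatives are uniformly analytic and bounded in a neighborhood of $\supp(\mu)$ away from branch points of the covering $\pi_{Q^{\mu,t}}$ — become essential. A secondary difficulty is the short-time phase $t \in (0,\ft)$, where one cannot yet exploit the smoothing of Dyson Brownian motion; this is why the statement only asserts rigidity for $t \geq \ft$, and Assumption \ref{a:A_n} is precisely tuned so that the initial Stieltjes-transform error, propagated via the characteristic flow without regularization, stays at the optimal $(\log n)^{\fa+1}/(n\Im z)$ scale.
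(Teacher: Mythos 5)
Your overall architecture (linearize $\Delta_t=\tilde m_t-m_t$ via It\^o, represent the nonlocal error as a contour integral of $\Delta_t$ using the analyticity of $g_t$ from Propositions \ref{p:derg} and \ref{p:branchloc}, then Gr\"onwall plus BDG on a lattice with Assumption \ref{a:A_n} as input) is indeed the skeleton of the paper's argument. But there is a genuine gap in the central device: your characteristics. You propose to flow along $\dot z_t=\tilde m_t(z_t)$, i.e.\ along the empirical Stieltjes transform and, more importantly, without the drift component $g_t$. The paper instead uses the deterministic characteristics of the limiting complex Burger's equation, $\dot z_t=f_t(Z_t)=m_t(z_t)+g_t(Z_t)$, lifted to the Riemann surface $\cC_t$ (see \eqref{e:transport2} and \eqref{e:dzt}), so that only the multiplicative term $-\Delta_t\,\del_z\tilde f_t$ survives in \eqref{e:dDeltat}. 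The missing $g_t$ is not cosmetic: the edge itself moves with velocity $f_t(\sfa(t))$ (see \eqref{e:edgeEqn}), so characteristics driven by $\tilde m_t$ alone drift relative to $\sfa(t)$ at speed $O(1)$, and you lose exactly the monotonicity $\del_s\sqrt{\kappa_s}\leq-\fC\sqrt{\kappa_s+\eta_s}$ and the particle-distance lower bound \eqref{e:gap} of Claim \ref{p:gap}. Relatedly, your claim that the flow ``preserves the distance to the support'' so that ``the scale $\Im z$ is preserved for $t\leq O(1)$'' is false on the $O(1)$ time horizon required here (this is precisely the difficulty the paper flags relative to \cite{MR4009708,adhikari2020dyson}): $\Im[z_s]$ decreases substantially along the flow, which is why the domains $\cD_t(r)$ are defined by survival of the flow, why the Gr\"onwall exponential must be identified as $\asymp\Im[z_s]/\Im[z_t]$ via \eqref{eqn:betabd} and Proposition \ref{p:propzt}, and why the martingale term cannot be bounded by a naive BDG at fixed $z$: a quadratic variation of order $1/(n^2(\Im z)^3)$ integrated over $O(1)$ time gives roughly $1/(n(\Im z)^{3/2})$, which at the scales in the proposition is far worse than the target $(\log n)^{\fa+1}/(n\Im z)$. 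Closing this requires the flow-integrated estimates of Proposition \ref{p:propzt}, the square-root asymptotics of Proposition \ref{p:squareroot}, the gap bound \eqref{e:gap}, the dyadic time decomposition in Claim \ref{c:error}, and a stopping time to bootstrap $\Im[\tilde m]\approx\Im[m]$ — none of which are available along your $\tilde m$-driven characteristics.

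A second gap is the absence of the two-phase structure. The paper first proves a short-time statement (Propositions \ref{p:extremeeig} and \ref{p:bulkrigid3}): for $0\leq t\leq\ft_0$ the particles are confined near $\supp(\mu_A)$ by a coupling with a Brownian watermelon (Theorem \ref{thm:extreme}), so the Stieltjes transform is controlled at macroscopic distance with error $(\log n)^{\fa}/n$; only for $t\geq\ft_0$, when $\rho_t^*$ has square-root edges, does it push to the local domain $\cD_t^{\rm bulk}$ with the stopping time \eqref{stoptime2}. You correctly note that the short-time regime is delicate and is why the conclusion is stated for $t\geq\ft$, but your proposal supplies no mechanism for it, and without it the bootstrap for $t\geq\ft_0$ has no valid starting point. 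In summary: right framework, but the choice of characteristics, the incorrect scale-preservation claim, and the missing short-time/edge machinery are substantive obstructions rather than routine details.
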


The proof of Proposition \ref{p:bulkrigid2} follows similar arguments as in \cite[Theorem 3.1]{MR4009708}, and the proof of \eqref{e:edgerigid2} follows from similar argument as in \cite[Theorem 4.3]{adhikari2020dyson} with two  modifications.
Firstly, the drift term in the random walk \eqref{e:weightrandomwalk}  corresponding to the weighted nonintersecting Brownian bridges \eqref{e:wdensity} depends on time, however the drift terms in \cite{MR4009708, adhikari2020dyson} do not depend on time. Secondly, the rigidity estimates in \cite{MR4009708, adhikari2020dyson} are proven for short time $t=\oo(1)$. Here we need to show the rigidity estimates up to time $t=1$, which requires  more careful analysis.

Using the Ansatz \ref{a:defE}, we can rewrite the  random walk \eqref{e:randomwalk2} corresponding to the weighted nonintersecting Brownian bridges as
\begin{align}\label{e:randomwalk4}
\rd x_i(t)=\frac{1}{\sqrt n} \rd B_i(t)+\frac{1}{n}\sum_{j: j\neq i}\frac{\rd t}{x_i(t)-x_j(t)}+(g_t(x_i(t);\mu_t,t)+\cE_t^{(i)}(x_1(t),x_2(t), \cdots, x_n(t)))\rd t,
\end{align}
for $1\leq i\leq n$, where 
\begin{align*}
\mu_t=\frac{1}{n}\sum_{i=1}^n \delta_{x_i(t)},\quad \mu_0=\mu_{A_n}.
\end{align*}
We denote the difference between the Stieltjes transforms $\tilde m_t(z)$ and $m_t(z)$ as
\begin{align}\label{e:Delta_t}
\Delta_t(z)\deq\tilde m_t(z)- m_t(z)=\int \frac{1}{z-x}(\rd\mu_t-\rho_t^*(x)\rd x).
\end{align}
To write down the stochastic differential equation of $\Delta_t(z)$, we introduce the following quantity, 
\begin{align*}
\tilde f_t(Z)=\frac{1}{n}\sum_{i}\frac{1}{z-x_i(t)}+g_t(Z)=
\int \tilde m_t(z)+g_t(Z),\quad Z=(f,z)\in \cC_t.
\end{align*}
Using It\^{o} 's formula, we can write down the stochastic differential equation for $\tilde f_t(Z)$:
\begin{align}\begin{split}\label{e:df}
\rd \tilde f_t(Z)
&=\frac{1}{n\sqrt n}\sum_{i=1}^n\frac{\rd B_i(t)}{(z-x_i(t))^2}+\del_t g_t(Z)\rd t+\frac{1}{n^2}\sum_{i=1}^n \frac{1}{(z-x_i(t))^2}
\sum_{j=1}^n \frac{1}{z-x_j(t)}\rd t\\
&+\frac{1}{n}\sum_{i=1}^n\frac{g_t(x_i(t);\mu_t,t)+\cE^{(i)}_t(\bmx(t))}{(z-x_i(t))^2}\rd t.
\end{split}\end{align}
Then we can use \eqref{e:gtsec8},
to rewrite the righthand side of \eqref{e:df} as
\begin{align}\begin{split}\label{e:newdf}
\rd \tilde f_t(Z)&=\frac{1}{n\sqrt n} \sum_{i=1}^n\frac{\rd B_i(t)}{(z-x_i(t))^2}
-\tilde f_t(Z)\del_z \tilde f_t(Z)\rd t+
\int\frac{g_t(x)-g_t(Z)-(x-z)\del_z g_t(Z)}{(z-x)^2} (\rd \mu_t-\rho_t^*(x)\rd x)\rd t\\
&+\frac{1}{n}\sum_{i=1}^n\frac{(g_t(x_i(t);\mu_t,t)-g_t(x_i(t)))+\cE^{(i)}_t(\bmx(t))}{(z-x_i(t))^2}\rd t.
\end{split}\end{align}
%
%
%
To solve for \eqref{e:newdf}, we recall the characteristic flow \eqref{e:transport2}: for any $Z_0=(f,z)\in \cC_0$, then
$Z_t=(f, z+t f)\in \cC_t$ and $f_t(Z_t)=f$.
We will use the notations $z_t=z(Z_t)=z+tf$, and notice that
\begin{align}\label{e:dzt}
\del_t z_t=f=f_t(Z_t)=m_t(z_t)+g_t(Z_t).
\end{align}
We plug $Z=Z_t$ in \eqref{e:newdf}, and use \eqref{e:dzt} to get
\begin{align}\begin{split}\label{e:dftzt}
\rd \tilde f_t(Z_t)&=\frac{1}{n\sqrt n} \sum_{i=1}^n\frac{\rd B_i(t)}{(z_t-x_i(t))^2}
-\Delta_t(z_t)\del_z \tilde f_t(Z_t)\rd t+
\int\frac{g_t(x)-g_t(Z_t)-(x-z_t)\del_z g_t(Z_t)}{(z_t-x)^2} (\rd \mu_t-\rho_t^*(x)\rd x)\rd t\\
&+\frac{1}{n}\sum_{i=1}^n\frac{(g_t(x_i(t);\mu_t,t)-g_t(x_i(t)))+\cE^{(i)}_t(\bmx(t))}{(z_t-x_i(t))^2}\rd t.
\end{split}\end{align}
From our definition of the characteristic flow \eqref{e:transport2}, $f_t(Z_t)=f$, which is a constant. Thus we have $\del_t f_t(Z_t)=0$.
In this way, we obtain a stochastic differential equation of $\Delta_t$ by taking difference of \eqref{e:dftzt} with $\del_t f_t(Z_t)=0$.
\begin{align}\begin{split}\label{e:dDeltat}
\rd\Delta_t(z_t)&=\frac{1}{n\sqrt n} \sum_{i=1}^n\frac{\rd B_i(t)}{(z_t-x_i(t))^2}
-\Delta_t(z_t)\del_z \tilde f_t(Z_t)\rd t+
\int\frac{g_t(x)-g_t(Z_t)-(x-z_t)\del_z g_t(Z_t)}{(z_t-x)^2} (\rd \mu_t-\rho_t^*(x)\rd x)\rd t\\
&+\frac{1}{n}\sum_{i=1}^n\frac{(g_t(x_i(t);\mu_t,t)-g_t(x_i(t)))+\cE^{(i)}_t(\bmx(t))}{(z_t-x_i(t))^2}\rd t.
\end{split}\end{align}
%
%
%
%
 We can integrate both sides of \eqref{e:dDeltat} from $0$ to $t$ and obtain
\beq \label{eq:mzt}
\Delta_t(z_t)=\int_0^t  \rd \cE_1(s) +(-\Delta_s(z_s)\del_z \tilde f_s(Z_s)+\cE_2(s)+\cE_3(s))\rd s,
\eeq
where the error terms are
\begin{align}
\label{defcE1}& \rd \cE_1(s)=\frac{1}{n\sqrt n} \sum_{i=1}^n\frac{\rd B_i(s)}{(z_s-x_i(s))^2},\\
\label{defcE2}& \cE_2(s)=\int\frac{g_t(x)-g_t(Z_t)-(x-z_t)\del_z g_t(Z_t)}{(z_t-x)^2} (\rd \mu_t-\rho_t^*(x)\rd x),\\
\label{defcE3}& \cE_3(s)=\frac{1}{n}\sum_{i=1}^n\frac{(g_t(x_i(t);\mu_t,t)-g_t(x_i(t)))+\cE^{(i)}_t(\bmx(t))}{(z_t-x_i(t))^2}.
\end{align}
We remark that $\cal E_1,\cE_2$ and $\cal E_3$ implicitly depend on $Z_0=(f,z)$, the initial value of the flow $Z_t$.  The optimal rigidity estimates will eventually follow from an application of the Gr{\" o}nwall's inequality to \eqref{eq:mzt}.

We take a small time $0\leq \ft_0\leq 1$, after time $\ft_0$, the measure $\rho_t^*$ has square root behavior as in \eqref{e:square}. It is straightforward to show the following asymptotics of the Stieltjes transform of $\rho_t^*$.
\begin{proposition}\label{p:squareroot}
Under the assumptions of Theorem \eqref{t:rigidity}, for any $\ft_0\leq t\leq 1$ the measure $\rho_t^*$ has square root behavior in the following sense: in a small neighborhood of the left edge $\sfa(t)$ of $\rho_t^*$, its Stieltjes transform $m_t(z)$ satisfies
\begin{align}\label{e:ssmt}
\Im[m_t(\sfa(t)-\kappa+\ri\eta)]\asymp
\left\{
\begin{array}{cc}
\eta/\sqrt{\kappa+\eta}, & \kappa\geq 0,\\
\sqrt{|\kappa|+\eta} & \kappa \leq 0.
\end{array}
\right.
\end{align}
The same statement holds right edge of $\rho_t^*$.
\end{proposition}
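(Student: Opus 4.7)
The plan is to derive \eqref{e:ssmt} by direct estimation of the Poisson integral
\begin{align*}
|\Im[m_t(\sfa(t)-\kappa+\ri\eta)]| = \eta\int \frac{\rho_t^*(x)}{(x-\sfa(t)+\kappa)^2+\eta^2}\,\rd x,
\end{align*}
using the uniform square-root behavior of $\rho_t^*$ near its left edge.

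First, I would upgrade the pointwise expansion \eqref{e:square} to one that is valid uniformly in $t \in [\ft_0,1]$: there exists $\delta_0 > 0$ such that, for all $t \in [\ft_0,1]$ and $y \in [0,\delta_0]$,
\begin{align*}
\rho_t^*(\sfa(t)+y) = \frac{\sfs(t)}{\pi}\sqrt{y} + r_t(y), \qquad |r_t(y)| \lesssim y^{3/2}, \qquad \sfs(t) \asymp 1,
\end{align*}
with implicit constants independent of $t$. This uniformity is inherited from the analytic family of Riemann surfaces $\{\cC_t\}$ constructed in Section \ref{s:burger}: the left edge $\sfa(t)$ is a smoothly varying simple branch point of the covering map \eqref{e:Qt}, and the local branch data determine $\sfs(t)$ analytically. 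The non-critical assumption \ref{a:ncritic} at $t = 1$ gives $\sfs(1) > 0$, and non-criticality propagates to $t \in [\ft_0,1]$ via the characteristic flow \eqref{e:flow} (shrinking $\tau$ and enlarging $\ft_0$ if necessary).

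Given this uniform expansion, I would change variables $y = x - \sfa(t)$ and split the Poisson integral at $y = \delta_0$. The region $|x - \sfa(t)| > \delta_0$ contributes $O(\eta)$, which is subleading compared to both $\eta/\sqrt{\kappa+\eta}$ and $\sqrt{|\kappa|+\eta}$. The remainder term $r_t(y)$ contributes $O\pa{\eta \int_0^{\delta_0} y^{3/2}/((y+\kappa)^2+\eta^2)\,\rd y} = O((|\kappa|+\eta)^{3/2})$, also subleading. The leading contribution is $(\sfs(t)/\pi) I(\kappa,\eta)$ with
\begin{align*}
I(\kappa,\eta) \deq \eta \int_0^{\delta_0}\frac{\sqrt{y}}{(y+\kappa)^2+\eta^2}\,\rd y.
\end{align*}

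The remaining step is a routine evaluation of $I(\kappa,\eta)$ by splitting the integration at $y = |\kappa|+\eta$. When $\kappa \geq 0$, the denominator is $\asymp (\kappa+\eta)^2$ on $[0,\kappa+\eta]$ and $\asymp y^2$ on $[\kappa+\eta,\delta_0]$; both pieces yield a contribution of order $\eta/\sqrt{\kappa+\eta}$. When $\kappa \leq 0$, the integrand peaks near $y = |\kappa|$ on a window of width $\eta$, and case analysis depending on whether $|\kappa|$ exceeds $\eta$ gives $I(\kappa,\eta) \asymp \sqrt{|\kappa|+\eta}$. Combined with the subleading bounds above, this matches \eqref{e:ssmt}; the analogous estimate at the right edge follows by reflection. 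The only substantive point in the argument is the uniform-in-$t$ control of the local square-root expansion — the integral estimates themselves are elementary — which is why the paper labels the proposition ``straightforward''.
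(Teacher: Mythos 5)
Your proposal is correct and is essentially the argument the paper intends: the paper offers no written proof, asserting the estimate is a "straightforward" consequence of the square-root edge behavior \eqref{e:square}, which is exactly your route of a uniform-in-$t$ edge expansion (with $\sfs(t)$ bounded below on the compact interval $[\ft_0,1]$) followed by elementary Poisson-kernel estimates split at $y=|\kappa|+\eta$. The only slip is the intermediate remainder bound: $\eta\int_0^{\delta_0} y^{3/2}/((y+\kappa)^2+\eta^2)\,\rd y$ is of order $\eta\sqrt{\delta_0}+(|\kappa|+\eta)^{3/2}$ rather than $(|\kappa|+\eta)^{3/2}$, but this is still subleading compared with the main term once $\kappa+\eta\leq\delta_0$ and $\delta_0$ is chosen small, so the conclusion is unaffected.
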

We decompose the proof of \eqref{e:edgerigid2} and Proposition \ref{p:bulkrigid2} into two parts. In the first part, Section \ref{s:shorttime}, we show that for time $0\leq t\leq\ft_0$, the Stieltjes transform of the empirical particle density $\mu_t$ satisfies the same estimate \eqref{e:A_n} as $\mu_A$. In the second part, Section \ref{s:longtime}, we study the case $\ft_0\leq t\leq 1$. For which, we need certain estimates of the characteristic flow, which comes from the square root behavior of the density $\rho_t^*$ as in Proposition \ref{p:squareroot}, and this holds only for time $\ft_0\leq t\leq 1$.

We recall the domain
$\Omega=\{(x,t)\in \bR\times (0,1+\tau):\rho_t^*(x)>0\}$, and use the map $\pi_{Q^{t}}$ from \eqref{e:Qt}, $\cC_0$ can be identified with two copies of $\Omega$ gluing together, and $\cC_t^+$ can be identified with two copies of $\Omega\cap \bR\times[t, 1+\tau]$ gluing together.
For any $0\leq r\leq 1+\tau$, we define the spectral domain  
\begin{align*}
\cD_0(r)\deq 
\{(f,z)\in \cC_0: Z_s(f,z)\in \cC_s^+, 0\leq s\leq r \}\subset \cC_0.
\end{align*}
They correspond to points in $\{(x,s)\in \Omega: r\leq s\leq 1+\tau\}$.  
More generally for any $0\leq t\leq 1$ and $0\leq r\leq 1+\tau-t$, we define
\begin{align}\begin{split}\label{e:defDtr}
\cD_t(r)&=\{Z_t(f,z)\in \cC_t^+: Z_{t+s}(f,z)\in \cC_{t+s}^+, 0\leq s\leq r\}=Z_t(\cD_0(r+t))\subset \cC_t.
\end{split}\end{align}
They correspond to points in $\{(x,s)\in \Omega:  t+r\leq 1+\tau\}$, as in figure \ref{f:projection}.
\begin{figure}
\begin{center}
 \includegraphics[scale=0.22,trim={0cm 5cm 0 7cm},clip]{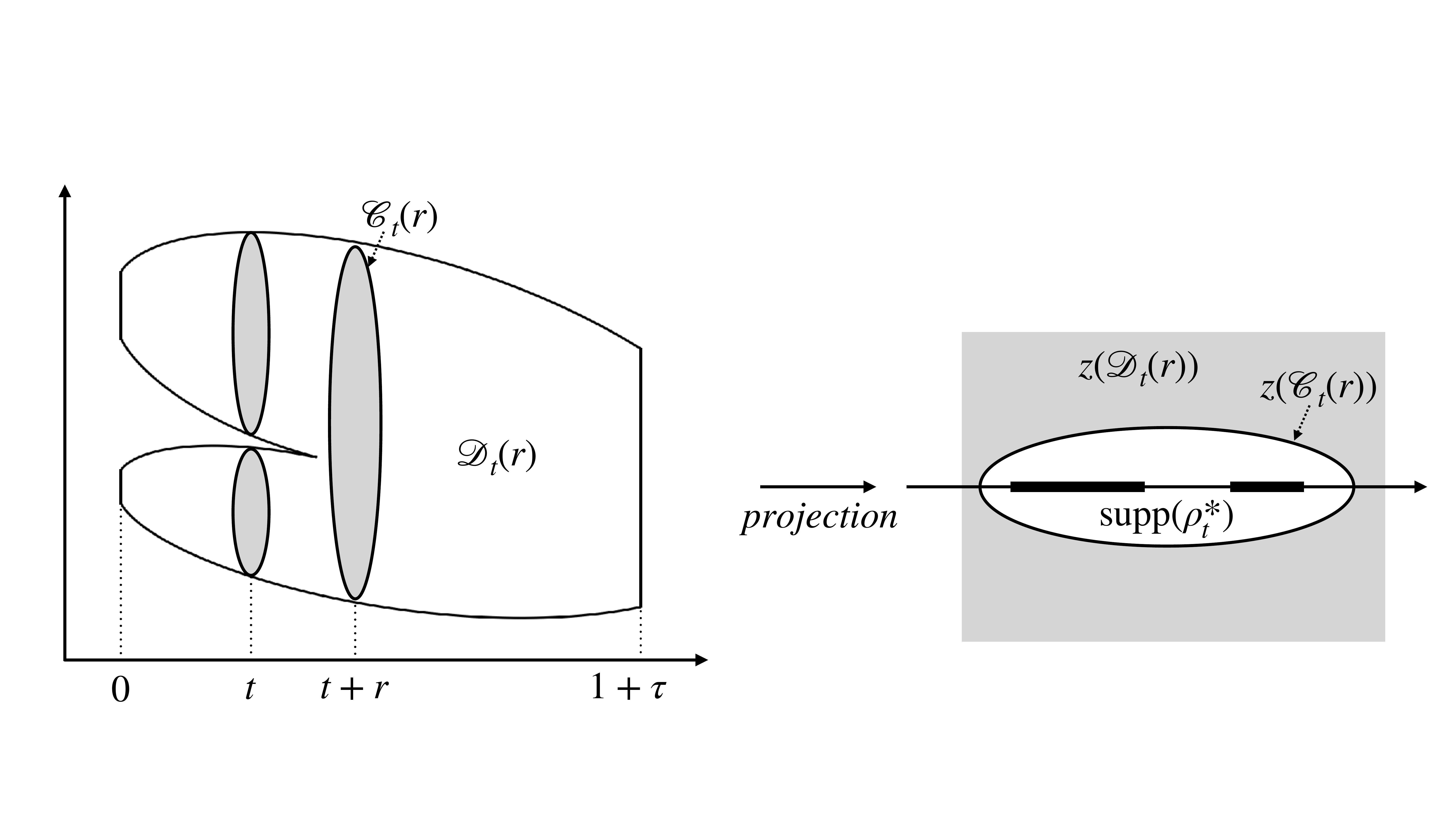}
 \caption{An illustration of the domain $\cD_t(r)$, the contour $\mathscr C_t(r)$ , and their projections on the $z$-plane.}
 \label{f:projection}
 \end{center}
 \end{figure}

The bottom boundary of $\cD_t(r)$ inside $\cC_t^+$ is a curve, 
we denote it by $\mathscr C_t(r)$, and its projection on the $z$-plane as 
$z(\mathscr C_t(r))$. Then $z(\mathscr C_t(r))$
encloses $\supp(\rho_t^*)$. Explicitly it is given 
\begin{align}\label{e:Ctr}
z(\mathscr C_t(r))=\{z=x-rf_{t+r}(x): (x,t+r)\in \Omega\}.
\end{align}

Thanks to Proposition \ref{p:branchloc}, 
a neighborhood of $ \supp(\rho_t^*)$ is a bijection with its preimage under the projection map $(f_t(Z),z)\in \cC_t^+\mapsto z\in \bC$, i.e. for any $w$ in a neighborhood of $ \supp(\rho_t^*)$, the cardinality of $z^{-1}(w)$ is one.
We can take  $\fr>0$ small enough, such that for any $0\leq t\leq 1$, $z(\cC_t^+\setminus \cD_t(\fr))$ is inside such neighborhood of $ \supp(\rho_t^*)$. Then the projections $z(\cD_t(\fr)), z(\cC_t^+\setminus \cD_t(\fr))$ do not intersect with each other, $z(\cD_t(\fr))\cap z(\cC_t^+\setminus \cD_t(\fr))=\emptyset$.


We define  the following lattice on the domain $\cD_0(\fr)$,
\beq\label{def:L}
\cal L\deq \left\{z^{-1}(E+\ri \eta)\in \dom_0(\fr): E\in \bZ/ n^2, \eta\in \bZ/n^2\right\}.
\eeq
It is easy to see from the characteristic flow \eqref{e:transport2}, the image of the lattice $\cL$ also gives a $1/n^2$ mesh of $Z_t(\cD_0(\fr))$ after projected to the $z$-plane.
\begin{proposition}\label{prop:continuity}
For any $0\leq t\leq 1$ and $W\in Z_t(\cD_0(\fr))$, there exists some lattice point $Z\in \cal L$ as in \eqref{def:L}, such that
\begin{align*}
|z_t(Z)-z(W)|\lesssim 1/n^2.
\end{align*}
where the implicit constant depends on $\mu_A$.
\end{proposition}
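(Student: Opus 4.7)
The plan is to treat the claim as a Lipschitz/density statement for the map $z_t\colon \cD_0(\fr)\to\bC$, viewed as a perturbation of $z$ via the characteristic flow. Since $Z_t(f,z)=(f,z+tf)$, for any $Z\in \cD_0(\fr)$ we have the explicit formula
\[
z_t(Z)=z(Z)+t\,f_0(Z).
\]
Given $W\in Z_t(\cD_0(\fr))$, I would let $Z_0=Z_t^{-1}(W)\in\cD_0(\fr)$ (well-defined because the characteristic flow is a diffeomorphism between the Riemann surfaces), so that $z(W)=z_t(Z_0)$. It then suffices to locate $Z\in\cL$ for which both $|z(Z)-z(Z_0)|$ and $|f_0(Z)-f_0(Z_0)|$ are $\OO(1/n^2)$.

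For the first quantity, I would use the definition of the mesh \eqref{def:L}: round $z(Z_0)\in\bC$ to the nearest element $E+\ri\eta\in\bZ/n^2+\ri\,\bZ/n^2$ so that $|E+\ri\eta-z(Z_0)|\leq\sqrt{2}/n^2$, and take $Z$ to be the preimage under $z$ that lies on the same sheet as $Z_0$ (which exists in $\cD_0(\fr)$ for $n$ large, since $\cD_0(\fr)$ is open and $Z_0$ is interior). To handle the second quantity I would invoke Assumptions \ref{a:reg} and \ref{a:ncritic}, which guarantee that the full Riemann surface $\cC_0$ associated with the variational problem is well defined up to time $1+\tau$, that the $\fr$-cutoff separates $\cD_0(\fr)$ from the residuals of $f_0\,\rd z$ on the top and bottom boundaries of $\Omega$, and that $\rho_C$ is non-critical. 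Together these imply that $f_0$ is analytic on (a neighbourhood of) $\cD_0(\fr)$ with derivative bounded by a constant $C=C(\mu_A)$ along the sheet containing $Z_0$. Hence
\[
|z_t(Z)-z(W)|
=\bigl|(z(Z)-z(Z_0))+t\,(f_0(Z)-f_0(Z_0))\bigr|
\leq (1+tC)\,|z(Z)-z(Z_0)|\lesssim \frac{1}{n^2},
\]
which is the claimed bound.

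The main obstacle is controlling $f_0$ along the same sheet near branch points of the projection $z\colon \cC_0\to\bC$, which sit on the left and right boundaries of $\Omega$ inside $\cD_0(\fr)$; there $\partial_z f_0$ is singular in the $z$-coordinate. To deal with this, I would switch to $f$ as the local uniformiser near a branch point $(f^*,z^*)$, where $z-z^*=c(f-f^*)^2+\OO((f-f^*)^3)$. The derivative $\partial_f z_t=\partial_f z+t$ equals $t$ at the branch point, so $z_t$ is a local diffeomorphism there (for $t>0$). Combined with Proposition \ref{p:branchloc} (which locates branch points away from the bottom boundary) and the non-criticality of $\rho_C$, this ensures $z_t$ has bounded variation in the appropriate local chart throughout $\cD_0(\fr)$, and the density estimate propagates to a neighbourhood of each branch point. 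Aggregating the bulk estimate with this local argument near the finitely many branch points yields the proposition with an implicit constant that depends only on $\mu_A$ (through $\fr,\tau,\fb$ and the locations of the branch points), completing the proof.
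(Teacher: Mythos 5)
You are right that the paper itself offers no argument here (it treats the statement as immediate from the characteristic flow), and your skeleton is the natural one: pull $W$ back to $Z_0=Z_t^{-1}(W)\in\cD_0(\fr)$, use $z_t(Z)=z(Z)+tf_0(Z)$, round $z(Z_0)$ to the $1/n^2$-grid on the same sheet, and close with a Lipschitz bound for $f_0$ in the $z$-coordinate on $\cD_0(\fr)$. The gap is in the only substantive step, namely the boundedness of $\del_z f_0$ on $\cD_0(\fr)$, and your treatment of the singular set is off in both directions. The points of the left and right boundaries of $\Omega$ at times $s\in[\fr,1+\tau]$ are branch points of the time-$s$ projection $z_s$, not of the time-$0$ projection $z$: in a local uniformizer $u$ at the edge $(\sfa(s),s)$ one has $z_s=\sfa(s)+u^2$ and $f=f_s(\sfa(s))+cu+\OO(u^2)$ with $c\neq0$, so $z=z_s-sf$ satisfies $dz/du=-sc+\OO(u)\neq 0$ and $\del_z f_0\to -1/s$, which is bounded by $1/\fr$ on $\cD_0(\fr)$; so your Lipschitz bound does not fail there. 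Your proposed fix, however, is evaluated at exactly these points with the roles of the projections interchanged: there $\del_f z_t=\del_f z+t=t-s$, which vanishes at $t=s$, so the assertion that ``$z_t$ is a local diffeomorphism there for $t>0$'' is false precisely at the preimages of the time-$t$ edges $\sfa(t),\sfb(t)$, which do lie in $\cD_0(\fr)$ once $t\geq\fr$. (That ramification is in fact harmless, since $|dz_t/dz|=|2u/(2u-sc)|\lesssim 1$ near such a point, i.e.\ the flow contracts the mesh there rather than stretching it — but this is an argument you would still have to supply.)

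The genuinely dangerous points are branch points of the time-$0$ projection $z:\cC_0\to\bC$ itself, such as the interior ramification points $Z_i,\bar Z_i$ that the paper introduces when stating the Rauch variational formula. At such a point $\del_z f_0$ blows up, and, worse, the lattice $\cL$ — being the preimage of a $1/n^2$-grid in the $z$-coordinate — has spacing of order $1/(n^2\delta)$ in the local uniformizer at distance $\delta$ from the branch point, so its image under $z_t$ has gaps of order $t/(n^2\delta)\gg 1/n^2$ when $\delta\ll t$; your estimate $|f_0(Z)-f_0(Z_0)|\leq C\,|z(Z)-z(Z_0)|$ simply cannot hold there. You assert boundedness of the derivative ``along the sheet'' from Assumptions \ref{a:reg} and \ref{a:ncritic}, but Proposition \ref{p:branchloc} only excludes branch points from a neighborhood of the bottom cut $\supp(\mu_A)$, not from all of $\cD_0(\fr)$ (note also that the $\fr$-cutoff does not separate $\cD_0(\fr)$ from the top cut at time $1+\tau$; that boundary is harmless, but for a different reason, namely $dz/du=-(1+\tau)c+\OO(u)\neq0$ at the corners). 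To close the argument you must show that the time-$0$ projection is unramified on a neighborhood of $\cD_0(\fr)$ — equivalently $1-s\,\del_x f_s(x)\neq 0$ for $(x,s)\in\Omega$ with $s\geq\fr$, which is also what yields the identity $\del_z f_0=\del_x f_s/(1-s\,\del_x f_s)$ and the bound $1/\fr$ used above — or else keep the lattice points at a definite distance from the finitely many branch points and absorb that distance into the constant. Two smaller points: near $\del\cD_0(\fr)$ the nearest grid preimage may fall outside $\cD_0(\fr)$ and hence outside $\cL$, so you need a word on why a grid point of the domain within $\OO(1/n^2)$ still exists; and for the way the proposition is used later one also wants $Z_t(Z)$ to remain in the good domain, which your choice should respect.
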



By symmetry, in the rest of this section, we restrict ourselves in the subdomain that $\{Z\in \cC_0: \Im[z(Z)]\geq 0\}$. In this case we have $\Im[f_0(Z)]\leq 0$, and for any $t\geq 0$ such that $Z_t(Z)\in \cC_t^+$, $z_t(Z)$ remains nonnegative, i.e. $\Im[z_t(Z)]\geq 0$. The estimates for $\{Z\in \cC_0: \Im[z(Z)]\leq 0\}$ follows by symmetry.

\subsection{Short time estimates}
\label{s:shorttime}
\begin{proposition} \label{p:extremeeig}
Under the assumptions of Theorem \ref{t:rigidity}, with high probability, the Stieltjes transform of the weighted nonintersecting Brownian bridge \eqref{e:wdensity}
satisfies the following estimates: with high probability, it holds uniformly for any $0\leq t\leq \ft_0$
\begin{align}\label{e:extreme}
\{x_1(t), x_2(t),\cdots, x_n(t)\}\in\cup_k [{\sf a}_k- \fC\sqrt{t}-\oo(1), {\sf b}_k+\fC\sqrt{t}+\oo(1)],
\end{align}
where $\mu_{A_n}$ is supported on $\cup_k [{\sf a}_k-\oo(1), {\sf b}_k+\oo(1)]$. As a consequence, $\supp\mu_{t}$ is contained in a $\fC\sqrt t+\oo(1)$ neighborhood of $\supp\rho^*_{t}$.
\end{proposition}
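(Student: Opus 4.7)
The plan is to reduce Proposition~\ref{p:extremeeig} to the classical short-time behavior of pure Dyson Brownian motion via a stochastic comparison. Proposition~\ref{p:derg} gives $|g_t(x_i;\mu_t,t)|=\OO(1)$ and Proposition~\ref{p:cEbound} gives $|\cE^{(i)}_t(\bmx)|=\OO(1/n^2)$, uniformly as long as the empirical measure $\mu_t$ remains in a neighborhood of $\mu_A$ for which the Riemann-surface estimates of Section~\ref{s:burger} are available. Under that condition there is a universal $\fC_0>0$ such that the total drift in \eqref{e:randomwalk4} satisfies $|g_t(x_i;\mu_t,t)+\cE^{(i)}_t(\bmx)|\leq \fC_0$.

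Next I would introduce the auxiliary processes $\bmy^\pm(t)\deq\tilde\bmx(t)\pm\fC_0 t$, where $\tilde\bmx(t)$ is pure Dyson Brownian motion started from $\bma$ and driven by the same Brownian motions $B_1,\ldots,B_n$ as $\bmx(t)$. By translation invariance of the Coulomb interaction, $\bmy^\pm$ satisfies
\begin{align*}
\rd y_i^\pm=\frac{1}{\sqrt n}\rd B_i+\frac{1}{n}\sum_{j\neq i}\frac{\rd t}{y_i^\pm-y_j^\pm}\pm\fC_0\rd t,\qquad y_i^\pm(0)=a_i.
\end{align*}
A standard monotonicity argument then shows the gap $\eta_i(t)=y_i^+(t)-x_i(t)$ cannot become negative: at a minimum with $\eta_i=0$ and $\eta_j\geq 0$ for $j\neq i$, the Coulomb-difference contribution is nonnegative (shifting the other particles $x_j$ upward to $y_j^+$ weakens the net repulsion on the $i$-th particle) and the drift difference $\fC_0-(g_t+\cE^{(i)}_t)\geq 0$, so $\eta_i$ is nondecreasing there. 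A symmetric argument bounds $x_i(t)$ from below by $y_i^-(t)$, giving $|x_i(t)-\tilde x_i(t)|\leq\fC_0 t$ for all $i$ and $t\in[0,\ft_0]$ on an appropriate stopped event.

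The third step invokes the short-time behavior of pure DBM: for $\tilde\bmx(t)$ started from $\bma$ with $\mu_{A_n}$ supported in $\cup_k[\sfa_k-\oo(1),\sfb_k+\oo(1)]$, each particle moves at most $\fC\sqrt t+\oo(1)$ with high probability, i.e.\ $|\tilde x_i(t)-a_i|\leq \fC\sqrt t+\oo(1)$ uniformly in $i$ and in $t\in[0,\ft_0]$. This follows from the fact that the empirical density of $\tilde\bmx(t)$ is close to the free convolution $\mu_{A_n}\boxplus\sigma_t$, whose support lies within $2\sqrt t$ of $\supp\mu_{A_n}$, together with standard edge/bulk rigidity for short-time DBM. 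Combined with $|x_i(t)-\tilde x_i(t)|\leq \fC_0 t\leq \fC_0\sqrt{\ft_0}\cdot\sqrt t$, we conclude $|x_i(t)-a_i|\leq \fC\sqrt t+\oo(1)$, and since $a_i\in[\sfa_k-\oo(1),\sfb_k+\oo(1)]$ for some $k$, this forces $x_i(t)\in\cup_k[\sfa_k-\fC\sqrt t-\oo(1),\sfb_k+\fC\sqrt t+\oo(1)]$. A stopping-time bootstrap closes the loop: the first time at which some $x_i(t)$ exits the claimed region is equal to $\ft_0$ with high probability, which justifies the a priori drift bound used in the comparison.

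The main obstacle is the circular dependence between the drift bound on $g_t(\cdot;\mu_t,t)$ and the particle-location bound itself. The boundedness of $g_t$ requires $\mu_t$ to lie in an appropriate regularity class, which is precisely what the proposition aims to establish. The stopping-time formulation above is designed exactly to break this circularity: on the stopped event the a priori drift bound is in force, and the comparison argument then strengthens the location bound so that the stopping time cannot occur strictly before $\ft_0$.
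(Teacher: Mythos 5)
Your strategy (SDE comparison of the weighted bridge with pure Dyson Brownian motion shifted by $\pm\fC_0 t$, plus a stopping-time bootstrap) is genuinely different from the paper's argument, and it could probably be made to work, but as written it has two concrete gaps. First, the a priori drift bound $|g_t(x_i;\mu_t,t)+\cE_t^{(i)}|\leq\fC_0$ does not follow from the results you cite: Proposition \ref{p:derg} bounds \emph{derivatives} of $g_t$ in a neighborhood of the bottom boundary of $\cC_t^{\mu,t}$, not $g_t$ itself, and the uniformity of such bounds over the random empirical measures $\mu_t$ is exactly the delicate point — your stopping time only controls the \emph{support} of $\mu_t$, so you must prove that $\supp\mu_t$ lying in a fixed compact set already forces $|g_t(x;\mu_t,t)|\lesssim 1$ on that support. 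This is true (it follows, e.g., from the free-bridge representation of Theorem \ref{t:fbb}, which gives $g_t=\frac{1}{t-(1+\tau)}\tau(\mathsf x_t-\mathsf b\,|\,\mathsf x_t)$, hence a bound of order $\mathrm{diam}/\tau$), but it needs an argument and is not supplied; without it the bootstrap does not close. Second, the pure-DBM input is problematic in two ways: the per-particle claim $|\tilde x_i(t)-a_i|\leq\fC\sqrt t+\oo(1)$ is stronger than what support considerations give (a bulk particle can move inside the support without affecting \eqref{e:extreme}), and the statement you actually need — that no particle of short-time DBM exits the $\fC\sqrt t$-widened bands — is essentially the proposition itself in the driftless case, which you import from the literature ("standard edge/bulk rigidity for short-time DBM") rather than prove; note also that free-convolution support control alone does not rule out outlier particles, so this citation is carrying real weight.

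For comparison, the paper avoids all of this machinery: conditionally on the (random) terminal data $\bmb$, the weighted process is an honest nonintersecting Brownian bridge, and Theorem \ref{thm:extreme} (the Corwin--Hammond monotone coupling) lets one dominate it above and below by bridges whose starting points all coincide with a band edge and whose terminal points all coincide — i.e., Brownian watermelons after an affine shift — whose extreme paths are classically known to stay within $\fC\sqrt t+\oo(1)$ of their starting point. This gives \eqref{e:extreme} directly, with no drift estimates, no stopping time, and no appeal to DBM local laws. If you want to keep your route, the cleanest fix is to (i) prove the compact-support bound on $g_t$ via the free-bridge representation, and (ii) replace the cited DBM rigidity by the same watermelon comparison applied to your auxiliary process $\tilde\bmx(t)$ — at which point you may as well apply the watermelon comparison to $\bmx(t)$ itself, as the paper does.
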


We will need the following  monotonicity statement for nonintersecting Brownian bridges,  which follows directly from \cite[Lemmas 2.6 and 2.7]{corwin2014brownian}. 
\begin{theorem}[{\cite[Lemmas 2.6 and 2.7]{corwin2014brownian}}]\label{thm:extreme}
Given two pairs of boundary data $(a_1\leq a_2\leq \cdots\leq a_n)$, $(b_1\leq b_2\leq \cdots\leq b_n)$,  $(a'_1\leq a'_2\leq \cdots\leq a'_n)$ and $( b_1'\leq  b'_2\leq \cdots\leq  b'_n)$. We consider nonintersecting Brownian bridges from $(a_1,a_2,\cdots, a_n)$ and $(a'_1, a'_2, \cdots,a'_n)$ to $(b_1, b_2, \cdots,b_n)$ and $(b'_1,b'_2, \cdots, b'_n)$: $x_1(t)\leq x_2(t)\cdots\leq x_n(t)$ and $x'_1(t)\leq x'_2(t)\cdots\leq x'_n(t)$. If $a_i\geq a'_i$ and $b_i\geq b'_i$ for all $1\leq i\leq n$, then there exists  a coupling, such that at any time $0\leq t\leq 1$, $x_i(t)\geq x'_i(t)$ for all $1\leq i\leq n$. 
\end{theorem}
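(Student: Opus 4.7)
My plan is to prove this monotone-coupling statement via a discrete approximation followed by a Gibbs-resampling argument. The target theorem is a stochastic-monotonicity result: pointwise ordered boundary data for two systems of $n$ nonintersecting Brownian bridges should produce trajectories that can be coupled to remain pointwise ordered for all $t\in[0,1]$.

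First I would discretize each of the two continuum systems by $n$-tuples of nonintersecting simple symmetric random walks on $\delta\bZ$ at time mesh $\delta^2$, with the boundary data $\bma,\bmb,\bma',\bmb'$ rounded to the lattice (preserving the componentwise ordering). By the invariance principle for nonintersecting random walks---which follows by combining the Karlin--McGregor determinantal formula with Donsker's theorem applied to each individual bridge---these discrete systems converge weakly as $\delta\to 0$ to the continuum nonintersecting Brownian bridges. Since the set of componentwise-ordered path tuples is closed in the uniform topology on $\mathcal C([0,1],\bR^n)$, it suffices to produce a monotone coupling at the discrete level and then take a weak limit.

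At the discrete level the key input is a Brownian-Gibbs-type resampling property: conditionally on all curves except $x_i$ and on the endpoints of $x_i$, the law of $x_i(\cdot)$ is that of a simple random walk bridge constrained to stay strictly between $x_{i-1}(\cdot)$ and $x_{i+1}(\cdot)$. Such barrier-constrained random walk bridges are stochastically monotone in both the endpoint values and the barrier heights, which I would establish by a direct graphical coupling: realize the two unconstrained bridges via a common maximal coupling, and apply identical rejection steps to both so the ordering between them is preserved. Using this monotonicity I would define a coupled Markov chain on pairs of line ensembles that iteratively resamples curve $i$ in each system using the same external randomness; the single-path monotonicity then guarantees that the ordering $x_j\geq x_j'$ is preserved at every resampling step. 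Its invariant measure furnishes the desired monotone coupling for the discrete systems, and passing $\delta\to 0$ transfers the coupling to the continuum.

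The main obstacle will be the middle step, namely establishing stochastic monotonicity of the single-path barrier-constrained resampling in a form robust enough to plug into a simultaneous $n$-curve coupling: one needs monotonicity in the endpoints of $x_i$ together with monotonicity in the two barrier paths. The cleanest route is via the explicit rejection-sampling representation of a barrier-constrained bridge, where the monotonicity reduces to a coupling of two unconstrained bridges under shift of their endpoint data---an elementary fact that can be proved by the standard synchronous coupling of random walk bridges via their running increments. Once that monotonicity is in hand, the iterative resampling at the discrete level, the ergodicity argument identifying the invariant measure with the product of nonintersecting bridge laws, and the weak-limit passage $\delta\to 0$ are all routine.
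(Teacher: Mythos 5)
You should first note that the paper does not prove this statement at all: it is imported verbatim as Lemmas 2.6 and 2.7 of \cite{corwin2014brownian}, so the relevant comparison is with the Corwin--Hammond argument, and your overall architecture (discretize to nonintersecting random walk bridges, establish a monotone coupling for the discrete ensembles by a resampling Markov chain run with common randomness, identify the invariant measure, then pass to the continuum limit using that the set of ordered path tuples is closed) is exactly the architecture of that proof. In that sense your plan is the standard one and not a new route.

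However, the step you yourself identify as the crux is the one that fails as written. Your proposed proof of stochastic monotonicity for a single barrier-constrained bridge --- ``realize the two unconstrained bridges via a common maximal coupling, and apply identical rejection steps to both'' --- does not produce the desired coupling. If you resample proposal pairs until \emph{both} components satisfy their respective barrier constraints, the accepted pair has the law of the coupled proposals conditioned on the intersection of the two acceptance events, and its marginals are then biased away from the two conditioned bridge laws (each marginal gets reweighted by the conditional probability that the partner is also accepted). If instead you accept each component at its own first successful trial, the two accepted curves come from different trials and the pointwise ordering of the coupling is lost. So ``identical rejection steps'' either corrupts the marginals or destroys the order, and the single-path monotonicity cannot be established this way. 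The fix, which is what Corwin--Hammond actually do, is to run a local Glauber/Metropolis-type dynamics (random site, $\pm$ lattice-increment update, accepted iff the constraints remain satisfied) on the discrete paths with common uniform randomness for the two systems: monotonicity of each elementary local update is checked by inspection, order is preserved inductively, and the invariant measure of each marginal chain is the constrained ensemble; no whole-curve rejection coupling is needed, and indeed the dynamics can be run directly on the full $n$-curve ensemble so that your two-level structure (single-curve lemma plus outer Gibbs chain) collapses into one argument. A secondary, more minor gap: your invariance-principle step is asserted via Donsker applied to individual bridges, but the conditioning on nonintersection requires an argument (for distinct endpoints the nonintersection event is a continuity set of the limiting product bridge law; for the weakly ordered boundary data allowed in the statement, where endpoints may coincide, one must additionally argue by continuity in the boundary data, as the paper itself does when extending \eqref{e:density} to $\overline{\Delta}_n$).
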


\begin{proof}[Proof of Proposition \ref{p:extremeeig}]
For weighted nonintersecting Brownian bridges \eqref{e:wdensity} with boundary data $(a_1\leq a_2\leq \cdots\leq a_n)$ and $(b_1\leq b_2\leq \cdots\leq b_n)$, and $\mu_{A_n}$ is supported on $\cup_k [{\sf a}_k-\oo(1), {\sf b}_k+\oo(1)]$.
We can take $a'_1=a'_2=\cdots=a_n'=a_1={\sfa}_1-\oo(1)\leq a_1$, and $b'_1=b'_2=\cdots=b'_n= b_1$. Then the nonintersecting Brownian bridges $x'_1(t)\leq x'_2(t)\leq \cdots\leq x'_n(t)$ is the well-understood Brownian watermelon (after an affine shift), and we have that with high probability $x'_1(t)\geq \sfa_0-\fC \sqrt{t}+\oo(1)$. Then Theorem \ref{thm:extreme} implies that $x_1(t)\geq x'_1(t)\geq \sfa_0-\fC \sqrt{t}+\oo(1)$. By the same argument we can show that 
\begin{align*}
x_i(t)\leq \sfb_k+\fC \sqrt{t}+\oo(1),\quad x_{i+1}(t)\leq \sfa_{k+1}-\fC \sqrt{t}+\oo(1),\quad i=n\int_{-\infty}^{\sfb_k+\oo(1)} \rd\mu_{A_n},
\end{align*}
and the claim \ref{e:extreme} follows.
\end{proof}

\begin{proposition} \label{p:bulkrigid3}
Under the assumptions of Theorem \ref{t:rigidity}, with high probability, the Stieltjes transform of the weighted nonintersecting Brownian bridge \eqref{e:wdensity}
 satisfies the following estimates: for any $0\leq t\leq \ft_0$, and
 any $z\in \bC_+$ outside the contour $z(\mathscr  C_t(\fr))$ as defined in \eqref{e:Ctr}, it holds for $z=z(Z)$
\begin{align}\label{e:bulkrigid3}
    \left|\tilde m_t(z)-m_t(z)\right|\lesssim \frac{(\log n)^{ \fa}}{n}.
\end{align}
\end{proposition}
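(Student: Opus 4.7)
The plan is to apply Gr\"onwall's inequality to the integrated equation \eqref{eq:mzt} along the characteristic flow $\{Z_s\}$, combined with a union bound over the lattice $\cL$ from \eqref{def:L} to turn a pointwise estimate into a uniform one. First I fix an initial value $Z_0=(f,z_0)\in\cC_0$ whose trajectory $z_s=z_0+sf$ remains outside the contour $z(\mathscr C_s(\fr))$ for all $0\le s\le t\le\ft_0$; along such a trajectory the distances $|z_s-x|$ for $x\in\supp\mu_s\cup\supp\rho_s^*$ are bounded below by a constant depending on $\fr$, where the control of $\supp\mu_s$ for short times uses Proposition \ref{p:extremeeig}. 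Consequently all the Cauchy kernels $1/(z_s-x_i(s))^2$ and analytic test functions appearing in the error terms $\cE_1,\cE_2,\cE_3$ of \eqref{defcE1}--\eqref{defcE3} are uniformly bounded. The initial condition is $|\Delta_0(z_0)|\lesssim(\log n)^\fa/n$ by Assumption \ref{a:A_n}.

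Next I would bound the three error terms. The martingale $\int_0^t \rd\cE_1(s)$ has quadratic variation $\OO(t/n^3)$, so by the Burkholder--Davis--Gundy inequality it is $\OO((\log n)^{1/2}/n^{3/2})$ with overwhelming probability, which is negligible. For $\cE_2(s)$, the integrand in \eqref{defcE2} is analytic in $x$ in a fixed complex neighborhood of $\supp\rho_s^*\cup\supp\mu_s$, so deforming to a contour around this set (inside the region where $|\Delta_s(w)|$ is controlled) expresses $\cE_2(s)$ as a linear functional of $\Delta_s$ against a bounded smooth kernel, yielding $|\cE_2(s)|\lesssim\sup_w|\Delta_s(w)|$ for $w$ on that contour. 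For $\cE_3(s)$, the piece involving $\cE^{(i)}_s$ is $\OO(1/n^2)$ by Proposition \ref{p:cEbound}, while the remaining difference $g_s(x_i;\mu_s,s)-g_s(x_i)$ can be written via the functional derivative \eqref{e:dmugt} as an integral against $\mu_s-\rho_s^*$, and then as a contour integral of $\Delta_s$ against a bounded kernel using Proposition \ref{p:derg}.

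Feeding these estimates into \eqref{eq:mzt} gives a self-consistent inequality of the form
\begin{equation*}
|\Delta_t(z_t)|\lesssim\frac{(\log n)^\fa}{n}+\int_0^t\sup_{w}|\Delta_s(w)|\,\rd s,
\end{equation*}
where the supremum is taken over a good contour in the ``outside'' region. To close the loop I run a stopping-time argument on the lattice $\cL$: let $\tau$ be the first time $s\le\ft_0$ at which the maximum of $|\Delta_s(z_0)|$ over $Z_0=(f,z_0)\in\cL$ (with trajectory remaining outside $\mathscr C_r(\fr)$ for $r\le s$) exceeds $\fC(\log n)^\fa/n$. A union bound over $|\cL|\le\OO(n^4)$ points together with overwhelming-probability estimates for each trajectory shows $\tau=\ft_0$ with overwhelming probability, and Gr\"onwall on $[0,\tau]$ yields the desired bound at lattice points. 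Finally Proposition \ref{prop:continuity} and the crude deterministic Lipschitz bound $|\partial_z\Delta_s(z)|\le n$ promote the lattice bound to all $z$ outside $z(\mathscr C_t(\fr))$, with the $1/n^2$ mesh making the extension loss negligible.

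The main obstacle is the coupled self-consistent structure: $\cE_2$ and $\cE_3$ depend on the empirical measure $\mu_s$, both through integrals against $\mu_s-\rho_s^*$ and through the $\mu$-dependence of $g_s(\cdot;\mu_s,s)$, so the Gr\"onwall argument must be closed simultaneously over a contour rather than at a single point. The key analytic input that makes this tractable is Proposition \ref{p:derg}, which provides uniform bounds on the functional derivatives of $g_s$; this in turn relies crucially on the non-criticality Assumption \ref{a:ncritic}, without which those derivatives would blow up near cusps and the Gr\"onwall constant would be uncontrolled.
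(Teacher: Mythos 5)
Your proposal follows essentially the same route as the paper's proof: run $\Delta_t$ along the characteristic flow, bound the martingale term by Burkholder--Davis--Gundy, represent $\cE_2$ and the difference $g_s(x_i;\mu_s,s)-g_s(x_i)$ in $\cE_3$ as contour integrals of $\Delta_s$ against bounded kernels coming from the functional derivatives of $g$ (Propositions \ref{p:derft}, \ref{p:derg}), and close a stopping-time/Gr\"onwall argument over the lattice $\cL$, extending by the $1/n^2$ mesh via Proposition \ref{prop:continuity}; this is exactly the structure of Claims \ref{p:error}, \ref{prop:gtsumbound}, \ref{prop:gtchange} and the bootstrap \eqref{stoptime}. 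Two small corrections are needed. First, the quadratic variation of $\int \rd\cE_1$ is $\OO(t/n^{2})$, not $\OO(t/n^{3})$: the sum over the $n$ particles in \eqref{defcE1} contributes a factor $n$, so BDG yields $\OO(\log n/n)$ rather than your $\OO((\log n)^{1/2}/n^{3/2})$ --- still negligible at the target scale $(\log n)^{\fa}/n$, as in Claim \ref{p:error}. Second, the lattice-plus-characteristics argument only controls $\Delta_t(z)$ for $z$ in the projection $z(\cD_t(\fr))$, whereas the statement concerns every $z\in\bC_+$ outside $z(\mathscr C_t(\fr))$; points outside the contour that are not reached by the flow (e.g.\ far from the support or with large imaginary part) require the extra Cauchy-integral step \eqref{e:outC} over the contour $z(\mathscr C_t(\fr))$, which your write-up omits but which is a one-line addition once the estimate on the contour is available.
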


We define the stopping time 
\beq\label{stoptime}
\sigma\deq \inf_{t\geq0}\left\{\exists W\in \cD_t(\fr): w=Z(W), \left|\Delta_t(w)\right|\geq \frac{\fM(t)}{n}\right\}\wedge \ft_0,\quad 
\fM(t)\deq \fC e^{\fC t}(\log n)^{ \fa},
\eeq
for some sufficiently large $\fC$, which we will choose later.
We will prove that with high probability it holds that $\sigma=\ft_0$. Then this implies that Proposition \ref{p:bulkrigid3} holds for $z\in z(\cD_t(\fr))$. For $z\not\in z(\cD_t(\fr))$ and outside the contour $z(\mathscr C_t(\fr))$, the estimate of the Stieltjes transform follows from a contour integral
\begin{align}\label{e:outC}
 \left|\tilde m_t(z)-m_t(z)\right|
 =\left|\frac{1}{2\pi \ri}\oint_{z(\mathscr C_t(\fr))} \frac{\tilde m_t(w)-m_t(w)}{z-w}\rd w\right|\lesssim \frac{(\log n)^{ \fa}}{n}.
\end{align}

For any $Z\in \cL$ as in \eqref{def:L}, we define $t(Z)$ to be the earliest time such that $Z_t(Z)$ no longer belongs to $\dom_t(\fr)$ up to $\ft_0$,
\begin{align}\label{e:deftZ}t(Z)\deq \sup_{s\geq0}\{ Z_t(Z)\in \dom_t(\fr)\}\wedge \ft_0.\end{align}

\begin{claim}\label{p:error}
Under the assumptions of Theorem \ref{t:rigidity}. There exists an event $\cW$ that holds with high probability on which we have for every  $Z \in \cal L$ as in \eqref{def:L} and $0 \leq t \leq t (Z)\wedge \sigma$,
\beq\label{eq:estet}
 \left|\int_0^{t} \rd \cE_2(s)\right|\lesssim \frac{ \log n}{n}.
\eeq
\end{claim}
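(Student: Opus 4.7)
The plan is to exploit that $\cE_2(s)$ is a linear statistic of the signed measure $\rd\mu_s - \rho_s^*(x)\rd x$ integrated against an analytic test function, and convert it into a contour integral of $\Delta_s$, in order to harness the stopping-time bound $|\Delta_s(w)|\leq \fM(s)/n$ valid for $s\leq \sigma$. First I would verify that the integrand
\begin{equation*}
h_s(x) \deq \frac{g_s(x)-g_s(Z_s)-(x-z_s)\del_z g_s(Z_s)}{(z_s-x)^2}
\end{equation*}
extends analytically in $x$ to a complex neighborhood of $\supp(\rho_s^*)\cup\supp(\mu_s)$: the discussion after \eqref{e:gtZdef} together with Proposition \ref{p:branchloc} shows that $g_s$ is analytic in such a neighborhood of the cut, and any apparent pole of $h_s$ at $x=z_s$ is removed by the second-order Taylor cancellation built into the numerator.

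Choosing $\fr>0$ small enough (depending only on $\mu_A,\mu_B$) so that the projected contour $\gamma_s \deq z(\mathscr C_s(\fr))$ lies in this analyticity domain, and taking $\ft_0$ small enough that Proposition \ref{p:extremeeig} places $\supp(\mu_s)$ inside $\gamma_s$ despite the $\fC\sqrt{s}$ short-time broadening, Cauchy's theorem yields
\begin{equation*}
\cE_2(s) \;=\; \frac{1}{2\pi\ri}\oint_{\gamma_s} h_s(w)\,\Delta_s(w)\,\rd w.
\end{equation*}
On $\gamma_s$, the stopping-time bound \eqref{stoptime} gives $|\Delta_s(w)|\leq\fM(s)/n$, Proposition \ref{p:derg} together with uniform boundedness of $|z_s|$ for $Z\in\cL$ yields $|h_s(w)|=\OO(1)$ uniformly in $w\in\gamma_s$ and $Z\in\cL$, and $|\gamma_s|=\OO(1)$. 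Combining these pointwise estimates and integrating in $s\in[0,t]$ with $t\leq\ft_0\wedge\sigma$, using $\fM(s)\lesssim(\log n)^\fa$ for $s\leq\ft_0$, one obtains
\begin{equation*}
\left|\int_0^t \cE_2(s)\,\rd s\right| \;\lesssim\; \ft_0\cdot\frac{(\log n)^\fa}{n} \;\lesssim\; \frac{\log n}{n},
\end{equation*}
after absorbing logarithmic prefactors into the constant. This bound is entirely deterministic once $s\leq\sigma$, so the event $\cW$ does not enter at this stage; it is relevant only for the parallel probabilistic treatment of the martingale term $\cE_1$.

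The hard part will be the uniform choice of contour: $\gamma_s$ must simultaneously enclose $\supp(\rho_s^*)\cup\supp(\mu_s)$, remain in the analyticity domain of $g_s$ provided by Proposition \ref{p:branchloc}, and lie inside $z(\cD_s(\fr))$ where the stopping-time bound applies. Calibrating $\fr$ to the geometry of $\Omega$ near its bottom boundary, and $\ft_0$ small enough that the $\fC\sqrt{s}$ short-time broadening from Proposition \ref{p:extremeeig} is absorbed, is the main technical requirement. A secondary subtlety is the uniform bound $|h_s|=\OO(1)$ when $z_s$ approaches $\supp(\rho_s^*)$, which requires the second-derivative control of $g_s$ afforded by the higher-derivative estimates in Proposition \ref{p:derg}.
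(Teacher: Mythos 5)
There is a genuine gap: you have proved a different statement from the one the paper proves here. Despite the symbol $\cE_2$, the quantity bounded in Claim \ref{p:error} is the stochastic integral written with a differential, and the paper's own proof (as well as the fact that the claim only asserts the bound on a high-probability event $\cW$ of Brownian paths) makes clear that the object being controlled is the martingale term of \eqref{defcE1}, i.e. $\frac{1}{n^{3/2}}\int_0^{t\wedge\sigma}\sum_{i=1}^n \rd B_i(s)/(z_s-x_i(s))^2$. The paper's argument is: by Proposition \ref{p:extremeeig} (short-time confinement of the particles near $\supp(\mu_A)$) together with the definition of $t(Z)$ and of $\cD_s(\fr)$, the characteristic $z_s(Z)$ stays at distance $\gtrsim 1$ from all particles for $0\le s\le t(Z)\wedge\sigma$; hence the quadratic variation of this martingale is $\OO(n^{-2})$, the Burkholder--Davis--Gundy inequality gives the bound $\log n/n$ with probability $1-n^{-\fC}$ for each fixed $Z$, and $\cW$ is obtained by a union bound over the polynomially many lattice points $Z\in\cL$. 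None of this appears in your proposal, so the only genuinely probabilistic error term in \eqref{eq:mzt} is left uncontrolled; your own observation that ``the event $\cW$ does not enter'' was the symptom of the misreading, since constructing $\cW$ is precisely the content of the claim, and the $\log n/n$ term in \eqref{e:Deltafar} is exactly this martingale contribution.

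What you did prove---the contour-integral bound for the deterministic functional in \eqref{defcE2}, using analyticity of $g_s$ near the cut, a contour $z(\mathscr C_s(\fr))$ enclosing $\supp(\mu_s)\cup\supp(\rho_s^*)$, and the stopping-time bound $|\Delta_s(w)|\le \fM(s)/n$---is essentially the paper's proof of Claim \ref{prop:gtsumbound}, and that part of your reasoning is sound, including the points you flag about Proposition \ref{p:extremeeig} and the choice of $\fr$, $\ft_0$. Two caveats remain even on that reading: the correct conclusion there is $\fM(t)/n\asymp (\log n)^{\fa}/n$, and your step ``absorbing logarithmic prefactors into the constant'' is not legitimate when $\fa>1$, so your argument does not yield the stated $\log n/n$; moreover that claim is a pointwise-in-$t$ bound which enters the Gr{\"o}nwall step through $\int_0^{t\wedge\sigma}\fM(s)\,\rd s/n$, rather than through the quantity $\int_0^t\rd\cE_2(s)$ appearing in Claim \ref{p:error}.
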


\begin{proof}

We have for any $0\leq t\leq t(Z)\wedge\sigma$, Proposition \ref{p:extremeeig} implies that 
\begin{align}\label{e:eigloc}
\{x_1(t), x_2(t), \cdots, x_n(t)\}\in \cup_k [{\sf a}_k- \fC\sqrt{t}-\oo(1), {\sf b}_k+\fC\sqrt{t}+\oo(1)],
\end{align}
which is a small neighborhood of $\supp(\mu_A)$. By taking $\ft_0$ sufficiently small and $n$ large enough, we can make sure that the righthand side of \eqref{e:eigloc} is distance $\OO(1)$ away from the contour $z(\mathscr C_t(\fr))$. By the definition of $t(Z)$ as in \eqref{e:deftZ}, we have that $Z_s(Z)\in \cD_s(\fr)$ for $0\leq s\leq t\leq t(Z)\wedge \sigma$. Especially $z_s(Z)$ is outside the contour $z(\mathscr C_s(\fr))$. We conclude that
\begin{align}\label{e:ddbound}
\dist(z_s, \{x_1(s), x_2(s), \cdots, x_n(s)\})\gtrsim 1,\quad z_s=z_s(Z)
\end{align}
Then it follows that 
\begin{align*}\begin{split}
&\left\langle\frac{1}{ n^{3/2}}\int_0^{\cdot\wedge\sigma}\sum_{i=1}^n \frac{{\rm d} B_i(s)}{(x_i(s)-z_s)^2} \right\rangle_{t}
= \frac{1}{n^{3}}\int_0^{t\wedge\sigma}\sum_{i=1}^n \frac{ \rd s}{|x_i(s)-z_s|^4}
\lesssim \frac{1}{n^2}.
\end{split}\end{align*} 
where we used \eqref{e:ddbound}.
Therefore, by Burkholder-Davis-Gundy inequality, for any $Z\in \cal L$, the following holds with high probability, i.e., $1-n^{-\fC}$,
\begin{align}\begin{split}\label{eq:dd2term}
&\sup_{0\leq t\leq t(Z)}\left|\frac{1}{ n^{3/2}}\int_0^{t\wedge\sigma}\sum_{i=1}^n \frac{{\rm d} B_i(s)}{(x_i(s)-z_s)^2} \right|
\lesssim \frac{ \log n}{n}.
\end{split}
\end{align}
We define $\cW$ to be the set of Brownian paths $\{B_1(s), \cdots, B_n(s)\}_{0\leq s\leq \ft_0}$ on which \eqref{eq:dd2term} holds for  any $Z\in \cal L$.
From the discussion above,  $\cW$ holds with overwhelming probability, i.e., $\bP(\cW)\geq 1-\OO(|\cal L|n^{-\fC})$. This finishes the proof of Claim \ref{p:error}.
\end{proof}
We now bound the second term of \eqref{defcE1}.

\begin{claim}\label{prop:gtsumbound}
Under the assumptions of Theorem \eqref{t:rigidity}, for any $Z\in \cal L$ and $0\leq t\leq t(Z)\wedge \sigma$ as in \eqref{e:deftZ}, we have
\beq\label{e:gtsumbound}
\int\frac{g_t(x)-g_t(Z_t)-(x-z_t)\del_z g_t(Z_t)}{(z_t-x)^2} (\rd \mu_t-\rho_t^*(x)\rd x)\lesssim \frac{\fM(t)}{n},
\eeq
where the implicit constant is independent of $\fM(t)$.
\end{claim}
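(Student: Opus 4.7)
The plan is to rewrite the left-hand side of \eqref{e:gtsumbound} as a contour integral against $\Delta_t$, then invoke the stopping-time bound \eqref{stoptime} along the contour. Denote the integrand by
\begin{equation*}
F(x) \deq \frac{g_t(x)-g_t(Z_t)-(x-z_t)\del_z g_t(Z_t)}{(z_t-x)^2}.
\end{equation*}
The numerator vanishes to second order as $x\to z_t$, so $F$ extends to an analytic function of $x$ throughout any open set on which $g_t$ is analytic. By the gluing construction of Section~\ref{s:bc} (see \eqref{e:gszmut}--\eqref{e:dmugt}), $g_t$ extends to a single-valued analytic function on a neighborhood $\cN$ of $\supp(\rho_t^*)$ in the $z$-plane, obtained by identifying the two sides of the cut.

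Next I would select the contour. Take $\Gamma = z(\mathscr C_t(\fr))$ from \eqref{e:Ctr}, with $\fr$ small enough (as already arranged in Proposition~\ref{prop:continuity}) so that the region enclosed by $\Gamma$ lies inside $\cN$. By Proposition~\ref{p:extremeeig}, for $\ft_0$ sufficiently small all particles $\{x_i(t)\}$ also lie inside $\Gamma$; since $Z_t\in\cD_t(\fr)$, the point $z_t$ lies on the other side of $\Gamma$, and in any case $F$ is analytic throughout the interior of $\Gamma$ thanks to the removable singularity at $x=z_t$. Expanding $\tilde m_t$ via residues at the $x_i(t)$ and $m_t$ via the Sokhotski--Plemelj jump along $\supp(\rho_t^*)$ yields
\begin{equation*}
\int F(x)\,(\rd\mu_t(x) - \rho_t^*(x)\rd x) = \frac{1}{2\pi\ri}\oint_{\Gamma}F(w)\,\Delta_t(w)\,\rd w,
\end{equation*}
with $\Gamma$ positively oriented.

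Finally I would bound the three factors on $\Gamma$. The length $|\Gamma|=\OO(1)$. Proposition~\ref{p:derg} gives uniform bounds on $g_t$ and its first two derivatives in a neighborhood of $\supp(\rho_t^*)$; combined with a Taylor expansion of the numerator for the regime $|w-z_t|\ll 1$, this yields $|F(w)|\lesssim 1$ uniformly in $w\in\Gamma$ and $Z_t$. By the definition \eqref{stoptime} of $\sigma$ together with $t\leq t(Z)\wedge\sigma$, one has $|\Delta_t(w)|\leq \fM(t)/n$ for $w$ in the $z$-projection of $\cD_t(\fr)$, and this extends by continuity to the boundary $\Gamma$. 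Combining produces the claimed bound $\OO(\fM(t)/n)$, with the implicit constant independent of $\fM(t)$, which is crucial for the subsequent Gr\"onwall argument.

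The main technical point to verify carefully is that the single contour $\Gamma$ simultaneously (i) encloses every $x_i(t)$, (ii) lies inside the analyticity neighborhood $\cN$ of $g_t$, and (iii) satisfies the $\sigma$-bound on $\Delta_t$ along its entire length. These three requirements are arranged by the choices already in place: (i) by Proposition~\ref{p:extremeeig} for small $\ft_0$, (ii) by the choice of small $\fr$, and (iii) directly from \eqref{stoptime}. Once these are fixed, only universal constants enter the final estimate.
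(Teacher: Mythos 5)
Your proof is correct and takes essentially the same route as the paper: both rewrite the left-hand side as a contour integral of the $\OO(1)$ kernel against $\Delta_t$ over (the projection of) $\mathscr C_t(\fr)$, with the particles and $\supp\rho_t^*$ enclosed, and then invoke the stopping-time bound $|\Delta_t(w)|\leq \fM(t)/n$ along the contour. The only difference is cosmetic: the paper runs the argument on the Riemann surface $\cC_t^+$, locating the poles of the extended integrand at the other preimages of $z_t$ (which lie in $\cD_t(\fr)$, beyond the contour), whereas you work with the single-valued branch of $g_t$ in the $z$-plane and use that $z_t$ projects outside the region enclosed by $z(\mathscr C_t(\fr))$ — equivalent statements given the bijectivity of the projection on $z(\cC_t^+\setminus\cD_t(\fr))$.
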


\begin{proof}
We recall from \eqref{e:gtZdef}, 
$g_t(Z)$ can be glued to an analytic function in a neighborhood of the bottom of $\cC_t^+$, i.e. in a neighborhood of $\pi_{Q^t}(\{(x,t): x\in \supp(\rho_t^*)\})$ and its complex conjugate. The integrand 
can also be extended to a meromorphic function on $\cC_t^+$:
\begin{align}\label{e:integrand}
\frac{g_t(W)-g_t(Z_t)-(z(W)-z_t)\del_z g_t(Z_t)}{(z_t-z(W))^2},\quad W\in \cC^+_t.
\end{align}
It only has poles at $\{W\in \cC_t^+: W\neq Z_t, z(W)=z_t\}$.
By our choice of $\cD_t(\fr)$, $z(\cC_t^+\setminus \cD_t(\fr))$ is a bijection with its preimage under the projection map $z: \cC_t^+\mapsto \bC$. So the poles of \eqref{e:integrand} are 
 in $\cD_t(\fr)$.
 Then we can take the contour $\mathscr C_t(\fr)$, whose projection on the $z$-plane encloses $\supp \mu_t, \supp \rho_t^*$.
We can rewrite \eqref{e:gtsumbound} as a contour integral, 
\begin{align*}
\frac{1}{2\pi\ri}\oint_{\mathscr C_t(\fr)}\frac{g_t(W)-g_t(Z_t)-(w-z_t)\del_z g_t(Z_t)}{(z_t-w)^2} (\tilde m_t(w)-m_t(w)) \rd W,\quad w=z(W).
\end{align*}
On the contour $\mathscr C_t(\fr)$, the integrand \eqref{e:integrand} is of order $\OO(1)$, and our definition of the stopping time \eqref{stoptime} implies that $|\tilde m_t(w)-m_t(w)|\leq \fM(t)/n$. It follows that
\begin{align*}
\left| \frac{1}{2\pi\ri}\oint_{\sfS}\frac{g_t(W)-g_t(Z_t)-(w-z_t)\del_z g_t(Z_t)}{(z_t-w)^2} (\tilde m_t(w)-m_t(w)) \rd W\right|\lesssim  \oint_{\sfS} \frac{\fM(t)}{n}|\rd W|\lesssim \frac{\fM(t)}{n}.
\end{align*}
The claim \eqref{e:gtsumbound} follows.
\end{proof}

\begin{claim}\label{prop:gtchange}
Under the assumptions of Theorem \eqref{t:rigidity}, for any $Z\in \cal L$ and $0\leq t\leq t(Z)\wedge \sigma$ as in \eqref{e:deftZ}, we have
\beq\label{e:getchange1}
|(g_t(x_i(t);\mu_t,t)-g_t(x_i(t))|\lesssim \frac{\fM(t)}{n}
\eeq
and it follows that
\beq\label{e:getchange2}
\left|\frac{1}{n}\sum_{i=1}^n\frac{(g_t(x_i(t);\mu_t,t)-g_t(x_i(t)))+\cE^{(i)}_t(\bmx(t))}{(z_t-x_i(t))^2}\right|\lesssim \frac{\fM(t)|\Im[\tilde m_t(z_t)]|}{n\Im[z_t]},
\eeq
where the implicit constant is independent of $\fM(t)$. When $\dist(z_t, \{x_1(t), x_2(t),\cdots, x_n(t)\})\gtrsim 1$, the righthand side of \eqref{e:getchange2} simplifies to $\OO(\fM(t)/n)$.
\end{claim}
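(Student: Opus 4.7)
My approach is to reduce both \eqref{e:getchange1} and \eqref{e:getchange2} to a change-of-initial-measure computation for the Hamilton--Jacobi flow, then exploit the a priori Stieltjes-transform bound provided by the stopping time $\sigma$ in \eqref{stoptime}. By the restriction principle \eqref{e:gs}, the minimizer of the variational problem \eqref{e:varWt} at time $t$ with initial data $\rho_t^*$ agrees with the restriction to $[t,1+\tau]$ of the minimizer with initial data $\mu_A$ at time $0$, so $g_t(x; \rho_t^*, t) = g_t(x)$. Consequently
\[
g_t(x_i(t); \mu_t, t) - g_t(x_i(t)) = g_t(x_i(t); \mu_t, t) - g_t(x_i(t); \rho_t^*, t),
\]
which measures the sensitivity of $g_t$ to a perturbation of the initial data at time $t$.

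I then interpolate $\mu^\theta = (1-\theta)\rho_t^* + \theta \mu_t$ for $\theta \in [0,1]$, so that by the variational formula of Proposition \ref{p:derft} together with \eqref{e:dmugt},
\[
g_t(x_i(t); \mu_t, t) - g_t(x_i(t); \rho_t^*, t) = \int_0^1 \int \frac{\delta g_t(x_i(t); \mu^\theta, t)}{\delta \mu}(y) \, d(\mu_t - \rho_t^*)(y) \, d\theta.
\]
By Proposition \ref{p:derg}, the $y$-derivative of the kernel $y \mapsto \delta g_t(x_i(t); \mu^\theta, t)/\delta\mu(y)$ is $\OO(1)$ uniformly in a neighborhood of the bottom boundary of $\cC_t^{\mu^\theta,t}$. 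I then rewrite the inner integral as a contour integral
\[
\int \frac{\delta g_t(x_i(t); \mu^\theta, t)}{\delta \mu}(y) \, d(\mu_t - \rho_t^*)(y) = \frac{1}{2\pi \ri} \oint_{z(\mathscr C_t(\fr))} \frac{\delta g_t(x_i(t); \mu^\theta, t)}{\delta \mu}(w) \, \Delta_t(w) \, dw,
\]
noting that the additive constant in the kernel cancels because $\mu_t$ and $\rho_t^*$ are both probability measures. By Proposition \ref{p:extremeeig}, for $t \leq \ft_0$ the particles $\{x_1(t),\ldots,x_n(t)\}$ lie within a $\fC\sqrt{\ft_0}+\oo(1)$ neighborhood of $\supp(\mu_A)$, so for $\ft_0$ and $\fr$ sufficiently small the contour $z(\mathscr C_t(\fr))$ encloses $\supp(\mu_t) \cup \supp(\rho_t^*)$ at distance $\OO(1)$. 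By the definition of $\sigma$, $|\Delta_t(w)| \leq \fM(t)/n$ on this contour for $t \leq \sigma$, and \eqref{e:getchange1} follows by integration.

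For \eqref{e:getchange2}, I combine \eqref{e:getchange1} with Proposition \ref{p:cEbound}, which gives $|\cE_t^{(i)}(\bmx(t))| \lesssim 1/n^2$, so the numerator is uniformly $\OO(\fM(t)/n)$. The remaining sum is handled by the elementary identity
\[
\sum_{i=1}^n \frac{1}{|z_t - x_i(t)|^2} = \frac{n |\Im \tilde m_t(z_t)|}{\Im[z_t]},
\]
valid for $\Im[z_t] > 0$, giving the claimed bound $\fM(t)|\Im \tilde m_t(z_t)|/(n \Im[z_t])$. In the regime $\dist(z_t, \{x_i(t)\}) \gtrsim 1$, one bounds $|z_t - x_i(t)|^{-2} \lesssim 1$ directly to get a sum of size $\OO(n)$, yielding the asserted $\OO(\fM(t)/n)$.

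The main technical point is the contour-integral reduction in the second paragraph, where one must justify that the kernel $\delta g_t(x_i(t); \mu^\theta, t)/\delta\mu(\cdot)$ is analytic in the closed region bounded by $z(\mathscr C_t(\fr))$, uniformly in $\theta \in [0,1]$. This is precisely where Proposition \ref{p:branchloc} is essential: for $\fr$ small enough, the relevant neighborhood of $\supp(\mu^\theta)$ contains no branch points of the covering map \eqref{e:zmatt2} and is in bijection with its preimage, so that the Schiffer-kernel representation \eqref{e:dmugt} defines an analytic function of $y$ there after the cancellation of the diagonal double poles.
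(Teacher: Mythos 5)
Your proposal is correct and follows essentially the same route as the paper: interpolating between $\rho_t^*$ and $\mu_t$, expressing $\del_\theta g_t$ via Proposition \ref{p:derft}, converting the integral against $\rd(\mu_t-\rho_t^*)$ into a contour integral of the kernel against $\Delta_t(w)$ bounded by $\fM(t)/n$ through the stopping time, and then deducing \eqref{e:getchange2} from Proposition \ref{p:cEbound} together with the identity $\frac{1}{n}\sum_i|z_t-x_i(t)|^{-2}=|\Im[\tilde m_t(z_t)]|/\Im[z_t]$. The only differences are cosmetic (you integrate over $z(\mathscr C_t(\fr))$ rather than a contour on $\cC_t^{\mu_t^\theta,t,+}$ projected into $z(\cD_t(\fr))$, and you justify the $\OO(1)$ kernel bound via Proposition \ref{p:derg} plus the cancellation of the additive constant), which match the paper's level of rigor.
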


\begin{proof}
To estimate \eqref{e:getchange1} we interpolate the probability measures $\rd\mu_t$ and $\rho_t^*(x)\rd x$ 
\begin{align*}
\rd\mu^\theta_t=\theta\rd\mu_t+(1-\theta) \rho_t^*(x)\rd x, \quad 0\leq \theta\leq 1,
\end{align*}
and write 
\begin{align*}
g_t(x;\mu_t,t)-g_t(x)=\int_0^1 \del_\theta g_t(x;\mu_t^\theta,t)\rd \theta.
\end{align*}
We recall that $g_t(x;\mu_t^\theta, t)=f_t(x;\mu_t^\theta,t)-\int1/(x-y)\rd \mu_t^\theta(y)$. Using \eqref{p:derft}, we have 
\begin{align}\label{e:difg}
g_t(x;\mu_t,t)-g_t(x)
=\int_0^1 \int \left(Q_{X_\theta(y), Z_0}(x;\mu_t^\theta,t)
-\frac{1}{y-x}\right)(\rd \mu_t(y)-\rho_t^*(y)\rd y)\rd \theta,
\end{align}
where $\{X_\theta(x)=(f_t(x;\mu^\theta,t), x)\}_{x\in \supp(\mu^\theta)}$. 
The integrand as a function of $y$, can  be extended to a meromorphic function on $\cC_t^{\mu_t^\theta, t,+}$:
\begin{align}\label{e:integrand2}
Q_{W, Z_0}(x;\mu_t^\theta,t)
-\frac{1}{z(W)-x},\quad W\in \cC_t^{\mu_t^\theta, t,+}.
\end{align}
More importantly, it glues to an analytic in a neighborhood of the bottom boundary of $\cC_t^{\mu_t^\theta, t,+}$. We can take a contour $\sfS$ over $\cC_t^{\mu_t^\theta, t,+}$, such that its projection encloses a neighborhood of $\supp \mu_t, \supp \rho_t^*$, 
and is contained in $z(\cD_t(\fr))$, and rewrite \eqref{e:difg} as a contour integral,
\begin{align*}
\int_0^1 \frac{1}{2\pi\ri}\oint_{\sfS} \left(Q_{W, Z_0}(x;\mu_t^\theta,t)
-\frac{1}{z(W)-x}\right)(\tilde m_t(w)-m_t(w)) \rd W \rd\theta,\quad w=z(W).
\end{align*}
On the contour $\sfS$, the integrand \eqref{e:integrand2} is of order $\OO(1)$, and our definition of the stopping time \eqref{stoptime} implies that $|\tilde m_t(w)-m_t(w)|\leq \fM(t)/n$. It follows that
\begin{align*}
\left|\int_0^1 \frac{1}{2\pi\ri}\oint_{\sfS} \left(Q_{W, Z_0}(x;\mu_t^\theta,t)
-\frac{1}{z(W)-x}\right)(\tilde m_t(w)-m_t(w)) \rd W\rd\theta\right|\lesssim \int_0^1 \oint_{\sfS} \frac{\fM(t)}{n}|\rd W|\rd\theta\lesssim \frac{\fM(t)}{n}.
\end{align*}
The claim \eqref{e:getchange1} follows.

For  \eqref{e:getchange2}, using Proposition \ref{p:cEbound} and \eqref{e:getchange1}, 
\begin{align*}
\left|\frac{1}{n}\sum_{i=1}^n\frac{(g_t(x_i(t);\mu_t,t)-g_t(x_i(t)))+\cE^{(i)}_t(\bmx(t))}{(z_t-x_i(t))^2}\right|\lesssim
\frac{1}{n}\sum_{i=1}^n\frac{\fM(t)/n+1/n^2}{|z_t-x_i(t)|^2}
\lesssim \frac{\fM(t)|\Im[\tilde m_t(z_t)]|}{n\Im[z_t]}.
\end{align*}
This gives the claim  \eqref{e:getchange2}. When $\dist(z_t, \{x_1(t), x_2(t),\cdots, x_n(t)\})\gtrsim 1$, the above simplifies to $\OO(\fM(t)/n)$.
\end{proof}

We can now start analyzing \eqref{eq:mzt}. For any lattice point $Z\in \cal L$ and $0\leq t\leq t(Z)$ as in \eqref{e:deftZ}, by Claims \ref{p:error}, \ref{prop:gtsumbound} and \ref{prop:gtchange}, we have 
\begin{align}\label{e:Deltafar}
\left|\Delta_{t\wedge\sigma}(z_{t\wedge\sigma})\right|\lesssim |\Delta_{0}(z_{0})|+\int_0^{t\wedge\sigma}|\Delta_s(z_s)||\del_z \tilde f_s(Z_s)|\rd s+
\frac{\log n}{n}+\int_0^{t\wedge \sigma}\frac{\fM(s)\rd s}{n}.
\end{align}
Notice that for $s\leq t\wedge \sigma$,
we have
$\left|\del_z \tilde f_s(Z_s)\right|
\leq \left|\del_z \tilde m_s(z_s)\right|+\left|\del_z g_s(Z_s)\right|
\lesssim 1$. Assumption \ref{a:A_n} implies that $|\Delta_0(z_0)|\lesssim (\log n)^{ \fa}/n$.
We can simplify \eqref{e:Deltafar} as
\begin{align}\label{e:Deltafar2}
\left|\Delta_{t\wedge\sigma}(z_{t\wedge\sigma})\right|\lesssim
\frac{(\log n)^{ \fa}}{n}+\int_0^{t\wedge \sigma}\frac{\fM(s)\rd s}{n}.
\end{align}
We recall from \eqref{stoptime}, that $\fM(t)=\fC e^{\fC t}(\log n)^{ \fa}$. We can further simplify the righthand side of \eqref{e:Deltafar2} and conclude that on the event $\cW$,
\begin{align*}
\left| \Delta_{t\wedge\sigma}(z_{t\wedge\sigma})\right|\lesssim 
\frac{(1+e^{\fC (t\wedge \sigma)})(\log n)^{ \fa}}{n}\leq \frac{\fC e^{\fC (t\wedge \sigma)}(\log n)^{ \fa}}{2n},
\end{align*}
provided $\fC$ is large enough. By Proposition \ref{prop:continuity}, for any $W\in \dom_{t\wedge\sigma}(\fr)$ with $w=z(W)$, there exists some $Z\in \cal L$ such that $z_{t\wedge\sigma}(Z)\in \dom_{t\wedge\sigma}(\fr)$, and 
\begin{align*}
|z_{t\wedge\sigma}(Z)-z(W)|\lesssim 1/n^2.
\end{align*}
Moreover, on the domain $z(\cal D_{t\wedge \sigma}(\fr))$, both $\td m_{t\wedge \sigma}$ and $m_{t\wedge\sigma}$ are Lipschitz with Lipschitz constant $\OO(1)$. Therefore 
\begin{align*}\begin{split}
&\phantom{{}={}}\left|\td m_{t\wedge\sigma}(w)-m_{t\wedge\sigma}(w)\right|\leq 
\left|\td m_{t\wedge\sigma}(z_{t\wedge\sigma}(Z))-m_{t\wedge\sigma}(z_{t\wedge\sigma}(Z))\right|\\
&+
\left|\td m_{t\wedge\sigma}(w)-\td m_{t\wedge\sigma}(z_{t\wedge\sigma}(Z))\right|+
\left| m_{t\wedge\sigma}(w)-m_{t\wedge\sigma}(z_{t\wedge\sigma}(Z))\right|\\
&\leq \frac{\fC e^{\fC (t\wedge \sigma)}(\log n)^{ \fa}}{2n}+\OO\left(\frac{1}{n^2}\right)\leq \frac{\fC e^{\fC (t\wedge \sigma)}(\log n)^{\fa}}{n}.
\end{split}\end{align*}
It follows that $\sigma = \ft_0$ and  this completes the proof of Proposition \ref{p:bulkrigid3}.

\subsection{Long time estimates}
\label{s:longtime}

In this section, we prove Proposition \ref{p:bulkrigid2} and \eqref{e:edgerigid2}. For them we need to understand the Stieltjes transform close to the spectrum. We define the following spectral domain as illustrated in Figure \ref{f:domain} :
\begin{align}\begin{split}\label{e:bulkdomain}
\cD_t^{\rm bulk}&=\left(\cD_t(\max\{\fr+(\ft_0-t)/2,0\})\setminus \cD_t(2\fr)\right)\bigcap\\
&
\{(f,z)\in \cC_t^+: \dist(z, [\sfa(t), \sfb(t)])|\Im[f]|\wedge|\Im[z]|\geq (\log n)^{ \fa+2}/n)\}.
\end{split}\end{align}
The estimates of Stieltjes transform on $\{(f,z)\in \cC_t^+: \dist(z, [\sfa(t), \sfb(t)])|\Im[f]|\wedge|\Im[z]|\geq (\log n)^{ \fa+2}/n)\}$ with $\Re[z]\in [\sfa(t), \sfb(t)]$ give the information of particles inside the bulk, and with $\Re[z]\notin [\sfa(t), \sfb(t)]$ can be used to control the locations of extreme particles.

\begin{figure}
\begin{center}
 \includegraphics[scale=0.22,trim={0cm 9cm 0 14cm},clip]{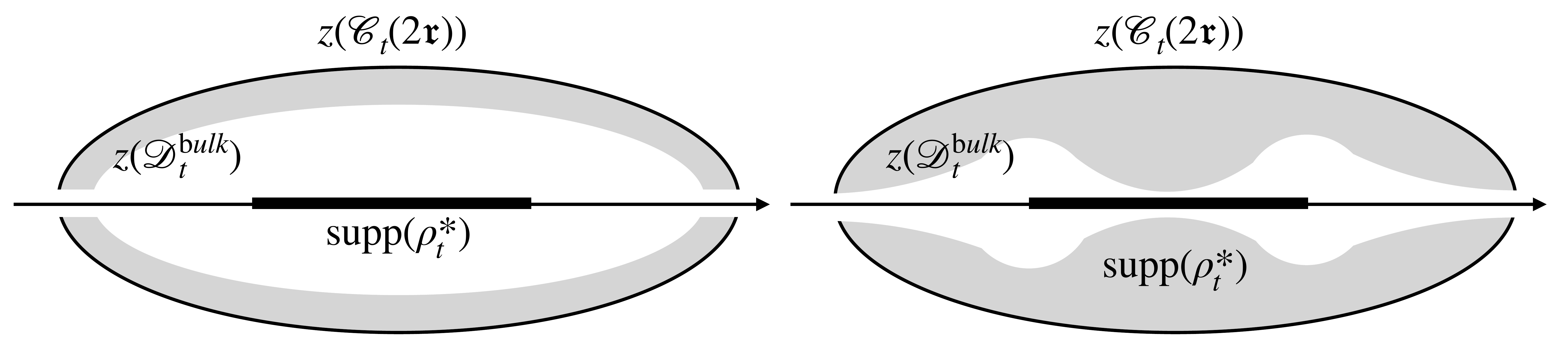}
 \caption{The projection of the spectral domain $\cD_t^{\rm bulk}$ on the $z$-plane. The left subplot is the projection of $\cD_t(\max\{\fr+(\ft_0-t)/2,0\})\setminus \cD_t(2\fr)$ on the $z$-plane with $\fr+(\ft_0-t)/2>0$. The right subplot is the projection of $\{(f,z)\in \cC_t^+: \dist(z, [\sfa(t), \sfb(t)])|\Im[f]|\wedge|\Im[z]|\geq (\log n)^{ \fa+2}/n)\}$ on the $z$-plane.}
 \label{f:domain}
 \end{center}
 \end{figure}

We recall the contours $\mathscr C_t(r)$ as defined in \eqref{e:Ctr}, which are the bottom boundary curves of $\cD_t(r)$. For any $Z=(f,z)\in \cC_t^+$, $f=m_t(z)+g_t(Z)$ (by \eqref{e:gtZdef}). Since $g_t(Z)$ is real for $z\in \bR$, we have $|\Im[g_t(Z)]|=\OO(\Im[z])\lesssim |\Im[m_t(z)]|$. 
Combining a contour integral similar to \eqref{e:outC}, the estimates on $\cD_t^{\rm bulk}$ and $\cD_t(2\fr)$ give full information of the  Stieltjes transform on 
\begin{align*}
\{z\in\bC: \dist(z, [\sfa(t), \sfb(t)])|\Im[m_t(z)]|\wedge|\Im[z]|\gtrsim (\log n)^{ \fa+2}/n)\},
\end{align*}
which is the domain in Proposition \ref{p:bulkrigid2}.

We recall from \eqref{e:Ctr} that the curve $z(\mathscr C_t(r))$ is characterized by $\Im[z]+r\Im[f_t(z)]=0$, where $(f_t(z),z)\in \cC_t^+$.
Proposition \ref{p:squareroot} implies that for $\ft_0\leq t\leq 1$, the density  $\rho_t^*$ has square root behavior. Thus $f_t(z)$ has square root behavior in a small neighborhood of $\sfa(t)$,
\begin{align}\label{e:localft}
f_t(z)=f_t(\sfa(t))+\Omega(1)\sqrt{\sfa(t)-z}+\OO(|z-\sfa(t)|^{3/2})
\end{align}
where the square root is the branch with nonpositive imaginary part. 
Therefore, for $r\ll1$, in a small neighborhood of $\sfa(t)$, 
  if $z=\sfa(s)-\kappa+\ri\eta\in z(\mathscr C_t(r))$, solving $\Im[z]+r\Im[f_t(z)]=0$, we get that $\kappa+\eta\asymp r^2=\oo(1)$ for $\kappa\geq 0$, and 
  $\eta^2/(|\kappa|+\eta)\asymp r^2=\oo(1)$ for $\kappa\leq 0$.

For any $Z\in \cL$ as in \eqref{def:L}, we define $t(Z)$ such that $Z_t(Z)$ no longer belong to $\dom_t^{\rm bulk}$ after time $t(Z)$:
\begin{align}\label{e:deftz2}
t(Z)\deq \sup_{t\geq 0}\{Z_t(Z)\in \cD_t(2\fr)\cup\cD_t^{\rm bulk}\}\wedge 1,
\end{align}
and the stopping time
\begin{align}\begin{split}\label{stoptime2}
\sigma&\deq \inf_{t\geq0}\left\{\exists W\in \cD_t(2\fr): w=z(W), \left|\Delta_t(w)\right|\geq \frac{\fM_1(t)}{n}\right\}\\
&\wedge\inf_{t\geq 0}\left\{\exists W\in \cD_t^{\rm bulk}: w=z(W), \left|\Delta_t(w)\right|\geq \frac{\fC(\log n)^{ \fa+1}}{n\sqrt{\Im[w]\dist(w, [\sfa(t), \sfb(t)])}}\right\}\wedge 1,
\end{split}\end{align}
for some sufficiently large $\fC$, which we will choose latter.
Thanks to Proposition \ref{p:bulkrigid3}, on the event $\cW$ as defined in Claim \ref{p:error}, we have that $\sigma\geq \ft_0$.

\begin{claim}\label{p:gap}
Under the assumptions of Theorem \ref{t:rigidity}, there exists a universal constant $\fC>0$ such that for any $\ft_0\leq t\leq 1$ and $Z\in \cD_0(\fr)$ if $z_t(Z)=\sfa(t)-\kappa_t(Z)+\ri\eta_t(Z)$ and $\kappa_t(Z)\geq 0$, then 
for any $\ft_0\leq s\leq t$ either $\eta_s(Z)\gtrsim 1$, or $\kappa_s(Z)\gtrsim 1$, or
\begin{align}\label{e:sqrtkappa}
\del_s\sqrt{\kappa_s(Z)}\leq -\fC\sqrt{\kappa_s(Z)+\eta_s(Z)}.
\end{align}
The same statement holds for characteristic flows close to the right edge. As a consequence 
if $Z_t(Z) \in \cD_t^{\rm bulk}\cup \cD_t(2\fr)$, then $Z_s(Z) \in \cD_s^{\rm bulk}\cup \cD_s(2\fr)$.
If we further assume that $\ft_0\leq t\leq \sigma$ as defined in \eqref{stoptime2} and $Z_t(Z)\in \cD_t^{\rm bulk}$, then we have
\begin{enumerate}
\item We denote
\begin{align}\begin{split}\label{e:defsAB}
&\sfA(t)\deq \sfa(t)-(\log n)^{ 2\fa+4}/n^{2/3} \wedge \min\{E: E+\ri \eta\in z(\mathscr C_t(\max\{\fr+(\ft_0-t)/2, 0\}))\},\\
&\sfB(t)\deq  \sfb(t)+(\log n)^{ 2\fa+4}/n^{2/3} \vee \max\{E: E+\ri \eta\in z(\mathscr C_t(\max\{\fr+(\ft_0-t)/2, 0\}))\}.
\end{split}\end{align}
The particles are all in the interval $[\sfA(t), \sfB(t)]$,
\begin{align}\label{e:extremepp}
\sfA(t)\leq x_1(t)\leq x_2(t)\leq \cdots\leq x_n(t)\leq \sfB(t).
\end{align}
\item Let $z_t=z(Z_t)$ the Stieltjes transform satisfies
\beq\label{e:replacebound}
|\Delta_{t}(z_t)|\lesssim(\log n)^{\fa+1}/(n\sqrt{\eta_t\dist(z_t, [\sfa(t), \sfb(t)])})\lesssim \Im[m_{t}(z_t)]/\log n.
\eeq
 \item For any $\ft_0\leq s\leq t$, 
\begin{align}\label{e:gap}
\dist(z_s(Z), \{x_1(s), x_2(s),\cdots, x_n(s)\})\gtrsim \eta_s(Z)+\sqrt{\kappa_s(Z)+\eta_s(Z)}(t-s).
\end{align}
\end{enumerate}
\end{claim}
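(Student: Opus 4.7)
My plan decomposes Claim \ref{p:gap} into an ODE analysis of the characteristic flow near the spectral edge (which yields the trichotomy \eqref{e:sqrtkappa}, the domain monotonicity, and the gap estimate \eqref{e:gap}) and a standard \emph{no outlier} argument driven by the Stieltjes transform bound defining $\sigma$ (which yields the extreme particle bound \eqref{e:extremepp} and, via Proposition \ref{p:squareroot}, the Stieltjes bound \eqref{e:replacebound}).

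\emph{Step 1 (edge ODE).} The edge point $(\sfa(s), s)$ lifts under $\pi_{Q^s}$ to a branch point of the covering $(f,z) \in \cC_s^+ \mapsto z$, and branch points are rigidly carried by the characteristic flow \eqref{e:transport2}; thus $\del_s \sfa(s) = f_s(\sfa(s))$. Differentiating $z_s - \sfa(s)$ along the flow and inserting the local expansion \eqref{e:localft} gives
\begin{align*}
\del_s(z_s - \sfa(s)) = f_s(z_s) - f_s(\sfa(s)) = c_s\sqrt{\sfa(s) - z_s} + \OO(|\sfa(s)-z_s|^{3/2}),
\end{align*}
with $c_s \asymp \sfs(s)$ bounded away from zero uniformly on $[\ft_0, 1]$ by Proposition \ref{p:squareroot} and Assumption \ref{a:ncritic}. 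Writing $\sfa(s) - z_s = \kappa_s - \ri\eta_s$ and $\sqrt{\kappa_s - \ri\eta_s} = u_s - \ri v_s$ with $u_s, v_s \asymp \sqrt{\kappa_s + \eta_s}$ for $\kappa_s \geq 0$, and separating real and imaginary parts yields $\del_s \kappa_s, \del_s \eta_s \asymp -\sqrt{\kappa_s + \eta_s}$; the inequality \eqref{e:sqrtkappa} is then an algebraic rearrangement valid whenever $\kappa_s, \eta_s$ are not already $\gtrsim 1$. The monotonicity statement follows since $\cD_t(r) = Z_{t-s}(\cD_s(r+t-s))$ is inherent to the flow, while the auxiliary constraint $\Im[z]\dist(z,[\sfa(t),\sfb(t)]) \geq (\log n)^{\fa+2}/n$ in $\cD_t^{\rm bulk}$ only relaxes backward in time: both $\eta_s$ and $\dist(z_s,[\sfa(s),\sfb(s)])$ are non-decreasing as $s$ decreases.

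\emph{Step 2 (no outlier and Stieltjes bound).} If $x_n(t) > \sfB(t)$, I would test at $w = x_n(t) + \ri\eta$ with $\eta$ chosen just large enough that $w$ projects into $\cD_t^{\rm bulk}$; then $\Im[\tilde m_t(w)] \geq 1/(n\eta)$, while $\Im[m_t(w)] = 0$ (since $w$ lies above the complement of $\supp(\rho_t^*)$), contradicting the Stieltjes error bound $|\Delta_t(w)| \leq (\log n)^{\fa+1}/(n\sqrt{\eta \dist(w,[\sfa(t),\sfb(t)])})$ built into the definition \eqref{stoptime2} of $\sigma$. A symmetric argument handles $x_1(t) < \sfA(t)$. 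The Stieltjes bound \eqref{e:replacebound} then follows immediately from the definition of $\sigma$, and its comparison with $\Im[m_t(z_t)]$ uses the asymptotics \eqref{e:ssmt} together with the $\cD_t^{\rm bulk}$-constraint, which supplies the gain of a factor $\log n$.

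\emph{Step 3 (gap estimate \eqref{e:gap}).} Integrating $\del_s \eta_s \leq -c\sqrt{\kappa_s + \eta_s}$ backward from $t$ and using the monotonicity $\kappa_r + \eta_r \geq \kappa_s + \eta_s$ for $r \in [s,t]$ from Step 1 gives $\eta_s \geq \eta_t + c(t-s)\sqrt{\kappa_s + \eta_s}$; combining with \eqref{e:extremepp}, which places all particles within $(\log n)^{2\fa+4}/n^{2/3}$ of $[\sfa(s),\sfb(s)]$ — a scale much smaller than $\eta_s$ on $\cD_s^{\rm bulk}$ — converts this bound on $\Im[z_s]$ into the required bound on $\dist(z_s, \{x_i(s)\})$. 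The main obstacle in the whole plan is the uniform ODE control of Step 1: specifically, verifying $c_s \asymp 1$ and controlling the remainder $\OO(|\sfa(s) - z_s|^{3/2})$ against the principal term uniformly for $s \in [\ft_0, 1]$. This is exactly what the non-criticality Assumption \ref{a:ncritic} in combination with the analytic extension afforded by Assumption \ref{a:reg} — which provides uniform lower bounds on $\sfs(s)$ for $s \in [\ft_0, 1+\tau]$ — is designed to furnish.
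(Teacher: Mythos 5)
There is a genuine gap, and it sits at the heart of your Step 1/Step 3. From the local expansion \eqref{e:localft}, writing $\sfa(s)-z_s=\kappa_s-\ri\eta_s$ with $\kappa_s\geq 0$, one has $\Re[\sqrt{\sfa(s)-z_s}]\asymp\sqrt{\kappa_s+\eta_s}$ but $\Im[\sqrt{\sfa(s)-z_s}]\asymp-\eta_s/\sqrt{\kappa_s+\eta_s}$. Hence along the characteristic only the horizontal distance obeys $\del_s\kappa_s\asymp-\sqrt{\kappa_s+\eta_s}$ (which, as you say, rearranges to \eqref{e:sqrtkappa}), while the imaginary part obeys $\del_s\eta_s\asymp-\eta_s/\sqrt{\kappa_s+\eta_s}$, not $\asymp-\sqrt{\kappa_s+\eta_s}$ as you claim. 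Consequently your Step 3 fails: integrating the (incorrect) bound backward to get $\eta_s\geq\eta_t+c(t-s)\sqrt{\kappa_s+\eta_s}$ is false in general — for a characteristic with $\eta_t$ tiny and $\kappa_t$ of order $\fr^2$, the backward growth of $\eta_s$ is only multiplicative and slow, and $\eta_s$ alone never produces the term $\sqrt{\kappa_s+\eta_s}(t-s)$ required in \eqref{e:gap}. The paper obtains \eqref{e:gap} by a different mechanism: it uses \eqref{e:extremepp} together with the level curves $\mathscr C_s(r)$ of \eqref{e:Ctr}. With $r=\max\{\fr+(\ft_0-s)/2,(\log n)^{\fa+2}/n^{1/3}\}$ the interval $[\sfA(s),\sfB(s)]$ of \eqref{e:defsAB} lies inside $z(\mathscr C_s(r))$, while $Z_s(Z)$ lies on $\mathscr C_s(r')$ with $r'\geq r+(t-s)/2$; since near the edge points of $\mathscr C_s(\rho)$ satisfy $\kappa+\eta\asymp\rho^2$, the separation of the two curves gives $\dist(z_s(Z),[\sfA(s),\sfB(s)])\gtrsim r'(r'-r)\asymp\sqrt{\kappa_s+\eta_s}\,(t-s)$ (and $\gtrsim(r'-r)$ when $r'\gtrsim1$), which combined with the trivial bound $\geq\eta_s$ yields \eqref{e:gap}. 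In other words, the gap is created by the outward drift of the real part $\kappa_s$, not of $\eta_s$, and your argument is missing exactly this geometric comparison.

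Two smaller points. In Step 2 you write $\Im[m_t(w)]=0$ at $w=x_n(t)+\ri\eta$; this is literally false for $\eta>0$ — what is needed (and what the paper verifies via Proposition \ref{p:squareroot}) is the quantitative statement $\Im[m_t(w)]\asymp\eta/\sqrt{\kappa+\eta}\ll1/(n\eta)$ together with the error bound being $\ll1/(n\eta)$, which forces the specific choices $\eta\asymp n^{-2/3}$ and $\kappa\geq(\log n)^{2\fa+4}n^{-2/3}$; your phrase ``$\eta$ chosen just large enough'' skips this balancing, which is the actual content of \eqref{e:extremepp}. The remaining parts of your plan (the edge ODE for $\kappa_s$, the derivation of $\del_s\sfa(s)=f_s(\sfa(s))$, the domain-propagation argument, and deducing \eqref{e:replacebound} from the stopping time and the definition of $\cD_t^{\rm bulk}$) are in line with the paper's proof.
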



\begin{proof}

The left edge $\sfa(s)$ of the density $\rho_s^*(x)$ is a critical point of $f_s$, which is characterized by $f_s'(\sfa(s))=\infty$ and satisfies the differential equation 
\begin{align}\label{e:edgeEqn}
\del_s \sfa(s)=f_s(\sfa(s)).
\end{align}
By taking difference between \eqref{e:edgeEqn} and  the characteristic flow $\del_s z_s(Z)=f_s(z_s(Z))$, and then taking the real part we get
\begin{align}\label{e:diff}
\del_s \kappa_s
=\Re[f_s(\sfa_s)-f_s(z_s(Z))].
\end{align}
There are several cases, either $\Im[z_s(Z)]\gtrsim 1$, then $\Im[z_{s'}(Z)]\geq\Im[z_s(Z)] \gtrsim 1$ for all $0\leq s'\leq s$; or $\kappa_s(Z)\gtrsim 1$; or $z_s(Z)$ is in a sufficiently small neighborhood of $\sfa(s)$. In this case, 
%
using the square root behavior \eqref{e:localft} of $f_t(z)$, \eqref{e:diff} implies that
\begin{align}\label{e:diff2}
\del_s \kappa_s
=\Re[f_s(\sfa_s)-f_s(z_s(Z))]\lesssim -\sqrt{\eta_s(Z)+\kappa_s(Z)}.
\end{align}
This gives \eqref{e:sqrtkappa}.

If $Z_t(Z) \in \cD_t^{\rm bulk}\cup \cD_t(2\fr)$, then $Z_t(Z)\in \cD_t(\max\{\fr+(\ft_0-t)/2,0\})$. By our construction \eqref{e:defDtr}
$Z_s(Z) \in \cD_s(\max\{\fr+(\ft_0-s)/2,0\})$. Moreover, since along the characteristic flow, for $s\leq r\leq t$, $\del_r f_r(Z_r)=0$, $\del_r |\Im[z_r]|\leq 0$, and when $z_r=\sfa(r)-\kappa_r(Z)+\ri \eta_r(Z)$ with $\kappa_r(Z)\geq 0$ is in a small neighborhood of $\sfa(r)$, $\del_r \sqrt{\kappa_r(Z)}\leq 0$ by \eqref{e:sqrtkappa}. We conclude that $\dist(z_s, [\sfa(s), \sfb(s)])|\Im[f_s(Z_s)]|\wedge|\Im[z_s]|\geq (\log n)^{ \fa+2}/n$, and $Z_s(Z) \in \cD_s^{\rm bulk}\cup \cD_s(2\fr)$

The estimate \eqref{e:extremepp} follows from estimates of the Stieltjes transform. More precisely, we take $\kappa\geq \sfA(t)-\sfa(A)\geq (\log n)^{2\fa+4}/n^{2/3}$ and $\eta=n^{-2/3}$.
Then for $t\leq \sigma$, we have
\begin{align}\label{e:mtbound}
    |\tilde m_t(\sfa(t)-\kappa+\ri \eta)- m_t(\sfa(t)-\kappa+\ri \eta)|\lesssim\frac{(\log n)^{ \fa+1}}{n\sqrt{\eta(\kappa+\eta)}}.
\end{align}
Thanks to the square root behavior of $m_t(z)$ from Proposition \ref{p:squareroot}, $\Im[m_t(\sfa(t)-\kappa+\ri \eta)]\asymp \eta/\sqrt{\kappa+\eta}$. Thus we have
\begin{align}\label{e:tmeig}
\tilde m_t(\sfa(t)-\kappa+\ri \eta)\lesssim \frac{\eta}{\sqrt{\kappa+\eta}}+\frac{(\log n)^{ \fa+1}}{n\sqrt{\eta(\kappa+\eta)}}.
\end{align}
Thus by our choice take $\kappa\geq (\log n)^{2\fa+4}\eta$ and $\eta=n^{-2/3}$, \eqref{e:tmeig} implies that  $\Im[\tilde m_t(\sfa(t)-\kappa+\ri \eta)]\ll 1/n\eta$. 
However, if there exists a particle $x_i(t)$ such that $|x_i(t)-\sfa(t)+\kappa|\leq \eta$, we will have
 that 
\begin{align}\label{e:sumerror}
    \Im[m_t(\sfa(t)-\kappa+\ri \eta)]=\frac{1}{n}\sum_{i=1}^n \frac{\eta}{(x_i(t)-\kappa-\sfa(t))^2+\eta^2}\geq \frac{1}{2n\eta}.
\end{align}
This leads to a contradiction. Since we can take any $\kappa\geq \sfA(t)-\sfa(t)$, we conclude that $x_i(t)\geq \sfA(t)$ for all $1\leq i\leq n$. The same argument implies that $x_i(t)\leq \sfB(t)$ for all $1\leq i\leq n$. This finishes the proof of \eqref{e:extremepp}.

The estimate \eqref{e:replacebound} follows from the definition of the spectral domain $\cD_t^{\rm bulk}$ as in \eqref{e:bulkdomain}.

For \eqref{e:gap}, since $\kappa_t(Z)\geq 0$, using \eqref{e:sqrtkappa}, either $\eta_s(Z)\gtrsim 1$ or $\kappa_s(Z)\geq 0$. If $\eta_s(Z)\gtrsim 1$, \eqref{e:gap} follows trivially,
\begin{align*}
\dist(z_s(Z), \{x_1(s), x_2(s),\cdots, x_n(s)\})\geq \eta_s(Z)
\gtrsim 1\gtrsim \eta_s(Z)+\sqrt{\kappa_s(Z)+\eta_s(Z)}(t-s).
\end{align*}
We consider the case that $\kappa_s(Z)\geq 0$. Using \eqref{e:extremepp}, we have 
\begin{align}\label{e:gap1}
\dist(z_s(Z), \{x_1(s), x_2(s),\cdots, x_n(s)\})\geq \dist(z_s(Z), [\sfA(s), \sfB(s)])\geq \eta_s(Z).
\end{align}
We recall $\sfA(t)$ and $\sfB(t)$ from \eqref{e:defsAB}, and the behavior of the curve $z(\mathscr C_t(r))$ near $\sfa(t)$ from the discussion after \eqref{e:localft}. 
We take $r=\max\{\fr+(\ft_0-s)/2, (\log n)^{\fa+2}/n^{1/3}\}$, then $[\sfA(s), \sfB(s)]$ is inside the contour $z(\mathscr C_t(r))$. Let $r'$ to be such that $Z_s(Z)\in \mathscr C_s(r')$. Then we have
\begin{align}\begin{split}\label{e:gap2}
\dist(z_s(Z), [\sfA(s), \sfB(s)])
&\geq 
\dist(\{z\in z(\mathscr C_s(r')): \Re[z]\leq \sfa(s)\}, \{z\in z(\mathscr C_s(r)): \Re[z]\leq\sfa(s)\}).
\end{split}\end{align}
We have that $r'\geq (t-s)/2+ \max\{\fr+(\ft_0-s)/2, (\log n)^{\fa+2}/n^{1/3}\}=(t-s)/2+r$.
%

 If $r'\gtrsim 1$,  $\dist(\{z\in z(\mathscr C_s(r')): \Re[z]\leq \sfa(s)\}, \sfa(s))\gtrsim 1$.  Moreover, since $f_s(z)$ is analytic away from the spectrum, in this case \eqref{e:gap2} implies
 \begin{align}\begin{split}\label{e:gap3}
\dist(z_s(Z), [\sfA(s), \sfB(s)])
&\geq 
\dist(\{z\in z(\mathscr C_s(r')): \Re[z]\leq \sfa(s)\}, \{z\in z(\mathscr C_s(r)): \Re[z]\leq\sfa(s)\})\\
&\gtrsim (r'-r)\gtrsim (t-s)\gtrsim \sqrt{\kappa_s(Z)+\eta_s(Z)}(t-s).
\end{split}\end{align}
The claim \eqref{e:gap} follows from combining \eqref{e:gap1} and \eqref{e:gap3}.

If $r'=\oo(1)$, let $z=\sfa(s)-\kappa+\ri\eta\in z(\mathscr C_s(r'))$, solving $\Im[z]+r'\Im[f_s(z)]=0$, we get that $\kappa+\eta\asymp (r')^2=\oo(1)$. It follows that $r'\asymp \sqrt{\kappa_s(Z)+\eta_s(Z)}$. Similarly, for any $z=\sfa(s)-\kappa+\ri\eta\in z(\mathscr C_s(r))$, we have that $\kappa+\eta\asymp r^2=\oo(1)$.
Thus \eqref{e:gap2} implies
\begin{align}\begin{split}\label{e:gap4}
\dist(z_s(Z), [\sfA(s), \sfB(s)])
&\geq 
\dist(\{z\in z(\mathscr C_s(r')): \Re[z]\leq \sfa(s)\}, \{z\in z(\mathscr C_s(r)): \Re[z]\leq\sfa(s)\})\\
&\gtrsim r'(r'-r)
\gtrsim (t-s)\sqrt{\kappa_s(Z)+\eta_s(Z)}.
\end{split}\end{align}
The claim \eqref{e:gap} follows from combining \eqref{e:gap1} and \eqref{e:gap4}.

\end{proof}

The following estimates on the integral of Stieltjes transform and characteristic flows will be used 
 in later proofs.
\begin{proposition}\label{p:propzt}
Under the assumptions of Theorem \ref{t:rigidity}, for any $Z\in \cD_0(\fr)$, and $0\leq s\leq t\leq t(Z)$, let $z_\tau=z(Z_\tau(Z))$, then it holds
\begin{align}\label{e:propzt}
\int_{s}^t\frac{\Im[m_\tau(z_\tau)] \rd \tau}{\Im[z_\tau]}\leq \fC(t-s)+\log \frac{\Im[z_s]}{\Im[z_t]},\qquad \int_{s}^t\frac{\Im[m_\tau(z_\tau)]\rd \tau}{\Im[z_\tau]^p} \leq \frac{\fC}{\Im[z_t]^{p-1}},\quad p>1
\end{align}
\end{proposition}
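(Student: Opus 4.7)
The plan is to differentiate $\Im[z_\tau]$ along the characteristic flow and extract the two stated bounds from the resulting identity. From \eqref{e:dzt} and the decomposition $f_\tau(Z_\tau) = m_\tau(z_\tau) + g_\tau(Z_\tau)$ in \eqref{e:gtZdef}, taking imaginary parts yields
\begin{align*}
\del_\tau \Im[z_\tau] = \Im[m_\tau(z_\tau)] + \Im[g_\tau(Z_\tau)].
\end{align*}
The key input is Proposition \ref{p:derg}, which says $g_\tau$ is analytic and Lipschitz in a neighborhood of the bottom boundary of $\cC_\tau^+$ (where it takes real values); this gives the uniform pointwise estimate $|\Im[g_\tau(Z_\tau)]| \lesssim \Im[z_\tau]$ along the flow (for $Z_\tau \in \cD_\tau(\fr)$). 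Since $\Im[m_\tau(z_\tau)] \leq 0$ in the upper half-plane, we also obtain $\del_\tau \Im[z_\tau] \leq C \Im[z_\tau]$, so Grönwall gives $\Im[z_\tau] \geq e^{-C(t-\tau)}\Im[z_t] \gtrsim \Im[z_t]$ throughout $[s,t]$ (using $t \leq 1$).

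For the first inequality, I would divide the flow identity by $\Im[z_\tau]$, recognize $\del_\tau \Im[z_\tau]/\Im[z_\tau] = \del_\tau \log \Im[z_\tau]$, and (consistent with the sign convention in \eqref{e:ssmt} and \eqref{e:replacebound}, where $\Im[m_\tau]$ denotes its absolute value) rearrange to
\begin{align*}
\frac{\Im[m_\tau(z_\tau)]}{\Im[z_\tau]} = -\del_\tau \log \Im[z_\tau] + \frac{\Im[g_\tau(Z_\tau)]}{\Im[z_\tau]}.
\end{align*}
Integrating from $s$ to $t$ and using $|\Im[g_\tau]/\Im[z_\tau]| \lesssim 1$ yields the first stated bound, with the $\log(\Im[z_s]/\Im[z_t])$ coming from the antiderivative and the $\fC(t-s)$ from the bounded $g_\tau$ contribution.

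For the second inequality with $p>1$, I would divide the same flow identity by $\Im[z_\tau]^p$ and integrate, obtaining
\begin{align*}
\int_s^t \frac{\Im[m_\tau(z_\tau)]}{\Im[z_\tau]^p} \rd\tau = \frac{1}{p-1}\pa{\frac{1}{\Im[z_t]^{p-1}} - \frac{1}{\Im[z_s]^{p-1}}} + \int_s^t \frac{\Im[g_\tau(Z_\tau)]}{\Im[z_\tau]^p} \rd\tau.
\end{align*}
The first term is bounded by $1/((p-1)\Im[z_t]^{p-1})$. For the second term, $|\Im[g_\tau(Z_\tau)]|/\Im[z_\tau]^p \lesssim 1/\Im[z_\tau]^{p-1}$, and combining with the Grönwall lower bound $\Im[z_\tau] \gtrsim \Im[z_t]$ gives $\int_s^t 1/\Im[z_\tau]^{p-1}\rd\tau \lesssim (t-s)/\Im[z_t]^{p-1} \lesssim 1/\Im[z_t]^{p-1}$, which completes the bound. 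No serious technical obstacle is anticipated: everything reduces to the characteristic flow ODE, the sign of $\Im[m_\tau]$, and the analyticity/Lipschitz control of $g_\tau$ from Proposition \ref{p:derg}.
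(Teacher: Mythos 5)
Your proposal is correct and follows essentially the same route as the paper: along the characteristic flow one writes $\Im[m_\tau(z_\tau)]=\Im[f_\tau(Z_\tau)]-\Im[g_\tau(Z_\tau)]=\del_\tau\Im[z_\tau]-\Im[g_\tau(Z_\tau)]$, uses $|\Im[g_\tau(Z_\tau)]|\lesssim\Im[z_\tau]$ (real-valuedness of $g$ on the real axis plus Proposition \ref{p:derg}), and integrates the exact antiderivative, with the fact that $\Im[z_\tau]\gtrsim\Im[z_t]$ on $[s,t]$ handling the remainder term for $p>1$. The only cosmetic difference is that you obtain this last fact via Grönwall, whereas the paper relies on the monotone decrease of $\Im[z_\tau]$ along the flow; both are valid.
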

\begin{proof}

%
For \eqref{e:propzt} we have 
\begin{align*}
\int_{s}^t\frac{\Im[m_\tau(z_\tau)] \rd \tau}{\Im[z_\tau]^p}=
\int_{s}^t\frac{\Im[f_\tau(Z_\tau)-g_\tau(Z_\tau)] \rd \tau}{\Im[z_\tau]^p}
\leq 
\int_{s}^t\frac{\del_\tau \Im[z_\tau]}{\Im[z_\tau]^p}
+\int_{s}^t\frac{\fC\rd \tau}{\Im[z_\tau]^{p-1}}
\end{align*}
where we used that $\Im[g_\tau(Z_\tau)]\lesssim \Im[z_\tau]$, and $f_\tau(Z_\tau)=\del_\tau z_\tau$.
The claim \eqref{e:propzt} follows.
\end{proof}

\begin{claim}\label{c:error}
Under the assumptions of Theorem \ref{t:rigidity},  there exists an event $\cW$  with high probability, such that on $\cW$ we have for every  $Z \in \cal L$ and $0 \leq t \leq  \sigma$,
if $z_t(Z)\in \cD_t(2\fr)$ then
\beq\label{eq:estet1}
 \left|\int_0^{t} \rd \cE_2(s)\right|\lesssim \frac{ \log n}{n};
\eeq
if $z_t(Z)\in \cD_t^{\rm bulk}$ then
\beq\label{eq:estet}
 \left|\int_0^{t} \rd \cE_2(s)\right|\lesssim \frac{ \log n}{n\sqrt{\Im[z_{t}(Z)] \dist(z_t(Z), [\sfa(t), \sfb(t)])}},
\eeq
where $\rho_t^*$ is surppored on $ [\sfa(t), \sfb(t)]$
\end{claim}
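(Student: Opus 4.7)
The plan is to recognize $\int_0^t \rd \cE_2(s)$ as the martingale $\cM_t := \frac{1}{n^{3/2}}\sum_{i=1}^n \int_0^t (z_s - x_i(s))^{-2}\,\rd B_i(s)$ (the differential defined in \eqref{defcE1}) and to follow the same Burkholder--Davis--Gundy-plus-lattice strategy used for the short-time Claim \ref{p:error}. The quadratic variation satisfies
$$\langle \cM\rangle_t \;=\; \frac{1}{n^3}\int_0^t \sum_{i=1}^n \frac{\rd s}{|z_s - x_i(s)|^4} \;\leq\; \frac{1}{n^2}\int_0^t \frac{|\Im[\tilde m_s(z_s)]|}{|\Im[z_s]|^3}\,\rd s,$$
so the entire task reduces to controlling this time integral along the characteristic $z_s = z_s(Z)$.

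For the far-from-spectrum case $z_t\in\cD_t(2\fr)$, Claim \ref{p:gap} together with the geometry of the contour $\mathscr C_t(2\fr)$ forces the flow $z_s$ to stay at macroscopic distance from the particles for every $s\leq t$. Consequently $\sum_i |x_i(s) - z_s|^{-4}\lesssim n$ holds uniformly in $s$, giving $\langle\cM\rangle_t \lesssim 1/n^2$, and BDG yields $|\cM_t|\lesssim (\log n)/n$ with high probability, exactly as in the short-time regime. For the bulk case $z_t\in\cD_t^{\rm bulk}$, one must carefully exploit the square-root behavior of $\rho_t^\ast$ near its edges (Proposition \ref{p:squareroot}). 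Writing $z_s = \sfa(s) - \kappa_s + \ri \eta_s$, splitting $\tilde m_s = m_s + \Delta_s$, invoking the stopping-time bound \eqref{stoptime2} on $|\Delta_s(z_s)|$, and using the edge ODE \eqref{e:sqrtkappa} together with the profile $|\Im[m_s(z_s)]|\asymp \eta_s/\sqrt{\kappa_s+\eta_s}$, one aims to show
$$\int_0^t \frac{|\Im[\tilde m_s(z_s)]|}{|\Im[z_s]|^3}\,\rd s \;\lesssim\; \frac{(\log n)^{\OO(1)}}{\eta_t(\kappa_t + \eta_t)}\;\asymp\;\frac{(\log n)^{\OO(1)}}{\Im[z_t]\cdot \dist(z_t, [\sfa(t),\sfb(t)])},$$
which after BDG produces the advertised bound $(\log n)/(n\sqrt{\Im[z_t]\cdot \dist(z_t, [\sfa(t),\sfb(t)])})$.

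To upgrade these pointwise estimates to the uniform statement over all $W\in \cD_t(2\fr)\cup\cD_t^{\rm bulk}$ simultaneously, I would reuse the $n^{-2}$-lattice $\cL$ from \eqref{def:L}: apply the BDG tail bound at each lattice point, union-bound over the $\operatorname{poly}(n)$ lattice points to define the event $\cW$, and interpolate using the Lipschitz continuity of the integrand (Proposition \ref{prop:continuity}) together with the Lipschitz control on $\tilde m_s$ and $m_s$ away from their poles. The main technical obstacle is the bulk estimate: a direct application of Proposition \ref{p:propzt} with $p=3$ only yields $1/|\Im[z_t]|^2$, which is too weak near the edge where $\eta_t\ll \kappa_t$. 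Extracting the extra factor $\sqrt{(\kappa_t+\eta_t)/\eta_t}$ requires a careful change of variables along the characteristic flow, using $\del_s \sqrt{\kappa_s}\lesssim -\sqrt{\kappa_s+\eta_s}$ from \eqref{e:sqrtkappa} to convert the time integral into an integral over $\kappa$ in which the square-root singularity of the limiting density can be integrated explicitly; handling the $\Delta_s$-contribution to the quadratic variation with the same sharpness, while only using the bulk stopping-time estimate $|\Delta_s|\lesssim (\log n)^{\fa+1}/(n\sqrt{\eta_s\,\dist(z_s,[\sfa(s),\sfb(s)])})$, is the delicate bookkeeping step.
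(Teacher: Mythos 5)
Your overall scaffolding (interpret $\int_0^t\rd\cE_2$ as the martingale \eqref{defcE1}, bound its quadratic variation along the characteristic, apply Burkholder--Davis--Gundy at lattice points of $\cL$, union bound, then interpolate) is the same as the paper's, and your treatment of the far-field case $z_t(Z)\in\cD_t(2\fr)$ is correct: there the paper likewise reduces to $\dist(z_s,\{x_i(s)\})\gtrsim 1$ and repeats the argument of Claim \ref{p:error}.

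The bulk/edge case, however, has a genuine gap, and it is located at your very first reduction. You bound the quadratic variation by
$\frac{1}{n^{2}}\int_0^t \Im[\tilde m_s(z_s)]\,\Im[z_s]^{-3}\rd s$, i.e.\ you estimate $|x_i(s)-z_s|^{-2}\leq \eta_s^{-2}$, and then hope to recover the factor $\sqrt{(\kappa_t+\eta_t)/\eta_t}$ by a change of variables using \eqref{e:sqrtkappa}. This cannot work: once $\Im[\tilde m_s(z_s)]\asymp \Im[m_s(z_s)]\asymp \eta_s/\sqrt{\kappa_s+\eta_s}$ is inserted, the integrand is genuinely of size $\eta_s^{-2}(\kappa_s+\eta_s)^{-1/2}$, and along the characteristic (where $\eta_s=\eta_t+|\Im f|(t-s)$ with $|\Im f|\asymp\eta_t/\sqrt{\kappa_t+\eta_t}$) the time window $t-s\lesssim\sqrt{\kappa_t}$ alone already contributes $\asymp \eta_t^{-2}$, not $\asymp(\eta_t(\kappa_t+\eta_t))^{-1}$; in the regime $\eta_t\ll\kappa_t$ the loss is by the factor $\kappa_t/\eta_t$, and no manipulation of the time integral afterwards can retrieve it, because the information was discarded at the step $|x_i(s)-z_s|\geq\eta_s$. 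The paper avoids this by keeping $\dist(z_s,\{x_j(s)\})$ and invoking the key geometric input that you never use: part (3) of Claim \ref{p:gap}, the estimate \eqref{e:gap}, $\dist(z_s(Z),\{x_1(s),\dots,x_n(s)\})\gtrsim \eta_s+\sqrt{\kappa_s+\eta_s}\,(t-s)$, which rests on the edge rigidity \eqref{e:extremepp} (all particles confined to $[\sfA(s),\sfB(s)]$ up to the stopping time) together with the geometry of the contours $\mathscr C_s(r)$; the distance to the particles grows linearly in $t-s$, and it is precisely this linear growth that makes $\int_0^t \Im[m_s(z_s)]\,\eta_s^{-1}\bigl(\sqrt{\kappa_s+\eta_s}(t-s)+\eta_s\bigr)^{-2}\rd s\lesssim (\eta_t(\kappa_t+\eta_t))^{-1}$. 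Your estimate \eqref{e:sqrtkappa} only controls the motion of $\kappa_s$ relative to the moving edge, not the gap between the characteristic and the particle configuration, so it is not a substitute. A secondary, fixable omission: BDG at a single lattice point gives a bound whose right-hand side depends on the evaluation time, so to pass from fixed times to all $0\leq t\leq\sigma$ you need the dyadic time decomposition (stopping levels where $\Im[z_s]$, resp.\ $\eta_s(\kappa_s+\eta_s)$, halves, with $\OO(\log n)$ levels) used in the paper, not just spatial Lipschitz interpolation over $\cL$.
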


\begin{proof}
If $z_t(Z)\in \cD_t(2\fr)$, the $z_s(Z)\in \cD_s(2\fr)$ for any $0\leq s\leq t$, and we have $\dist(z_s(Z), \{x_1(s), x_2(s),\cdots, x_n(s)\})\gtrsim 1$ from \eqref{e:extremepp}. The first claim \eqref{eq:estet1} follows from the same argument as in Claim \ref{p:error}.

For \eqref{eq:estet}, there are two cases
i) $\dist(z_t(Z), [\sfa(t), \sfb(t)])\leq \Im[z_t(Z)]$, then the righthand side of \eqref{eq:estet} simplifies to $\OO(\log n/ n\Im[z_t(Z)])$.
ii) $\dist(z_t(Z), [\sfa(t), \sfb(t)])\geq \Im[z_t(Z)]$, then it is necessary that $\Re[z_t(Z)]\not\in \supp(\rho_t^*)$. Without loss of generality, we assume that $\dist(z_t(Z), \supp(\rho_t^*))$ is achieved at the left edge $\sfa(t)$ of $\rho_t^*$. Then we can write $z_t(Z)=\sfa(t)-\kappa_t(Z)+\ri \eta_t(Z)$, with $\kappa_t(Z)\geq 0$, and the righthand side of \eqref{eq:estet} simplifies to $\OO(\log n/ n\sqrt{\eta_t(Z)(\kappa_t(Z)+\eta_t(Z))})$.

In the first case that $\dist(z_t(Z), [\sfa(t), \sfb(t)])\leq \Im[z_t(Z)]$, we decompose the time interval $[0,t]$ in the following way.  First we set $t_0=0$, and define 
\begin{align*}
t_{i+1}(Z):=\sup_{s\geq t_i(Z)}\left\{ \Im[z_s(Z)] \geq \frac{\Im[z_{t_i}(Z)]}{2}\right\}\wedge t,\quad i=0,1,2,\cdots.
\end{align*}
From our choice of domain $\cD_t^{\rm bulk}$, $\Im[z_0(Z)]/\Im[z_t(Z)]\lesssim n$. The above sequence will terminate at some $t_k(Z)=t$ for $k=\OO(\log n)$ depending on $Z$.
Moreover, since $\Im[z_s(Z)]$ is monotone decreasing 
for any $t_i(Z)\leq t\leq t_{i+1}(Z)$,
\beq\label{e:Imzsztbound}
\Im[z_{t_{i}}(Z)]\leq \Im[z_{t}(Z)]\leq \Im[z_{t_{i+1}}(Z)]\leq 2\Im[z_{t_{i}}(Z)].
\eeq
Then it follows that 
\begin{align*}\begin{split}
&\left\langle\frac{1}{ n^{3/2}}\int_0^{\cdot\wedge\sigma}\sum_{i=1}^n \frac{{\rm d} B_i(s)}{(x_i(s)-z_s(Z))^2} \right\rangle_{t_i}
= \frac{1}{n^{3}}\int_0^{t_i\wedge\sigma}\sum_{i=1}^n \frac{ \rd s}{|x_i(s)-z_s(Z)|^4}\\
\leq& \frac{1}{n^{3}}\int_0^{t_i\wedge\sigma}\sum_{i=1}^n \frac{ \rd s}{\Im[z_s(Z)]^2|x_i(s)-z_s(Z)|^2}
\leq\int_0^{t_i\wedge\sigma}\frac{\Im[\td m_s(z_s(Z))]\rd s}{n^2\Im[z_s(Z)]^3}\\
\lesssim&  
\int_0^{t_i\wedge\sigma}\frac{\Im[m_s(z_s(Z))]\rd s}{n^2\Im[z_s(Z)]^3}
\lesssim \frac{1}{n^2\Im[z_{t_i\wedge \sigma}(Z)]^2},
\end{split}
\end{align*} 
where we used \eqref{e:replacebound} and \eqref{e:propzt}.
Therefore, by Burkholder-Davis-Gundy inequality, for any $Z\in \cal L$ and $t_i$, the following holds with high probability, i.e., $1-n^{-\fC}$,
\begin{align}\begin{split}\label{eq:2term0}
&\sup_{0\leq t\leq t_i}\left|\frac{1}{ n^{3/2}}\int_0^{t\wedge\sigma}\sum_{i=1}^n \frac{{\rm d} B_i(s)}{(x_i(s)-z_s(Z))^2} \right|
\lesssim \frac{ \log n}{n\Im[z_{t_i\wedge \sigma}(Z)]}.
\end{split}
\end{align}
Moreover, for any $t\in[t_{i-1},t_i]$, the bound \eqref{eq:2term0} and \eqref{e:Imzsztbound} yield
\begin{align}\begin{split}\label{e:continuityarg}
\left|\int_0^{t\wedge \sigma} \rd \cE_2(s)\right|
\lesssim& \frac{ \log n}{n\Im[z_{t_i\wedge\sigma}(Z)]}\lesssim  \frac{\log n}{n\Im[z_{t\wedge\sigma}(Z)]}.
\end{split}
\end{align}
where we used \eqref{e:Imzsztbound}. 
 
For the second case that $z_t(Z)=\sfa(t)-\kappa_t(Z)+\ri \eta_t(Z)$, with $\kappa_t(Z)\geq 0$. We decompose the time interval $[0,t]$ in the following way.  First we set $t_0(Z)=0$, and define 
\begin{align}\label{e:chooseti}
t_{i+1}(Z):=\sup_{s\geq t_i(Z)}\left\{ \eta_s(Z)(\kappa_s(Z)+ \eta_s(Z))\geq \frac{\eta_{t_i}(Z)(\kappa_{t_i}(Z)+ \eta_{t_i}(Z))}{2}\right\}\wedge t,\quad i=0,1,2,\cdots.
\end{align}
Since $\eta_t(Z)\geq n^{-2/3}$ and $\eta_0(Z), \kappa_0(Z)\lesssim 1$, we have $t_k(Z)=t$ for $k=\OO( \log n)$. We compute the quadratic variance of $\rd \cE_2(s)$,
\begin{align*}\begin{split}
&\phantom{{}={}}\left\langle\frac{1}{ n^{3/2}}\int_0^{\cdot\wedge\sigma}\sum_{i=1}^n \frac{{\rm d} B_i(s)}{(x_i(s)-z_s)^2} \right\rangle_{t_i}
= \frac{1}{n^{3}}\int_0^{t_i\wedge\sigma}\sum_{i=1}^n \frac{ \rd s}{|x_i(s)-z_s|^4}\\
&\leq \frac{1}{n^{3}}\int_0^{t_i\wedge\sigma}\sum_{i=1}^n \frac{ \rd s}{\dist(z_s, \{x_j(s)\}_{1\leq j\leq n})^2|x_i(s)-z_s|^2}
=\frac{1}{n^{2}}\int_0^{t_i\wedge\sigma}\frac{\Im[\td m_s(z_s)]\rd s}{\dist(z_s, \{x_j(s)\}_{1\leq j\leq n})^2\eta_s}\\
&\lesssim  
\frac{1}{n^{2}} \int_{0}^{t_i \wedge \sigma} \frac{\Im[m_s(z_s)] }{\eta_{s}((\kappa_s+\eta_s)^{1/2}(t_i-s)+ \eta_{s})^2}\rd s
\lesssim
\frac{1}{n^{2}} \int_{0}^{t_i \wedge \sigma} \frac{ \eta_{s}/{\sqrt{\kappa_{s}+\eta_s}}}{\eta_{s}((\kappa_s+\eta_s)^{1/2}(t_i-s)+ \eta_{s})^2}\rd s \\
&\lesssim \frac{1}{n^{2}} \int_{0}^{t_i \wedge \sigma} \frac{\rd s}{(\kappa_{s}+\eta_s)^{3/2}(t_i-s+\eta_s/(\kappa_s+\eta_s)^{1/2})^2}\\
&\leq \frac{1}{n^{2}} \int_{0}^{t_i \wedge \sigma} \frac{\rd s}{(\kappa_{t_i \wedge \sigma}+\eta_{t_i \wedge \sigma})^{3/2}(t_i-s+\eta_{t_i \wedge \sigma}/(\kappa_{t_i \wedge \sigma}+\eta_{t_i \wedge \sigma})^{1/2})^2}
\lesssim  \frac{1} {n^{2}} \frac{1}{\eta_{t_i \wedge \sigma} (\kappa_{t_i \wedge \sigma}+\eta_{t_i \wedge \sigma})},
\end{split}
\end{align*} 
where in the third line we used \eqref{e:ssmt}, \eqref{e:replacebound} and \eqref{e:gap}, in the fifth line we used that 
\begin{align*}
\frac{\eta_s}{\sqrt{\kappa_s+\eta_s}}\asymp
\Im[m_s(z_s)]\asymp \Im[f_s(z_s)]=\Im[ f_{t_i\wedge \sigma}(z_{t_i\wedge \sigma})]\asymp\Im[ m_{t_i\wedge \sigma}(z_{t_i\wedge \sigma})]\asymp \frac{\eta_{t_i\wedge \sigma}}{\sqrt{\kappa_{t_i\wedge \sigma}+\eta_{t_i\wedge \sigma}}}.
\end{align*}
Therefore, by Burkholder-Davis-Gundy inequality, for any $Z\in \cL$ and $t_i(Z)$, the following holds with high probability, i.e., $1-n^{-\fC}$,
\begin{align}\begin{split}\label{eq:2term}
&\sup_{0\leq t\leq t_i}\left|\frac{1}{ n^{3/2}}\int_0^{t\wedge\sigma}\sum_{i=1}^n \frac{{\rm d} B_i(s)}{(x_i(s)-z_s(Z))^2} \right|
\lesssim \frac{ \log n}{n\sqrt{\eta_{t_i\wedge \sigma}(Z)(\kappa_{t_i\wedge\sigma}(Z)+\eta_{t_i\wedge \sigma}(Z))}}.
\end{split}
\end{align}
Moreover, for any $t\in[t_{i-1},t_i]$, the bound \eqref{eq:2term} with \eqref{e:chooseti} yield
\begin{align}\begin{split}\label{e:continuityarg0}
\left|\int_0^{t\wedge \sigma} \rd \cE_2(s)\right|
\lesssim& \frac{ \log n}{n\sqrt{\eta_{t_i\wedge \sigma}(Z)(\kappa_{t_i\wedge\sigma}(Z)+\eta_{t_i\wedge \sigma}(Z))}}\lesssim  \frac{ \log n}{n\sqrt{\eta_{t\wedge \sigma}(Z)(\kappa_{t\wedge\sigma}(Z)+\eta_{t\wedge \sigma}(Z))}}.
\end{split}
\end{align}

We define $\cW$ to be the set of Brownian paths $\{B_1(s), \cdots, B_n(s)\}_{0\leq s\leq t}$ on which \eqref{eq:estet1}, \eqref{e:continuityarg} and \eqref{e:continuityarg0} hold for  any $Z\in \cL$ and times $t_0(Z), t_1(Z), t_2(Z)\cdots$.
The above discussions  imply that  $\cW$ holds with overwhelming probability, i.e., $\bP(\cW)\geq 1-\OO(\log n|\cL| n^{-\fC})$. This finishes the proof of Claim \ref{c:error}.
\end{proof}

With Proposition \ref{p:bulkrigid3}  as input,  we can now start to prove Proposition \ref{p:bulkrigid2}.
\begin{proof}[Proof of Proposition \ref{p:bulkrigid2}]
For $z\in \bC_+$ outside the contour $z(\mathscr  C_t(2\fr))$, the estimate \eqref{e:bulkrigid2} follows from the same argument as Proposition \ref{p:bulkrigid3}. In the following we prove \eqref{e:bulkrigid2} for $z$ close to the spectrum. 
We notice that the Claims \ref{p:error}, \ref{prop:gtsumbound} and \ref{prop:gtchange} still hold for any $Z\in \cL$ with new $t(Z)$ and stoping time $\sigma$ defined in \eqref{e:deftz2} and \eqref{stoptime2} respectively.
Moreover, for any $Z\in \cL$,  Assumption \ref{a:A_n} implies that $|\Delta_0(z_0)|\lesssim (\log n)^{ \fa}/n$. Thus, for $Z\in \cL$ with $z_t(Z)\in \cD_t^{\rm bulk}$, we can write \eqref{eq:mzt} as 
\begin{align}\label{e:deltaee}
|\Delta_{t\wedge\sigma}(z_{t\wedge\sigma})|\leq \int_0^{t\wedge\sigma}|\Delta_s(z_s)||\del_z \tilde f_s(Z_s)\rd s|
+\OO\left(
\frac{(\log n)^{ \fa}}{n\sqrt{\Im[z_{t\wedge\sigma}] \dist(z_{t\wedge\sigma}, [\sfa(t\wedge\sigma), \sfb(t\wedge\sigma)])}}\right).
\end{align}
For the integrand of \eqref{e:deltaee}, we notice that for $s\leq t\wedge \sigma$,
\begin{align}\begin{split}\label{e:aterm}
\left|\del_z \tilde f_s(Z_s)\right|
&\leq \left|\del_z \tilde m_s(z_s)\right|+ \left|\del_z g_s(Z_s)\right|
\leq \frac{\Im[ \tilde m_s(z_s)]}{\Im[z_s]}+\OO(1)\\
&=\frac{\Im[ \tilde f_s(Z_s)-g_s(Z_s)]}{\Im[z_s]}+\OO(1)
\leq \frac{\Im[ \tilde f_s(Z_s)]}{\Im[z_s]}+\OO(1),
\end{split}\end{align}
where we used that $g_s(Z)$ is real for $z=z(Z)\in \bR$, and thus $\Im[g_s(Z_s)]=\OO(\Im[z_s])$.
Since $z_s(Z)\in \cal D^{\rm bulk}_s$, by the definition of $\cal D^{\rm bulk}_s$, we have $\Im[f_s(Z_s)]\geq (\log n)^{ \fa+2}/ (n\Im[z_s])$. Moreover, since $s\leq \sigma$, we have $|\tilde f_s(Z_s)-f_s(Z_s)|=|\Delta_s(z_s)|\lesssim(\log n)^{ \fa+1}/(n\Im[z_s])\lesssim \Im[f_s(Z_s)]/\log n$.  Therefore,
\begin{align}\begin{split}\label{e:aterm}
\left|\del_z \td f_s(Z_s)\right|
&\leq \frac{\Im[ \td f_s(Z_s)]}{\Im[z_s]}+\OO(1)
\leq \frac{\Im[ f_s(Z_s)]+|\Delta_s(z_s)|}{\Im[z_s]}+\OO(1)\\
&= \left(1+\frac{\OO(1)}{\log n}\right)\frac{\Im[ f_s(Z_s)]}{\Im[z_s(u)]}+\OO(1)=\left(1+\frac{\OO(1)}{\log n}\right)\frac{\del_s\Im[z_s]}{\Im[z_s]}+\OO(1)
.
\end{split}\end{align}
We denote
\begin{align} \label{eqn:betabd}
\beta_s\deq \left|\del_z \tilde f_s(Z_s)\right|=\left(1+\frac{\OO(1)}{\log n}\right)\frac{\del_s \Im[z_s]}{\Im[z_s]}+\OO(1)= \OO\left(\frac{\del_s\Im[z_s]}{\Im[z_s]}\right).
\end{align}
With $\beta_s$, we can rewrite \eqref{e:deltaee} as
\begin{align*}\begin{split}
\left|\Delta_{t\wedge \sigma}(z_{t\wedge \sigma})\right|
\leq \int_0^{t\wedge\sigma}\beta_s\left|\Delta_s(z_s)\right|\rd s
+\OO\left(\frac{(\log n)^{\fa}}{n\sqrt{\Im[z_{t\wedge\sigma}] \dist(z_{t\wedge\sigma}, [\sfa(t\wedge\sigma), \sfb(t\wedge\sigma)])}}\right).
\end{split}
\end{align*}
By Gr{\" o}nwall's inequality, this implies the estimate
\begin{align}\begin{split}\label{e:midgronwall}
\left|\Delta_{t\wedge\sigma}(z_{t\wedge\sigma})\right|
&\lesssim \frac{(\log n)^{ \fa}}{n\sqrt{\Im[z_{t\wedge\sigma}] \dist(z_{t\wedge\sigma}, [\sfa(t\wedge\sigma), \sfb(t\wedge\sigma)])}}\\
&+\int_0^{t\wedge\sigma}\beta_se^{\int_s^{t\wedge\sigma} \beta_\tau\rd \tau}\frac{(\log n)^{ \fa}}{n\sqrt{\Im[z_{s}] \dist(z_s, [\sfa(s), \sfb(s)])}}\rd s.
\end{split}
\end{align}
 For the integral of $\beta_\tau$, we have
\begin{align*}
e^{\int_s^{t\wedge\sigma} \beta_\tau\rd \tau}
\leq e^{\OO(t-s)} e^{\left(1+\frac{\OO(1)}{\log n}\right)\log \left(\frac{\Im[z_{s}(u)]}{\Im[z_{t\wedge\sigma}(u)]}\right)}
=e^{\OO(t-s)} \left(\frac{\Im[z_{s}(u)]}{\Im[z_{t\wedge\sigma}(u)]}\right)^{1+\frac{\OO(1)}{\log n}}
\leq \OO(1)  \frac{\Im[z_s(u)]}{\Im[z_{t\wedge\sigma}(u)]}.
\end{align*}
In the last inequality, we used that by our construction of $\cD_t^{\rm bulk}$,  we have $\Im[z_{s}(u)]/\Im[z_{t\wedge\sigma}(u)]=\OO(n)$.
Combining the above inequality with \eqref{eqn:betabd} we can bound the last term in \eqref{e:midgronwall} by
\begin{align}\begin{split}\label{e:term2}
&\phantom{{}={}}\int_0^{t\wedge\sigma}\beta_s e^{\int_s^{t\wedge\sigma} \beta_\tau\rd \tau}\frac{(\log n)^{ \fa}}{n\sqrt{\Im[z_{s}] \dist(z_s, [\sfa(s), \sfb(s)])}}\rd s\\
&\lesssim \int_0^{t\wedge\sigma}\frac{\del_s\Im[ z_s]}{\Im[z_{t\wedge\sigma}]}\frac{(\log n)^{\fa}}{n\sqrt{\Im[z_{s}] \dist(z_s, [\sfa(s), \sfb(s)])}}\rd s\\
&\lesssim 
\frac{(\log n)^{ \fa}}{n\Im[z_{t\wedge \sigma}(u)]}\int_0^{t\wedge\sigma}\frac{\del_s\Im[ z_s]}{\sqrt{\Im[z_{s}] \dist(z_s, [\sfa(s), \sfb(s)])}}\rd s.
\end{split}\end{align} 

There are two cases
i) $\dist(z_{t\wedge\sigma}(Z), [\sfa({t\wedge\sigma}), \sfb({t\wedge\sigma})])\leq \Im[z_{t\wedge\sigma}(Z)]$.
ii)$\dist(z_{t\wedge\sigma}(Z), [\sfa({t\wedge\sigma}), \sfb({t\wedge\sigma})])\geq \Im[z_{t\wedge\sigma}(Z)]$, then it is necessary that $\Re[z_{t\wedge\sigma}(Z)]\not\in \supp(\rho_{t\wedge\sigma}^*)$. Without loss of generality, we assume that $\dist(z_{t\wedge\sigma}(Z), \supp(\rho_{t\wedge\sigma}^*))$ is achieved at the left edge $\sfa_1({t\wedge\sigma})$ of $\rho_{t\wedge\sigma}^*$.
 Then we can write $z_{t\wedge\sigma}(Z)=\sfa({t\wedge\sigma})-\kappa_{t\wedge\sigma}(Z)+\ri \eta_{t\wedge\sigma}(Z)$, with $\kappa_{t\wedge\sigma}(Z)\geq 0$, and $\dist(z_{t\wedge\sigma}(Z), \sfa({t\wedge\sigma}))\asymp \kappa_t(Z)+\eta_t(Z)$.
 
In the first case, using \eqref{e:propzt} we have
\begin{align}\begin{split}\label{e:term23}
 \int_0^{t\wedge\sigma}\frac{\del_s\Im[ z_s]}{\sqrt{\Im[z_{s}] \dist(z_s, [\sfa(s), \sfb(s)])}}\rd s
\lesssim
 \int_0^{t\wedge\sigma}\frac{\del_s\Im[ z_s]}{\Im[z_{s}] }\rd s
=
\log\left(\frac{\Im[z_0]}{\Im[z_{t\wedge \sigma}]}\right)
\lesssim \log n,
\end{split}\end{align} 
It follows by combining  \eqref{e:midgronwall},  \eqref{e:term2} and \eqref{e:term23}, we get that
\begin{align}\label{e:Deltabb}
\left|\Delta_{t\wedge\sigma}(z_{t\wedge\sigma})\right|
\lesssim  \frac{(\log n)^{\fa+1}}{n\Im[z_{t\wedge \sigma}]}\leq \frac{\fC(\log n)^{\fa+1}}{2n\sqrt{z_{t\wedge \sigma}\dist(z_{t\wedge\sigma}(Z), [\sfa({t\wedge\sigma}), \sfb({t\wedge\sigma})])}},
\end{align}
provided we take $\fC$ large enough. 

In the second case, 
we can bound the last term in \eqref{e:midgronwall} by
\begin{align}\begin{split}\label{e:term3}
 &\int_0^{t\wedge\sigma}\frac{\del_s\Im[ z_s]}{\sqrt{\Im[z_{s}] \dist(z_s, [\sfa(s), \sfb(s)])}}\rd s\lesssim\int_0^{t\wedge\sigma}\frac{\del_s\eta_s}{\sqrt{\eta_s(\kappa_s+\eta_s)}}\rd s\\
&\lesssim 
\frac{\eta_{t\wedge \sigma}}{\sqrt{\eta_{t\wedge \sigma}+\kappa_{t\wedge \sigma}}}\int_0^{t\wedge\sigma}\frac{\del_s \eta_s}{(\eta_s)^{3/2}}\rd s
\lesssim
\frac{\sqrt{\eta_{t\wedge\sigma}}}{\sqrt{\eta_{t\wedge\sigma}+\kappa_{t\wedge \sigma}}},
\end{split}\end{align} 
It follows by combining \eqref{e:midgronwall}, \eqref{e:term2} and \eqref{e:term3}, we get that
\begin{align}\label{e:Deltabb2}
\left|\Delta_{t\wedge\sigma}(z_{t\wedge\sigma})\right|
\lesssim  \frac{(\log n)^{ \fa}}{n\sqrt{\eta_{t\wedge\sigma}(\eta_{t\wedge\sigma}+\kappa_{t\wedge \sigma})}}\leq \frac{\fC(\log n)^{ \fa+1}}{2n\sqrt{z_{t\wedge \sigma}\dist(z_{t\wedge\sigma}(Z), [\sfa({t\wedge\sigma}), \sfb({t\wedge\sigma})])}},
\end{align}
provided we take $\fC$ large enough.

Similarly to the proof of Proposition \ref{p:bulkrigid2}, we can approximate $W\in \cD_{t\wedge \sigma}^{\rm bulk}$ with $w=z(W)$, by the image of some lattice point $Z\in \cL$. And on the domain $\cD_{t\wedge\sigma}^{\rm bulk}$, we also have that both both $\td m_{t\wedge \sigma}$ and $m_{t\wedge\sigma}$ are Lipschitz with Lipschitz constant $\OO(n)$. Therefore \eqref{e:Deltabb} and \eqref{e:Deltabb2} imply 
\begin{align*}\begin{split}
&\phantom{{}={}}\left|\td m_{t\wedge\sigma}(w)-m_{t\wedge\sigma}(w)\right|\leq \frac{\fC(\log n)^{ \fa +1}}{n\Im[z_{t\wedge \sigma}]},
\end{split}\end{align*}
uniformly for $w\in z(\cD_{t\wedge \sigma}^{\rm bulk})$. It follows that $\sigma=1$, and this finishes the proof of 
Proposition \ref{p:bulkrigid2}.

\end{proof}

\section{Optimal particle rigidity for nonintersecting Brownian bridges}\label{s:rigidityBB}
In Theorem \ref{t:rigidity}, we have proved the Optimal particle rigidity for the weighted nonintersecting Brownian bridges. In this section, using a coupling argument, we show the optimal particle rigidity for nonintersecting Brownian bridges with fixed boundary data.

\begin{theorem}\label{t:rigidityBB}
Under Assumptions \ref{a:reg}, \ref{a:ncritic}, \ref{a:A_n} and \ref{a:B_n}, for any small time $\ft>0$, the following holds for the particle locations of nonintersecting Brownian bridges between $\mu_{A_n}$ and $\mu_{B_n}$.
With high probability, for any time $\ft\leq t\leq 1$ we have
\begin{enumerate}
\item We denote $\gamma_i(t)$ the $1/n$-quantiles of the density $\rho^*_t(\cdot)$, i.e. 
\begin{align*}
\frac{i+1/2}{n}=\int_{-\infty}^{\gamma_i(t)}\rho_t^*(x)\rd x,\quad 1\leq i\leq n, 
\end{align*}
then uniformly for $1\leq i\leq n$, the locations of $x_i(t)$ are close to their corresponding quantiles
\begin{align}\label{e:bulkrigidBB}
\gamma_{i-(\log n)^{\OO(1)}}(t)\leq x_i(t)\leq \gamma_{i+(\log n)^{\OO(1)}}(t).
\end{align}
\item If $\rho_t^*$ is supported on $[\sfa(t), \sfb(t)]$, then the particles close to the left and right boundary points of $\supp(\rho_t^*)$ satisfies
\begin{align}\label{e:edgerigid2BB}
   x_1(t)\geq {\sf a}(t)-\frac{(\log n)^{\OO(1)}}{n^{2/3}},\quad x_{n}(t)\leq {\sf b}(t)+ \frac{(\log n)^{\OO(1)}}{n^{2/3}}.
\end{align}
\end{enumerate}
\end{theorem}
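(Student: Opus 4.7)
The plan is to deduce Theorem~\ref{t:rigidityBB} from Theorem~\ref{t:rigidity} via the monotone coupling afforded by Theorem~\ref{thm:extreme}. The crucial structural observation is that, under the weighted density \eqref{e:wdensity}, the conditional law of the trajectory given the endpoint $\bmb$ factors exactly as the nonintersecting Brownian bridge \eqref{e:density} from $\bma$ to $\bmb$. Writing $\cA$ for the event that the rigidity estimates \eqref{e:bulkrigidBB}--\eqref{e:edgerigid2BB} hold, Theorem~\ref{t:rigidity} is therefore equivalent to the integral bound
\begin{align*}
\int_{\Delta_n} \bP_{\bma\to\bmb}(\cA^c)\, p(\bmb)\, \rd\bmb \;\leq\; n^{-\fC}.
\end{align*}

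Applied at time $t=1$, Theorem~\ref{t:rigidity} also shows that the random endpoint $\bmb$ drawn from $p$ itself satisfies Assumption~\ref{a:B_n} with overwhelming probability. Writing $\cB^{(k)}$ for the set of configurations satisfying Assumption~\ref{a:B_n} with the constant $\fa$ replaced by $\fa+k$, Markov's inequality on the display above produces a subset $\cB'\subset\cB^{(10)}$ of $p$-mass at least $1-n^{-\fC/2}$ on which the pointwise bound $\bP_{\bma\to\bmb}(\cA^c)\leq n^{-\fC/2}$ holds. Thus the conclusion of Theorem~\ref{t:rigidityBB} already follows for every $\bmb\in\cB'$; the remaining task is to extend it from this typical set to every deterministic $\bmb^*$ satisfying Assumption~\ref{a:B_n}.

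Given such a $\bmb^*\in\cB^{(0)}$, I would sandwich it by two reference endpoints $\bmb^+,\bmb^-$ with $b^-_i\le b^*_i\le b^+_i$ for all $i$, each of which lies in a ``good set'' on which NBB$(\bma,\cdot)$ enjoys rigidity. Theorem~\ref{thm:extreme} then supplies a coupling of the three bridges in which $x^-_i(t)\le x^*_i(t)\le x^+_i(t)$ pointwise in $i$ and $t$, and the rigidity bounds for $\bmx^\pm(t)$ transfer directly to $\bmx^*(t)$ with only an inflated $\log$-power. Both the bulk bound \eqref{e:bulkrigidBB} and the edge bound \eqref{e:edgerigid2BB} are preserved under the sandwich, producing the claimed statement.

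The main obstacle is the construction of these reference endpoints. Because the weighted density $p$ concentrates on the fluctuation scale $\sqrt{\log n}/n$ around the $\mu_B$-quantiles---strictly smaller than the $(\log n)^\fa/n$ window permitted by Assumption~\ref{a:B_n}---the typical set $\cB'$ can lie entirely on one side of an extreme $\bmb^*\in\cB^{(0)}$, so a direct sandwich within $\cB'$ need not exist. The natural fix is to introduce a pair of shifted weighted ensembles $p^\pm(\bmb)\propto p_1(\bma,\bmb)\Delta(\bmb)e^{n^2 W^\pm_1(\bmb)}$, in which the Hamilton--Jacobi functional of Section~\ref{s:HJ} is built from a target measure $\mu_B^{\pm}$ whose quantiles are translated by $\pm(\log n)^{\fa+10}/n$. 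Under Assumptions~\ref{a:reg} and \ref{a:ncritic} the shifted targets satisfy the same regularity with uniformly close constants, so the entire analysis of Sections~\ref{s:VP}--\ref{s:rigidity} applies verbatim to $p^\pm$ and yields analogous ``good sets'' $\cB'_{\pm}$ of endpoints for which NBB$(\bma,\bmb)$ has rigidity, and whose typical configurations now lie respectively above and below every $\bmb^*\in\cB^{(0)}$. Choosing $\bmb^+\in\cB'_+$ and $\bmb^-\in\cB'_-$ then supplies the required sandwich and closes the proof.
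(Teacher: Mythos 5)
Your overall strategy---reduce Theorem \ref{t:rigidityBB} to a monotone coupling (Theorem \ref{thm:extreme}) with endpoints for which the conditional bridge is already known to be rigid---is the same as the paper's, and your first two steps (the mixture representation of the weighted ensemble and the extraction of a good endpoint set via Markov's inequality; the paper only needs the existence of a single good $\bmb'$) are fine. The gap is in the sandwich construction. You insist on coordinatewise domination $b^-_i\le b^*_i\le b^+_i$ for every $i$, and propose to obtain $\bmb^{\pm}$ from weighted ensembles whose target measure is translated by $\pm(\log n)^{\fa+10}/n$. This addresses the bulk discrepancy but fails at the spectral edges: Assumption \ref{a:B_n} allows $b^*_i$ to deviate from the quantiles on the scale $(\log n)^{\OO(1)}n^{-2/3}$ there (the quantile spacing near $\sfa(1)$, $\sfb(1)$ is of order $n^{-2/3}$, and in addition $b_1\ge \sfa(1)-(\log n)^{\fa}n^{-2/3}$, $b_n\le \sfb(1)+(\log n)^{\fa}n^{-2/3}$ are admissible), whereas the extreme particles of your shifted ensembles only concentrate within $(\log n)^{\OO(1)}n^{-2/3}$ of an edge that has been moved by the much smaller amount $(\log n)^{\fa+10}/n$. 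For an admissible $\bmb^*$ with, say, $b^*_1=\gamma_{1+(\log n)^{\fa}}(1)\approx \sfa(1)+(\log n)^{2\fa/3}n^{-2/3}$, a typical $\bmb^+$ drawn from $p^+$ does not dominate $\bmb^*$ at the edge indices, so Theorem \ref{thm:extreme} cannot be applied; the same problem occurs for $\bmb^-$ at the lower edge. Enlarging the translation to the scale $(\log n)^{\fC}n^{-2/3}$ repairs the edge but then the equilibrium quantiles of $p^{\pm}$ differ from those of $\rho^*_t$ in the bulk by $\asymp n^{-2/3}$, i.e. by about $n^{1/3}$ quantile spacings, so a single sandwich cannot also deliver the bulk estimate \eqref{e:bulkrigidBB}. (A smaller point: the exponent $\fa+10$ is not known to beat the unspecified $(\log n)^{\OO(1)}$ rigidity exponent of Theorem \ref{t:rigidity}, so even the bulk domination requires choosing the shift relative to that exponent.)

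The paper closes exactly this gap by making two different comparisons with one good endpoint $\bmb'$ from the original ensemble, with no new ensembles needed: for the bulk bound it couples with shifted indices, $x_i(t)\le y_{i+(\log n)^{\OO(1)}}(t)$ and its mirror image, which only requires $b_i\le b'_{i+(\log n)^{\OO(1)}}$---a comparison through the common quantile sequence that is valid at all indices, including the edge, where it is vacuous rather than false; and for the edge bound it compares with $\bmb^{\pm}=\bmb'\pm(\log n)^{\fC}n^{-2/3}$, whose bridges are exact linear-in-time shifts of the bridge to $\bmb'$, so the $n^{-2/3}$-scale loss is harmless for \eqref{e:edgerigid2BB}. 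Your argument needs an analogous split (or the index-shift device) to treat the edge; as written, the required sandwich does not exist for extremal admissible $\bmb^*$.
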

%

\begin{proof}
We recall from \eqref{e:wdensity}, the weighted nonintersecting Brownian bridges \eqref{e:wdensity} can be sampled in two steps. First we sample the boundary $\bmb'=(b_1', b_2',\cdots, b_n')$ using  the density \eqref{e:boundarydensity}. Then we sample  nonintersecting Brownian bridges with boundary data $\bma, \bmb'$.  Under Assumptions \ref{a:reg}, \ref{a:ncritic} and \ref{a:A_n}, in Theorem \ref{t:rigidity}, we showed that for any time $\ft\leq t\leq 1$, the particles of the weighted nonintersecting Brownian bridges satisfy optimal rigidity estimates. As an easy consequence, there exists a boundary data $\bmb'$ (sampled from the density \eqref{e:boundarydensity}), it satisfies:
\begin{align*}
\gamma_{i-(\log n)^{\OO(1)}}(1)\leq b_i'\leq \gamma_{i+(\log n)^{\OO(1)}}(1).
\end{align*}
and 
\begin{align*}
 b'_1\geq {\sf a}(1)-\frac{(\log n)^{\OO(1)}}{n^{2/3}},\quad b'_{n}\leq {\sf b}(1)+ \frac{(\log n)^{\OO(1)}}{n^{2/3}}.
\end{align*}
Moreover,  the nonintersecting Brownian bridges between $\bma, \bmb'$ satisfies the optimal rigidity estimates as in Theorem \ref{t:rigidity}.

We denote the nonintersecting Brownian bridges between $\bma$ and $\bmb'$ as $\{y_1(t)\leq y_2(t)\leq \cdots \leq y_n(t)\}_{0\leq t\leq 1}$. Then with high probability, it holds
\begin{align}\label{e:bulkrigidBBc}
\gamma_{i-(\log n)^{\OO(1)}}(t)\leq y_i(t)\leq \gamma_{i+(\log n)^{\OO(1)}}(t).
\end{align}
\item If $\rho_t^*$ is supported on $[\sfa(t), \sfb(t)]$, then the particles close to the left and right boundary points of $\supp(\rho_t^*)$ satisfies
\begin{align}\label{e:edgerigid2BBc}
   y_1(t)\geq {\sf a}(t)-\frac{(\log n)^{\OO(1)}}{n^{2/3}},\quad y_{n}(t)\leq {\sf b}(t)+ \frac{(\log n)^{\OO(1)}}{n^{2/3}}.
\end{align}
Since $b_i \leq b'_{i+(\log n)^{\OO(1)}}$, and $a_i \leq a_{i+(\log n)^{\OO(1)}}$, using the monotonicity of nonintersecting brownian bridges, Theorem \ref{thm:extreme}, we can couple the nonintersecting Brownian bridges $\{x_1(t)\leq x_2(t)\leq \cdots \leq x_n(t)\}_{0\leq t\leq 1}$ and $\{y_1(t)\leq y_2(t)\leq \cdots \leq y_n(t)\}_{0\leq t\leq 1}$,
\begin{align}\label{e:ubb}
x_i(t)\leq y_{i+(\log n)^{\OO(1)}},\quad 0\leq t\leq 1,
\end{align}
where we used the convention that $y_i=+\infty$ for $i>n$. The coupling \eqref{e:ubb} and \eqref{e:bulkrigidBBc} together implies 
\begin{align}\label{e:xub}
x_i(t)\leq y_{i+(\log n)^{\OO(1)}}(t)\leq \gamma_{i+(\log n)^{\OO(1)}}, \quad \ft\leq t\leq 1,
\end{align}
with high probability. A similar coupling using $b_i \geq b'_{i-(\log n)^{\OO(1)}}$ and $a_i \geq a_{i-(\log n)^{\OO(1)}}$ implies 
\begin{align}\label{e:xlb}
x_i(t)\geq y_{i-(\log n)^{\OO(1)}}(t)\geq \gamma_{i-(\log n)^{\OO(1)}},\quad \ft\leq t\leq 1,
\end{align}
where we used the convention that $y_i=-\infty$ for $i\leq 0$.
The claim \eqref{e:bulkrigidBB} follows from combining \eqref{e:xub}  and \eqref{e:xlb}.

To prove \eqref{e:edgerigid2BB}, we construct the new boundary data:
\begin{align}\label{e:defb+-bb}
\bmb^{\pm}=(b'_1\pm (\log n)^{\fC}n^{-2/3}, b'_2\pm (\log n)^{\fC}n^{-2/3}, \cdots, b'_n\pm (\log n)^{\fC}n^{-2/3}).
\end{align}
Thanks to Assumption \ref{a:B_n}, if we take $\fC$ large enough, then
\begin{align*}
b_i^-\leq b_i, b_i'\leq b_i^+,\quad 1\leq i\leq n.
\end{align*}
Since affine shifts preserve nonintersecting Brownian bridges, the nonintersecting Brownian bridges between $\bma$ and 
$\bmb^{\pm}$, denoted as $y_1^{\pm}(t)\leq y^{\pm}_2(t)\leq \cdots \leq y^{\pm}_n(t)$, they can be coupled with the nonintersecting Brownian bridge $y_1(t)\leq y_2(t)\leq \cdots \leq y_n(t)$
\begin{align*}
y_i^{-}(t)=y_i(t)-\frac{t (\log n)^{\fC}}{n^{2/3}},\quad y_i^{+}(t)=y_i(t)+\frac{t (\log n)^{\fC}}{n^{2/3}}, \quad 0\leq t\leq 1.
\end{align*}
Thanks to Theorem \ref{t:rigidity}, it holds with high probability,
\begin{align*}
 \sfa(t)-\frac{(\log n)^{\OO(1)}}{n^{2/3}}\leq y_1^-(t)\leq y_1^+(t) \leq 
 y_n^-(t)\leq y_n^+(t) \leq \sfb(t)+\frac{(\log n)^{\OO(1)}}{n^{2/3}}, \quad 0\leq t\leq 1.
\end{align*}
Using the monotonicity of nonintersecting brownian bridges,  Theorem \ref{thm:extreme}, we can couple the nonintersecting Brownian bridges  $x_1(t)\leq x_2(t)\leq \cdots \leq x_n(t)$
with the nonintersecting Brownian bridges $\{y_1^{\pm}(t)\leq y^{\pm}_2(t)\leq \cdots \leq y^{\pm}_n(t)\}_{t_0\leq t\leq 1}$, and conclude that with high probability
\begin{align}\label{e:couple}
 \sfa(t)-\frac{(\log n)^{\OO(1)}}{n^{2/3}}\leq y_1^-(t)\leq x_1(t)\leq x_n(t)\leq y_n^+(t)\leq \sfb(t)+\frac{(\log n)^{\OO(1)}}{n^{2/3}}, \quad \ft\leq t\leq 1.
\end{align}
This finishes the proof of \eqref{e:edgerigid2BB}.

\end{proof}

\section{Edge Universality}
In this section we prove edge universality for nonintersecting Brownian bridges Theorem \ref{t:universality}. The proof consists of two parts. In Section \ref{s:edgew}, we prove the 
edge universality for the weighted nonintersecting Brownian bridges \eqref{e:wdensity}. The edge universality for nonintersecting Brownian bridges follows from a coupling with weighted nonintersecting Brownian bridges. The proof is given in Section \ref{s:edge}.

\subsection{Edge Universality for weighted nonintersecting Brownian bridges}\label{s:edgew}

We recall the random walk corresponding to the weighted nonintersecting Brownian bridges from \eqref{e:weightrandomwalk}
\begin{align}\label{e:wr2}
\rd x_i(t) = \frac{1}{\sqrt{n}} \rd B_i(t) +\frac{1}{n}\sum_{j:j\neq i}\frac{\rd t}{x_i(t)-x_j(t)}+g_t(x_i(t), \mu_t,t)+\cE_t^{(i)}(\bmx(t)).
\end{align}
In this section we prove edge universality for the weighted nonintersecting Brownian bridges \eqref{e:wr2}. 
We recall from \eqref{e:square}, for any $0<t<1$, in the neighborhood of $x=\sfa(t)$,  the limiting density $\rho_t^*(x)$ has square root behavior 
\begin{align}\label{e:square2}
\rho_t^*(x)= \frac{\sfs(t)\sqrt{[x-\sfa(t)]_+}}{\pi}+\OO(|x-\sfa(t)|^{3/2}).
\end{align}

\begin{proposition}\label{p:weu}
Fix small $\ft>0$. Under the Assumptions \ref{a:reg}, \ref{a:ncritic} and \ref{a:A_n} as $n$ goes to infinity,
 the fluctuations of extreme particles of the weighted nonintersecting Brownian bridges \eqref{e:wr2} after proper rescaling, converge to the Airy point process: for any time $\ft\leq t\leq1$,
 the 
\begin{align*}
(\sfs(t)n)^{2/3}(x_1(t)-\sfa(t), x_2(t)-\sfa(t),x_3(t)-\sfa(t),\cdots)\rightarrow \text{Airy Point Process}
\end{align*}
The same statement holds for particles close to the right edge.
\end{proposition}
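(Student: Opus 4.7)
The strategy is to treat \eqref{e:wr2} as a Dyson Brownian motion with a smooth time-dependent external field $g_s(\cdot;\mu_s,s)$ plus the negligible correction $\cE_s^{(i)}$, and then run the dynamical edge universality machinery of \cite{adhikari2020dyson, MR4009708}. By Proposition \ref{p:cEbound}, the correction $\cE_s^{(i)}=\OO(1/n^2)$ contributes at most $\OO(1/n^2)$ to each $x_i(t)$ over the time interval $[\ft,t]$, which after rescaling by $(\sfs(t)n)^{2/3}$ is $\OO(n^{-4/3})$ and may be dropped. First I would absorb the drift by comparing $\{x_i(s)\}_{\ft/2\leq s\leq t}$ with an auxiliary process $\{\tilde x_i(s)\}$ driven by the same Brownian motions, solving
\begin{align*}
\rd \tilde x_i(s)=\frac{1}{\sqrt n}\rd B_i(s)+\frac{1}{n}\sum_{j\neq i}\frac{\rd s}{\tilde x_i(s)-\tilde x_j(s)}+g_s(\tilde x_i(s);\rho_s^*,s)\rd s,
\end{align*}
initialized from deterministic $\ft/2$-quantiles $\gamma_i(\ft/2)$ of $\rho^*_{\ft/2}$. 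The external field $g_s(\cdot;\rho_s^*,s)$ is analytic at the edge by Assumption \ref{a:reg} and the discussion in Section \ref{s:burger}, so $\{\tilde x_i\}$ is a deterministic-external-field DBM.

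For the reference process $\{\tilde x_i\}$, edge universality follows by a further, standard coupling that Taylor-expands the analytic drift at $\sfa(s)$ and reduces to a pure DBM with edge given by the square-root profile \eqref{e:square2}. The Airy point process limit with normalization $(\sfs(t)n)^{2/3}$ then comes from matching \eqref{e:square2} against the universal Airy-edge normalization, as in \cite{landon2017edge}.

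The substantial step is transferring edge universality from $\{\tilde x_i\}$ back to $\{x_i\}$. Setting $u_i(s)=x_i(s)-\tilde x_i(s)$, subtraction of the two SDEs yields
\begin{align*}
\rd u_i(s)=\frac{1}{n}\sum_{j\neq i}\frac{u_j(s)-u_i(s)}{(x_i(s)-x_j(s))(\tilde x_i(s)-\tilde x_j(s))}\rd s+F_i(s)\rd s,
\end{align*}
with forcing $F_i(s)=g_s(x_i;\mu_s,s)-g_s(\tilde x_i;\tilde\mu_s,s)+\cE_s^{(i)}$. Decomposing $F_i$ via the $\mu\to \rho_s^*$ interpolation used in Section \ref{s:rigidity}, using the functional-derivative and spatial-derivative bounds of Proposition \ref{p:derg} in conjunction with the contour-integral representation \eqref{e:fexpf} and Proposition \ref{p:derft}, one rewrites $g_s(x;\mu_s,s)-g_s(x;\rho_s^*,s)$ as a contour integral of the Stieltjes-transform difference $\Delta_s=\tilde m_s-m_s$ against an analytic kernel bounded on a contour enclosing $\supp\rho_s^*$. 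The optimal rigidity of Theorem \ref{t:rigidity} gives $|\Delta_s(w)|\lesssim (\log n)^{\OO(1)}/n$ uniformly on such a contour, so $|F_i(s)|\lesssim (\log n)^{\OO(1)}/n$. The short-range/long-range parabolic analysis of \cite{adhikari2020dyson,MR4009708} applied to the kernel above then homogenizes $u_i(s)$ to a spatial average on scale $(\log n)^{\OO(1)}/n$, which after the edge rescaling $(\sfs(t)n)^{2/3}$ becomes $(\log n)^{\OO(1)}n^{-1/3}=\oo(1)$ near the edge. Since labels near the edge are affected through the $|i-j|^{-1}$-decay of the heat kernel on a window of width $(\sfs(t)n)^{2/3}$, the propagation preserves the edge scaling.

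The main obstacle is the self-consistent dependence of $g_s$ on the random empirical measure: unlike standard DBM, the drift in \eqref{e:wr2} is not external but is coupled back to the particles. The resolution is that the complex Burger's equation and the Hamilton-Jacobi formalism of Sections \ref{s:VP}--\ref{s:burger} render the functional derivatives $\delta g_s/\delta\mu$ uniformly analytic near the edge (Proposition \ref{p:derg}), so that the discrepancy between $\mu_s$ and $\rho_s^*$ propagates linearly and is controlled by the optimal rigidity from Theorem \ref{t:rigidity}; this is precisely why Assumption \ref{a:A_n}, optimal (not merely polynomial) rigidity, and the cancellation of $1/n^2$ corrections in Proposition \ref{p:cEbound} are all required simultaneously to close the argument at the edge scale. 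The right-edge statement follows by the symmetric argument applied to $\sfb(t)$.
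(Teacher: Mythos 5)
Your overall idea---regard \eqref{e:wr2} as a DBM with a drift, discard $\cE^{(i)}_t$ via Proposition \ref{p:cEbound}, and invoke rigidity plus the dynamical universality machinery---is in the right spirit, but as written there are two genuine gaps, and your route diverges from the paper's in a way that makes it substantially harder. First, your treatment of the reference process is not a proof: edge universality for a DBM with a general analytic, time-dependent external field cannot be obtained by ``Taylor-expanding the drift at $\sfa(s)$ and reducing to a pure DBM''; a constant or linear part of the drift can be absorbed by an affine change of variables, but the higher-order part changes the dynamics and the moving edge, and handling it is exactly the content of \cite[Theorem 6.1]{adhikari2020dyson} (Theorem \ref{t:edgeUniv}), which is the input the paper imports. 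The paper's proof avoids your reference-process coupling altogether: it conditions at a time $\ft_0=\ft-n^{-1/3+\oo(1)}$ at which Theorem \ref{t:rigidity} gives optimal rigidity, uses Claim \ref{prop:gtchange} to replace the self-consistent drift $g_t(x_i(t);\mu_t,t)$ by the deterministic drift $g_t(x_i(t))$ up to $\OO(\log n/n)$, notes that such a drift error over the remaining time window of length $n^{-1/3+\oo(1)}$ contributes $\oo(n^{-2/3})$, and then applies the short-time theorem (adapted to a smoothly time-dependent potential) directly to the actual, random, rigid configuration at time $\ft_0$.

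Second, your transfer step is not justified by the cited results and contains an incorrect bound. Homogenization of the coupled difference $u_i$ over an order-one time interval, at the edge, and with a drift that is itself a functional of the empirical measure, is not available off the shelf from \cite{adhikari2020dyson,MR4009708}, which are short-time statements; establishing it would amount to reproving (a stronger form of) the theorem you are trying to use. Moreover the claim $|F_i(s)|\lesssim (\log n)^{\OO(1)}/n$ cannot hold as stated: $F_i$ contains the spatial-difference term $g_s(x_i;\rho_s^*,s)-g_s(\tilde x_i;\rho_s^*,s)=\OO(|u_i(s)|)$, and at the edge $|u_i(\ft/2)|=|x_i(\ft/2)-\gamma_i(\ft/2)|$ is only of order $(\log n)^{\OO(1)}n^{-2/3}$, not $n^{-1}$; your contour-integral argument controls only the $\mu_s$-versus-$\rho_s^*$ discrepancy at a fixed spatial point, so the linear-in-$u$ term must be kept inside the parabolic operator (where it produces a Gr{\"o}nwall factor), not dumped into the forcing. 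With these two steps repaired---most naturally by abandoning the order-one-time coupling and conditioning at a time $n^{-1/3+\oo(1)}$ before $t$, as the paper does---the argument closes.
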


Under Assumptions \ref{a:reg}, \ref{a:ncritic} and \ref{a:A_n}, in Theorem \ref{t:rigidity}, we showed that for any time $\ft\leq t\leq 1$, the particles of the random walk \eqref{e:wr2} satisfy optimal rigidity estimates. Fix time $\ft_0=\ft-n^{-1/3+\oo(1)}$, we can condition on the weighted nonintersecting Brownian bridge \eqref{e:wr2} at time $\ft_0$, such that $\bmx(\ft_0)$ satisfies the optimal rigidity estimates. 
Then from Claim \ref{prop:gtchange}, we have that with high probability for any $\ft_0\leq t\leq 1$, $g_t(x_i(t), \mu_t,t)=g_t(x_i(t))+\OO(\log n/n)$, and we can rewrite \eqref{e:wr2} as
\begin{align}\label{e:wr3}
\rd x_i(t) = \frac{1}{\sqrt{n}} \rd B_i(t) +\frac{1}{n}\sum_{j:j\neq i}\frac{\rd t}{x_i(t)-x_j(t)}+g_t(x_i(t))+\OO\left(\frac{\log n}{n}\right).
\end{align}
The error term $\OO(\log n/n)$ does not affect edge fluctuation, which is on the scale $\OO(n^{-2/3})$. If we ignore the error term, \eqref{e:wr3} is the Dyson's Brownian motion with drift $g_t(\cdot)$. For Dyson's Brownian motion with general drift, the short time edge universality is well-understood.

In \cite{adhikari2020dyson}, joint with A. Adhikari, we considered the $\beta$-Dyson's Brownian motion with general potential $V$,
\begin{align}\label{e:DBM}
\rd y_i(t) = \sqrt\frac{2}{\beta n} \rd B_i(t) +\frac{1}{n}\sum_{j:j\neq i}\frac{\rd t}{y_i(t)-y_j(t)}-\frac{1}{2}V'(y_i(t))\rd t,\quad i=1,2,\cdots, n.
\end{align}
For any probability density $\rho_0$, we denote $\rho_t$ the solution of the McKean-Vlasov equation \cite[Equation (2.2)]{adhikari2020dyson} associated to \eqref{e:DBM} with initial data $\rho_0$. If the initial data of the Dyson's Brownian motion \eqref{e:DBM}, weakly converges to $\rho_0$, then the empirical particle density of \eqref{e:DBM} at time $t\geq 0$ weakly converges to $\rho_t$. Moreover, if $\rho_0$ has square root behavior in a small neighborhood of its left edge, i.e. $\rho_0(x)=S(0)\sqrt{[x-E(0)]_+}/\pi+\OO(|x-E(0)|^{3/2})$, then $\rho_t$ also has square root behavior, $\rho_t(x)=S(t)\sqrt{[x-E(t)]_+}/\pi+\OO(|x-E(t)|^{3/2})$.

One result of \cite{adhikari2020dyson} states that the extreme particles of the Dyson's Brownian motion for general $\beta$ and potential $V$ converge to the Airy-$\beta$ point process in a short time.
\begin{theorem}{\cite[Theorem 6.1]{adhikari2020dyson}} \label{t:edgeUniv}
Suppose the potential $V$ is analytic, and near the left edge the initial data of \eqref{e:DBM} satisfies rigidity estimates on the optimal scale $(\log n)^{\OO(1)} /n^{2/3}$ with respect to a measure $\rho_0$, which has square root behavior in a small neighborhood of its left edge . Let $t \asymp N^{-1/3+\oo(1)}$, then with high probability, 
\begin{align*}
(S(t)n)^{2/3}(y_1(t)-E(t), y_2(t)-E(t), y_3(t)-E(t),\cdots)\rightarrow \text{Airy-$\beta$ Point Process}.
\end{align*}
The same statement holds for particles close to the right edge.
\end{theorem}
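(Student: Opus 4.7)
The plan is to establish edge universality for the $\beta$-DBM with general analytic potential $V$ via a three-step strategy: propagate the input optimal rigidity along the characteristic flow of the limiting equation, then couple to a reference DBM whose edge is already known to be Airy-$\beta$, and finally show the coupled difference is $o(n^{-2/3})$ at the edge after time $t \asymp n^{-1/3+\oo(1)}$.

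First, I would analyze the limiting dynamics. The empirical measure of \eqref{e:DBM} is governed by a McKean-Vlasov equation whose Stieltjes transform $m_t$ satisfies a complex Burger-type equation with source term $V'/2$. Using the method of characteristics, I would define the flow $z_t = z_0 + t(m_t(z_t) + V'(\Re z_t)/2\cdot(\ldots))$ (appropriately); near the left edge $E(0)$, the square-root behavior of $\rho_0$ is preserved, and the edge trajectory $E(t)$ together with the coefficient $S(t)$ evolve analytically in $t$. The hypothesis that $V$ is analytic and $\rho_0$ has square-root edge is crucial to guarantee that the edge remains a generic square-root edge for all $t$ in the window of interest.

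Second, I would propagate rigidity. Working with $\Delta_t(z) = \tilde m_t(z) - m_t(z)$, I would derive a stochastic differential equation along the characteristics, entirely analogous to \eqref{e:dDeltat} but with $g_t$ replaced by $-V'/2$, and estimate the martingale and drift error terms using Burkholder-Davis-Gundy on a short-time spectral domain $\{\kappa + \eta \geq (\log n)^{\OO(1)}/n^{2/3}\}$. Since the input rigidity at $t=0$ is already optimal, Gr\"onwall's inequality yields, for all $0 \leq s \leq t$,
\begin{equation*}
|\tilde m_s(z) - m_s(z)| \lesssim \frac{(\log n)^{\OO(1)}}{n\sqrt{\eta(\kappa+\eta)}},
\end{equation*}
uniformly on the edge spectral domain. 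By the standard Helffer-Sj\"ostrand argument this upgrades to edge-scale particle rigidity $x_i(s) - \gamma_i(s) = \OO((\log n)^{\OO(1)}/n^{2/3})$ for indices near the edge, at every $s \leq t$.

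Third, I would carry out the coupling. Let $\{y_i(s)\}$ be a reference $\beta$-DBM driven by the \emph{same} Brownian motions but with initial data sampled from a reference measure $\rho_0^{\mathrm{ref}}$ (e.g.\ a GOE/GUE-type ensemble scaled so that its left edge and edge coefficient match $(E(0), S(0))$ of $\rho_0$) and with the matching drift $-V'(y_i)/2$. Edge universality for this reference is known by the standard short-time DBM analysis (e.g., by comparison with the Gaussian divisible ensemble and the moment method or the Brezin-Hikami formula). Setting $u_i(s) = x_i(s) - y_i(s)$ yields a closed system of deterministic ODEs (the Brownian terms cancel) driven by finite differences of $1/(x_i - x_j) - 1/(y_i - y_j)$ plus $V'(x_i) - V'(y_i)$, which is exactly the setting of the homogenization / short-range parabolic equation analysis developed in Bourgade-Erd\H{o}s-Yau and Landon-Yau. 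Using the rigidity of both processes as input, the discrete parabolic equation on the differences $u_i$ has Green's function bounds showing that the local average of $u_i$ near the edge decays like $n^{-2/3}/n^{\epsilon}$ by time $t \asymp n^{-1/3+\oo(1)}$, so the two point processes $\{n^{2/3}S(t)(x_i(t)-E(t))\}$ and $\{n^{2/3}S(t)(y_i(t)-E(t))\}$ have the same finite-dimensional marginals in the limit.

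The main obstacle is the homogenization step at the edge. In the bulk it suffices to show $u_i(t)$ is essentially constant on mesoscopic scales; at the edge one must track the \emph{absolute} position to accuracy $\oo(n^{-2/3})$, which requires a sharp parabolic Green's function estimate for the kernel $\cB_{ij}(s) = \1(i\neq j)/(n(x_i(s)-x_j(s))^2) + V''(x_i(s))\delta_{ij}/2$ acting on sequences, uniformly in the optimal rigidity class. Controlling the influence of the general drift $V'$ on the finite-speed-of-propagation estimates for this kernel, and absorbing the $V'(x_i)-V'(y_i)$ term without spoiling the edge scaling, is where the bulk of the technical work lies; analyticity of $V$ allows one to Taylor expand and reduce to bulk rigidity bounds, closing the argument.
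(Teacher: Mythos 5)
First, note that this paper does not prove Theorem \ref{t:edgeUniv} at all: it is imported verbatim from \cite{adhikari2020dyson}, and the only remark made here is that the argument adapts to a smoothly time-dependent potential. So your proposal has to be measured against the strategy of that reference. Your Steps 1--2 (square-root edge preserved by the McKean--Vlasov flow, optimal edge rigidity propagated along characteristics via a BDG-plus-Gr\"onwall argument for $\Delta_t$) are sound and are indeed the rigidity input used there, and mirror Section \ref{s:rigidity} of the present paper.

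The genuine gap is in Step 3, and it is twofold. (i) Your base case is circular for general $\beta$: you couple to a reference process that still carries the drift $-V'/2$ but is started from a ``GOE/GUE-type'' matched ensemble, and you assert its Airy-$\beta$ edge behavior is ``known by comparison with the Gaussian divisible ensemble, the moment method or the Brezin--Hikami formula.'' Those tools exist only for $\beta=2$ (and with effort $\beta=1,4$); for general $\beta$ a DBM with general potential started from a matched Gaussian-type ensemble is not an invariant ensemble at positive time and its edge statistics are exactly the statement being proved. The non-circular reduction, and the route taken in \cite{adhikari2020dyson}, is to couple the general-$V$ process to the potential-free (or quadratic) $\beta$-DBM with the \emph{same} initial data and the same Brownian motions, so that the difference is driven only by $V'$; one then linearizes $V'$ at the edge, absorbs the linear part into a deterministic re-centering/rescaling of the edge, controls the bulk-to-edge influence by finite-speed-of-propagation estimates, and invokes the known edge statistics of $\beta$-DBM from rigid initial data (Landon--Yau) rather than a Brezin--Hikami-type base case. (ii) Your stated homogenization conclusion is also off: with different initial data the difference $u_i$ does not ``decay like $n^{-2/3-\epsilon}$'' by time $t\asymp n^{-1/3+\oo(1)}$ (the initial measures differ at order one, and even edge differences need not shrink); what the edge homogenization of Bourgade--Erd{\H o}s--Yau/Landon--Yau gives is that $u_i(t)$ becomes \emph{constant in $i$} near the edge up to $\oo(n^{-2/3})$, i.e. a deterministic shift that must then be matched against the displacement $E(t)-E^{\mathrm{ref}}(t)$ of the two limiting edges. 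Without repairing the base case and restating the homogenization step in this form, the argument does not close; with those repairs it essentially collapses onto the coupling-plus-linearization proof of the cited work.
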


Although the potential for the $\beta$-Dyson's Brownian motion \eqref{e:DBM} studied in \cite{adhikari2020dyson} does not depend on time. It is straightforward to adapt the proof in \cite{adhikari2020dyson} for the case that the potential smoothly depends on time. Proposition \ref{p:weu} follows from applying Theorem \ref{t:edgeUniv} to \eqref{e:wr3}.

\subsection{Edge Universality for nonintersecting Brownian bridge}\label{s:edge}

In this section we prove the main result of this paper
Theorem \ref{t:universality}, edge universality of nonintersecting Brownian bridges. 

\begin{proof}[Proof of Theorem \ref{t:universality}]
Fix time $t_0\leq t$ such that $n^{-2/3+\oo(1)}\leq t-t_0\ll (\log n)^{-\fC}$, where the constant $\fC$ will be chosen later. 
We construct the new boundary data:
\begin{align}\label{e:defb+-}
\bmb^{\pm}=(b_1\pm (\log n)^{\fC}n^{-2/3}, b_2\pm (\log n)^{\fC}n^{-2/3}, \cdots, b_n\pm (\log n)^{\fC}n^{-2/3}),
\end{align}
with $\fC$ large enough, such that 
\begin{align*}
b_i^-\leq b_i\leq b_i^+,\quad 1\leq i\leq n.
\end{align*}
We denote nonintersecting Brownian bridges between $\bma$ and $\bmb$  after conditioning on time $t_0$, as $\{x_1(s)\leq x_2(s)\leq \cdots \leq x_n(s)\}_{t_0\leq s\leq 1}$. The affine shifts preserve nonintersecting Brownian bridges.
If we denote the nonintersecting Brownian bridges between $x_1(t_0)\leq x_2(t_0)\leq \cdots \leq x_n(t_0)$, and 
$\bmb^{\pm}$, as $x_1^{\pm}(s)\leq x^{\pm}_2(s)\leq \cdots \leq x^{\pm}_n(s)$, we can couple it with the nonintersecting Brownian bridges between $\bma$ and $\bmb$ 
\begin{align*}
x_i^-(s)=x_i(s)-\frac{s-t_0}{1-t_0}\frac{(\log n)^{\fC}}{n^{2/3}},\quad
x_i^+(s)=x_i(s)+\frac{s-t_0}{1-t_0}\frac{(\log n)^{\fC}}{n^{2/3}}, \quad t_0\leq s\leq 1
\end{align*}
Especially, at time $s=t$ with $t-t_0\ll (\log n)^{-\fC}$, it holds that 
\begin{align}\label{e:ulbb}
x_i^-(t), x_i^+(t)=x_i(t)+\oo(n^{-2/3})
\end{align}

We denote $\{y_1(s)\leq y_2(s)\leq \cdots\leq y_n(s)\}_{t_0\leq s\leq 1}$ the weighted nonintersecting Brownian bridges starting at time $t_0$ with initial data $x_1(t_0)\leq x_2(t_0)\leq \cdots \leq x_n(t_0)$. Then the same argument as in Proposition  \ref{p:weu} gives that at time $s=t$, the extreme particles of $\{y_1(t)\leq y_2(t)\leq \cdots\leq y_n(t)\}$ are asymptotically given by Airy point process, 
\begin{align}\label{e:uni}
(\sfs(t)n)^{2/3}(y_1(t)-\sfa(t), y_2(t)-\sfa(t),y_3(t)-\sfa(t),\cdots)\rightarrow \text{Airy Point Process}.
\end{align}
Moreover, Theorem \ref{t:rigidity} implies that with high probability
$y_i(1)$ is close to the corresponding $1/n$-quantiles of the density $\rho_B$, as defined in \eqref{e:gamma0}
\begin{align}\label{e:bulkrigidu}
\gamma_{i-(\log n)^{\OO(1)}}(1)\leq y_i(1)\leq \gamma_{i+(\log n)^{\OO(1)}}(t),
\end{align}
and the particles close to the left and right boundary points of $\supp(\rho_B)$ satisfies
\begin{align}\label{e:edgerigidu2}
   y_1(t)\geq {\sf a}(1)-\frac{(\log n)^{\OO(1)}}{n^{2/3}},\quad y_{n}(t)\leq {\sf b}(1)+ \frac{(\log n)^{\OO(1)}}{n^{2/3}}.
\end{align}
Thanks to Assumption \ref{a:B_n}, if we take $\fC$ large enough, then with high probability
\begin{align*}
b_i^-\leq y_i(1)\leq b_i^+.
\end{align*}
Using the monotonicity of nonintersecting Brownian bridges, Theorem \ref{thm:extreme}, we can couple the the weighted nonintersecting Brownian bridges starting at time $t_0$ with initial data $x_1(t_0)\leq x_2(t_0)\leq \cdots \leq x_n(t_0)$
with the nonintersecting Brownian bridges $\{x_1^{\pm}(s)\leq x^{\pm}_2(s)\leq \cdots \leq x^{\pm}_n(s)\}_{t_0\leq s\leq 1}$, and conclude that with high probability
\begin{align}\label{e:couple}
x_i^-(s)\leq y_i(s)\leq x_i^+(s).
\end{align}
Estimates \eqref{e:ulbb} and \eqref{e:couple} imply that with high probability at time $s=t$ with $t-t_0\ll (\log n)^{-\fC}$,
\begin{align}\label{e:xibb}
y_i(t)=x_i(t)+\oo(n^{-2/3}).
\end{align}
Theorem \ref{t:universality} follows from combining \eqref{e:uni} and \eqref{e:xibb}.
\end{proof}

\bibliography{References.bib}
\bibliographystyle{abbrv}

\end{document}